\documentclass[abstract=true, DIV=12]{scrartcl}

\usepackage{preamble}

\title{A rational model for the fiberwise~THH~transfer~I: Sullivan~algebras}
\author{Florian Naef \and Robin Stoll}
\date{April 28, 2026}

\begin{document}

\maketitle

\begin{abstract}
  Given a map $f$ of fibrations over a space $B$ such that the fiber of $f$ is simply connected and finitely dominated, we prove that its fiberwise THH transfer, considered as a map of parametrized spectra over $B$, is rationally modeled by the Hochschild homology transfer of a Sullivan model of $f$.
  The proof goes in two steps.
  Firstly, we use the machinery of higher categorical traces to show that the fiberwise THH transfer can be computed internally to parametrized spectra.
  Secondly, we model the resulting description rationally using work of Braunack-Mayer, who proved that parametrized spectra can be modeled by modules over Sullivan algebras.
  In Part~II, we will use our result to obtain a rational model of the Becker--Gottlieb transfer, and for applications to manifold topology.
\end{abstract}

\setcounter{tocdepth}{\subsectiontocdepth}
\tableofcontents

\section{Introduction}

For a fibration $f \colon X \to Y$ whose fibers are equivalent to finite CW-complexes (or, more generally, are finitely dominated), there is a ``wrong-way'' map for the suspension spectra of the free loop spaces
\[ f^*  \colon  \SS[\L Y]  \longto  \SS[\L X] \]
dubbed the \emph{free loop transfer} by Lind--Malkiewich \cite{LM}.
It is obtained by observing that $\SS[\L X]$ is equivalent to $\THH(X)$, i.e.\ topological Hochschild homology of the full subcategory of compact objects in the stable $\infty$-category of parametrized spectra over $X$ (i.e.\ functors from $X$ considered as an $\infty$-groupoid to spectra).
Pulling back parametrized spectra along a map $f$ as above preserves compact objects, and hence induces a map
\[ f^* \colon \THH(Y) \longto \THH(X) \]
called the \emph{THH transfer}.
This turns out to be related to many other constructions; for example, Lind--Malkiewich proved that one can recover the Becker--Gottlieb transfer $\SS[Y] \to \SS[X]$ from the THH transfer.
We will return to this point in \cite{II}.

Given another fibration $Y \to B$, we can form the THH transfer of $f$ fiberwise over $B$ to obtain the \emph{fiberwise THH transfer}: a map
\[ f^* \colon \THH_B(Y) \longto \THH_B(X) \]
of parametrized spectra over $B$; here $\THH_B(E)$ denotes fiberwise THH of a space $E$ over $B$.
When $Y = B$, it is also called the \emph{fiberwise THH Euler characteristic}.
Via the Dennis trace, it is related to the fiberwise \emph{A-theory} Euler characteristic (i.e.\ the analogous construction for algebraic K-theory instead of THH), which plays a central role in the topological Riemann--Roch theorem of Dwyer--Weiss--Williams \cite{DWW}.
Following a suggestion of Manuel Krannich, this relationship in fact served as one of our main motivations, and we heavily exploit it in \cite{II}.
Lastly let us mention that, by work of Naef--Safronov \cite{NS}, the THH Euler characteristic measures the failure of homotopy invariance of the Goresky--Hingston coproduct in string topology.

\subsection*{Main result}

Our main result is that a rational model for the fiberwise THH transfer is given by the classical transfer of Hochschild homology; see below for its definition.

\begin{introtheorem}[see \cref{thm:transfer_model_cdga}] \label{intro:thm:cdga}
  Let $f \colon X \to Y$ and $p \colon Y \to B$ be fibrations between nilpotent spaces of finite rational type that are respectively modeled by cofibrations $\phi \colon R \to S$ and $\iota \colon \k \to R$ of cdgas.
  Assume that the fiber of $f$ is simply connected and finitely dominated, and that the fiber of $p$ is simply connected.
  Then the fiberwise THH transfer $f^* \colon \THH_B(Y) \to \THH_B(X)$ is modeled by the Hochschild homology transfer $\phi^* \colon \HH_\k(S) \to \HH_\k(R)$.
\end{introtheorem}

Recall that by work of Sullivan \cite{Sul}, there is a contravariant equivalence between the rational homotopy category of nilpotent spaces of finite rational type, and the homotopy category of connected commutative differential graded algebras over $\QQ$ (\emph{cdgas} for short) of finite type.
A cdga $\k$ is a \emph{model} for a space $B$ if they correspond to each other under this equivalence.
In this situation, Braunack--Mayer \cite{Bra} furthermore showed that the rational homotopy category of nilpotent, bounded below parametrized spectra over $B$ of finite rational type is equivalent to the homotopy category of $\k$-modules of finite type (previous results in this direction were obtained by Félix--Murillo--Tanré \cite{FMT}).
This is the sense in which the map $\HH_\k(S) \to \HH_\k(R)$ in the homotopy category of $\k$-modules models the fiberwise THH transfer (we denote by $\HH_\k(A)$ the Hochschild complex of a $\k$-algebra $A$).

\paragraph{The Hochschild homology transfer.}
A map $\phi \colon R \to S$ of $\k$-algebras induces a transfer map on Hochschild homology when $S$ is perfect (i.e.\ derived dualizable) as an $R$-module.
In that case, there is a canonical coevaluation map $\coev \colon S \to S \dertensor_R \dual{S}_R$ in the homotopy category of $S$-bimodules, where $\dual{S}_R$ denotes the derived $R$-dual of $S$.
The Hochschild homology transfer $\phi^*$ is then defined to be the following composite
\[ \HH_\k(S)  \xlongto{\coev_*}  \HH_\k(S, S \dertensor_R \dual{S}_R)  \eq  \HH_\k(R, \dual{S}_R \dertensor_S S)  \xlongto{\ev_*}  \HH_\k(R) \]
(see e.g.\ Keller \cite[§5]{Kel21}).
The middle equivalence can be visualized using the following picture
\[
\begin{tikzcd}[column sep = 10]
  S \vphantom{\dual{S}_R} \rar[start anchor = north, end anchor = north, bend left = 90, dash]{\dertensor_S} & \lar[start anchor = south, end anchor = south, bend left = 90, dash]{\dertensor_R} \dual{S}_R
\end{tikzcd}
\qquad = \qquad
\begin{tikzcd}[column sep = 10]
  \dual{S}_R \rar[start anchor = north, end anchor = north, bend left = 90, dash]{\dertensor_R} & \lar[start anchor = south, end anchor = south, bend left = 90, dash]{\dertensor_S} S \vphantom{\dual{S}_R}
\end{tikzcd}
\]
where we recall that the Hochschild complex of an $S$-bimodule is obtained by derived tensoring the left and right $S$-module structures together.

\paragraph{Equivariant models.}
The condition of \cref{intro:thm:cdga} that the spaces involved are nilpotent can be inconvenient; for example it is usually not fulfilled in the case of the universal fibration $F / \aut(F) \to \B \aut(F)$ with fiber $F$.
However, if $E \to B$ is a fibration of connected spaces with simply connected fiber, one can always find a (discrete) group $G$ and a map $B \to \B G$ such that the (homotopy) fibers of $E$ and $B$ over $\B G$ are nilpotent; for example one can choose $G$ to be $\pi_1(B)$.
In that case the map $E \to B$ can be modeled by a map of cdgas with an action of $G$ (assuming that the universal coverings of $E$ and $B$ are of finite rational type); see \cite{BS,GHT} for elaborations on this idea.
An explicit equivariant model of this kind for the classifying space $\B \aut(F)$ was constructed by Berglund--Zeman \cite{BZ}, and this was extended to a model for the classifying space of block diffeomorphisms of a manifold of dimension $\ge 6$ (with spherical boundary) by Berglund--Stoll \cite{BS24}.

Following these ideas, we extend the work of Braunack--Mayer on models for parametrized spectra to the $G$-equivariant context (see \cref{sec:eq}), and use this to prove a version of \cref{intro:thm:cdga} for maps of spaces over $\B G$ that are fiberwise nilpotent; see \cref{thm:transfer_model_cdga}.

\paragraph{The proof.}
We use work on higher categorical traces by Hoyois--Scherotzke--Sibilla \cite{HSS} and Carmeli--Cnossen--Ramzi--Yanovski \cite{CCRY} to show that the fiberwise THH transfer can be computed internally to parametrized spectra; see \cref{lemma:transfer_diag}.
We then use the work of Braunack--Mayer to model the resulting description algebraically, thus obtaining \cref{intro:thm:cdga}.
This involves producing rational models for various constructions of the six-functor calculus on parametrized spectra, in particular the dualizing spectrum of Klein \cite{Kle}, which might be of independent interest (see \cref{sec:model_Wirthmueller,sec:model_dualizing}).

Due to our reliance on higher categorical methods and results, most of this paper is written in the language of $\infty$-categories.
To facilitate our use of equivariant models, in \cref{sec:sections} we deduce from work of Harpaz \cite{Har} and Dugger \cite{Dug} that a groupoidal limit of (not necessarily simplicial) model categories is a model for the limit of the underlying $\infty$-categories.

\paragraph{Further directions.}
Keller \cite{Kel98} has also constructed a transfer for cyclic homology of dgas.
It seems likely to the authors that it is a rational model for the fiberwise transfer of rational negative cyclic homology $\HCm(\blank; \QQ)$, i.e.\ the homotopy fixed points of the $\Sphere 1$-action on $\THH(\blank) \tensor \H\QQ$.\footnote{We would like to thank Samuel Muñoz-Echániz and Manuel Krannich for explaining to us that this is not equivalent to the rationalization of $\TCm$ (i.e.\ the homotopy fixed points of the $\Sphere 1$-action on $\THH$), which we had implicitly assumed in a previous version of this introduction.}
We thus formulate the following.

\begin{introconjecture} \label{intro:conj:HC}
  In the situation of \cref{intro:thm:cdga}, the fiberwise transfer of rational negative cyclic homology $f^* \colon \HCm_B(Y; \QQ) \to \HCm_B(X; \QQ)$ is modeled by the transfer of cyclic homology $\phi^* \colon \HC_\k(S) \to \HC_\k(R)$.
\end{introconjecture}

By work of Goodwillie \cite{Goo}, rational negative cyclic homology is rationally closely related to algebraic K-theory.
In particular \cref{intro:conj:HC} would provide a rational model for a close approximation of the fiberwise A-theory transfer.

In a different direction, it should be straightforward to generalize our methods to obtain a rational model for the fiberwise Reidemeister trace of a fiberwise self-map $g$, i.e.\ the analog of the THH Euler characteristic for THH twisted by $g$.
This is an obstruction for $g$ to be fiberwise homotopic to a map without fixed points; in fact Klein--Williams \cite{KW} showed that it is a complete obstruction under certain assumptions on the base and fiber (also see Ponto \cite{Pon}).

\paragraph{Applications.}
In \cite{II}, we provide an explicit formula for the Hochschild homology transfer in terms of $\Ainf$-algebras.
Combined with \cref{intro:thm:cdga}, this yields an explicit rational model for the fiberwise THH transfer.
We then use it for the following applications:
\begin{itemize}
  \item We obtain a rational model for the Becker--Gottlieb transfer, using that it can be expressed in terms of the THH transfer by a result of Lind--Malkiewich \cite{LM}.
  \item We consider the characteristic classes constructed by Berglund \cite{Ber} for fibrations with fiber a Poincaré complex (which generalize classes found by Berglund--Madsen \cite{BM}); they are defined via the Lie graph complex, and we prove that the classes corresponding to non-trivalent graphs with exactly one loop vanish when evaluated on fiber bundles with fiber a compact simply connected topological manifold.
  \item We provide a rational model for the space of fiberwise THH-simple structures, which is a step towards obtaining rational models for the classifying spaces of diffeomorphisms and homeomorphisms of a compact simply connected manifold in the rational concordance stable range.
\end{itemize}

\subsection*{Acknowledgments}

First and foremost, we would like to thank Nils Prigge, who was involved in the beginning of this project but declined to be an author.
Secondly, we would like to thank Manuel Krannich for suggesting to us that a rational model for the fiberwise A-theory transfer might allow to prove vanishing of graph characteristic classes, which started this project.
Moreover we would like to thank Fabian Hebestreit for repeatedly explaining various parts of the six functor calculus on parametrized spectra to us, and Maxime Ramzi for answering our questions about higher categorical traces.
We would furthermore like to thank Alexander Berglund, Cary Malkiewich, Samuel Muñoz-Echániz, Thomas Nikolaus, Oscar Randal-Williams, George Raptis, Wolfgang Steimle, Jan Steinebrunner, Ferdinand Wagner, and Kelly Wang for helpful discussions.

While working on this project, Robin Stoll was supported by a postdoctoral scholarship of the Knut and Alice Wallenberg foundation.

\section{Preliminaries}

In this section, we fix our notation, recall basic constructions and facts we will need throughout the main body of the paper, and prove some elementary lemmas we did not find in the literature.

\subsection{Model categories, \texorpdfstring{$\infty$}{infinity}-categories, animas, and spectra}

We will assume familiarity with model categories and the language of higher category theory as covered in \cite[§1 and Appendix A]{LurHTT}, and the basics of higher algebra, for example as covered in \cite{Gep}.
In particular the term \emph{$\infty$-category} will mean quasi-category, though we try to work model independently where possible.
We will ignore set-theoretic issues throughout, passing to a higher universe where necessary.

\begin{notation}
  We write $\Catinf$ for the $\infty$-category of $\infty$-categories and $\Catinfmon$ for the $\infty$-category of symmetric monoidal $\infty$-categories and (strong) symmetric monoidal functors.
  We furthermore write $\PrL \subset \Catinf$ for the subcategory consisting of the presentable $\infty$-categories and the left adjoint functors, and $\PrLst \subset \PrL$ for the full subcategory spanned by the stable presentable $\infty$-categories.
\end{notation}

Recall that $\Catinfmon$ is a subcategory of $\Fun(\Finpt, \Catinf)$, where $\Finpt$ denotes the category of finite pointed sets.

\subsubsection*{2-categories}

\begin{notation}
  By \emph{$2$-category} we will mean a bicategory; a \emph{pseudofunctor} is a homomorphism of $2$-categories, and a \emph{pseudonatural transformation} is a transformation of pseudofunctors.
  A pseudofunctor is \emph{normal} if it strictly preserves identity $1$-morphisms; it is \emph{strict} if it additionally strictly preserves composition.
  A $2$-category is \emph{strict} if all associators and unitors are identities.
  A \emph{$(2,1)$-category} is a $2$-category where all $2$-morphisms are isomorphisms.
  We write $\twoCat$ for the strict $2$-category of categories.
\end{notation}

Recall that, by a result of Power \cite[§4.2]{Pow}, any pseudofunctor from a $2$-category to $\twoCat$ can be replaced by a strict functor up to pseudonatural equivalence.
The same result holds for strict $2$-categories of categories equipped with some extra structure, by transporting this structure along equivalences of categories.

\begin{notation}
  For a category $\cat C$, we denote by $\Nerve(\cat C)$ its nerve considered as a simplicial set and $\infty$-category.
  We will occasionally implicitly consider a category to be an $\infty$-category via its nerve.
  For a $2$-category $\cat C$, we denote by $\Duskin(\cat C)$ its Duskin nerve (see e.g.\ \kerodon{009T}).
\end{notation}

Recall that the Duskin nerve is functorial in normal pseudofunctors.
Furthermore recall that there is a canonical isomorphism $\Duskin(\cat C) \iso \Nerve(\cat C)$ if $\cat C$ is a $1$-category, and that $\Duskin(\cat C)$ is an $\infty$-category if (and only if) $\cat C$ is a $(2,1)$-category (see e.g.\ \kerodon{00AC}).

\subsubsection*{Localizations of categories}

\begin{notation}
  Given an $\infty$-category $\cat C$ and a class of morphisms $W$ of $\cat C$ that contains all equivalences, we denote by $\loc {\cat C} W$ the localization of $\cat C$ at $W$ (see \cite[§1.3.4]{LurHA}).
\end{notation}

An explicit model for the localization $\loc {\cat C} W$ is provided by a fibrant replacement of the pair $(\cat C, W)$ in the model category $\sSetmark$ of marked simplicial sets (see \cite[§3.1]{LurHTT}).
For $\cat C$ an (ordinary) category, this construction is pseudofunctorial in a sense we will now explain.

\begin{notation}
  We denote by $\RelCat$ the strict $(2,1)$-category of pairs $(\cat C, W_{\cat C})$ of a category $\cat C$ and a class of morphisms $W_{\cat C}$ of $\cat C$ that contains all isomorphisms, functors $F \colon \cat C \to \cat D$ such that $F(W_{\cat C}) \subseteq W_{\cat D}$, and natural isomorphisms.
\end{notation}

Considering $\RelCat$ as a simplicial category by applying the nerve to each hom-category, the nerve induces a simplicial functor $\RelCat \to \sSetmark$ (here we use the simplicial enrichment of $\sSetmark$ given by $\Mapmarkueq$, which makes it into a simplicial model category).
Composing with a simplicial fibrant replacement functor of $\sSetmark$ and passing to homotopy coherent nerves, we obtain a functor
\begin{equation} \label{eq:loc}
  \Duskin(\RelCat)  \longto  \Catinf, \qquad (\cat C, W)  \longmapsto  \loc {\cat C} {W}
\end{equation}
(where we use \kerodon{00KY} to identify the Duskin nerve with the homotopy coherent nerve of the underlying simplicial category).

\subsubsection*{The underlying \texorpdfstring{$\infty$}{infinity}-category of a model category}

\begin{definition}
  Let $\cat M$ be a model category.
  We denote by $\fibobj{\cat M}$ resp.\ $\cofibobj{\cat M}$ the full subcategories spanned by the fibrant resp.\ cofibrant objects, and set $\bifibobj{\cat M} \defeq \fibobj{\cat M} \intersect \cofibobj{\cat M}$.
  Given any subcategory $\cat M'$ of $\cat M$, we denote by $\We{\cat M} \subset \cat M'$ the wide subcategory consisting of the weak equivalences.
  We write
  \[ \Underlying(\cat M)  \defeq  \loc {\cat M} {\We{\cat M}}  \qquad  \fUnderlying(\cat M)  \defeq  \loc {\fibobj{\cat M}} {\We{\cat M}}  \qquad  \cUnderlying(\cat M)  \defeq  \loc {\cofibobj{\cat M}} {\We{\cat M}} \]
  and call $\Underlying(\cat M)$ the \emph{underlying $\infty$-category of $\cat M$}.
\end{definition}

Recall that the homotopy category of $\Underlying(\cat M)$ is naturally equivalent to the homotopy category of $\cat M$ (see e.g.\ \kerodon{01MW}).
Furthermore note that the canonical inclusions
\[
\begin{tikzcd}
  (\bifibobj{\cat M}, \We{\cat M}) \rar \dar & (\fibobj{\cat M}, \We{\cat M}) \dar \\
  (\cofibobj{\cat M}, \We{\cat M}) \rar & (\cat M, \We{\cat M})
\end{tikzcd}
\]
induce equivalences upon passing to localizations, with quasi-inverses provided by (co)fibrant replacement functors.
In particular we have canonical equivalences
\begin{equation} \label{eq:underlying}
  \cUnderlying(\cat M) \eq \Underlying(\cat M) \eq \fUnderlying(\cat M)
\end{equation}
of $\infty$-categories.

\begin{notation}
  We denote by $\ModCatC$ the strict $(2,1)$-category of model categories, functors that preserve cofibrant objects and weak equivalences between cofibrant objects, and natural isomorphisms.
  We denote by $\ModCatL \subset \ModCatC$ the wide $(2,1)$-subcategory spanned by the left Quillen functors (and again all natural isomorphisms).
  Dually we define strict $(2,1)$-categories $\ModCatR \subset \ModCatF$.
  Lastly, we write $\ModCatmon$ for the strict $(2,1)$-category of symmetric monoidal model categories whose unit is cofibrant, (strong) symmetric monoidal left Quillen functors, and monoidal natural isomorphisms (see e.g.\ \cite[Definition~4.2.16]{Hov}).
\end{notation}

Note that there is a strict pseudofunctor $\ModCatC \to \RelCat$ given by $\cat M \mapsto (\cofibobj{\cat M}, \We{\cat M})$, and dually a strict pseudofunctor $\ModCatF \to \RelCat$.
In particular we obtain functors
\[ \cUnderlying  \colon  \Duskin(\ModCatC)  \longto  \Catinf  \qquad \text{and} \qquad  \fUnderlying  \colon  \Duskin(\ModCatF)  \longto  \Catinf \]
by composing with the localization functor \eqref{eq:loc}.
We furthermore recall the following from \cite[Example~4.1.7.6]{LurHA}.

\begin{definition}
  Let $\cat M$ be a symmetric monoidal model category with cofibrant unit.
  The \emph{underlying symmetric monoidal $\infty$-category} of $\cat M$ is the composite
  \[ \monUnderlying(\cat M)  \colon  \Nerve(\Finpt)  \xlongto{\cat M^\tensor}  \Duskin(\ModCatC)  \xlongto{\cUnderlying}  \Catinf\]
  where $\cat M^\tensor$ is the pseudofunctor $\Finpt \to \ModCatC$ that sends $n$ to $\cat M^{\times n}$.
\end{definition}

Note that $\monUnderlying(\cat M)$ is indeed a symmetric monoidal $\infty$-category since the localization functor \eqref{eq:loc} preserves finite products (see \cite[Proposition~4.1.7.2]{LurHA}).
In particular this construction yields a functor
\[ \monUnderlying  \colon  \Duskin(\ModCatmon)  \longto  \Catinfmon \]
sending $\cat M$ to its underlying symmetric monoidal $\infty$-category.

Using the equivalences of \eqref{eq:underlying}, left and right Quillen functors also induce maps on $\Underlying$ as follows.

\begin{notation}
  Given a left Quillen functor $L \colon \cat M \to \cat N$, we write $\Lder L$ for the composite
  \[ \Underlying(\cat M)  \xlongto{q}  \cUnderlying(\cat M)  \xlongto{\cUnderlying(L)}  \cUnderlying(\cat N)  \longto  \Underlying(\cat N) \]
  where $q$ is a cofibrant replacement functor.
  Dually we define $\Rder R$ for a right Quillen functor $R$.
  We will also write $\Lder L$ and $\Rder R$ for the induced functors on homotopy categories.
\end{notation}

By \cite[Proposition~1.5.1]{Hin}, given a Quillen adjunction $L \dashv R$, there is an induced adjunction $\Lder L \dashv \Rder R$ of $\infty$-categories.
If $L \dashv R$ is a Quillen equivalence, then the induced adjunction is an equivalence.

\subsubsection*{Natural transformations of Quillen functors and their mates}

The following composite, where $\hocat$ is (the restriction of) the homotopy category functor of \kerodon{025K},
\[ \Duskin(\ModCatL)  \xlongto{\cUnderlying}  \Catinf  \xlongto{\hocat}  \Duskin(\twoCat) \]
is, via the equivalences $\hocat {\cUnderlying(\cat M)} \eq \hocat {\Underlying(\cat M)} \eq \Hocat(\cat M)$, equivalent to the pseudofunctor $\Hocat \colon \ModCatL \to \twoCat$ of Hovey \cite[Theorem~1.4.3]{Hov}.
It sends a left Quillen functor $L$ to $\Lder L$, a natural isomorphism $L_1 \after L_2 \iso L$ to the natural equivalence
\[ \Lder L_1 \after \Lder L_2  =  L_1 q L_2 q  \xlongto{\eq}  L_1 L_2 q  \iso  L q  =  \Lder L \]
and has the unitor equivalence $\Lder \id = q \to \id$.
Everything works dually for $\ModCatR$ in place of $\ModCatL$.

In particular, a natural transformation $\alpha \colon L_2 \after L_1 \to L_2' \after L_1'$ of left Quillen functors induces a natural transformation
\[ \Lder L_2 \after \Lder L_1  \xlongto{\eq}  \Lder (L_2 \after L_1)  \xlongto{\alpha}  \Lder (L_2' \after L_1')  \xlongfrom{\eq}  \Lder L_2' \after \Lder L_1' \]
on homotopy categories, and we would like to be able to compute its mate (see e.g.\ \cite[§2.2]{KS} for an overview of the mate correspondence).
This is the content of the following result of Shulman \cite{Shu}.

\begin{proposition}[Shulman] \label{prop:Shulman}
  Let the following be natural transformations that are mates of each other
  \[
  \begin{tikzcd}
    \cat A \rar{L_1} \dar[swap]{L_1'} & \cat B \dar{L_2} \dlar[Rightarrow, shorten = 10][swap]{\alpha}  &[15]  \cat A \rar{L_1} \drar[Rightarrow, shorten = 10]{\beta} & \cat B  &[15]  \cat A & \lar[swap]{R_1} \cat B \\
    \cat C \rar[swap]{L_2'} & \cat D  &  \cat C \rar[swap]{L_2'} \uar{R_1'} & \cat D \uar[swap]{R_2}  &  \cat C \uar{R_1'} \urar[Rightarrow, shorten = 10]{\gamma} & \lar{R_2'} \cat D \uar[swap]{R_2}
  \end{tikzcd}
  \]
  where $\cat A$, $\cat B$, $\cat C$, and $\cat D$ are model categories, and $L_1 \dashv R_1$, $L_2 \dashv R_2$, $L_1' \dashv R_1'$, and $L_2' \dashv R_2'$ are Quillen adjunctions.
  Then the mate of the transformation $\Lder L_2 \after \Lder L_1 \to \Lder L_2' \after \Lder L_1'$ induced by $\alpha$, and the mate of the transformation $\Rder R_1' \after \Rder R_2' \to \Rder R_1 \after \Rder R_2$ induced by $\gamma$, are both represented by the zig-zag of natural transformations of functors $\Hocat(\cat C) \to \Hocat(\cat B)$ given by
  \[ L_1 q R_1' r  \xlongfrom{\eq}  L_1 q R_1' q r  \longto  L_1 R_1' q r  \xlongto{\beta}  R_2 L_2' q r  \longto  R_2 r L_2' q r  \xlongfrom{\eq}  R_2 r L_2' q \]
  where $r$ is a fibrant replacement functor and $q$ is a cofibrant replacement functor that preserves fibrant objects (which is automatic if it is obtained from a functorial factorization).
  
  In particular the derived unit and the derived counit of a Quillen adjunction $L \dashv R$ are represented by
  \[ \id  \xlongfrom{\eq} q \xlongto{\eta}  R L q  \longto  R r L q  \qquad \text{and} \qquad  L q R r  \longto  L R r  \xlongto{\epsilon}  r  \xlongfrom{\eq}  \id \]
  respectively.
\end{proposition}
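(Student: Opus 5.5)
We first treat the special case of the derived unit and counit, and then deduce the general statement from it by pasting. Throughout, we use that the derived adjunction $\Lder L \dashv \Rder R$ on homotopy categories can be described via Hovey's construction. The functors are $\Lder L = Lq$ and $\Rder R = Rr$.

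For the unit, we must show that the displayed zig-zag $\id \xlongfrom{\eq} q \xlongto{\eta} RLq \to RrLq$ corresponds, under the adjunction bijection $[X, RrY] \iso [LqX, Y]$ on homotopy classes, to the identity of $\Lder L X$. To do this, take $X$ cofibrant. Then the map $qX \to RrLqX$ is adjoint, as a map of cofibrant–fibrant objects, to $LqX \to rLqX$. This is the fibrant replacement map, which represents the identity in $\Hocat$. The counit is handled dually, using that $q$ preserves fibrant objects. Since the derived unit is characterized by its adjunct, this proves the second part.

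For the general statement, recall that the mate of $\Lder L_2 \after \Lder L_1 \to \Lder L_2' \after \Lder L_1'$ is the composite
\[ \Lder L_1 \Rder R_1' \xlongto{\eta_2} \Rder R_2 \Lder L_2 \Lder L_1 \Rder R_1' \xlongto{\alpha} \Rder R_2 \Lder L_2' \Lder L_1' \Rder R_1' \xlongto{\epsilon_1'} \Rder R_2 \Lder L_2'. \]
We substitute the explicit zig-zags for $\eta_2$ and $\epsilon_1'$ from the special case, and write $\alpha$ via the coherence isomorphisms $L_2 q L_1 q \eq L_2 L_1 q$. We then use naturality of the comparison maps $q \to \id$ and $\id \to r$ to slide all replacement maps to the outer ends. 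The point-set composite that remains is $L_1 R_1' \to R_2 L_2 L_1 R_1' \to R_2 L_2' L_1' R_1' \to R_2 L_2'$, and by the point-set mate correspondence this composite is exactly $\beta$. The computation for $\gamma$ is formally dual, and $\gamma$ is also a point-set mate of $\beta$, so it yields the same zig-zag.

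The main obstacle is the bookkeeping in this sliding step. One must check that each intermediate object is cofibrant or fibrant where needed, so that the relevant maps are weak equivalences or represent well-defined morphisms in $\Hocat$. In particular, $q$ must preserve fibrancy so that $R_1' q r$ and $L_2' q r$ are homotopically meaningful. One must also check that the inserted $q$'s and $r$'s commute past the unit and counit via naturality squares. I would carry this out as a single large commutative diagram in the point-set categories, with every wrong-way arrow a weak equivalence, and then read off the zig-zag.
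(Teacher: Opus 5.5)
\textbf{Comparison with the paper.} The paper gives no argument of its own here. It cites Shulman [Theorem~7.6 and Proposition~6.9], where derived functors are organized into a double pseudofunctor, so derived transformations and their mates correspond and the derived mate is computed explicitly. Your direct point-set verification is therefore a different route. Your treatment of the derived unit and counit is correct; neither the assumption that $X$ is cofibrant nor fibrancy-preservation of $q$ is actually needed there.

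\textbf{Where the sliding step fails.} The general case has a genuine gap exactly at the step you defer as bookkeeping: naturality of $p\colon q\to\id$ and $j\colon\id\to r$ alone cannot produce the stated zig-zag. Write $Y=R_1'rX$, let $\epsilon'$ be the counit of $L_1'\dashv R_1'$, and set $e=\epsilon'_{rX}\circ L_1'(p_Y)\colon L_1'qY\to rX$. The unit part of the expanded composite does collapse by naturality, but the composite then ends with
\[ R_2 r L_2' L_1' qY \xleftarrow{\;\sim\;} R_2 r L_2' q L_1' qY \xrightarrow{R_2 r L_2' q(e)} R_2 r L_2' q r X . \]
Sliding the backward $p$ past $q(e)$ via $e\circ p_{L_1'qY}=p_{rX}\circ q(e)$ would require inverting $R_2rL_2'(p_{rX})$. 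That map is not a weak equivalence, because $rX$ need not be cofibrant. Correspondingly, the point-set $\beta$ that would ``remain'' sits at $rX$, where $L_2'$ is not homotopical. In fact the object $R_1'qrX$ of the target zig-zag never occurs in the expanded composite, so no amount of sliding can reach it.

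\textbf{How to repair it.} You have to insert that object by hand. Precompose with $L_1qR_1'(p_{rX})\colon L_1qR_1'qrX\to L_1qR_1'rX$. This is a weak equivalence exactly because $qrX$ is fibrant (Ken Brown for $R_1'$), and this is where the hypothesis on $q$ really enters. Now pull this map through $\eta_2$, $\alpha$, $j$, $p$ by naturality, and use $\epsilon'_{rX}\circ L_1'R_1'(p_{rX})=p_{rX}\circ\epsilon'_{qrX}$. The backward $p$ can now be slid past the counit, since doing so produces $R_2rL_2'(p_{qrX})$, a weak equivalence because $qrX$ is cofibrant. Everything then reduces to $R_2j\circ\beta_{qrX}\circ L_1(p_{R_1'qrX})$, as required.

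One step of this is not a point-set naturality square. You must know that $L_2'q(p_{rX})$ and $L_2'(p_{qrX})$, two different maps $L_2'qqrX\to L_2'qrX$, agree in $\Hocat(\cat D)$. They do: $q(p_{rX})$ and $p_{qrX}$ become equal after composing with the weak equivalence $p_{rX}$, hence agree in $\Hocat(\cat C)$, and $L_2'$ is homotopical on cofibrant objects. Your ``single large commutative diagram'' must incorporate this argument, or a cylinder object for $qqrX$.

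\textbf{The $\gamma$ case.} It is not formally dual. Dualizing your argument yields a zig-zag through $rqX$ and requires $r$ to preserve cofibrant objects, which is not assumed. You need a separate, analogous computation through the bifibrant object $qrX$: postcompose with the weak equivalence $R_2rL_2'q(j_X)$ and use naturality in the cofibrant input $qX$. Alternatively, use that the derived transformations induced by $\alpha$ and $\gamma$ are themselves mates, which is essentially the content of the Shulman result the paper cites.
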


\begin{proof}
  This follows from \cite[Theorem~7.6 and Proposition~6.9]{Shu}.
\end{proof}

We also note the following general fact about mates for later use.

\begin{lemma} \label{lemma:mates}
  In a $2$-category, let $l \dashv r$ and $l' \dashv r'$ be two adjunctions and let
  \[
  \begin{tikzcd}
    A \rar{l} \dar[swap]{a} & B \dar{b} & A \dar[swap]{a} \drar[Rightarrow, shorten = 10]{\beta} & \lar[swap]{r} B \dar{b} \\
    C \rar{l'} \urar[Rightarrow, shorten = 10]{\alpha} & D & C & \lar[swap]{r'} D
  \end{tikzcd}
  \]
  be two $2$-morphisms that are mates of each other.
  Then the following two diagrams commute
  \[
  \begin{tikzcd}
    a \rar{\eta'} \dar[swap]{\eta} & r' l' a \dar{\alpha} & l' a r \dar[swap]{\beta} \rar{\alpha} & b l r \dar{\epsilon} \\
    a r l \rar{\beta} & r' b l & l' r' b \rar{\epsilon'} & b
  \end{tikzcd}
  \]
  where $\eta$, $\eta'$, $\epsilon$, and $\epsilon'$ are the (co)units of the adjunctions $l \dashv r$ and $l' \dashv r'$.
\end{lemma}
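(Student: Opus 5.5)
We need to prove Lemma~\ref{lemma:mates}. Recall that the mate $\beta \colon a r \to r' b$ of $\alpha \colon l' a \to b l$ is defined as the composite
\[ a r \xlongto{\eta' a r} r' l' a r \xlongto{r' \alpha r} r' b l r \xlongto{r' b \epsilon} r' b , \]
and conversely $\alpha$ is recovered from $\beta$ as
\[ l' a \xlongto{l' a \eta} l' a r l \xlongto{l' \beta l} l' r' b l \xlongto{\epsilon' b l} b l . \]
The plan is to verify each square by substituting one of these formulas and simplifying with naturality (the interchange law for horizontal and vertical composition of $2$-morphisms) together with the triangle identities. Since the statement takes place in an arbitrary $2$-category, I will work with whiskerings and note that associators and unitors can be suppressed by coherence, i.e.\ we may assume the $2$-category is strict.

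For the left square, substitute the formula for $\beta$ into the lower path $a \xrightarrow{a\eta} a r l \xrightarrow{\beta l} r' b l$. This gives
\[ a \xrightarrow{a\eta} arl \xrightarrow{\eta' arl} r'l'arl \xrightarrow{r'\alpha rl} r'blrl \xrightarrow{r'b\epsilon l} r'bl . \]
By interchange, $\eta' a r l \circ a\eta = r'l'a\eta \circ \eta' a$. Similarly, $r'\alpha r l \circ r' l' a \eta = r' b l \eta \circ r'\alpha$, where this step uses interchange of $\alpha$ with $\eta$. The composite therefore becomes
\[ (r' b \epsilon l \circ r' b l \eta) \circ r'\alpha \circ \eta' a . \]
The triangle identity $\epsilon l \circ l\eta = \id_l$ kills the bracket, leaving $r'\alpha \circ \eta' a$, which is the upper path.

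The right square is dual. Substitute the formula for $\alpha$ into the upper path $l' a r \xrightarrow{\alpha r} b l r \xrightarrow{b\epsilon} b$, obtaining
\[ \epsilon' b l r \circ l'\beta l r \circ l' a \eta r \]
followed by $b\epsilon$. By interchange, $b\epsilon \circ \epsilon' b l r = \epsilon' b \circ l' r' b \epsilon$ and $l' r' b \epsilon \circ l' \beta l r = l' \beta \circ l' a r \epsilon$. This yields
\[ \epsilon' b \circ l'\beta \circ l' a (r\epsilon \circ \eta r) , \]
and the triangle identity $r\epsilon \circ \eta r = \id_r$ reduces this to $\epsilon' b \circ l'\beta$, which is the lower path.

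There is no real obstacle here. The only care needed is bookkeeping of whiskerings, and in a non-strict bicategory the suppressed coherence isomorphisms, which is justified by the coherence theorem.
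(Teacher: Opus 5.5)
Your proof is correct and is the same argument as the paper's, which only says the squares follow by chasing through the definitions using the triangle identities. You carry this out explicitly: you substitute the mate formulas, apply interchange, and cancel with $\epsilon l \circ l\eta = \id_l$ and $r\epsilon \circ \eta r = \id_r$.
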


\begin{proof}
  Chasing through the definitions, this follows from the triangle identities.
\end{proof}

\subsubsection*{Animas}

\begin{notation}
  We write $\sSet$ for the category of simplicial sets, equipped with the Quillen model structure.
  We write $\An \defeq \Underlying(\sSet)$ and call its objects \emph{animas}.
\end{notation}

Recall that $\An$ is equivalent to the $\infty$-category of $\infty$-groupoids.
Furthermore recall that $\An$ is the $\infty$-category freely generated under colimits by a single object.
The following is a version of the resulting universal property, which we record for later use.

\begin{observation} \label{obs:assembly}
  For a cocomplete $\infty$-category $\cat C$, the functor $\ev_{*} \colon \Fun(\An, \cat C) \to \cat C$ has a fully faithful left adjoint $\iota_!$ given by left Kan extension along $\iota \colon \set{*} \to \An$.
  In particular the map
  \[ \ev_*  \colon  \Map \bigl( \iota_! X, F \bigr)  \xlongto{\eq}  \Map \bigl( X, F(*) \bigr) \]
  is an equivalence for all $X \in \An$ and $F \colon \An \to \cat C$.
  By \cite[Lemma~5.1.5.5]{LurHA} the essential image of $\iota_!$ consists precisely of the colimit-preserving functors.
  Since the unit of the adjunction $\iota_! \dashv \ev_*$ is an equivalence, a natural transformation $F \to G$ of colimit-preserving functors $\An \to \cat C$ is thus an equivalence if and only if $F(*) \to G(*)$ is an equivalence.
\end{observation}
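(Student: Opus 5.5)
The statement is \cref{obs:assembly}, and I would deduce it from the universal property of $\An$ as the free cocomplete $\infty$-category on one object. The plan has three steps: build the adjunction, identify the essential image of $\iota_!$, and read off the detection criterion.

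\textbf{Constructing the adjunction.} Since $\cat C$ is cocomplete and $\set{*}$ is small, the left Kan extension of any functor $\set{*} \to \cat C$ along $\iota$ exists. Pointwise it is given by
\[ (\iota_! X)(Y) \simeq \colim_{\Map(*,Y)} X \simeq \colim_{Y} X. \]
Left Kan extension is left adjoint to restriction $\iota^* = \ev_*$, by \cite[§4.3.3]{LurHTT}. Since $\iota$ is fully faithful, \cite[Proposition~4.3.2.17]{LurHTT} shows that the unit $X \to \ev_* \iota_! X$ is an equivalence. By the standard criterion for adjunctions, a unit that is an equivalence means $\iota_!$ is fully faithful. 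The displayed equivalence of mapping anima is then just the adjunction equivalence, composed with the unit.

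\textbf{Essential image.} I would take this from \cite[Lemma~5.1.5.5]{LurHA} as cited, or argue it directly. Every object $Y \in \An$ is the colimit of the constant diagram at $*$ indexed by $Y$. The formula $\colim_Y X$ visibly preserves colimits in $Y$, so $\iota_! X$ is colimit-preserving. Conversely, let $F$ preserve colimits and set $X = F(*)$. The counit $\iota_! \ev_* F \to F$ is a transformation of colimit-preserving functors, and it is an equivalence on $*$ by the triangle identity together with the fact that the unit is invertible. Since both sides preserve colimits and every $Y$ is a colimit of copies of $*$, the counit is an equivalence at every $Y$. Hence $F$ lies in the essential image.

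\textbf{The detection criterion.} Let $\alpha \colon F \to G$ be a transformation between colimit-preserving functors. By the previous step $F \simeq \iota_! F(*)$ and $G \simeq \iota_! G(*)$. Because $\iota_!$ is fully faithful, $\alpha$ corresponds under the adjunction to $\ev_*(\alpha)$. It follows that $\alpha$ is an equivalence if and only if $\alpha_*$ is. One could also see this directly by writing $Y = \colim_Y *$ and taking colimits of $\alpha_*$.

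\textbf{Main obstacle.} The only subtle point is the decomposition $Y \simeq \colim_Y *$ in $\An$, which is the statement that the Yoneda embedding is dense, used for the point. I would cite \cite[Corollary~4.4.4.9]{LurHTT}, and the remaining steps are formal.
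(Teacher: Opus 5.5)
Your proposal is correct and takes the same approach as the paper. In both, $\iota_!$ is the left Kan extension along the fully faithful $\iota$, so the unit is an equivalence and $\iota_!$ is fully faithful. The essential image is identified via Lemma~5.1.5.5, and the detection criterion holds because $\ev_*$ restricted to colimit-preserving functors is inverse to $\iota_!$. Your direct density argument via $Y \simeq \colim_Y *$ is a valid self-contained replacement for the citation; the "visibly" step holds because $Y \mapsto \colim_Y X$ is left adjoint to $\Map_{\cat C}(X, \blank)$.
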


We now recall the (un)straightening equivalence, specialized to the case of functors to $\An$ (i.e.\ $\infty$-groupoids).
To this end note that, for an $\infty$-category $\cat C$ that admits pullbacks, the functor $\ev_1 \colon \cat C^{[1]} \to \cat C$ is a cartesian fibration and as such is classified by a functor $\opcat{\cat C} \to \Catinf$ that sends an object $C \in \cat C$ to $\overcat{\cat C}{C}$ and a map $f \colon C \to D$ to the functor $f \colon \overcat{\cat C}{D} \to \overcat{\cat C}{C}$ given by pullback along $f$ (cf.\ \kerodon{05SA}).
For an anima $X$, unstraightening yields an equivalence of $\infty$-categories
\begin{equation} \label{eq:straighten}
  \Fun(X, \An)  \xlongto{\eq}  \overcat{\An}{X}
\end{equation}
that sends $F \colon X \to \An$ to the pullback of the forgetful functor $\Anpt \to \An$ along $F$ (cf.\ \cite[§3.3.2]{LurHTT} and \cite[Theorem~3.3.17]{Lan}); it is a natural transformation of functors $\opcat{\An} \to \Catinf$ by \cite[Corollary~A.32]{GHN}.

\begin{observation} \label{obs:pi_1_action}
  By the unstraightening equivalence \eqref{eq:straighten} there is, for any map of animas $f \colon E \to B$, an essentially unique pullback square
  \[
  \begin{tikzcd}
    E \rar \dar[swap]{f} & \Anpt \dar \\
    B \rar{F} & \An
  \end{tikzcd}
  \]
  in $\Catinf$.
  Note that the fiber $F_b \defeq \fib_b f$ is canonically equivalent to $F(b)$ for every $b \in B$.
  Passing to homotopy categories, we obtain actions of $\pi_1(B, b)$ on $F_b \in \hocat \An$ and of $\pi_1(E, e)$ on $(F_{f(e)}, e) \in \hocat \Anpt$.\footnote{Note that our definition of these actions agrees with the classical construction for a Hurewicz fibration $f \colon E \to B$ of topological spaces (see e.g.\ \cite[§1.5]{MP}); this can be seen by considering \eqref{eq:straighten} for the walking isomorphism $\Interval$ (for the pointed action we pass to the undercategories $\undercat{\Fun(\Interval, \An)}{\const_*}$ and $\undercat{(\Anover{\Interval})}{\id_I}$).}
  In particular $\pi_1(E, e)$ acts on $\pi_n(F_{f(e)}, e)$ for every $n$.
\end{observation}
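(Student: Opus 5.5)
The plan is to read all three assertions off the unstraightening equivalence \eqref{eq:straighten} with $X = B$, together with its naturality in $X$ and the pasting law for pullbacks.

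\textbf{The square.} Since \eqref{eq:straighten} is essentially surjective, there is some $F \colon B \to \An$ and an equivalence over $B$ between $f \colon E \to B$ and the pullback of $\Anpt \to \An$ along $F$. This pullback is formed in $\Catinf$: the forgetful functor $\Anpt \to \An$ is a left fibration, so the strict pullback is a homotopy pullback and is again an anima. This gives the square. For essential uniqueness, suppose $F'$ is another functor admitting such a square. Then $F$ and $F'$ both map to the object $f$ of $\overcat{\An}{B}$ under \eqref{eq:straighten}. Because \eqref{eq:straighten} is an equivalence, $F$ and $F'$ are equivalent, and the space of choices (a functor $F$ together with an identification over $B$) is contractible.

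\textbf{The fibers.} Fix $b \in B$ and regard it as a map $b \colon * \to B$. Paste the square with the pullback square defining $\fib_b f$. By the pasting law, $F_b = \fib_b f$ is the pullback of $\Anpt \to \An$ along $F(b) \colon * \to \An$. That pullback is the fiber of $\An_{*/} \to \An$ over $F(b)$, namely $\Map_{\An}(*, F(b)) \eq F(b)$. Equivalently, one can use naturality of \eqref{eq:straighten} in $X$ (\cite[Corollary~A.32]{GHN}) for $b \colon * \to B$: restricting $F$ along $b$ corresponds to pulling back $f$ along $b$, and over a point \eqref{eq:straighten} is the identity of $\An$.

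\textbf{The actions.} Apply $\hocat$ to the square. A loop at $b$ is an automorphism of $b$ in $\hocat B$, so $\hocat F$ sends it to an automorphism of $F(b) \eq F_b$ in $\hocat \An$. This defines the action of $\pi_1(B,b)$. In the same way, the top map $E \to \Anpt$ sends $e$ to a pointed anima. By commutativity of the square and the identification of the fibers, that pointed anima is $(F_{f(e)}, e)$. Applying $\hocat$ therefore gives an action of $\pi_1(E,e)$ on $(F_{f(e)}, e)$ in $\hocat \Anpt$. Composing with the functors $\pi_n \colon \hocat \Anpt \to \mathrm{Set}$ (or $\mathrm{Grp}$) yields the action of $\pi_1(E,e)$ on $\pi_n(F_{f(e)}, e)$.

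\textbf{Main obstacle.} The delicate step is checking that the basepoint of the pointed anima is $e$ itself, compatibly with the identification $F(f(e)) \eq F_{f(e)}$. This requires tracking the pasting of squares with the points included, rather than only up to an unspecified equivalence.

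For the footnote's comparison with the classical action, I would apply \eqref{eq:straighten} to the walking isomorphism $\Interval$. A path in $B$ gives a map $\Interval \to B$. Pulling $f$ back along it gives an object of $\overcat{\An}{\Interval}$, which corresponds to an equivalence between the two end fibers. One then checks that this is the classical transport equivalence coming from path lifting. For the pointed version, the same argument is run in the undercategories $\undercat{\Fun(\Interval, \An)}{\const_*}$ and $\undercat{(\Anover{\Interval})}{\id_I}$.
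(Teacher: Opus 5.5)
Your proposal is correct and is essentially the paper's argument, which the observation itself only sketches. The square and its essential uniqueness come from the unstraightening equivalence: the space of pairs of a functor $F$ and an identification of $f$ with its unstraightening is contractible. The identification $F_b \simeq F(b)$ comes from pasting pullbacks, and the two actions come from applying the homotopy category functor to $F$ and to the top map of the square. Your treatment of the footnote via the walking isomorphism is likewise the paper's, which also leaves that comparison as a check.
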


\subsubsection*{Spectra}

We recall the following notation from \cite[§1.4]{LurHA}.

\begin{notation}
  For an $\infty$-category $\cat C$ that admits finite limits, we denote by $\Spob(\cat C)$ the stable $\infty$-category of spectrum objects in $\cat C$ and by $\Loopsinf \colon \Spob(\cat C) \to \cat C$ the infinite loop functor.
  When $\cat C$ is presentable, we write $\Suspinf \colon \cat C \to \Spob(\cat C)$ for the left adjoint of $\Loopsinf$.
  We write $\Sp \defeq \Spob(\An)$ for the $\infty$-category of spectra and furthermore set $\SS[\blank] \defeq \Suspinf \colon \An \to \Sp$ and $\SS \defeq \SS[*]$.
\end{notation}

We will need the following version of the universal property of $\Spob(\blank)$.
In the case where $\cat D$ is presentable, it is a consequence of \cite[Corollary~1.4.4.5]{LurHA}.

\begin{lemma} \label{lemma:Spobj}
  Let $\cat C$ be a presentable $\infty$-category and $\cat D$ a stable $\infty$-category.
  Then the following functor is fully faithful
  \[ (\Suspinf)^*  \colon  \FunL \bigl( \Spob(\cat C), \cat D \bigr)  \longto  \FunL ( \cat C, \cat D ) \]
  where $\FunL(\cat A, \cat B) \subseteq \Fun(\cat A, \cat B)$ denotes the full subcategory of left adjoint functors.
\end{lemma}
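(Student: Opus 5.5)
The plan is to reduce to the presentable case, where \cite[Corollary~1.4.4.5]{LurHA} identifies $\FunL(\Spob(\cat C), \cat E) \simeq \FunL(\cat C, \cat E)$ via $(\Suspinf)^*$ for presentable stable $\cat E$. We do this by embedding $\cat D$ into a presentable stable $\infty$-category in a way that detects left adjoints. Concretely, take the Yoneda-type embedding $j \colon \cat D \to \cat E \defeq \Fun^{\mathrm{ex}}(\opcat{\cat D}, \Sp)$, enlarging the universe if necessary so that $\cat D$ counts as small. Then $\cat E$ is presentable and stable, and $j$ is fully faithful and exact. However, $j$ need not preserve colimits that exist in $\cat D$, so composing a left adjoint with $j$ does not obviously give a left adjoint. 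I therefore prefer the following more direct argument, which avoids this issue.

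Direct argument. Let $F, G \colon \Spob(\cat C) \to \cat D$ be left adjoints, with right adjoints $F^R$ and $G^R$. Natural transformations $F \to G$ correspond, via mates, to natural transformations $G^R \to F^R$ of functors $\cat D \to \Spob(\cat C)$. Similarly, transformations $F\Suspinf \to G\Suspinf$ correspond to transformations $\Loopsinf G^R \to \Loopsinf F^R$. The key steps are as follows.
\begin{enumerate}
  \item Since $\cat D$ is stable and $F^R, G^R$ are right adjoints between stable categories, they are exact. So they factor through exact functors $\cat D \to \Spob(\cat C)$.
  \item By the universal property of spectrum objects (\cite[§1.4.2]{LurHA}), postcomposition with $\Loopsinf$ induces an equivalence $\Fun^{\mathrm{lex}}(\cat D, \Spob(\cat C)) \simeq \Fun^{\mathrm{lex}}(\cat D, \cat C)$ for $\cat D$ stable; more precisely, $\Spob(\cat C)$ is the limit of the tower $\cdots \xrightarrow{\Omega} \cat C_* \xrightarrow{\Omega} \cat C_*$, and left exact functors out of a stable (hence pointed) category into it are the same as left exact functors into $\cat C$.
  \item It follows that $\Map(G^R, F^R) \simeq \Map(\Loopsinf G^R, \Loopsinf F^R)$.
  \item Transporting back along the mate equivalences, which are compatible with composition of adjoints since $(F\Suspinf)^R = \Loopsinf F^R$, we conclude that $\Map(F,G) \to \Map(F\Suspinf, G\Suspinf)$ is an equivalence, i.e.\ $(\Suspinf)^*$ is fully faithful.
\end{enumerate}

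The main obstacle is the compatibility in the last step. We must check that the mate correspondence intertwines precomposition with $\Suspinf$ and postcomposition with $\Loopsinf$. This is the standard fact that the passage to right adjoints $\FunL(\cat A, \cat B) \simeq \opcat{\FunR(\cat B, \cat A)}$ is functorial in both variables (\cite[Proposition~5.2.6.2]{LurHTT}), and should be straightforward. A secondary point is to confirm that step 2 applies: it holds for any stable $\cat D$, with no presentability needed on $\cat D$, because left exact functors from a pointed category into $\cat C$ land in pointed objects.
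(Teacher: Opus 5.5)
Your direct argument is correct and is essentially the paper's proof. Both pass to right adjoints, observe that right adjoints out of the stable $\cat D$ are left exact (equivalently, reduced excisive, which is the class the paper uses), and then invoke Lurie's universal property of $\Spob(\cat C)$ (the paper cites \cite[Proposition~1.4.2.22]{LurHA}) to conclude that $(\Loopsinf)_*$ is fully faithful on right adjoints; your initial embedding idea is not needed.
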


\begin{proof}
  By taking adjoints, the assertion is equivalent to the upper horizontal functor in the following diagram being fully faithful
  \[
  \begin{tikzcd}
    \FunR \bigl( \cat D, \Spob(\cat C) \bigr) \rar{(\Loopsinf)_*} \dar &[10] \FunR ( \cat D, \cat C ) \dar \\
    \Excred \bigl( \cat D, \Spob(\cat C) \bigr) \rar{(\Loopsinf)_*} & \Excred ( \cat D, \cat C )
  \end{tikzcd}
  \]
  where $\FunR \subseteq \Fun$ denotes the full subcategory of right adjoint functors and $\Excred \subseteq \Fun$ denotes the full subcategory of reduced excisive functors, i.e.\ those that send pushouts to pullbacks and preserve the terminal object.
  Note that the vertical fully faithful inclusions exist by the assumption that $\cat D$ is stable.
  The bottom horizontal functor is an equivalence by \cite[Proposition~1.4.2.22]{LurHA}.
  This implies the claim.
\end{proof}

\begin{lemma} \label{cor:Sp}
  Let $\cat D$ be a stable $\infty$-category.
  Then the following functor is fully faithful
  \[ \ev_\SS  \colon  \FunL ( \Sp, \cat D )  \longto  \cat D \]
  where $\FunL(\cat A, \cat B) \subseteq \Fun(\cat A, \cat B)$ denotes the full subcategory of left adjoint functors.
\end{lemma}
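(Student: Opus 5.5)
We deduce this from \cref{lemma:Spobj} applied with $\cat C = \An$, which is presentable, and from the universal property of $\An$ recorded in \cref{obs:assembly}. Since $\Sp = \Spob(\An)$ and $\SS = \Suspinf(*)$, the functor $\ev_\SS$ factors as
\[ \FunL(\Sp, \cat D)  \xlongto{(\Suspinf)^*}  \FunL(\An, \cat D)  \xlongto{\ev_*}  \cat D . \]
The first functor is fully faithful by \cref{lemma:Spobj}. A composite of fully faithful functors is fully faithful, so it suffices to show that the second functor is fully faithful.

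For the second functor we would like to use \cref{obs:assembly}, but it assumes that $\cat D$ is cocomplete, and here $\cat D$ is only stable. We therefore argue directly. Let $F, G \colon \An \to \cat D$ be left adjoints. Left adjoints preserve all colimits that exist in the source, and $\An$ is cocomplete, so $F$ and $G$ preserve all small colimits.

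Since $\An$ is freely generated under colimits by $*$, we expect the restriction along $\iota \colon \set{*} \to \An$ to induce an equivalence $\Map(F, G) \to \Map(F(*), G(*))$. To make this precise without assuming cocompleteness of $\cat D$, we would pass to a cocompletion. Let $j \colon \cat D \to \widehat{\cat D} = \Fun(\opcat{\cat D}, \widehat{\An})$ be the Yoneda embedding into presheaves valued in large animas. This $\widehat{\cat D}$ is cocomplete after enlarging the universe, which the conventions of the paper allow.

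The functors $jF$ and $jG$ need not preserve colimits, since $j$ does not preserve them. The remedy is to observe that the colimit-preserving functors $\An \to \cat D$ are exactly the left Kan extensions of their restrictions along $\iota$. Indeed, every $X \in \An$ is the colimit of the constant diagram $X \to \An$ with value $*$, so $F(X) \simeq \colim_X F(*)$, and this colimit exists in $\cat D$. By \cite[Lemma~5.1.5.5]{LurHA}, or the pointwise formula for Kan extensions in the version that only requires the relevant colimits to exist, $F$ is then a left Kan extension of $F \circ \iota$. The universal property of left Kan extensions gives
\[ \Map(F, G) \simeq \Map\bigl(F(*), G(*)\bigr) , \]
so $\ev_*$ is fully faithful on colimit-preserving functors. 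This does not require $\cat D$ to be cocomplete, only that these pointwise Kan extensions exist.

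The main obstacle is this final step: justifying the Kan extension property when $\cat D$ is not cocomplete. If the pointwise argument proves awkward, the fallback is to replace $\cat D$ by $\operatorname{Ind}(\cat D)$. This is a stable $\infty$-category admitting all filtered colimits and all finite colimits, hence cocomplete, and the inclusion $\cat D \to \operatorname{Ind}(\cat D)$ is fully faithful and exact. We would then check that composing with this inclusion sends left adjoints $\An \to \cat D$ to colimit-preserving functors. This holds because the colimits appearing in $\An$ are generated by $*$, and the relevant colimits of $F(*)$ in $\cat D$ are preserved by the inclusion, given their identification as colimits over animas via the tensoring. We would then apply \cref{obs:assembly} in $\operatorname{Ind}(\cat D)$.
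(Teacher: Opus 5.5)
Your argument is correct and is essentially the paper's proof. You factor $\ev_\SS$ as $\ev_* \after (\Suspinf)^*$ and apply \cref{lemma:Spobj} with $\cat C = \An$. You then get full faithfulness of $\ev_*$ on left adjoints because a colimit-preserving functor $\An \to \cat D$ is a pointwise left Kan extension of its value at $*$, which needs no cocompleteness of $\cat D$; the paper cites \cite[Lemma~5.1.5.5~(1) and Proposition~4.3.2.15]{LurHTT} for exactly this. The $\Ind(\cat D)$ fallback is unnecessary, and as stated it would fail, since $\cat D \to \Ind(\cat D)$ does not in general preserve infinite colimits such as $\colim_X F(*)$.
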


\begin{proof}
  The functor $\ev_* \colon \FunL ( \An, \cat D ) \to \cat D$ is fully faithful by \cite[Lemma~5.1.5.5~(1) and Proposition~4.3.2.15]{LurHTT}.
  Then the claim follows from \cref{lemma:Spobj} applied to $\cat C = \An$.
\end{proof}

\subsection{Differential graded algebra}

In this subsection, we fix our conventions for homological algebra, and recall elementary properties of the homotopy theory of differential graded algebras and their modules.

\begin{convention}
  The base field is $\QQ$.
  We use cohomological grading conventions, i.e.\ differentials have degree $1$ and we write degrees as superscripts.
\end{convention}

\begin{notation}
  We write $\Ch$ for the category of unbounded cochain complexes, equipped with its usual symmetric monoidal structure.
  We denote by $\shift[n]$ a cohomological shift by $-n$, i.e.\ for a cochain complex $C$, we set $(\shift[n] C)^p \defeq C^{p+ n}$.
  We write $\shift C \defeq \shift[1] C$.
\end{notation}

Since we will mainly require homological shifts by $1$ (i.e.\ cohomological shifts by $-1$), we chose this convention for $\shift[n]$ to keep our notation light.

\subsubsection*{Algebras}

\begin{definition}
  A \emph{dga} is a monoid in $\Ch$.
  For a dga $R$, we denote by $\opmod{R}$ the same underlying cochain complex with the opposite multiplication, i.e.\ we define $\opmod{a} \cdot \opmod{b} \defeq (-1)^{\deg a \deg b} \opmod{(b \cdot a)}$.
\end{definition}

\begin{definition}
  A \emph{cdga} is a commutative monoid in $\Ch$.
  We denote by $\CDGA$ the category of non-negatively graded cdgas, equipped with the projective model structure, i.e.\ the weak equivalences are the quasi-isomorphisms and the fibrations are the degreewise surjections.\footnote{This projective model structure exists by \cite[Theorem~4.3]{BG}.}
  A \emph{cofibration of cdgas} will refer to a map of non-negatively graded cdgas that is a cofibration in $\CDGA$; a \emph{cofibrant cdga} will refer to a non-negatively graded cdga that is cofibrant in $\CDGA$.
  We will sometimes implicitly consider a $\QQ$-algebra to be a dga concentrated in degree $0$.
\end{definition}

We remark that the model category $\CDGA$ is combinatorial: it is cofibrantly generated by \cite[§1.2]{Hes} and, using finitely generated cdgas, it is easy to see that it is locally presentable as well (see e.g.\ \cite[Lemma~5.4]{Sci}).

\begin{definition}
  A non-negatively graded cdga $R$ is \emph{homologically connected} if $\Coho 0 (R) \iso \QQ$; it is \emph{connected} if $R^0 \iso \QQ$.
\end{definition}

\begin{definition}
  A map of non-negatively graded cdgas $R \to S$ is \emph{quasi-free} if the underlying graded algebra of $S$ is free over $R$, i.e.\ if there is an isomorphism $S \iso R \tensor \SA V$ of graded algebras under $R$ for some graded vector space $V$.
  A quasi-free map of cdgas is \emph{semifree} if the isomorphism can be chosen such that $V$ admits a basis $(v_\alpha)_{\alpha \in I}$ for some well-ordered set $I$ such that $d(v_\alpha) \in R \tensor \SA V_{< \alpha}$, where $V_{< \alpha} \subset V$ is the subspace spanned by those $v_\beta$ with $\beta < \alpha$.
  A semifree map of cdgas is \emph{minimal} if there exists such a basis such that $\deg{v_\alpha} \le \deg{v_\beta}$ whenever $\alpha \le \beta$.
  A cdga $R$ is \emph{quasi-free}, \emph{semifree}, or \emph{minimal} when the map $\QQ \to R$ is so.
  A \emph{minimal model} of a cdga $R$ is a quasi-isomorphism of cdgas $M \to R$ such that $M$ is minimal.
\end{definition}

Note that a semifree map of cdgas is a cofibration (see e.g.\ \cite[Remark~3.4]{Bra}).
Recall that a homologically connected cdga always admits a minimal model (see e.g.\ \cite[Proposition~7.7]{BG}), which is automatically connected.

\begin{observation} \label{obs:cofib_cdgas_retract}
  Given a cofibration $\phi \colon \k \to R$ of cdgas, there exists a semifree map $\psi \colon \k \to S$ of cdgas such that $\phi$ is a retract of $\psi$ in the under category $\undercat{(\CDGA)}{\k}$.
  This follows from the facts that the semifree maps of cdgas are precisely the relative cell complexes (see e.g.\ \cite[Remark~3.3]{Bra}), and that $\undercat{(\CDGA)}{\k}$ is again cofibrantly generated (see e.g.\ \cite[Theorem~15.3.6]{MP}).
\end{observation}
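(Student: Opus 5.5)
The plan is to run the usual retract argument, using a small-object-argument factorization whose left factor is a relative cell complex. Write $I$ for a set of generating cofibrations of $\CDGA$. By \cite[Theorem~15.3.6]{MP}, the under category $\undercat{(\CDGA)}{\k}$ is again cofibrantly generated, with generating cofibrations the maps $\k \tensor i$ for $i \in I$; equivalently, these are the maps $i$ equipped with the structure map from $\k$ into the coproduct. I will then apply the small object argument in $\undercat{(\CDGA)}{\k}$ to the morphism $\phi \colon \k \to R$, viewed as a map from the initial object $\id_\k$ to $\phi$. This yields a factorization
\[ \k \xlongto{\psi} S \xlongto{p} R \]
in which $\psi$ is a relative $I$-cell complex (a transfinite composite of pushouts of coproducts of generating cofibrations) and $p$ has the right lifting property with respect to $I$, so $p$ is a trivial fibration.

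Next I use that $\phi$ is a cofibration. Consider the lifting problem in $\CDGA$ given by the square with top map $\psi \colon \k \to S$, left map $\phi \colon \k \to R$, right map $p \colon S \to R$ and bottom map $\id_R$. Since $\phi$ is a cofibration and $p$ is a trivial fibration, there is a lift $s \colon R \to S$ with $s \phi = \psi$ and $p s = \id_R$. Hence $s$ and $p$ are both maps under $\k$. The composite $R \xrightarrow{s} S \xrightarrow{p} R$ is the identity of $\phi$ in $\undercat{(\CDGA)}{\k}$, and this exhibits $\phi$ as a retract of $\psi$.

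It remains to check that $\psi$ is semifree, and I expect this to be the main point requiring care. I will invoke the identification of semifree maps with relative cell complexes \cite[Remark~3.3]{Bra}. If one wants to verify it directly, the argument goes as follows. The generating cofibrations are free extensions $\SA(\QQ x) \to \SA(\QQ x \oplus \QQ y)$ with $dy = x$, together with $\QQ \to \SA(\QQ y)$ for a cycle $y$; for $x$ in degree $0$ one must check that the chosen generating set still has this free shape. A pushout of such a map along $A \to B$ adjoins free generators $y$ with $dy \in B$. Adjoining the generators in the order in which they are attached, with the order within each stage chosen arbitrarily, produces a well-ordered basis $(v_\alpha)$ of $V$ such that $d v_\alpha \in \k \tensor \SA V_{<\alpha}$. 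Passing to transfinite composites preserves both the freeness of $S$ over $\k$ and this ordering condition, since a transfinite composite of such extensions is again such an extension with the concatenated well-order. This gives the semifree structure on $\psi$ and completes the plan.
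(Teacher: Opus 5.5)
Your proposal is correct and follows essentially the paper's argument. The paper likewise factors $\phi$ by the small object argument in the cofibrantly generated under category into a relative cell complex followed by a trivial fibration, applies the retract argument, and identifies relative cell complexes with semifree maps. Your direct check of the direction actually needed (cell complexes are semifree) is a fine substitute for the citation.
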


\subsubsection*{Modules}

\begin{definition}
  Let $R$ be a dga.
  An \emph{$R$-module} is a left module over $R$ in $\Ch$.
  We denote by $\Mod{R}$ the category of $R$-modules, equipped with the projective model structure, i.e.\ the weak equivalences are the quasi-isomorphisms and the fibrations are the degreewise surjections.\footnote{This projective model structure exists by \cite[Theorem~3.3]{BMR}.}
  When $R$ is commutative, we equip $\Mod{R}$ with the closed symmetric monoidal structure given by $\tensor_R$.
\end{definition}

Note that for us a $\QQ$-module is by definition a cochain complex.
By \cite[Theorem~3.3 and Remark~6.19]{BMR}, the model category $\Mod{R}$ is combinatorial, proper, enriched over $\Mod{\QQ}$, and, if $R$ is commutative, also monoidal.
In the latter case it is in particular enriched over itself, and thus the internal hom-functor $\Hom_R(\blank, N) \colon \opcat{\Mod{R}} \to \Mod{R}$ is right Quillen, and so is $\Hom_R(M, \blank) \colon \Mod{R} \to \Mod{R}$ when $M$ is cofibrant (see e.g.\ \cite[§16.4]{MP}).

\begin{notation}
  For a cdga $R$ and an $R$-module $M$, we write $\dual M_R$ for $\Hom_R(M, R)$ as an $R$-module.
  Note that, if $R \to S$ is a map of cdgas, then $\dual S_R$ canonically lifts to an $S$-module.
\end{notation}

\begin{definition}
  Let $R$ be a dga.
  An $R$-module $M$ is \emph{quasi-free} if its underlying graded module over the underlying graded algebra of $R$ is free, i.e.\ if there is an isomorphism of graded $R$-modules $M \iso R \tensor V$ for some graded vector space $V$.
  A quasi-free $R$-module is \emph{semifree} if the isomorphism can be chosen such that $V$ admits a basis $(v_\alpha)_{\alpha \in I}$ for some well-ordered set $I$ such that $d(v_\alpha) \in R \tensor V_{< \alpha}$, where $V_{< \alpha} \subset V$ is the subspace spanned by those $v_\beta$ with $\beta < \alpha$.
  A semifree $R$-module is \emph{minimal} if there exists such a basis such that $\deg{v_\alpha} \le \deg{v_\beta}$ whenever $\alpha \le \beta$.
  A \emph{minimal model} of an $R$-module $N$ is a quasi-isomorphism of $R$-modules $M \to N$ such that $M$ is minimal.
\end{definition}

Assuming that $R$ is a non-negatively graded cdga, recall from \cite[§3.2]{Bra} that semifree $R$-modules are cofibrant, that an $R$-module whose homology is bounded below admits a minimal model, and that a minimal model is unique up to isomorphism if it exists.
Also note that, if $\phi \colon R \to S$ is a semifree map of cdgas, then $S$ is semifree as an $R$-module; hence, if $\phi$ is a cofibration of cdgas, then $S$ is cofibrant as an $R$-module (using \cref{obs:cofib_cdgas_retract}).
We furthermore record the following lemma for later use.

\begin{lemma} \label{lemma:cofibrant_flat}
  Let $R$ be a dga, $f \colon M \to M'$ a quasi-isomorphism of $R$-modules, and $g \colon N \to N'$ a quasi-isomorphism of $\opmod{R}$-modules.
  If $M$ and $M'$ are cofibrant, then $f \tensor g \colon M \tensor_R N \to M' \tensor_R N'$ is a quasi-isomorphism.
  In particular, when $R$ is a cdga and $M$ is a cofibrant $R$-module, the functor $M \tensor_R \blank \colon \Mod{R} \to \Mod{R}$ preserves quasi-isomorphisms.
\end{lemma}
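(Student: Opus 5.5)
We first prove the case where only one variable changes and then combine. Write
\[ f \tensor g = (f \tensor \id_{N'}) \after (\id_M \tensor g) \quad\text{or, more conveniently,}\quad f \tensor g = (\id_{M'} \tensor g) \after (f \tensor \id_N), \]
so it suffices to show two things. First, for a cofibrant $R$-module $P$, the functor $P \tensor_R \blank$ from $\opmod{R}$-modules (right $R$-modules) to cochain complexes preserves quasi-isomorphisms. Second, a quasi-isomorphism $f \colon M \to M'$ between cofibrant $R$-modules induces a quasi-isomorphism $f \tensor \id_N$ for every $\opmod{R}$-module $N$.

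For the first claim, the class of $R$-modules $P$ for which $P \tensor_R \blank$ preserves quasi-isomorphisms has three closure properties:
\begin{itemize}
  \item It contains $R$, since $R \tensor_R N \iso N$.
  \item It is closed under shifts, arbitrary direct sums, and retracts.
  \item It is closed under transfinite extensions along degreewise split injections. If $P' \to P \to P''$ is degreewise split, then $P' \tensor_R N \to P \tensor_R N \to P'' \tensor_R N$ is a short exact sequence of complexes, and the five lemma on the long exact sequences applies. For transfinite compositions, use that $\tensor_R$ commutes with filtered colimits and that homology commutes with filtered colimits.
\end{itemize}
A semifree module has an exhaustive filtration by the submodules $R \tensor V_{<\alpha}$. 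The successive quotients are shifts of $R$ with induced differential, and the inclusions are degreewise split, so semifree modules lie in the class. Every cofibrant $R$-module in the projective model structure is a retract of a semifree one (a cell complex built from the generating cofibrations $R \tensor \Sphere{}$-type inclusions). Hence all cofibrant modules lie in the class.

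For the second claim, choose a cofibrant replacement $q N \to N$ of $N$ as an $\opmod{R}$-module, by a semifree one. Consider the square formed by $f \tensor \id_{qN}$ and $f \tensor \id_N$, connected by the maps $M \tensor_R qN \to M \tensor_R N$ and $M' \tensor_R qN \to M' \tensor_R N$. Both of these vertical maps are quasi-isomorphisms by the first claim applied to $M$ and $M'$, which are cofibrant. The top map $f \tensor \id_{qN}$ is a quasi-isomorphism by the first claim with the roles of the sides swapped, since $qN$ is cofibrant as an $\opmod{R}$-module. Two-out-of-three then gives the second claim. Combining the two claims yields the statement: $f \tensor \id_N$ is a quasi-isomorphism by the second claim, and $\id_{M'} \tensor g$ is one by the first claim since $M'$ is cofibrant.

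For the final assertion, when $R$ is a cdga and $M$ is cofibrant, take $f = \id_M$. Left and right modules coincide in this case, so $M \tensor_R g$ is a quasi-isomorphism for every quasi-isomorphism $g$.

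The main obstacle is the first claim, specifically the assertion that cofibrant $R$-modules are retracts of semifree ones in the model structure of \cite{BMR}, and the need to handle the unbounded case with transfinite filtrations. I would cite the identification of cofibrations with retracts of relative cell complexes in a cofibrantly generated model category, whose generating cofibrations are $R \tensor (\text{disk-boundary inclusions})$, exactly as in \cref{obs:cofib_cdgas_retract}.
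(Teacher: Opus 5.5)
Your proof is correct and follows the paper's own argument. You first show that $M \tensor_R \blank$ preserves quasi-isomorphisms for cofibrant $M$, by writing $M$ as a retract of a cell complex whose filtration quotients are free. You then get the two-variable statement from a cofibrant replacement of $N$ and two-out-of-three. The only difference is that the paper uses compact generation of $\Mod{R}$ to get a sequential filtration, whereas you run a transfinite induction via closure under filtered colimits. That works equally well, provided ``degreewise split'' means split as graded $R$-modules, which holds for cell attachments.
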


\begin{proof}
  We will prove that $M \tensor_R \blank$ preserves quasi-isomorphism; the general statement follows by choosing a cofibrant replacement $\ol N \to N$.
  Since the model category $\Mod{R}$ is compactly generated by \cite[Theorem~3.3]{BMR}, the cofibrant object $M$ is a retract of a sequential cell complex of $R$-modules; since retracts of quasi-isomorphisms are quasi-isomorphisms we can assume without loss of generality that $M$ is a sequential cell complex.
  It thus has an exhaustive, bounded below filtration such that each filtration quotient is isomorphic to $R^{\oplus I}$ for some set $I$.
  Since $R^{\oplus I} \tensor_R \blank$ clearly preserves quasi-isomorphisms, this implies the claim.
\end{proof}

\subsubsection*{Hochschild homology}

We now recall the bar construction, and the complexes defining Hochschild homology and cyclic homology of an algebra.
For more background, see for example \cite[Chapters~1 and 2]{Lod}.

\begin{definition} \label{def:bar}
  Let $\k$ be a cdga and $\k \to R$ a map of dgas.
  The \emph{two-sided bar construction} $\B_\k(R, R, R)$ of $R$ as a $\k$-algebra is the $(R \tensor_\k \opmod{R})$-module obtained as the total complex of
  \[ \cdots  \xlongto{d_3}  R \tensor_\k R^{\tensor_\k 2} \tensor_\k R  \xlongto{d_2}  R \tensor_\k R \tensor_\k R  \xlongto{d_1}  R \tensor_\k R \]
  where $d_n \defeq \sum_{i = 0}^{n} (-1)^i {\id^{\tensor i}} \tensor \mu_R \tensor \id^{\tensor n - i}$ with $\mu_R$ the multiplication of $R$.
  The augmentation $d_0 = \mu_R \colon R \tensor_\k R \to R$ yields a canonical map $\epsilon \colon \B_\k(R, R, R) \to R$ of $(R \tensor_\k \opmod{R})$-modules.
\end{definition}

\begin{definition} \label{def:HH}
  Let $\k$ be a cdga and $\k \to R$ a map of dgas.
  For an $(R \tensor_\k \opmod{R})$-module $M$, we write
  \[ \HH_\k(R, M)  \defeq  \B_\k(R, R, R) \tensor_{R \tensor_\k \opmod{R}} M \]
  for the \emph{Hochschild complex} of $R$ with coefficients in $M$.
  Note that there is a canonical isomorphism $\HH_\k(R, M) \iso \bigoplus_{n \ge 0} M \tensor_\k (\shift R)^{\tensor_\k n}$ of graded $\k$-modules.
  We write $\HH_\k(R) \defeq \HH_\k(R, R)$ and define the \emph{Connes complex} of $R$ to be the quotient
  \[ \HC_\k(R)  \defeq  \shift[-1] \bigoplus_{n \ge 0} (\shift R)^{\tensor_\k n+1}_{\Cyclic {n+1}}  \longtwoheadleftarrow  \shift[-1] \bigoplus_{n \ge 0} (\shift R)^{\tensor_\k n+1}  \iso  \shift[-1] \HH_\k(R, \shift R)  \iso  \HH_\k(R) \]
  where the cyclic group $\Cyclic{n+1}$ acts by cyclic permutations.
\end{definition}

The defining property of the bar complex is that it is a cofibrant resolution of $R$ as an $(R \tensor_\k \opmod{R})$-module, as the following lemma shows.

\begin{lemma} \label{lemma:bar_complex}
  Let $\k$ be a cdga and $\k \to R$ a map of dgas.
  Then $\epsilon \colon \B_\k(R, R, R) \to R$ is a quasi-isomorphism.
  If $R$ is cofibrant as a $\k$-module, then $\B_\k(R, R, R)$ is cofibrant as an $(R \tensor_\k \opmod{R})$-module.
\end{lemma}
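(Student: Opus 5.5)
For the quasi-isomorphism claim, I would exhibit an explicit contracting homotopy of the augmented complex $\B_\k(R,R,R) \to R \to 0$, viewed only as a complex of right $R$-modules (or of $\k$-modules). The standard choice is $s(a_0 \tensor a_1 \tensor \dots \tensor a_n \tensor a_{n+1}) \defeq 1 \tensor a_0 \tensor a_1 \tensor \dots \tensor a_{n+1}$ together with $s(a) \defeq 1 \tensor a$ on $R$. This uses only the unit of $R$ and involves no multiplication on the left. One then checks the identity $d_{n+1} s + s d_n = \id$ in the bar direction, with the Koszul signs coming from the shifts in the total complex. Because $s$ is built from the unit and inserts an element of degree $0$, it commutes with the internal differentials up to the appropriate sign. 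The augmented total complex therefore has a contracting homotopy, so it is acyclic. In other words, its cone is contractible, and $\epsilon$ is a quasi-isomorphism.

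One point needs care: the total complex is unbounded in the bar direction. I would take it to be the direct sum totalization, which matches the description $\bigoplus_n$ in \cref{def:HH}. The homotopy $s$ is defined termwise and preserves the direct sum, so no convergence issue arises and the contraction works on the nose.

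For cofibrancy, I would filter $\B_\k(R,R,R)$ by bar degree, $F_p = \bigoplus_{n \le p} R \tensor_\k R^{\tensor_\k n} \tensor_\k R$. Each $F_p$ is a sub-$(R \tensor_\k \opmod{R})$-module, since the bar differentials lower $n$. The quotient $F_p/F_{p-1}$ is isomorphic to $(R \tensor_\k \opmod{R}) \tensor_\k \shift[p] R^{\tensor_\k p}$, carrying only the internal differential. Write $A \defeq R \tensor_\k \opmod R$. The functor $A \tensor_\k \blank \colon \Mod{\k} \to \Mod{A}$ is left Quillen, being left adjoint to restriction, which preserves fibrations and quasi-isomorphisms. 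Moreover $R^{\tensor_\k p}$ is a cofibrant $\k$-module, because $\Mod{\k}$ is monoidal and $R$ is cofibrant. Hence each $F_{p-1} \to F_p$ is a pushout of $A \tensor_\k$ applied to the cofibration $\shift[p-1]\mathrm{cone}$-type inclusion. Concretely, $F_p$ is obtained from $F_{p-1}$ by attaching $A \tensor_\k \shift[p] R^{\tensor p}$ along the bar differential, which is a pushout of $A \tensor_\k (\shift[p-1] C \to \mathrm{cone}(\id_{\shift[p-1] C}))$ with $C = R^{\tensor_\k p}$. The map $\shift[p-1]C \to \mathrm{cone}$ is a cofibration between cofibrant $\k$-modules. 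It follows that each $F_{p-1} \to F_p$ is a cofibration, $F_0 = A$ is cofibrant, and the colimit $\B_\k(R,R,R)$ of this sequence of cofibrations is cofibrant.

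I expect the main obstacle to be making this cell-attachment description precise. One has to verify that the bar differential really yields a chain map $A \tensor_\k \shift[p-1] R^{\tensor p} \to F_{p-1}$, so that the pushout square holds on the nose. One also has to confirm that cone inclusions of cofibrant $\k$-modules are cofibrations; this is standard but relies on $\Mod{\k}$ being a monoidal model category, as stated in the paper. An alternative route is to show directly that $\Hom_A(\B,\blank)$ preserves trivial fibrations via the same filtration, and I would fall back on it if the pushout bookkeeping becomes awkward.
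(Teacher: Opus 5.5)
Your proposal is correct and follows essentially the same route as the paper. The paper uses the same contracting homotopy $s_n(x_0 \tensor \dots \tensor x_{n+1}) = 1 \tensor x_0 \tensor \dots \tensor x_{n+1}$ with $d_{n+1}s_n + s_{n-1}d_n = \id$, and it presents $\B_\k(R,R,R)$ as a sequential colimit of pushouts along cone inclusions $M_n \to \Cone(M_n)$, with $M_n$ free over $R \tensor_\k \opmod{R}$ on a shift of $R^{\tensor_\k n}$. The only cosmetic difference is in why these cone inclusions are cofibrations: the paper uses the $\Mod{\QQ}$-enrichment of $\Mod{R \tensor_\k \opmod{R}}$, whereas you push the cone inclusion of $\k$-modules forward along the left Quillen functor $(R \tensor_\k \opmod{R}) \tensor_\k \blank$.
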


\begin{proof}
  The maps
  \[ s_n  \colon  R^{\tensor_\k n + 2}  \longto  R^{\tensor_\k n + 3},  \quad  x_0 \tensor \dots \tensor x_{n+1}  \longmapsto  1 \tensor x_0 \tensor \dots \tensor x_{n+1} \]
  fulfill $d_{n+1} s_n + s_{n-1} d_n = \id$ for all $n \ge 0$.
  Since the $s_n$ are furthermore cochain maps, this implies that they exhibit $s_{-1}$ as a homotopy inverse of $\epsilon$.
  
  By construction, the $(R \tensor_\k \opmod{R})$-module $\B_\k(R, R, R)$ is isomorphic to a sequential colimit of maps obtained as pushouts along maps of the form $M_n \to \Cone(M_n)$ with $M_n$ isomorphic to $(R \tensor_\k \opmod{R}) \tensor_\k (\shift R)^{\tensor_\k n}$.
  The $\k$-module $(\shift R)^{\tensor_\k n}$ is cofibrant by assumption, and hence $M_n$ is a cofibrant $(R \tensor_\k \opmod{R})$-module.
  The map $\iota_n \colon M_n \to \Cone(M_n)$ is obtained by taking the tensor product of the cofibration $\QQ \to \Cone(\QQ)$ of $\QQ$-modules with $M_n$; since $\Mod{R \tensor_\k \opmod{R}}$ is a $\Mod{\QQ}$-enriched model category, this implies that $\iota_n$ is a cofibration.
  This completes the proof.
\end{proof}

\subsection{Homology localization}

In this subsection, we recall the definition and basic properties of homology localizations of animas, with a particular focus on rationalization.
Recall the following definition from \cite[§5.5.4]{LurHTT}.

\begin{definition}
  Let $\cat C$ be an $\infty$-category and $S$ a class of morphisms of $\cat C$.
  We say that an object $L$ of $\cat C$ is \emph{$S$-local} if the map of animas
  \[ f^*  \colon  \Map_{\cat C}(Y, L)  \longto  \Map_{\cat C}(X, L) \]
  is an equivalence for every map $f \colon X \to Y$ in $S$.
\end{definition}

\begin{definition}
  Let $A$ be an abelian group.
  We say that a map of animas is an \emph{$A$-equivalence} if it induces an isomorphism on homology with coefficients in $A$.
  An anima is \emph{$A$-local} if it is local with respect to the class of $A$-equivalences.
  We denote by $\An^A \subseteq \An$ the full subcategory spanned by the $A$-local animas.
\end{definition}

\begin{lemma} \label{lemma:localization}
  Let $A$ be an abelian group.
  Then the inclusion $\An^A \subseteq \An$ has a left adjoint $(\blank)_A$; for any anima $X$, the unit $\eta \colon X \to X_A$ is an $A$-equivalence to an $A$-local anima, and it is uniquely characterized by this property up to contractible choice.
  Furthermore, a map $f \colon X \to Y$ of animas is an $A$-equivalence if and only if $f_A \colon X_A \to Y_A$ is an equivalence.
\end{lemma}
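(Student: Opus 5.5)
The plan is to obtain $(\blank)_A$ as a Bousfield localization of the presentable $\infty$-category $\An$ at a \emph{small} set of morphisms, and then read off the remaining assertions from the general theory of \cite[§5.5.4]{LurHTT}.

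\textbf{Reduction to a small set.} The class $W_A$ of $A$-equivalences is not small, but it is strongly saturated: it is closed under pushouts and colimits in $\Fun(\Delta^1, \An)$, because $\H A \tensor \SS[\blank] \colon \An \to \Sp$ preserves colimits, and it satisfies two-out-of-three. I will use the classical Bousfield cardinality argument. Fix an infinite cardinal $\kappa > \abs{A}$. Every $A$-equivalence $X \to Y$ can then be built, via transfinite cobase changes, out of $A$-equivalences between $\kappa$-small animas; concretely, one models $X \to Y$ by an inclusion of simplicial sets and uses a subcomplex-counting argument to find $\kappa$-small subcomplexes $K \subseteq Y$ with $K \cap X \to K$ an $A$-equivalence. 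Let $S_0$ be a set of representatives of $A$-equivalences between $\kappa$-small animas. The argument then shows that the strongly saturated class generated by $S_0$ is all of $W_A$.

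\textbf{Existence of the localization.} By \cite[Proposition~5.5.4.15]{LurHTT}, the full subcategory of $S_0$-local objects is an accessible localization of $\An$, with left adjoint $L$. By the previous step, an anima is $S_0$-local if and only if it is $W_A$-local, so this subcategory is exactly $\An^A$. Moreover, the class of maps sent to equivalences by $L$ is the strong saturation of $S_0$, which by the previous step is exactly $W_A$. This gives two consequences. First, $f$ is an $A$-equivalence if and only if $f_A$ is an equivalence, which is the last claim. Second, the unit $X \to LX$ lies in $W_A$, since $L$ applied to it is an equivalence. So $\eta \colon X \to X_A \defeq LX$ is an $A$-equivalence to an $A$-local anima.

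\textbf{Uniqueness.} Let $X \to L'$ be any $A$-equivalence with $L'$ an $A$-local anima. Then $\Map(L', Z) \to \Map(X, Z)$ is an equivalence for every $A$-local $Z$. Hence $X \to L'$ is initial among maps from $X$ to $A$-local animas. The space of such initial objects is contractible, which gives uniqueness up to contractible choice.

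\textbf{Main obstacle.} The only step I expect to be substantive is the cardinality argument showing that $W_A$ is generated by a small set. I would simply cite Bousfield's original argument, or its $\infty$-categorical form in \cite[§5.5.4]{LurHTT}, rather than reprove it.
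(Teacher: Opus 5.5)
Your proof is correct and follows the paper's route: show that the $A$-equivalences form a class of small generation, then invoke \cite[Proposition~5.5.4.15]{LurHTT}, from which the existence of $(\blank)_A$, the properties of the unit, and the characterization of the maps it inverts follow as you describe. The only difference is in the step you single out as the main obstacle. Instead of Bousfield's cardinality argument, the paper gets small generation directly from \cite[Proposition~5.5.4.16]{LurHTT}, which says that the maps inverted by a colimit-preserving functor between presentable $\infty$-categories form a class of small generation, applied to $\mathrm{H}A \tensor \SS[\blank] \colon \An \to \Sp$.
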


\begin{proof}
  The class of $A$-equivalences is of small generation by \cite[Proposition~5.5.4.16]{LurHTT} applied to the functor $\mathrm{H}A \tensor \blank \colon \An \to \Sp$.
  Then the claim follows from \cite[(Proof of) Proposition~5.5.4.15]{LurHTT}.
\end{proof}

Classically, being $A$-local is often defined in terms of the homotopy category (see e.g.\ \cite[§5.2]{MP}).
The following lemma shows that this is equivalent to the definition we use.

\begin{lemma}
  Let $R$ be a ring.
  Then an anima is $R$-local if and only if it is $R$-local in the homotopy category of $\An$.
\end{lemma}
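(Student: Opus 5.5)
The forward implication is immediate: if $X$ is $R$-local, then for every $R$-equivalence $f \colon W \to Y$ the map $f^* \colon \Map(Y, X) \to \Map(W, X)$ is an equivalence. Applying $\pi_0$ shows that $f^* \colon [Y, X] \to [W, X]$ is a bijection, where $[\blank, \blank]$ denotes morphisms in $\hocat \An$. So $X$ is $R$-local in the homotopy category.

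For the converse, assume $f^* \colon [Y, X] \to [W, X]$ is bijective for every $R$-equivalence $f \colon W \to Y$. We must show that $f^* \colon \Map(Y, X) \to \Map(W, X)$ is an equivalence of animas. It suffices to show that this map becomes an isomorphism in $\hocat \An$. By the Yoneda lemma in $\hocat \An$, it is enough to check that it induces a bijection
\[ \bigl[ K, \Map(Y, X) \bigr]  \longto  \bigl[ K, \Map(W, X) \bigr] \]
for every anima $K$. Using the exponential adjunction in $\An$ (animas are cartesian closed), this map identifies naturally with
\[ (\id_K \times f)^*  \colon  [K \times Y, X]  \longto  [K \times W, X] . \]
So the claim reduces to showing that $\id_K \times f \colon K \times W \to K \times Y$ is again an $R$-equivalence; the hypothesis then gives the bijection.

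This last step is the only real content. I would model everything by simplicial sets and use the Eilenberg--Zilber theorem: $C_*(K \times W; R) \simeq C_*(K; R) \tensor_R C_*(W; R)$, naturally in $W$. The map $C_*(W; R) \to C_*(Y; R)$ is a quasi-isomorphism by assumption. Tensoring over $R$ with the degreewise free, bounded below complex $C_*(K; R)$ preserves quasi-isomorphisms. To see this, filter $C_*(K; R)$ by degree: the filtration quotients are shifts of free $R$-modules, tensoring with a free module preserves quasi-isomorphisms, and one passes to the colimit. This is essentially the same argument as in \cref{lemma:cofibrant_flat}, with $R$ an ordinary ring in place of a dga.

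Hence $\id_K \times f$ induces an isomorphism on $R$-homology, which completes the proof. I expect no genuine obstacle beyond keeping the naturality of the exponential identification straight.
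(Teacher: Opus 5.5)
Your proposal is correct and is essentially the paper's proof: both use the exponential adjunction and the Yoneda lemma in $\hocat \An$ to reduce the claim to showing that $\id_K \times f$ is again an $R$-equivalence. The only difference is that you justify this last step via Eilenberg--Zilber and a filtration/flatness argument, whereas the paper simply cites the Künneth spectral sequence.
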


\begin{proof}
  The ``only if''-direction is clear.
  For the ``if''-direction, let $L$ be an $R$-local object of $\hocat \An$ and $f \colon X \to Y$ an $R$-equivalence.
  Note that, for every anima $Z$, the map $Z \times f$ is again an $R$-equivalence by the Künneth spectral sequence.
  Thus the maps
  \begin{gather*}
    f^*  \colon  \pi_0 \Map_\An(Z \times Y, L)  \longto  \pi_0 \Map_\An(Z \times X, L)
  \shortintertext{and hence}
    f^*  \colon  [Z, \Map_\An(Y, L)]  \longto  [Z, \Map_\An(X, L)]
  \end{gather*}
  are bijections.
  Thus $f^* \colon \Map_\An(Y, L) \to \Map_\An(X, L)$ is an equivalence by the Yoneda lemma in $\hocat \An$.
\end{proof}

For the rest of this subsection we specialize to the case of $\QQ$-localization (though everything that follows also works for an arbitrary subring of $\QQ$).

\begin{definition}
  We say that a $\QQ$-equivalence is a \emph{rational equivalence} and that a $\QQ$-local anima is \emph{rational}.
  We call the functor $(\blank)_\QQ$ of \cref{lemma:localization} the \emph{rationalization functor}.
  We say that a map of animas $X \to Y$ is a \emph{rationalization} if it is a rational equivalence and $Y$ is rational.
\end{definition}

Note that the unit map $X \to \rat X$ is a rationalization, and that any rationalization is of this form up to equivalence.

\subsubsection*{Nilpotent animas and rationalization}

We will now recall that the rationalization functor is particularly well-behaved for nilpotent animas.
We begin with the notion of a nilpotent action on a group; the definition we use is taken from Bousfield--Kan \cite[Ch.~II, 4.1]{BK}.

\begin{definition}
  Let $K$ be a group that acts on a group $G$, i.e.\ it comes equipped with a group homomorphism $K \to \Aut_\Grp(G)$.
  We say that this action is \emph{nilpotent} if $G$ has a finite normal series of subgroups (i.e.\ each $A_i$ is normal in $A_{i+1}$)
  \[ 1 = A_0 \subseteq A_1 \subseteq \dots \subseteq A_n = G \]
  such that each $A_i$ is fixed setwise by $K$ and each quotient $\quot {A_{i+1}} {A_i}$ is abelian with trivial $K$-action.
  A group $G$ is \emph{nilpotent} if the conjugation action of $G$ on itself is nilpotent.
\end{definition}

Recall that the class of nilpotent $K$-actions is closed under taking subgroups, quotients, and extensions, see \cite[Ch.~II, Lemma~4.2]{BK}.
Also note that a group $G$ is nilpotent if and only if it admits a central series of finite length.
The following lemma shows that the definition of a nilpotent action we use is equivalent to the stronger definition of Hilton \cite{Hil75} (which is also used by May--Ponto \cite[§3.1]{MP}) when $G$ is nilpotent.\footnote{This is stated in \cite[§0]{HRS} without proof.}

\begin{lemma} \label{lemma:nilpotent_defs}
  Let $G$ be a nilpotent group.
  Then an action of a group $K$ on $G$ is nilpotent if and only if $G$ admits a central series of subgroups (i.e.\ each commutator $[G, A_{i + 1}]$ is contained in $A_i$)
  \[ 1 = A_0 \subseteq A_1 \subseteq \dots \subseteq A_n = G \]
  such that each $A_i$ is fixed setwise by $K$ and each quotient $\quot {A_{i+1}} {A_i}$ has trivial $K$-action.
\end{lemma}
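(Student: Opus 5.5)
The "if" direction is immediate: a central series of $G$ whose terms are $K$-stable and whose quotients carry trivial $K$-action is in particular a normal series with abelian quotients, because $[G, A_{i+1}] \subseteq A_i$ forces $A_i$ to be normal in $A_{i+1}$ and $A_{i+1}/A_i$ to be central in $G/A_i$. This is exactly the Bousfield--Kan definition.

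For the "only if" direction, the plan is to turn the nilpotency of $G$ and the nilpotency of the action into a single nilpotency statement for one group, and then use a characteristic central series of that group. Form the semidirect product $H \defeq G \rtimes K$, and let $K$ act on $G$ by conjugation inside $H$. I would use the lower "relative" central series
\[ \Gamma_1 \defeq G, \qquad \Gamma_{i+1} \defeq [H, \Gamma_i] \subseteq G, \]
where commutators are taken in $H$. Concretely, $\Gamma_{i+1}$ is generated by commutators $[g,a]$ with $g \in G$, $a \in \Gamma_i$, together with elements $k(a)a^{-1}$ with $k \in K$, $a \in \Gamma_i$. Each $\Gamma_i$ is normal in $H$, so it is $K$-stable and normal in $G$. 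Moreover $[G, \Gamma_i] \subseteq \Gamma_{i+1}$, and $K$ acts trivially on $\Gamma_i/\Gamma_{i+1}$. Reversing the indices therefore gives a series with exactly the required properties, provided $\Gamma_N = 1$ for some $N$. That termination is the key step.

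Termination is the main obstacle. Equivalently, one must show that the image of $H$ acting on $G$, namely the group generated by inner automorphisms and the $K$-action, acts nilpotently. I would argue by induction on the length $n$ of the given Bousfield--Kan series $1 = A_0 \subseteq \dots \subseteq A_n = G$, proving the stronger claim that $\Gamma_m \subseteq A_{n-1}$ for some $m$ and then descending.

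The quotient $G/A_{n-1}$ is abelian with trivial $K$-action. Hence all generators of $\Gamma_2$ lie in $A_{n-1}$, which gives $\Gamma_2 \subseteq A_{n-1}$.

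To continue the descent, I would apply induction to $A_{n-1}$. The group $A_{n-1}$ is nilpotent, being a subgroup of $G$, and it carries a nilpotent action of $H$: $K$ acts via the given series, and $G$ acts by conjugation. For the conjugation part I would use \cite[Ch.~II, Lemma~4.2]{BK}. The point is that the conjugation action of $G$ on $A_{n-1}$ is nilpotent because $G$ is nilpotent, and the class of nilpotent actions is closed under subgroups and extensions. From this I would conclude that the combined $H$-action on $A_{n-1}$ is nilpotent in the Bousfield--Kan sense. Concretely, I would refine the series $A_i$ by intersecting it with the lower central series of $G$ to obtain an $H$-stable series on which $H$ acts trivially on each quotient. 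The precise claim is that each term $A_i \cap \gamma_j(G)$ is normalized by $G$ and by $K$, and that both act trivially modulo the next term after a further refinement.

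If this refinement is messy, my fallback is a standard lemma. If a group $Q$ acts on a nilpotent group $G$ so that it acts nilpotently on the abelianization $G^{ab}$, then $Q$ acts nilpotently on $G$; this holds because $\gamma_j G/\gamma_{j+1} G$ is a quotient of $(G^{ab})^{\otimes j}$ as a $Q$-module. I would apply it with $Q = H$. The $H$-action on $G^{ab}$ factors through $K$, since inner automorphisms act trivially there. It is nilpotent by the Bousfield--Kan series, since quotients of nilpotent actions are nilpotent. Hence $H$ acts nilpotently on each abelian layer $\gamma_j/\gamma_{j+1}$, and therefore $\Gamma_N = 1$ for large $N$.
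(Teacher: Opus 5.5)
Your fallback argument is correct, but the inductive route before it does not work as written and should be dropped. The induction on $n$ does not close. To apply it to $A_{n-1}$ you need the combined $H$-action there to be nilpotent. That does not follow from the $K$-action and the conjugation action of $G$ each being nilpotent, via closure under subgroups, quotients and extensions; it is essentially the statement you are proving. The proposed refinement by $A_i \cap \gamma_j(G)$ also fails: a Bousfield--Kan series only has $A_i$ normal in $A_{i+1}$, not in $G$, so these intersections need not be normalized by $G$.

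The fallback is, at its core, the paper's proof.
\begin{itemize}
  \item The paper takes the lower central series $\Gamma^i G$. It notes that $K$ acts nilpotently on each layer $\Gamma^i G / \Gamma^{i+1} G$ simply because nilpotent $K$-actions are closed under subgroups and quotients. It then pulls back a $K$-stable filtration of each layer with trivial $K$-action on its quotients. Centrality is automatic, since every new term lies between $\Gamma^{i+1} G$ and $\Gamma^i G$.
  \item In your argument the same refined descending series $G = B_0 \supseteq B_1 \supseteq \dots \supseteq B_N = 1$ appears implicitly, when you pass from ``$H$ acts nilpotently on each layer'' to ``$\Gamma_N = 1$''. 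That series is already the one you want. Being $H$-stable means being normal in $G$ and $K$-stable. Trivial $H$-action on $B_s/B_{s+1}$ means trivial $K$-action together with $[G, B_s] \subseteq B_{s+1}$.
\end{itemize}
So the semidirect product and the relative series $\Gamma_{i+1} = [H, \Gamma_i]$ are a detour. They buy only a conceptual reformulation (the lemma says the combined action of $G$ and $K$ on $G$ is nilpotent) and a canonical, fastest-descending choice of series.

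Similarly, the tensor-power lemma proves more than is needed, namely that nilpotence on $G^{\mathrm{ab}}$ alone suffices. Since nilpotence of the $K$-action on all of $G$ is a hypothesis, closure under subquotients gives nilpotence on the layers directly. If you keep the tensor route, you must also justify two facts: the iterated commutator map is multilinear and surjective, and tensor products of nilpotent modules are nilpotent.
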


\begin{proof}
  For the ``if''-direction note that $[G, A_{i + 1}] \subseteq A_i$ implies that $A_i \subseteq A_{i+1}$ is normal and that $\quot {A_{i+1}} {A_i}$ is abelian.
  For the ``only if''-direction, let
  \[ G = \Gamma^0 G \supseteq \Gamma^1 G \supseteq \dots \supseteq \Gamma^n G = 1 \]
  be the lower central series of $G$, i.e.\ we set $\Gamma^{i+1} G \defeq [\Gamma^i G, G]$.
  The group $K$ acts on the abelian group $\quot {\Gamma^i G} {\Gamma^{i+1} G}$, and this action is nilpotent since the class of nilpotent $K$-actions is closed under taking quotients and subgroups.
  Thus there is a normal series of subgroups that are fixed setwise by $K$
  \[ \quot {\Gamma^i G} {\Gamma^{i+1} G} = F^i_0 \supseteq F^i_1 \supseteq \dots \supseteq F^i_{m_i} = 1 \]
  such that the $K$-action on the filtration quotients is trivial.
  Then, denoting the quotient map $\Gamma^i G \to \quot {\Gamma^i G} {\Gamma^{i+1} G}$ by $p_i$, the filtration
  \[ G = \inv{p_0}(F^0_0) \supseteq \dots \supseteq \inv{p_0}(F^0_{m_0}) = \Gamma^1 G = \inv{p_1}(F^1_0) \supseteq \dots \supseteq \inv{p_{n-1}}(F^{n-1}_{m_{n-1}}) = 1 \]
  is a central series of subgroups that are fixed setwise by $K$ such that the $K$-action on the filtration quotients is trivial.
\end{proof}

We now recall the notion of a nilpotent anima and a nilpotent map, as well as basic properties we will use throughout.

\begin{definition}
  We say that a map of animas $f \colon X \to Y$ is \emph{nilpotent} if $Y$ and $F \defeq \fib f$ are connected and for some $x \in F$ the action of $\pi_1(X, x)$ on $\pi_n(F, x)$ (cf.\ \cref{obs:pi_1_action}) is nilpotent for all $n \ge 1$.
  We say that a connected anima $X$ is \emph{nilpotent} if the map $X \to *$ is nilpotent.
\end{definition}

Recall that a map of connected animas $f \colon X \to Y$ is nilpotent if and only if its fiber $F$ is nilpotent and the action of $\pi_1(Y)$ on $\Ho n(F; \ZZ)$ is nilpotent for all $n \ge 0$ (see \cite[Corollary~2.2]{Hil}).
Further recall that a nilpotent anima $X$ is rational if and only if $\Ho n (X; \ZZ)$ is uniquely divisible (i.e.\ a rational vector space) for all $n \ge 1$ (see e.g.\ \cite[Theorem~6.1.1]{MP}).
On nilpotent animas, a model for the rationalization is given by the Bousfield--Kan $\QQ$-completion, see \cite[Ch.~V, Proposition~4.2]{BK}.
The $\QQ$-completion of a nilpotent map of animas is again nilpotent by \cite[Ch.~II, Lemma~4.8]{BK}; in particular the rationalization of a nilpotent anima is again nilpotent.
Lastly recall that for a nilpotent anima $X$, there are canonical isomorphisms $\pi_n(\rat X) \iso \pi_n(X) \tensor \QQ$ and $\Ho n (\rat X; \ZZ) \iso \Ho n (X; \QQ)$ for $n \ge 1$ (see e.g.\ \cite[Ch.~V, Proposition~3.1]{BK}); for $n = 1$, the expression $\pi_1(X) \tensor \QQ$ denotes the Malcev completion of the nilpotent group $\pi_1(X)$.

We conclude this subsection by recording the following two lemmas we will need later.

\begin{lemma} \label{lemma:nilpotent_composite}
  Let $f \colon X \to Y$ and $g \colon Y \to Z$ be two maps of connected animas with connected fibers.
  If any two of $f$, $g$, and $g \after f$ are nilpotent, then so is the third.
  In particular a map $f \colon X \to Y$ of nilpotent animas is nilpotent if and only if its fiber is connected.
\end{lemma}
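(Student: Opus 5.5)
Write $F \defeq \fib f$, $G \defeq \fib g$, $H \defeq \fib(g \after f)$. The plan is to reduce everything to the fiber sequence $F \to H \to G$ (obtained by taking fibers of $f$ over a point of $\fib g$) together with the criterion recalled above from \cite[Corollary~2.2]{Hil}. That criterion says a map of connected animas is nilpotent if and only if its fiber is nilpotent and $\pi_1$ of the target acts nilpotently on the integral homology of the fiber. Equivalently, we may use the homotopy-group formulation from the definition, in which $\pi_1$ of the source acts on $\pi_n$ of the fiber. All fibers here are connected by assumption, and the bases are connected.

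I would work with the homotopy-group formulation. The long exact sequence of $F \to H \to G$ is equivariant for $\pi_1(X)$, which acts on all three terms: on $\pi_n(F)$ and $\pi_n(H)$ via the fibrations $f$ and $g\after f$, and on $\pi_n(G)$ through $\pi_1(X) \to \pi_1(Y)$. In low degrees one has to be careful that the maps are compatible with the actions, including the $\pi_1$-level actions by conjugation; I would check this using the functoriality in \cref{obs:pi_1_action} applied to the map of fibrations over $Y$ and $Z$.

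The three cases then go as follows. (i) If $f$ and $g$ are nilpotent, then $\pi_n(H)$ is an extension of a $\pi_1(X)$-subgroup of $\pi_n(G)$ by a $\pi_1(X)$-quotient of $\pi_n(F)$. Nilpotent actions are closed under subgroups, quotients and extensions \cite[Ch.~II, Lemma~4.2]{BK}, so the action on $\pi_n(H)$ is nilpotent. Here we use that the $\pi_1(X)$-action on $\pi_n(G)$ is nilpotent because it factors through the nilpotent $\pi_1(Y)$-action. (ii) If $g\after f$ and $g$ are nilpotent, then $\pi_n(F)$ is an extension of the kernel of $\pi_n(H)\to\pi_n(G)$ (a subgroup of $\pi_n(H)$) by the image of $\pi_{n+1}(G)$ (a quotient of $\pi_{n+1}(G)$), so the same closure properties apply. (iii) If $f$ and $g\after f$ are nilpotent, we need the $\pi_1(Y)$-action on $\pi_n(G)$. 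Since $F$ is connected, $\pi_1(X)\to\pi_1(Y)$ is surjective, so it suffices to show that the $\pi_1(X)$-action is nilpotent. Now $\pi_n(G)$ is an extension of a subgroup of $\pi_{n-1}(F)$ by a quotient of $\pi_n(H)$, and we conclude by closure again. Because $F$ is connected, $\pi_1(G)$ is a quotient of $\pi_1(H)$, so the case $n=1$ also works.

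The main obstacle is the $n=1$ and $\pi_0$ bookkeeping. The terms $\pi_1$ are nonabelian, the images and kernels must be shown normal and invariant, and the action of $\pi_1$ of a fiber on itself has to be matched with conjugation. I would handle this by noting that the exact sequence is one of groups with compatible $\pi_1(X)$-actions, where the action on $\pi_1$ of each fiber restricts to conjugation along the inclusion of that fiber, and by citing \cite[Ch.~II, Lemma~4.8]{BK}-style arguments if needed.

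For the final claim, apply the main statement to $X \to Y \to *$. The composite and $Y \to *$ are nilpotent, so $f$ is nilpotent whenever its fiber is connected. Conversely, a nilpotent map has connected fiber by definition.
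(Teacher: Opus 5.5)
Your argument is correct. The paper gives no proof of its own and simply cites \cite[Ch.~II, Proposition~4.4]{BK}. What you wrote is a self-contained version of that standard result: the fiber sequence $F \to H \to G$ with its $\pi_1(X)$-equivariant long exact sequence, combined with closure of nilpotent actions under subgroups, quotients and extensions \cite[Ch.~II, Lemma~4.2]{BK}, and surjectivity of $\pi_1(X) \to \pi_1(Y)$ in case (iii). The citation buys brevity, while your version makes explicit the one input that needs care, which is the equivariance of the long exact sequence.

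Functoriality of \cref{obs:pi_1_action} for the two maps of fibrations $(X \to Y) \to (X \to Z)$ and $(X \to Z) \to (Y \to Z)$ only gives equivariance of $\pi_n(F) \to \pi_n(H)$ and $\pi_n(H) \to \pi_n(G)$. That suffices for case (i). Cases (ii) and (iii) additionally need the connecting map $\partial$ to be $\pi_1(X)$-equivariant. In (ii) you use it to see $\operatorname{im}\partial$ as a $\pi_1(X)$-quotient of $\pi_{n+1}(G)$. In (iii) you use it to see $\pi_n(G)/\ker\partial$ as a $\pi_1(X)$-subgroup of $\pi_{n-1}(F)$.

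This equivariance holds because the pointed fiber sequence $F_{f(x')} \to H_{gf(x')} \to G_{gf(x')}$ depends functorially on $x' \in X$. Hence $\pi_1(X,x)$ acts on the sequence as a whole, and therefore on its long exact sequence including $\partial$. With that in place, your $n=1$ bookkeeping goes through as you describe: kernels of equivariant maps are invariant normal subgroups, and $\pi_1(H) \to \pi_1(G)$ is surjective because $F$ is connected.
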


\begin{proof}
  This is \cite[Ch.~II, Proposition~4.4]{BK}.
\end{proof}

\begin{lemma} \label{lemma:rat_pullback}
  Let $f \colon X \to B$ and $g \colon Y \to B$ be pointed maps of nilpotent animas.
  Then the basepoint component of $X \times_B Y$ is nilpotent, and the canonical map $X \times_B Y \to \rat X \times_{\rat B} \rat Y$ is a rationalization when restricted to the respective basepoint components.
  When the fiber of $f$ or $g$ is connected, then $X \times_B Y$ and $\rat X \times_{\rat B} \rat Y$ are connected and hence the rationalization functor preserves the pullback $X \times_B Y$.
\end{lemma}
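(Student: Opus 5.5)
We reduce to the standard facts about nilpotent animas and their rationalization recalled above: the long exact sequence of homotopy groups, closure of nilpotent actions under subgroups, quotients and extensions, and Malcev completion on $\pi_1$.

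\textbf{Step 1: the basepoint component of the pullback is nilpotent.} Write $P \defeq X \times_B Y$ with basepoint $p = (x, y)$, and let $P_0$ be its basepoint component. For $n \ge 2$ we have $\pi_n(P_0) = \pi_n(P)$, and $\pi_1(P_0,p)$ is the fiber product. The long exact (Mayer--Vietoris type) sequence of the pullback
\[ \cdots \to \pi_{n+1}(B) \to \pi_n(P) \to \pi_n(X) \times \pi_n(Y) \to \pi_n(B) \to \cdots \]
is compatible with the actions of $\pi_1(P_0,p)$. This group acts on $\pi_n(X)$, $\pi_n(Y)$ and $\pi_n(B)$ through its images in $\pi_1(X)$, $\pi_1(Y)$ and $\pi_1(B)$ respectively. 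These actions are nilpotent, since restricting a nilpotent action along a homomorphism keeps it nilpotent.

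Thus $\pi_n(P)$ is an extension of a subgroup of $\pi_n(X)\times\pi_n(Y)$ by a quotient of $\pi_{n+1}(B)$ (for $n=1$, $\pi_1(P_0)$ is exactly such an extension). Closure of nilpotent actions under subgroups, quotients and extensions \cite[Ch.~II, Lemma~4.2]{BK} then shows the action is nilpotent. For $n=1$ with the conjugation action, this gives that $\pi_1(P_0)$ is nilpotent. The one care point is that the action on the quotient of $\pi_2(B)$ (the image of the boundary map) is the right one; this follows from equivariance of the boundary map.

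Alternatively, we can avoid this argument altogether: replace $X \to B$ by $X_0 \to B_0$, observe that the map $P_0 \to Y$ is the base change of $f$ restricted to components, and invoke \cref{lemma:nilpotent_composite} together with \cite{Hil}'s criterion. I would use the exact-sequence argument, as it is more self-contained.

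\textbf{Step 2: the comparison map is a rationalization on basepoint components.} By Step 1 applied to $\rat X \to \rat B \leftarrow \rat Y$ (rationalizations of nilpotent animas are nilpotent), the component $Q_0$ of $\rat X\times_{\rat B}\rat Y$ is nilpotent. Its homotopy groups are rational vector spaces, uniquely divisible nilpotent for $\pi_1$, since they sit in the analogous exact sequence built from the uniquely divisible groups $\pi_n(\rat X)$, $\pi_n(\rat Y)$, $\pi_n(\rat B)$, and uniquely divisible nilpotent groups are closed under kernels, cokernels and extensions. Hence $Q_0$ is rational, by the characterization of rational nilpotent animas via homotopy groups (equivalent to the homology criterion for nilpotent spaces \cite{MP}).

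The map $P_0 \to Q_0$ induces a map of long exact sequences which, on the outer terms, is $\pi_n(\blank)\to\pi_n(\blank)\otimes\QQ$ (Malcev completion in degree $1$). Since $\blank\otimes\QQ$ and Malcev completion are exact on nilpotent groups, the five lemma identifies $\pi_n(Q_0)$ with $\pi_n(P_0)\otimes\QQ$. A map of nilpotent animas inducing rationalization on homotopy groups is a rational equivalence, so $P_0 \to Q_0$ is a rationalization.

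\textbf{Step 3: connectedness.} The main delicacy is $\pi_0$. The set $\pi_0(P)$ is the orbit set of $\pi_1(B)$ acting on $\pi_0(F_f)\times\pi_0(F_g)$ (double cosets). If, say, the fiber of $f$ is connected, this set is a point, so $P$ is connected. The fiber of $\rat f$ is then $\rat{(F_f)}$, which is connected, by the fiberwise nilpotent rationalization \cite[Ch.~II]{BK}. This last step is the one I expect to be the real obstacle, because it requires knowing that $\rat X \to \rat B$ still has connected fiber. I would settle it by checking surjectivity of $\pi_1(\rat X)\to\pi_1(\rat B)$ via Malcev completion, which preserves surjections. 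Consequently $Q$ is connected too, and Step 2 gives that $P \to Q$ is a rationalization, i.e.\ rationalization preserves the pullback.
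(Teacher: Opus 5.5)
Your proof is correct, but it does not follow the paper's route. The paper gives no argument of its own; its proof is the single citation \cite[Proposition~6.2.5]{MP}. You instead prove the lemma from scratch, using:
\begin{itemize}
  \item the fiber sequence $\Omega B \to X \times_B Y \to X \times Y$ with its Mayer--Vietoris long exact sequence;
  \item closure of nilpotent actions under subgroups, quotients and extensions;
  \item exactness of rationalization on nilpotent groups;
  \item the homotopy-group characterization of rational nilpotent animas and of rationalization maps.
\end{itemize}
The citation is short but leaves every detail to May--Ponto. Your argument is self-contained and shows exactly where nilpotence enters. It also explicitly proves the connectedness clause, which the paper does not argue beyond the citation: you use the double coset description $\pi_0(X \times_B Y) \cong f_*\pi_1(X) \backslash \pi_1(B) / g_*\pi_1(Y)$ and the fact that Malcev completion preserves surjections. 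The price is that you must handle the low-degree part of the sequence by hand, and there your sketch needs three corrections.

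First, the sentence saying $\pi_1(P_0,p)$ ``is the fiber product'' is wrong as written. The group $\pi_1(P_0,p)$ is an extension of $\pi_1(X) \times_{\pi_1(B)} \pi_1(Y)$ by $\operatorname{coker}\bigl(\pi_2(X) \times \pi_2(Y) \to \pi_2(B)\bigr)$, which is what you correctly use a few lines later.

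Second, in degree $1$ the Mayer--Vietoris sequence is not a sequence of groups and homomorphisms. The map $\pi_1(X) \times \pi_1(Y) \to \pi_1(B)$ is $(a,b) \mapsto f_*(a)\, g_*(b)^{-1}$, so ``exactness plus five lemma'' does not literally apply. Instead, apply the five lemma to the short exact sequence of nilpotent groups above. For that you need that Malcev completion carries $\pi_1(X) \times_{\pi_1(B)} \pi_1(Y)$ to $\pi_1(\rat X) \times_{\pi_1(\rat B)} \pi_1(\rat Y)$. This does not follow from exactness alone. It follows from the characterization of rationalizations of nilpotent groups:
\begin{itemize}
  \item the target is uniquely divisible;
  \item the kernel is torsion;
  \item every element of the target has a power in the image.
\end{itemize}
To check the third condition you use that $\ell(u) = \ell(v)$ implies $u^m = v^m$ for some $m \ge 1$.

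Third, in Step~3 the appeal to \cite[Ch.~II]{BK} is unnecessary. Surjectivity of $\pi_1(\rat X) \to \pi_1(\rat B)$ already shows that the fiber of $\rat f$ is connected. The same double coset argument then shows that $\rat X \times_{\rat B} \rat Y$ is connected.
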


\begin{proof}
  This is \cite[Proposition~6.2.5]{MP}.
\end{proof}

\subsection{Sullivan's approach to rational homotopy theory}

In this subsection, we summarize the approach of Sullivan \cite{Sul} to rational homotopy theory, following Bousfield--Gugenheim \cite{BG} (also see Braunack--Mayer \cite[§3]{Bra}, whose notation we will largely follow).
First recall the Sullivan--de Rham adjunction of \cite[§8.1]{BG}
\begin{equation} \label{eq:Sullivan}
\begin{tikzcd}
  \sSet \rar[yshift = 7]{\APL}[below, name = T]{} &[20] \lar[yshift = -7]{\Real}[above, name = B]{} \opcat{\CDGA}
  \ar[from = T, to = B, phantom, "\vertdashv"]
\end{tikzcd}
\end{equation}
where $\APL(X)$ denotes piecewise linear de Rham forms on a simplicial set $X$, and $\Real(A)$ denotes the spatial realization of a cdga $A$.
Recall that there is a natural quasi-isomorphism $\APL(X) \to \Cochains * (X; \QQ)$ that induces an isomorphism $\Coho * (\APL(X)) \iso \Coho * (X; \QQ)$ of graded algebras (see e.g.\ \cite[Theorem~2.2 and Corollary~3.4]{BG}).
We furthermore recall the following terminology.

\begin{definition}
  An anima $X$ is \emph{of finite rational type} if $\Ho n (X; \QQ)$ is finite-dimensional for all $n \ge 0$.
  We write $\An^\QQ_{\nil,\ft} \subset \An$ for the full subcategory spanned by the rational nilpotent animas of finite rational type.
\end{definition}


\begin{definition}
  A cdga $A$ is of \emph{finite homotopical type} if it admits a minimal model that is finite-dimensional in each degree.
  We write $\CDGA[\ge 1, \fht] \subset \CDGA$ for the full subcategory spanned by the homologically connected cdgas of finite homotopical type.
\end{definition}

\begin{proposition}[Sullivan, Bousfield--Gugenheim] \label{prop:Sullivan}
  The Sullivan--de Rham adjunction \eqref{eq:Sullivan} is a Quillen adjunction and its derived functors restrict to an adjoint equivalence
  \[ \opcat{\Underlying(\CDGA)_{\ge 1, \fht}}  \eq  \An^\QQ_{\nil,\ft} \]
  of $\infty$-categories.
\end{proposition}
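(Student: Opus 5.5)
The plan is to deduce this from the classical results of Sullivan and Bousfield--Gugenheim, translated into the $\infty$-categorical language set up above. There are three steps: the Quillen adjunction, the identification of the essential images, and the equivalence on those images.

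\textbf{The Quillen adjunction.} I would first recall from \cite[§8]{BG} that $\APL$ sends cofibrations (monomorphisms) of simplicial sets to fibrations (surjections) of cdgas. It also sends acyclic cofibrations to acyclic fibrations: by the quasi-isomorphism $\APL \to \Cochains * (\blank; \QQ)$, weak equivalences go to quasi-isomorphisms. Since $\APL$ is the left adjoint of the adjunction \eqref{eq:Sullivan} when viewed as $\sSet \to \opcat{\CDGA}$, this makes \eqref{eq:Sullivan} a Quillen adjunction. By \cite[Proposition~1.5.1]{Hin} we then get an adjunction of underlying $\infty$-categories $\Lder \APL \dashv \Rder \Real$ between $\An$ and $\opcat{\Underlying(\CDGA)}$, using $\Underlying(\opcat{\CDGA}) \eq \opcat{\Underlying(\CDGA)}$.

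\textbf{The essential images.} Next I would check that both derived functors send the stated full subcategories into each other.
\begin{itemize}
  \item For $X$ nilpotent and of finite rational type, $\APL(X)$ is homologically connected, since $X$ is connected. It is of finite homotopical type: its minimal model has generators dual to $\pi_*(X) \tensor \QQ$ (resp.\ to the rational Malcev Lie algebra in degree $1$), and these are finite-dimensional by finite rational type and nilpotence. This is \cite[§§9--12]{BG}, via the Postnikov tower refined by principal fibrations.
  \item Conversely, for a cofibrant (minimal) cdga $M$ of finite homotopical type, the space $\Real(M)$ is nilpotent, rational, and of finite rational type. This follows by induction over the minimal-model filtration: $\Real$ turns elementary extensions into principal fibrations with fiber a rational Eilenberg--MacLane space.
\end{itemize}

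\textbf{Equivalence on the subcategories.} Finally I would show that the derived unit and counit are equivalences on these subcategories. For the counit, $M \to \APL \Real(M)$ is a quasi-isomorphism for $M$ minimal of finite type, again by induction using the Serre spectral sequence comparison in \cite[Theorem~10.1]{BG}. For the unit, $X \to \Real \APL(X)$ induces an isomorphism on rational homology between nilpotent spaces, because the counit is a quasi-isomorphism; hence it is a rationalization. When $X$ is already rational and nilpotent it is therefore an equivalence, since a rationalization between rational spaces is an equivalence by \cref{lemma:localization}. An adjunction restricting to full subcategories on which the unit and counit are equivalences gives an adjoint equivalence.

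The main obstacle is the unit and counit computation, which is the genuine content of Sullivan's theory. I would cite it from \cite[§§10--12]{BG} rather than reprove it. The only subtleties to check are that finite homotopical type is exactly the right finiteness condition on the cdga side, and that the $\infty$-categorical derived unit agrees with the classical one, which holds by \cref{prop:Shulman}.
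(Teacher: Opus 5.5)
Your proposal is correct and follows essentially the same route as the paper: verify the Quillen adjunction, pass to the derived $\infty$-adjunction, and import from Bousfield--Gugenheim both the restriction to the stated subcategories and the fact that the unit and counit are equivalences there (the paper cites \cite[Theorem~10.1]{BG} for all of this). The differences are minor. You check Quillen-ness via $\APL$ preserving acyclic cofibrations, whereas the paper uses $\Real$ preserving fibrations \cite[Lemma~8.2]{BG}. You also derive the unit statement from the counit via the triangle identity and \cref{lemma:localization}, rather than citing it directly.
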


\begin{proof}
  In the following we equip $\opcat{\CDGA}$ with the opposite model structure.
  By \cite[Lemma~2.7]{BG}, the functor $\APL$ sends cofibrations of simplicial sets (i.e.\ degreewise injections) to degreewise surjections of cdgas (which are the cofibrations in the opposite model structure).
  By \cite[Lemma~8.2]{BG}, the functor $\Real$ sends cofibrations of cdgas (which are the fibrations in the opposite model structure) to Kan fibrations of simplicial sets.
  Hence \eqref{eq:Sullivan} is a Quillen adjunction.
  
  We thus obtain an adjunction of $\infty$-categories
  \[
  \begin{tikzcd}
    \An = \Underlying(\sSet) \rar[yshift = 7]{\Lder \APL}[below, name = T]{} &[20] \lar[yshift = -7]{\Rder \Real}[above, name = B]{} \Underlying(\opcat{\CDGA}) = \opcat{\Underlying(\CDGA)}
    \ar[from = T, to = B, phantom, "\vertdashv"]
  \end{tikzcd}
  \]
  (note that every object of $\sSet$ is cofibrant, so that $\Rder \Real = \Underlying(\Real)$).
  By \cite[Theorem~10.1]{BG}, these derived functors restrict to functors (which are thus again adjoints) as follows
  \[
  \begin{tikzcd}
    \An^\QQ_{\nil,\ft} \rar[yshift = 4]{\Lder \APL} &[20] \lar[yshift = -4]{\Rder \Real} \opcat{\Underlying(\CDGA)_{\ge 1, \fht}}
  \end{tikzcd}
  \]
  and, when restricted to these full subcategories, the natural transformations
  \[ \id  \xlongto{\eta}  \Real \after \APL  \longto  \Real \after \Lder \APL  \qquad \text{and} \qquad  \Lder \APL \after \Real  \xlongfrom{\eq}  \APL \after \Real  \xlongto{\epsilon}  \id \]
  are pointwise weak equivalences.
  Thus they induce homotopies $\id \eq \Rder \Real \after \Lder \APL$ and $\Lder \APL \after \Rder \Real \eq \id$, which completes the proof.
\end{proof}

Note in particular that, by \cite[Proposition~1.3.4.23]{LurHA}, homotopy colimits in the model category $\CDGA$, as long as they are again homologically connected and of finite homotopical type, represent limits in $\An^\QQ_{\nil,\ft}$.

\begin{definition} \label{def:model}
  Let $\cat I$ be a category.
  We say that a diagram $A \colon \cat I \to \opcat{\CDGA[\ge 1, \fht]}$ \emph{models} a diagram $X \colon \cat I \to \An_{\nil,\ft}$ if it comes equipped with an equivalence $\Rder \Real \after u \after A \eq \rat X$ of functors $\cat I \to \An^\QQ_{\nil,\ft}$, where $u \colon \opcat{\CDGA[\ge 1, \fht]} \to \opcat{\Underlying(\CDGA)_{\ge 1, \fht}}$ is the canonical functor.
\end{definition}

\begin{remark}
  By \cite[Theorem~11.2]{BG}, the unit $\id \to \Rder \Real \after \Lder \APL$ is a rationalization when restricted to $\An_{\nil,\ft}$.
  In particular there is a natural equivalence $\Lder \APL (\rat Z) \eq \Lder \APL(Z)$ for $Z \in \An_{\nil,\ft}$.
  Hence, in \cref{def:model}, we could have equivalently asked for an equivalence $u \after A \eq \Lder \APL \after X$.
\end{remark}

Note that, if an anima $X$ is modeled by a cdga $A$, there is an induced isomorphism $\Coho * (X; \QQ) \iso \Coho * (A)$ of graded algebras.
We conclude this subsection by recording the following two lemmas we will need later.

\begin{lemma} \label{lemma:augmented}
  A homologically connected cofibrant cdga $R$ of finite homotopical type admits an augmentation $R \to \QQ$.
\end{lemma}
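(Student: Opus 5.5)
We have to produce a map of cdgas $R \to \QQ$ from a homologically connected cofibrant cdga $R$ of finite homotopical type. The plan is to pass through a minimal model, which is connected and therefore visibly augmented.

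First, choose a minimal model $m \colon M \to R$. This exists because $R$ is homologically connected (see the recollection after the definition of minimal models, citing \cite[Proposition~7.7]{BG}). As noted there, $M$ is automatically connected, i.e.\ $M^0 = \QQ$. Since $M$ is concentrated in non-negative degrees with $M^0 = \QQ$, the projection $\epsilon_M \colon M \to \QQ$ onto degree $0$ is a map of cdgas. It kills $M^{>0}$, which is an ideal, and it is compatible with the differential: $d(M^0) = d(\QQ \cdot 1) = 0$, and $d$ maps $M^{\ge 1}$ into $M^{\ge 2}$.

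Next, we extend $\epsilon_M$ along $m$. The map $m$ is a quasi-isomorphism between cofibrant objects of $\CDGA$; here $M$ is cofibrant because it is semifree, and $R$ is cofibrant by assumption. Moreover, $\QQ$ is fibrant, since every object of $\CDGA$ is fibrant (fibrations are the surjections). Hence
\[ m^* \colon \Hom_{\Hocat(\CDGA)}(R, \QQ) \longto \Hom_{\Hocat(\CDGA)}(M, \QQ) \]
is a bijection. Between a cofibrant source and a fibrant target, morphisms in the homotopy category are homotopy classes of honest maps. So there is an actual map of cdgas $\epsilon \colon R \to \QQ$ with $\epsilon \after m$ homotopic to $\epsilon_M$. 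This $\epsilon$ is the desired augmentation.

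The argument can be made more concrete. Because $m$ is a weak equivalence between cofibrant objects, we can factor $m$ as a trivial cofibration followed by a map with a section, or use Ken Brown's lemma, to get a homotopy inverse $g \colon R \to M$. Then $\epsilon \defeq \epsilon_M \after g$ works. The only point needing care is that homotopy classes $[R, \QQ]$ are represented by actual maps, which holds since $R$ is cofibrant and $\QQ$ is fibrant.

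I expect no genuine obstacle. The one thing to check is that the minimal model is connected, so that the degree-$0$ projection is a cdga map; this is exactly the recalled fact above. The finite homotopical type hypothesis is not even needed for this argument.
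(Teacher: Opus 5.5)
Your proof is correct, but it takes a different route from the paper.

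The paper's proof is one line via \cref{prop:Sullivan}. Since $R$ is cofibrant, homotopy classes of cdga maps $R \to \QQ = \APL(*)$ correspond to homotopy classes of points $* \to \Rder\Real(R)$. That set is nonempty because $\Rder\Real(R)$ lies in $\An^\QQ_{\nil,\ft}$ and is therefore a connected, in particular nonempty, anima. Finite homotopical type is used only there, to place $R$ in the subcategory where the Sullivan equivalence applies. A vertex of $\Real(R)$ is literally a cdga map $R \to \QQ$, so the nonemptiness of $\Rder\Real(R)$ is essentially the statement being proved. The paper is thus delegating the real content to the Sullivan--Bousfield--Gugenheim theorem.

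You instead construct the augmentation directly. The minimal model $M$ is connected, so it carries the canonical augmentation $M \to M^0 = \QQ$. You then transport this to $R$ using that $m \colon M \to R$ is a weak equivalence between cofibrant objects and that $\QQ$ is fibrant.

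What each route buys:
\begin{itemize}
  \item Your argument is more elementary and self-contained: it needs only the existence of minimal models \cite[Proposition~7.7]{BG} and basic model-category theory.
  \item It also shows, as you note, that the finite homotopical type hypothesis is superfluous.
  \item The paper's argument is shorter, given the machinery it has already set up.
\end{itemize}

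One small simplification of your concrete variant: $g$ need not be a homotopy inverse, since any cdga map $R \to M$ composed with $M \to \QQ$ gives an augmentation. Factor $m = p \after i$ with $i$ a trivial cofibration and $p$ a trivial fibration. Let $s$ be a section of $p$, which exists because $R$ is cofibrant, and $r$ a retraction of $i$, which exists because $M$ is fibrant. Then $g = r \after s$ works.
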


\begin{proof}
  Since $R$ is cofibrant, it follows from \cref{prop:Sullivan} that homotopy classes of maps $* \to \Rder \Real (R) \neq \emptyset$ are in bijection with homotopy classes of maps $R \to \QQ$.
\end{proof}

\begin{lemma} \label{lemma:pullback_model}
  Let the following be a pullback diagram in $\An$ and a homotopy pushout diagram in $\CDGA$, respectively,
  \[
  \begin{tikzcd}
    X \times_Z Y \rar \dar & Y \dar{g}  &  P & \lar B \\
    X \rar{f} & Z  &  A \uar & \lar C \uar
  \end{tikzcd}
  \]
  such that the maps $X \to Z \from Y$ lie in $\An_{\nil,\ft}$ and are modeled by the maps $A \to C \from B$, which lie in $\CDGA[\ge 1, \fht]$.
  If $f$ or $g$ has connected fibers, then the left-hand square lies in $\An_{\nil,\ft}$ and it is canonically modeled by the right-hand square, which lies in $\CDGA[\ge 1, \fht]$.
\end{lemma}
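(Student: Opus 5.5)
Since $f$ or $g$ has connected fibers, \cref{lemma:rat_pullback} shows that $X \times_Z Y$ is connected and nilpotent, and that the canonical map $X \times_Z Y \to \rat X \times_{\rat Z} \rat Y$ is a rationalization. So the square is a pullback of nilpotent animas which rationalization preserves. The plan is to show that $P$ lies in $\CDGA[\ge 1, \fht]$ and that $\Rder\Real(P)$ is the pullback $\rat X \times_{\rat Z} \rat Y$ in $\An$. The modeling equivalence of \cref{def:model} then follows by naturality.

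\textbf{Finiteness of $P$.} We may assume $C$ is cofibrant and $C \to A$ and $C \to B$ are cofibrations between minimal (Sullivan) algebras of finite type, so that $P \simeq A \otimes_C B$. Say the fiber of $f$ is connected. Using minimal relative models, we can write $A \cong C \otimes \Lambda V$ as a relative Sullivan algebra with $V$ of finite type and concentrated in degrees $\ge 1$. Here finiteness of $V$ comes from $\pi_*(\fib f)\otimes \QQ$ being finite dimensional in each degree, via the long exact sequence of the nilpotent map $f$. Then $P \cong B \otimes \Lambda V$ is again a relative Sullivan algebra. It is connected and of finite type, hence homologically connected and of finite homotopical type. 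Its minimal model is obtained from $B \otimes \Lambda V$ by the standard minimalization, which keeps finite type.

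\textbf{Identifying the realization.} $\Rder\Real$ is a right adjoint as a functor $\opcat{\Underlying(\CDGA)} \to \An$, so it sends homotopy pushouts of cdgas to pullbacks of animas. Hence
\[ \Rder\Real(P) \eq \Rder\Real(A) \times_{\Rder\Real(C)} \Rder\Real(B) \eq \rat X \times_{\rat Z} \rat Y, \]
and this equivalence is compatible with the maps of the square. The target is rational, nilpotent and of finite rational type by \cref{lemma:rat_pullback}, together with the finiteness of $P$ via \cref{prop:Sullivan}. So the pullback square in $\An^\QQ_{\nil,\ft}$ is $\rat$ of the original square, and the right-hand square models the left-hand one. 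Canonicity follows because all identifications are induced by the universal property of the pullback.

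\textbf{Main obstacle.} The hardest step is the finiteness claim, namely producing the relative minimal model of $C \to A$ with $V$ of finite type. This uses the relative Sullivan model theory of a nilpotent fibration with connected fiber of finite rational type, which goes beyond what is recorded above. As an alternative, one can instead compute $\Coho *(P)$ through the Eilenberg--Moore spectral sequence. This requires care, since its convergence again needs the nilpotence hypothesis.
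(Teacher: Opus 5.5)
Your proposal is correct, but it distributes the work differently from the paper. Both arguments begin with \cref{lemma:rat_pullback}.

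The paper then obtains finite rational type of $X \times_Z Y$ topologically, from convergence of the Eilenberg--Moore spectral sequence for the nilpotent map $f$ or $g$. It proves $P \in \CDGA[\ge 1, \fht]$ essentially algebraically. It makes $C$ minimal and replaces $C \to A$ by a cofibration with connected target; this is possible because $f$ is surjective on $\pi_1$, so $H^1(C) \to H^1(A)$ is injective. It chooses augmentations via \cref{lemma:augmented} and applies Bousfield--Gugenheim's Mayer--Vietoris sequence for the homotopy groups $\pi^n$. This is combined with their criterion that an augmented cdga has finite homotopical type iff every $\pi^n$ is finite-dimensional. Homological connectedness is checked by hand in degrees $0$ and $1$.

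You put the topological input elsewhere. Finiteness of $V$ comes from identifying the fiber $\Lambda V$ of the relative minimal model with a minimal model of $\fib f$, which is a heavier theorem from relative Sullivan theory. After that, $P \cong B \otimes \Lambda V$ is a connected Sullivan algebra of finite type, so connectedness is automatic. Finite rational type of $X \times_Z Y$ then comes for free: $\Rder \Real$ is a right adjoint on all of $\opcat{\Underlying(\CDGA)}$, so $\Rder\Real(P) \simeq \rat X \times_{\rat Z} \rat Y$, and this lies in $\An^\QQ_{\nil,\ft}$ by \cref{prop:Sullivan}.

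That last step is a real simplification. Grafted onto the paper's Mayer--Vietoris argument, it makes the Eilenberg--Moore step unnecessary. Conversely, your fiber-identification input is avoidable, since the Mayer--Vietoris argument handles $P$ directly.

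Three smaller points:
\begin{itemize}
  \item That $V$ can be taken in degrees $\ge 1$ is exactly the $H^1$-injectivity coming from connectedness of $\fib f$, and you should say so.
  \item You cannot in general make both legs cofibrations between minimal algebras. Your argument only needs $B$ and $C$ minimal and $A$ replaced by $C \otimes \Lambda V$, so this is harmless.
  \item Your Eilenberg--Moore alternative would not suffice by merely computing $H^*(P)$. Finite-type cohomology does not imply finite homotopical type when $H^1 \neq 0$ (e.g.\ for $S^1 \vee S^1$), so you would need an actual comparison with the topological side.
\end{itemize}
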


\begin{proof}
  By \cref{lemma:rat_pullback} the pullback $X \times_Z Y$ is nilpotent and the rationalization of the left-hand square is a pullback square in $\An^\QQ_\nil$.
  By \cref{lemma:nilpotent_composite} one of the maps $f$ and $g$ is nilpotent and hence the Eilenberg--Moore spectral sequence (with coefficients in $\QQ$) for the left-hand pullback square converges to $\Ho * (X \times_Z Y; \QQ)$ by \cite[Theorem~3.1]{Shi}; this implies that this homology is of finite type.
  Hence the rationalization of the left-hand square is also a pullback square in $\An^\QQ_{\nil,\ft}$ and it is enough to show that $P$ lies in $\CDGA[\ge 1, \fht]$.
  
  To this end, first note that up to quasi-isomorphism we can replace $C$ by a minimal (and hence connected and cofibrant) cdga.
  Without loss of generality we can assume that $f$ is surjective on $\pi_1$; then the map $C \to A$ is injective on $\Coho 1$ and we can replace it by a cofibration whose target is connected by \cite[Remark~7.9]{BG}.
  Similarly we can replace $C \to B$ by an arbitrary cofibration, so that its target is cofibrant and we can take $P$ to be the actual pushout of the square.
  \Cref{lemma:augmented} yields an augmentation of $B$, which induces an augmentation of $P$ such that the right-hand pushout square consists of maps of augmented cdgas.
  Now recall from \cite[§9.2]{BG} that a non-negatively graded augmented cdga $\epsilon \colon R \to \QQ$ is of finite homotopical type if and only if its homotopy groups $\pi^n(R) \defeq \Coho n (\quot {\ol R} {(\ol R \cdot \ol R)})$ are all finite-dimensional (here $\ol R \defeq \ker \epsilon$ is the augmentation ideal).
  By the long exact Mayer--Vietoris sequence of \cite[Proposition~6.14]{BG}, which relates the homotopy groups of $A$, $B$, $C$, and $P$, we deduce that $P$ is of finite homotopical type.
  To see that $P$ is homologically connected, first note that the map $B^0 \to P^0$ is an isomorphism since $A$ and $C$ are both connected.
  Furthermore $P^1$ is the pushout of $A^0 \tensor C^1 \tensor B^0 \to A^0 \tensor B^1$ along the injection $A^0 \tensor C^1 \tensor B^0 \to A^1 \tensor B^0$; hence $B^1 \iso A^0 \tensor B^1 \to P^1$ is injective.
  This implies that
  \[ \Coho 0 (B) = \ker (d \colon B^0 \to B^1) \iso \ker (d \colon P^0 \to P^1) = \Coho 0 (P) \]
  which completes the proof.
\end{proof}

\subsection{Parametrized spectra} \label{sec:param_spectra}

In this subsection, we recall the notion of a parametrized spectrum and parts of the associated six-functor formalism, which is a case of a so-called Wirthmüller context.

\begin{definition}
  Let $X$ be an anima.
  A \emph{parametrized spectrum over $X$}, or \emph{$X$-spectrum} for short, is a functor $X \to \Sp$.
  We write $\Sp[X] \defeq \Fun(X, \Sp)$ for the $\infty$-category of $X$-spectra, equipped with the pointwise symmetric monoidal structure $\tensor_X$ (we sometimes drop the subscript if there is no risk of confusion); we write $\SS_X \defeq \const_\SS$ for the monoidal unit.
  We furthermore write $\SS_X[\blank]$ and $\Suspinf[X]$ for the functor $(\Suspinf)_* \colon \Anover{X} \eq \Fun(X, \An) \to \Sp[X]$, and $\Loopsinf[X]$ for the functor $(\Loopsinf)_* \colon \Sp[X] \to \Fun(X, \An) \eq \Anover{X}$.
  For a map $f \colon X \to Y$ of animas, we denote by $f^* \colon \Sp[Y] \to \Sp[X]$ the precomposition functor, and by $f_! \dashv f^* \dashv f_*$ its adjoints.
  Unless stated otherwise, we will consider $\Fun(\blank, \Sp)$ to be contravariantly functorial using the maps $f^*$ and $\Sp[\blank]$ to be covariantly functorial using the maps $f_!$.
  By an abuse of notation, given an anima $E$ over $X$, we sometimes denote its structure map $E \to X$ by $E$.
\end{definition}

Recall that $\Sp[\blank] \colon \An \to \PrLst$ is the unique colimit-preserving functor that sends the point to $\Sp$ (see e.g.\ \cite[Remark~4.16]{CCRY}).

\begin{lemma} \label{lemma:susp_map}
  For an anima $X$ over $B$, there is a canonical equivalence $\SS_B[X] \eq X_! X^* (\SS_B)$.
  For a map $f \colon X \to Y$ of animas over $B$, the induced map $\SS_B[X] \to \SS_B[Y]$ is canonically homotopic to the composite
  \[ \SS_B[X]  \eq  X_! X^* (\SS_B)  \eq  Y_! f_! f^* Y^* (\SS_B)  \xlongto{\epsilon}  Y_! Y^* (\SS_B)  \eq  \SS_B[Y] \]
  where $\epsilon$ is the counit of the adjunction $f_! \dashv f^*$.
\end{lemma}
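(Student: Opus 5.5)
The plan is to reduce both claims to the case of the terminal object of $\Anover{B}$ via the straightening equivalence, and then to the universal property of $\Sp[\blank]$ as a colimit-preserving functor. Here $X$ is an anima over $B$ with structure map $X \colon X \to B$, and $\SS_B[X]$ means $\Suspinf[B]$ applied to the object $X \in \Anover{B} \eq \Fun(B, \An)$.

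For the first claim, I would show that both functors $\Anover{B} \to \Sp[B]$, namely $X \mapsto \SS_B[X]$ and $X \mapsto X_! X^*(\SS_B)$, are naturally equivalent. Since $X^* \SS_B = \SS_X = \Suspinf[X](\id_X)$, and $X_!$ commutes with $\Suspinf$ (left adjoints commute, because $X^*$ commutes with $\Loopsinf$ pointwise), we get
\[ X_! X^* (\SS_B) \eq X_! \Suspinf[X](\id_X) \eq \Suspinf[B](X_! \id_X) . \]
Here $X_! \colon \Anover{X} \to \Anover{B}$ is left adjoint to pullback, so it is postcomposition with $X$, and hence $X_! \id_X = (X \to B)$. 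This gives $\SS_B[X]$.

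For naturality, I would package this as follows. The assignment $(X \to B) \mapsto X_! X^*\SS_B$ is functorial in $\Anover{B}$ via counits, exactly as in the displayed composite. It suffices to check that the equivalence above is compatible with these counits. Under the identification $\Anover{X} \eq \Fun(X,\An)$, the map $f \colon X \to Y$ over $B$ gives $f_! \id_X = (f \colon X \to Y)$, and the counit $f_! f^* \id_Y \to \id_Y$ in $\Anover{Y}$ is the map $f$ itself, viewed as a map from $(X \xrightarrow{f} Y)$ to $(Y = Y)$. This uses the standard fact that for the pullback/postcomposition adjunction on slice categories the counit is given by the structure map. Applying $Y_!$ and then $\Suspinf[B]$, and using that the equivalence $Y_!\Suspinf[Y] \eq \Suspinf[B] Y_!$ is a mate-type equivalence compatible with counits (as in \cref{lemma:mates}), the counit $\epsilon$ for $f_! \dashv f^*$ on spectra corresponds to $\Suspinf[B]$ of the map $X \to Y$ in $\Anover{B}$. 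This is precisely the induced map $\SS_B[X] \to \SS_B[Y]$.

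The main obstacle is making the commutation $f_! \Suspinf \eq \Suspinf f_!$ coherent with the counits. I would handle it by deducing it as the mate of the strict commutation $f^* \Loopsinf = \Loopsinf f^*$, both being postcomposition and precomposition respectively. \Cref{lemma:mates} then gives the required compatibility of the counit of $f_! \dashv f^*$ on the space level with that on the spectrum level, once we note that $\Suspinf$ commutes with $f^*$ (by pointwise application) compatibly with the units.
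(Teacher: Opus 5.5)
Your proposal is correct and follows essentially the same route as the paper. The first claim comes from commuting $X^*$ and $X_!$ past $\Sigma^\infty$ and noting that $X_!$ on slices is postcomposition with $X \to B$. The second comes from \cref{lemma:mates}, applied to the square relating $f_! \dashv f^*$ on animas and on parametrized spectra via $\Sigma^\infty$, evaluated at $\mathrm{id}_Y$ (where the space-level counit is the map $f$ itself) and followed by $Y_!$. Your remark that $f_!\Sigma^\infty \simeq \Sigma^\infty f_!$ must be the mate of the pointwise equivalence $\Sigma^\infty f^* \simeq f^* \Sigma^\infty$ makes explicit what the paper's appeal to \cref{lemma:mates} uses implicitly.
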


\begin{proof}
  We begin by observing that the diagram
  \[
  \begin{tikzcd}
    \Anover{B} \rar{X^*} \dar[swap]{\eq} & \Anover{X} \rar{X_!} \dar{\eq} & \Anover{B} \dar{\eq} \\
    \Fun(B, \An) \rar{X^*} \dar[swap]{\Suspinf[B]} & \Fun(X, \An) \rar{X_!} \dar{\Suspinf[X]} & \Fun(B, \An) \dar{\Suspinf[B]} \\
    \Fun(B, \Sp) \rar{X^*} & \Fun(X, \Sp) \rar{X_!} & \Fun(B, \Sp)
  \end{tikzcd}
  \]
  commutes.
  This implies the first claim since $X_! \colon \Anover{X} \to \Anover{B}$ is given by composing with $X \to B$, so that $B \in \Anover{B}$ is mapped to $X \in \Anover{B}$ by the top row.
  For the second claim, we furthermore observe that \cref{lemma:mates} implies that the diagram
  \[
  \begin{tikzcd}
    f_! \Suspinf[X] f^* \rar{\eq} \dar[swap]{\eq} & f_! f^* \Suspinf[Y] \dar{\epsilon} \\
    \Suspinf[Y] f_! f^* \rar{\epsilon} & \Suspinf[Y]
  \end{tikzcd}
  \]
  commutes.
  Evaluating on $Y \in \Anover{Y}$ and applying $Y_!$ completes the proof, noting that the counit of the adjunction $f_! \dashv f^*$ at $Y \in \Anover{Y}$ is given by the map $X \to Y$ of animas over $Y$.
\end{proof}

\subsubsection*{The Wirthmüller context}

Given a map $f \colon X \to Y$ of animas, note that $f^* \colon \Sp[Y] \to \Sp[X]$ is strong symmetric monoidal.
In particular there is, for $P \in \Sp[X]$ and $Q \in \Sp[Y]$, a natural map
\[ f_! \bigl( f^*(P) \tensor_Y Q \bigr)  \xlongto{\eta}  f_! \bigl( f^*(P) \tensor_Y f^* f_! (Q) \bigr)  \eq  f_! f^* \bigl( P \tensor_X f_! (Q) \bigr)  \xlongto{\epsilon}  P \tensor_X f_! (Q) \]
which is an equivalence known as the \emph{projection formula} (see e.g.\ \cite[Proposition~6.8]{ABG}).

Given a pullback square of animas
\begin{equation} \label{eq:pullback}
\begin{tikzcd}
  A \rar{f} \dar[swap]{a} & B \dar{b} \\
  C \rar{g} & D
\end{tikzcd}
\end{equation}
the mates of the canonical equivalence $f^* b^* \eq a^* g^*$
\[ f_! a^*  \xlongto{\eq}  b^* g_!  \qquad \text{and} \qquad  b^* g_*  \xlongto{\eq}  f_* a^* \]
are equivalences themselves, known as the \emph{pull--push formulas} (see e.g.\ \cite[Proposition~4.3.3]{HL} and its adjoint).

When $X$ is a compact anima, there is an equivalence $X_*(\blank) \eq X_!(\blank \tensor \DS{X})$ for a certain $X$-spectrum $\DS{X}$ (this was first studied by Klein \cite{Kle}).
Interpreting $X_*$ as taking cohomology and $X_!$ as taking homology of $X$ with local coefficients, this equivalence can be viewed as a general form of twisted Poincaré duality.
We now recall an explicit and relative version of this.

\begin{definition} \label{def:DS}
  Let $f \colon X \to Y$ be a map of animas.
  We define the \emph{dualizing spectrum} of $f$ to be
  \[ \DS f  \defeq  (\pr_1)_* \Delta_! (\SS_X)  \in  \Sp[X] \]
  where $\Delta \colon X \to X \times_Y X$ is the diagonal and $\pr_1 \colon X \times_Y X \to X$ the first projection.
\end{definition}

\begin{lemma} \label{lemma:f_*}
  Let $f \colon X \to Y$ be a map of animas with compact fibers.
  Then there is a natural equivalence of functors $\Sp[X] \to \Sp[Y]$
  \[ f_!(\blank \tensor_X \DS{f})  \xlongto{\eq}  f_*(\blank) \]
  that is adjoint to the composite
  \begin{equation} \label{eq:f_*}
    \begin{aligned}
      f^* f_! \bigl( \blank \tensor_X (\pr_1)_* \Delta_! (\SS_X) \bigr)
      &\eq (\pr_2)_! \pr_1^* \bigl( \blank \tensor_X (\pr_1)_* \Delta_! (\SS_X) \bigr) \\
      &\eq (\pr_2)_! \bigl( \pr_1^* (\blank) \tensor_{X \times_Y X} \pr_1^* (\pr_1)_* \Delta_! (\SS_X) \bigr) \\
      &\xto{\epsilon} (\pr_2)_! \bigl( \pr_1^* (\blank) \tensor_{X \times_Y X} \Delta_! (\SS_X) \bigr) \\
      &\eq (\pr_2)_! \Delta_! \bigl( \Delta^* \pr_1^* (\blank) \tensor_X \SS_X \bigr) \\
      &\eq \id(\blank)
    \end{aligned}
  \end{equation}
  of the pull--push formula, the counit $\epsilon$ of the adjunction $\pr_1^* \dashv (\pr_1)_*$, and the projection formula.
\end{lemma}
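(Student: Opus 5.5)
Since both sides of the claimed map are exact and colimit-preserving in the variable, the plan is to reduce to representables and then reduce further to the case $Y = *$. First, note that the composite \eqref{eq:f_*} is a natural transformation $f^* f_!(\blank \tensor_X \DS f) \to \id$. Taking its adjoint under $f_! \dashv f^*$ and then $f^* \dashv f_*$ gives a natural map $f_!(\blank \tensor_X \DS f) \to f_*(\blank)$, so the map exists; what remains is to show it is an equivalence.

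The first step is to reduce to the case $Y = *$, using base change. Let $y \colon * \to Y$ be a point with fiber $F = X \times_Y *$, and write $i \colon F \to X$. The functor $y^*$ commutes with $f_!$ and $f_*$ via the pull--push formulas. It also carries $\DS f$ to $\DS{F \to *}$: here I would apply the pull--push formula for $(\pr_1)_*$ to the pullback of $X\times_Y X \to X$ along $i$, which is $F \times F \to F$, and the formula for $\Delta_!$ to the pullback of the diagonal, which is again a pullback square. Every identification in \eqref{eq:f_*} is built from units, counits, projection formulas and base change, so one checks via \cref{lemma:mates} that $y^*$ of the comparison map is the comparison map for $F \to *$. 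Since the family $y^*$ over all points is jointly conservative on $\Sp[Y]$, it suffices to treat $Y = *$ with $X$ compact.

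With $Y = *$, both $X_!(\blank \tensor \DS X)$ and $X_*$ are exact. The functor $X_*$ preserves colimits because $X$ is compact: $X_*$ is a finite limit, or a retract of one, since $\Sp$ is stable and $X$ is a retract of a finite anima. The left side preserves colimits as well, so both functors commute with all colimits. $\Sp[X]$ is generated under colimits by the representables $x_!\SS$ for $x \colon * \to X$, so it suffices to check the map on $x_!\SS$. For the left side, the projection formula gives $X_!(x_!\SS \tensor \DS X) \simeq x^*\DS X = x^*(\pr_1)_*\Delta_!\SS_X$, and pull--push for the square with fiber $X$ over $x$ identifies this with $X_*(x^{\prime *}\Delta_!\SS_X)$. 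Pull--push for $\Delta_!$ along $x \times \id$ gives $x^{\prime *}\Delta_!\SS_X \simeq x_!\SS$. Hence the left side is $X_*(x_!\SS)$, which matches the right side.

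The main obstacle is to verify that the comparison map, after these identifications, is the identity rather than merely some equivalence. I would handle this by unwinding \eqref{eq:f_*} evaluated on $x_!\SS$ and using \cref{lemma:mates} repeatedly to commute the counit $\epsilon$ of $\pr_1^* \dashv (\pr_1)_*$ past the base-change equivalences. An alternative would be to phrase everything through the Yoneda lemma in $\Fun(\Sp[X],\Sp)$.
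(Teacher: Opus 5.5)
Your route is correct in outline, and it differs from the paper's. The paper gives no direct argument: it deduces the statement from Cnossen's general results on twisted ambidexterity \cite[Lemma~3.6, Corollary~3.14, and Example~3.12]{Cno}. You argue by hand instead. You first base-change to the fibers; the compatibility you need there is the statement of \cref{lemma:norm_pull-push}, read as a commuting square of natural transformations and established before knowing the maps are invertible. You then use that $X_*$ preserves colimits for compact $X$, and finally check on the generators $x_!\SS$. Your argument is self-contained, uses only the formalism of \cref{sec:param_spectra}, and shows exactly where compactness enters. The citation is much shorter and places the lemma in a framework that also covers other coefficients and equivariant settings.

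All of the content sits in the step you defer. Identifying the source and target as $X_!(x_!\SS\tensor\DS X)\eq x^*\DS X\eq X_*(x_!\SS)$ gives no information about the map itself; it is not yet known to be an equivalence at all. The chase does work as you indicate:
\begin{itemize}
\item With $j=(x,\id)\colon X\to X\times X$ one has $\pr_1^*(x_!\SS)\eq j_!\SS_X$. By the projection formula for $j$, the map $\epsilon$ in \eqref{eq:f_*} becomes $j^*\epsilon$.
\item \Cref{lemma:mates}, applied to the mates $X^*x^*\eq j^*\pr_1^*$ and $x^*(\pr_1)_*\eq X_*j^*$, rewrites $j^*\epsilon$ as the counit of $X^*\dashv X_*$ at $j^*\Delta_!\SS_X$, precomposed with $X^*$ of the base-change equivalence.
\item You must also verify that the resulting identification $X^*X_!(x_!\SS\tensor\DS X)\eq X^*x^*\DS X$ is $X^*$ of the projection-formula equivalence $X_!(x_!\SS\tensor\DS X)\eq x^*\DS X$. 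This identification comes from pull--push, monoidality of $\pr_1^*$, and the rewriting above. To check it, paste the two base-change squares and use \cref{lemma:projection_pullback}.
\end{itemize}
The last point is not automatic: a self-equivalence of a constant $X$-spectrum need not lie in the image of $X^*$. Without it, the adjoint of a composite ``equivalence, counit, equivalence'' need not be invertible. Only with it is the adjoint at $x_!\SS$ equal to $X_*$ of an equivalence composed with equivalences.

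Three small slips:
\begin{itemize}
\item Passing from \eqref{eq:f_*} to $f_!(\blank\tensor_X\DS f)\to f_*(\blank)$ uses only the adjunction $f^*\dashv f_*$, not $f_!\dashv f^*$.
\item It is $i^*$, not $y^*$, that carries $\DS f$ to $\DS{F\to *}$.
\item $\Sp[X]$ is generated by the $x_!\SS$ under colimits \emph{and desuspensions}; colimits alone only give connective objects. This is harmless, since both functors are exact.
\end{itemize}
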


\begin{proof}
  This follows for example from \cite[Lemma~3.6, Corollary~3.14, and Example~3.12]{Cno}.
\end{proof}

In the rest of this subsection, we collect various relationships between these equivalences that we will need throughout the paper.

\begin{observation} \label{lemma:DS_pullback}
  Given the pullback square \eqref{eq:pullback}, there is an equivalence
  \[ a^*(\DS{g}) = a^* (\pr_1)_* \Delta_! (\SS_C) \eq (\pr_1)_* (a \times a)^* \Delta_! (\SS_C) \eq (\pr_1)_* \Delta_! a^* (\SS_C) \eq (\pr_1)_* \Delta_! (\SS_A) = \DS{f} \]
  using the two pullback squares
  \[
  \begin{tikzcd}
    A \rar{\Delta} \dar[swap]{a} & A \times_B A \rar{\pr_1} \dar{a \times a} & A \dar{a} \\
    C \rar{\Delta} & C \times_D C \rar{\pr_1} & C
  \end{tikzcd}
  \]
  and the associated pull--push formulas.
\end{observation}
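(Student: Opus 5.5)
The statement to justify is the observation that, for the pullback square \eqref{eq:pullback}, the chain of equivalences relating $a^*(\DS{g})$ and $\DS{f}$ makes sense and consists of equivalences. I would verify this link by link.

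First I check that the two small squares
\[
\begin{tikzcd}
  A \rar{\Delta} \dar[swap]{a} & A \times_B A \rar{\pr_1} \dar{a \times a} & A \dar{a} \\
  C \rar{\Delta} & C \times_D C \rar{\pr_1} & C
\end{tikzcd}
\]
are pullback squares in $\An$. For the right-hand square, $A \times_B A \eq A \times_B (B \times_D C) \eq A \times_D C$, since $A \eq B \times_D C$. On the other hand, $A \times_C (C \times_D C) \eq A \times_D C$. Compatibility with $\pr_1$ and $a \times a$ can be read off from the universal property. For the left-hand square, I compute the fiber product
\[ C \times_{C \times_D C} (A \times_B A). \]
By pasting with the right-hand square, $A \times_B A \eq A \times_C (C \times_D C)$. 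Hence this fiber product is $A \times_C \bigl( C \times_{C \times_D C} (C \times_D C) \bigr) \eq A \times_C C \eq A$, with the induced maps being $a$ and $\Delta$. Equivalently, one can use that the diagonal of a base change is the base change of the diagonal.

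Given this, each step in the displayed chain is one of the available tools.

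\begin{enumerate}
  \item The first equivalence $a^* (\pr_1)_* \eq (\pr_1)_* (a \times a)^*$ is the pull--push formula $b^* g_* \eq f_* a^*$ for the right-hand pullback square.
  \item The second equivalence $(a \times a)^* \Delta_! \eq \Delta_! a^*$ is the inverse of the pull--push formula $f_! a^* \eq b^* g_!$ for the left-hand square. This formula is an equivalence by \cite[Proposition~4.3.3]{HL}, as recalled above.
  \item The last equivalence $a^* \SS_C \eq \SS_A$ holds because $a^*$ is symmetric monoidal, or simply because it is precomposition of a constant functor.
\end{enumerate}

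Composing these equivalences gives $a^*(\DS{g}) \eq \DS{f}$, as claimed.

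The only step requiring genuine care is identifying the left-hand square as a pullback. Its horizontal maps are diagonals, so one has to make sure that the pasting argument uses the right maps; the rest of the argument is formal.
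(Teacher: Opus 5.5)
Your proposal is correct and follows the paper's argument. The paper invokes the pull--push formula for $(\pr_1)_*$ along the right-hand square, the inverse of the pull--push formula for $\Delta_!$ along the left-hand square, and $a^*(\SS_C) \eq \SS_A$. Your check that both small squares are pullbacks (e.g.\ via the pasting law, since $\pr_1 \after \Delta = \id$ on both rows) supplies a detail the paper leaves implicit.
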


\begin{lemma} \label{lemma:norm_pull-push}
  Given the pullback square \eqref{eq:pullback}, the following pasting of the equivalences of \cref{lemma:f_*,lemma:DS_pullback} and the pull--push formula for $f_!$ and $g_!$
  \[
  \begin{tikzcd}
    \Sp[A] \ar{dd}[swap]{f_*} \drar[near end]{\tensor \DS f} &[-20] & &[-20] \ar{lll}[swap]{a^*} \dlar[swap, near end]{\tensor \DS g} \Sp[C] \ar{dd}{g_*} \\
    & \dlar[near start]{f_!} \Sp[A] & \lar[swap]{a^*} \Sp[C] \drar[swap, near start]{g_!} & \\
    \Sp[B] & & & \ar{lll}[swap]{b^*} \Sp[D]
  \end{tikzcd}
  \]
  is the equivalence from the pull--push formula for $f_*$ and $g_*$.
\end{lemma}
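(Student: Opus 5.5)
Both sides are equivalences $b^* g_* \Rightarrow f_* a^*$, i.e.\ natural transformations $\Sp[C] \to \Sp[B]$. The plan is to pass to adjoints and compare the resulting transformations $f^* b^* g_* \Rightarrow a^*$. The pull--push equivalence $b^* g_* \eq f_* a^*$ is by definition the mate of $f^* b^* \eq a^* g^*$. By \cref{lemma:mates}, its adjoint is the composite
\[ f^* b^* g_* \eq a^* g^* g_* \xlongto{\epsilon} a^*. \]
It therefore suffices to show that the pasted transformation
\[ b^* g_* \eq b^* g_!(\blank \tensor \DS g) \eq f_! a^*(\blank \tensor \DS g) \eq f_!(a^* \blank \tensor \DS f) \eq f_* a^* \]
has the same adjoint.

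To compute that adjoint, I apply $f^*$ and postcompose with the counit $f^* f_* \to \id$. By construction of \cref{lemma:f_*}, the composite $f^* f_!(\blank \tensor \DS f) \eq f^* f_* \to \id$ is exactly the chain \eqref{eq:f_*} for $f$. The adjoint of the pasting is thus
\[ f^* b^* g_* \eq f^* b^* g_!(\blank\tensor \DS g) \eq f^* f_! a^*(\blank \tensor \DS g) \eq f^* f_!(a^*\blank \tensor \DS f) \xrightarrow{\eqref{eq:f_*}} a^*. \]
On the other hand, $a^*$ applied to the chain \eqref{eq:f_*} for $g$ gives
\[ a^* g^* g_!(\blank \tensor \DS g) \to a^*. \]
The goal is to identify the two, after using $f^*b^* \eq a^*g^*$ to turn the first source into $a^* g^* g_!(\blank \tensor \DS g)$.

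The comparison is made step by step along the cube of pullback squares. Its front face consists of $\Delta$ and $\pr_1, \pr_2$ for $A \times_B A$, its back face consists of those for $C \times_D C$, and the connecting maps are $a$ and $a \times a$. I would check the following, each compatible with the base change by $a$ or $a \times a$:
\begin{enumerate}
  \item[(i)] The pull--push formula $f^* f_! \eq (\pr_2)_!\pr_1^*$ is compatible with $g^* g_! \eq (\pr_2)_!\pr_1^*$. This is a pasting of pull--push mates for composable pullback squares, which compose because mates are compatible with pasting.
  \item[(ii)] $\pr_1^*$ is symmetric monoidal, and this is natural in $a^*$.
  \item[(iii)] The counit $\pr_1^*(\pr_1)_* \to \id$ is compatible with the base-change equivalence $a^*(\pr_1)_* \eq (\pr_1)_*(a\times a)^*$ used in \cref{lemma:DS_pullback}. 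This is the second square of \cref{lemma:mates} applied to that mate.
  \item[(iv)] The projection formula for $\Delta$ commutes with base change, since the projection formula is built from units and counits and $a^*$ is symmetric monoidal.
\end{enumerate}

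The main obstacle is step~(iii), together with keeping track of the fact that the same pull--push equivalences for $\Delta$ and $\pr_1$ appear twice: once inside \cref{lemma:DS_pullback}, and once inside \eqref{eq:f_*}. I expect the cleanest route to be to work in the homotopy $2$-category, where all of these are ordinary $2$-cells. There, each comparison reduces to \cref{lemma:mates} plus the statement that base-change mates paste along composites of pullback squares. The remaining verifications are routine diagram chases.
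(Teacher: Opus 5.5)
Your proposal is correct and is essentially the paper's argument: the paper proves this by a diagram chase using \cref{lemma:mates} and \cref{lemma:projection_pullback}, and your reduction to adjoints followed by steps (i)--(iv) is that chase written out. In particular, step~(iii) is \cref{lemma:mates}, and step~(iv) is exactly \cref{lemma:projection_pullback} applied to the pullback square of diagonals from \cref{lemma:DS_pullback}, so you can cite it rather than re-justify it.
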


\begin{proof}
  This follows from a diagram chase using \cref{lemma:mates} and \cref{lemma:projection_pullback} below.
\end{proof}

\begin{lemma} \label{lemma:projection_pullback}
  Given the pullback square \eqref{eq:pullback}, let $P \in \Sp[D]$.
  Then the following cube commutes
  \[
  \begin{tikzcd}
     & \Sp[A] \ar{rr}{f_!} \ar[from=dd]{}[near start]{a^*} & & \Sp[B] \\
    \Sp[A] \ar[crossing over]{rr}[near end]{f_!} \urar[near end]{\tensor a^* g^* P} & & \Sp[B] \urar[swap, near start]{\tensor b^* P} & \\
     & \Sp[C] \ar{rr}[near start]{g_!} & & \Sp[D] \ar{uu}[swap]{b^*} \\
    \Sp[C] \ar{rr}{g_!} \ar{uu}{a^*} \urar[near end]{\tensor g^* P} & & \Sp[D] \ar[crossing over]{uu}[swap, near end]{b^*} \urar[swap, near start]{\tensor P}
  \end{tikzcd}
  \]
  where the faces are given by the monoidality of $a^*$ and $b^*$ and the respective pull--push and projection formulas (note that $a^* g^* P \eq f^* b^* P$).
\end{lemma}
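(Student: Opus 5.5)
The plan is to reduce the commutativity of the cube to an identity between transformations built only from units of adjunctions and the symmetric monoidal structure of the pullback functors. We do this by passing to adjoints along $f_! \dashv f^*$.

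Both composites around the cube are natural transformations
\[ f_! \bigl( a^*(\blank) \tensor_A a^* g^* P \bigr)  \longto  b^* \bigl( g_!(\blank) \tensor_D P \bigr) \]
of functors $\Sp[C] \to \Sp[B]$. Since $f_! \dashv f^*$, two such transformations agree if and only if their adjoints
\[ a^*(\blank) \tensor_A a^* g^* P  \longto  f^* b^* \bigl( g_!(\blank) \tensor_D P \bigr) \]
agree. I will compute these adjoints face by face.

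\emph{Pull--push faces.} The pull--push equivalence $f_! a^* \to b^* g_!$ is by definition the mate of $f^* b^* \eq a^* g^*$. By \cref{lemma:mates}, its adjoint is
\[ a^*  \xlongto{a^* \eta_g}  a^* g^* g_!  \eq  f^* b^* g_! . \]

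\emph{Projection-formula faces.} The projection formula for $f$ is defined as $\eta$, then monoidality of $f^*$, then $\epsilon$. By the triangle identity, its adjoint is $\eta_f \tensor \id$ followed by the monoidality equivalence of $f^*$; in particular the counit drops out. The same holds for $g$.

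\emph{Composites.} Along the path through the bottom and front faces, the adjoint becomes
\[ a^*(\blank) \tensor a^* g^* P  \eq  a^*\bigl((\blank) \tensor g^* P\bigr)  \xlongto{a^*(\eta_g \tensor \id)}  a^*\bigl(g^* g_!(\blank) \tensor g^* P\bigr)  \eq  a^* g^* \bigl(g_!(\blank) \tensor P\bigr)  \eq  f^* b^* \bigl(g_!(\blank) \tensor P\bigr) , \]
after using naturality of $a^*$'s monoidal structure and the counit-free description above. Along the path through the back and top faces, one obtains $a^* \eta_g$ on the first factor, followed by the monoidality equivalences of $f^*$ and $b^*$ and the identification $f^* b^* \eq a^* g^*$. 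Here one also uses naturality of the unit $\eta_f$ with respect to the map $a^*(\blank) \to f^* b^* g_!(\blank)$, so that $\eta_f$ is absorbed by the triangle identity.

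\emph{Main obstacle.} It remains to see that the equivalence $f^* b^* \eq a^* g^*$ is compatible with the monoidal structures, i.e.\ that it is a \emph{symmetric monoidal} natural equivalence. This holds because $\Fun(\blank, \Sp)$ with the pointwise tensor product is a functor $\opcat{\An} \to \Catinfmon$: the pullback functors are restrictions, and the square \eqref{eq:pullback} commutes in $\An$. Granting this, both adjoints are the same composite of $a^*(\eta_g \tensor \id)$ with canonical monoidal coherence equivalences, so they agree and the cube commutes.

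I expect this last bookkeeping to be the delicate point. Everything is homotopy-coherent, but we only need an equality of two specified 2-cells, i.e.\ a homotopy between two maps. This is a statement in the homotopy 2-category, so the diagram chase is legitimate there.
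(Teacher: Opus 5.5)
Your proposal is correct and follows the paper's approach. The paper's proof is the one-line remark that the cube commutes by a diagram chase using \cref{lemma:mates}. Your argument carries out that chase in full: you pass to adjoints along $f_! \dashv f^*$, use \cref{lemma:mates} for the pull--push faces and the triangle identities for the projection-formula faces, and finish with the symmetric monoidality of the equivalence $f^*b^* \simeq a^*g^*$, which is the step the paper leaves implicit.
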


\begin{proof}
  This follows from a diagram chase using \cref{lemma:mates}.
\end{proof}

\subsection{Rational parametrized spectra}

In this subsection, we recall the definition of rational parametrized spectra and the associated rationalization functor.
We furthermore prove that the resulting $\infty$-categories are invariant under rational equivalences of the bases (under certain nilpotence conditions).

\begin{notation}
  Let $X$ be an anima, $x \colon * \to X$ a map, $E$ an $X$-spectrum, and $n \in \ZZ$.
  Then we write $\pi_n(E, x) \defeq \pi_n(x^* E)$ for the fiberwise homotopy groups of $E$ at the point $x$.
\end{notation}

\begin{definition}
  Let $X$ be an anima.
  We write $\rat {(\blank)} \colon \Sp[X] \to \Sp[X]$ for the functor $\blank \tensor_X X^*(\H \QQ)$.
  A map of $X$-spectra $f \colon E \to E'$ is a \emph{rational equivalence} if $\rat f$ is an equivalence.
  An $X$-spectrum $E$ is \emph{rational} if $\pi_n(E, x)$ is uniquely divisible (i.e.\ a rational vector space) for all $n \in \ZZ$ and $x \in X$.
  We denote by $\Sp[X]^\QQ \subseteq \Sp[X]$ the full subcategory spanned by the rational $X$-spectra.
\end{definition}

For a spectrum $A$, the canonical map $\pi_*(A) \tensor \QQ \to \pi_*(\rat A)$ is an isomorphism.%
\footnote{The collection of those $A$ for which the map is an isomorphism contains $\SS$ and is closed under taking retracts, (co)fibers, and filtered colimits; by \cite[Proposition~7.2.4.2]{LurHA} this collections thus contains all objects of $\Sp$.}
This implies that the essential image of $\rat {(\blank)}$ is precisely $\Sp[X]^\QQ$.
Moreover a map of $X$-spectra $f \colon E \to E'$ is a rational equivalence if and only if $\pi_*(f, x) \tensor \QQ$ is an isomorphism for all $x \in X$.
In particular the unit map $\SS \to \H \QQ$ is a rational equivalence, so that $X^*(\H \QQ)$ is an idempotent object of $\Sp[X]$ in the sense of \cite[Definition~4.8.2.1]{LurHA}.
By \cite[Proposition~4.8.2.7]{LurHA}, this implies that $\rat {(\blank)} \colon \Sp[X] \to \Sp[X]^\QQ$ is a symmetric monoidal localization functor.

As in the case of animas, the rational homotopy theory of parametrized spectra is better behaved on a certain class of nilpotent objects.
The following definition is taken from \cite[§2.2]{Bra}.

\begin{definition} \label{def:nilpotent_spectrum}
  Let $X$ be an anima.
  Then an $X$-spectrum $E$ is \emph{nilpotent} if the action of $\pi_1(X, x)$ on $\pi_n(E, x)$ is nilpotent for each $n \in \ZZ$ and $x \in X$.
  We denote by $\Sp[X]_\nil \subseteq \Sp[X]$ the full subcategory spanned by the nilpotent $X$-spectra.
\end{definition}

\subsubsection*{Invariance under base change}

We will now prove that the $\infty$-category of nilpotent rational parametrized spectra is invariant under rational equivalences of the bases.
To this end, we adapt the argument for group actions of Hilton--Mislin--Roitberg \cite[Ch.~I, Theorem~4.8]{HMR}.
We begin with an unstable version of the desired statement.

\begin{definition} \label{def:fiberwise}
  Let $B$ be an anima, and $X$ an anima over $B$.
  We say that $X$ has a property \emph{fiberwise} if each fiber of the structure map $X \to B$ has the property.
  We write $\Anover[\QQ]{B}$, $\Anover[\ft]{B}$, $\Anover[\ge 1]{B}$, and $\Anover[\fnil]{B}$ for the full subcategories of $\Anover{B}$ spanned by those animas over $B$ that are fiberwise rational, fiberwise of finite rational type, fiberwise connected, and fiberwise nilpotent, respectively.
  When $B$ is connected, we furthermore write $\Anover[\nil]{B}$ for the full subcategory spanned by those objects $f \colon X \to B$ such that $f$ is nilpotent.
  Multiple superscripts indicate the intersection of the respective full subcategories.
\end{definition}

\begin{notation}
  For an anima $B$, we denote by $\Retr{B} \defeq \undercat {(\Anover{B})} {\id_B}$ the $\infty$-category of \emph{retractive animas over $B$}.
  As in \cref{def:fiberwise}, we have full subcategories $\Retr[\QQ]{B}$, $\Retr[\ft]{B}$, $\Retr[\ge 1]{B}$, and $\Retr[\fnil]{B}$, as well as $\Retr[\nil]{B}$ when $B$ is connected.
  Multiple superscripts indicate the intersection of the respective full subcategories.
\end{notation}

\begin{lemma} \label{lemma:param_An_rational_eq}
  Let $f \colon X \to Y$ be a rational equivalence between nilpotent animas.
  Then the respective pullback functors $f^*$ restrict to equivalences $\Anover[\QQ,\nil]{Y} \eq \Anover[\QQ,\nil]{X}$ and $\Retr[\QQ,\nil]{Y} \eq \Retr[\QQ,\nil]{X}$.
\end{lemma}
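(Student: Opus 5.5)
The plan follows Hilton--Mislin--Roitberg: first show that the relevant objects over $Y$ are exactly the rationalizations, then deduce full faithfulness and essential surjectivity of $f^*$ from universal properties. Since $f$ is a rational equivalence of nilpotent animas, \cref{lemma:localization} gives $\rat f \colon \rat X \eq \rat Y$. So we may assume $Y = \rat X$ and $f$ is the rationalization map $\eta \colon X \to \rat X$; the general case then follows by applying this to $\eta_X$, $\eta_Y$ and the equivalence $\rat f$, since $f^* \eta_Y^* \eq \eta_X^* (\rat f)^*$ and the equivalence $\rat f$ clearly induces equivalences.

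\textbf{Key reformulation.} For $p \colon E \to B$ nilpotent with fiber $F$ and $B$ nilpotent, $E$ is nilpotent by \cref{lemma:nilpotent_composite}. Moreover the following are equivalent:
\begin{itemize}
  \item $F$ is rational;
  \item $E \to \rat E \times_{\rat B} B$ is an equivalence.
\end{itemize}
The map $\rat E \to \rat B$ has connected fiber $F'$, and by \cref{lemma:rat_pullback} applied to the fiber sequence, $F'$ is a rationalization of $F$. Comparing fibers over $B$ then proves the equivalence.

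\textbf{Full faithfulness.} Take $E, E' \in \Anover[\QQ,\nil]{Y}$ with $Y$ rational. Then $E$ and $E'$ are nilpotent with rational fibers, hence rational: the total space of a nilpotent fibration with rational fiber and base is rational, by the rationalization of fiber sequences (\cref{lemma:rat_pullback}) and the fact that rational equals local. Now
\[ \Map_{/Y}(E, E') = \fib\bigl(\Map(E, E') \to \Map(E, Y)\bigr). \]
Also $\Map_{/X}(f^*E, f^*E') \eq \Map_{/Y}(f^*E, E')$ is the fiber of $\Map(f^*E, E') \to \Map(f^*E, Y)$. The map $f^*E \to E$ is a rational equivalence: it is the pullback of $f$ along a nilpotent map, so apply \cref{lemma:rat_pullback} together with the connectedness of the fibers. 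Since $E'$ and $Y$ are local, restricting along $f^*E \to E$ is an equivalence on both mapping animas, which gives full faithfulness.

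\textbf{Essential surjectivity.} Given $E \to X$ nilpotent with rational fiber, set $E_Y \defeq \rat E \to \rat X = Y$. It is nilpotent by \cite[Ch.~II, Lemma~4.8]{BK}, its fiber is rational by the computation above, and $f^*E_Y \eq E$ by the key reformulation. The retractive version follows by passing to undercategories of $\id_Y$ and $\id_X$, since $f^*$ preserves these and $f^*(\id_Y) = \id_X$.

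\textbf{Main obstacle.} The delicate point is that \cref{lemma:rat_pullback} is stated for pointed maps and basepoint components, so basepoints and connectedness must be handled carefully. Here $X$ is connected and the fibers are connected by the nilpotence assumption, so all the pullbacks involved are connected and the lemma applies.
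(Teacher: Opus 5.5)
Your proof is correct. It shares the paper's reduction to the rationalization map $r \colon X \to \rat X$. Your essential-surjectivity step, identifying $E \eq X \times_{\rat X} \rat E$ by comparing fibers, is exactly the paper's argument that the unit $\id \to r^* q$ is an equivalence, where $q$ is fiberwise rationalization.

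The difference is in the other half. The paper builds $q$ as a functor $\Retr[\QQ,\nil]{X} \to \Retr[\QQ,\nil]{\rat X}$. It then shows that the counit $q r^* \to \id$ is an equivalence by applying \cref{lemma:rat_pullback} to the pullback $P = X \times_{\rat X} B$. You instead prove full faithfulness directly from the universal property of $\QQ$-localization. Since $f^*E \to E$ is a rational equivalence and $E'$ and $Y$ are $\QQ$-local, restriction is an equivalence on both $\Map(\blank, E')$ and $\Map(\blank, Y)$, and hence on the fibers $\Map_{/Y}$.

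You also get the retractive case formally, by passing to undercategories at $\id_Y \in \Anover[\QQ,\nil]{Y}$. The paper only remarks that the proof is identical.

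What each route buys:
\begin{itemize}
  \item Your route never has to make $q$ and its unit and counit coherent, because fully faithful plus essentially surjective suffices.
  \item The paper's route exhibits the inverse explicitly as fiberwise rationalization. It reuses that description later, in the proof of \cref{lemma:Susp_model_eq}.
\end{itemize}

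One small point you leave implicit, which the paper states: $f^*$ actually lands in the nilpotent subcategory. This holds because the fiber is unchanged and the $\pi_1$-action is pulled back along $\pi_1(X \times_Y E) \to \pi_1(E)$.
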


\begin{proof}
  We will prove the version for retractive animas; the proof of the other case is identical.
  First note that $f^*$ indeed restricts to a functor $\Retr[\QQ,\nil]{Y} \to \Retr[\QQ,\nil]{X}$.
  We will prove that it is an equivalence in the case of the rationalization map $r \colon X \to \rat X$.
  This implies the general case since $\rat f$ is an equivalence.
  
  To this end, consider the functor $q \colon \Retr{X} \to \Retr{\rat X}$ induced by rationalization, i.e.\ it sends an object $a \colon A \to X$ to $\rat a \colon \rat A \to \rat X$.
  Note that, given a nilpotent map $a \colon A \to X$ with $X$ nilpotent, the anima $A$ is nilpotent by \cref{lemma:nilpotent_composite}.
  Moreover the map $\rat a$ is nilpotent and the induced map from the fiber of $a$ to the fiber of $\rat a$ is a rationalization by \cref{lemma:rat_pullback}.
  In particular $q$ restricts to a functor $\Retr[\QQ,\nil]{X} \to \Retr[\QQ,\nil]{\rat X}$.
  We claim that it is quasi-inverse to $r^*$.
  
  To this end, let $a \colon A \to X$ be an object of $\Retr[\QQ,\nil]{X}$.
  Then we obtain a map of fiber sequences
  \[
  \begin{tikzcd}
    \fib a \rar \dar[swap]{\eq} & A \rar{a} \dar & X \dar{r} \\
    \fib \rat a \rar & \rat A \rar{\rat a} & \rat X
  \end{tikzcd}
  \]
  where the induced map on the fibers is an equivalence since $\fib a$ is rational.
  Hence the right-hand square is a pullback, which implies that the canonical natural transformation $\id \to r^* q$ is an equivalence on $\Retr[\QQ,\nil]{X}$.
  
  Now let $b \colon B \to \rat X$ be an object of $\Retr[\QQ,\nil]{\rat X}$ and consider the map of fiber sequences
  \[
  \begin{tikzcd}
    \fib b \rar \dar[equal] & P \rar \dar[swap]{p} & X \dar{r} \\
    \fib b \rar & B \rar{b} & \rat X
  \end{tikzcd}
  \]
  where we define $P$ to be the pullback of the lower right-hand corner.
  Note that $B$ is nilpotent; thus it is rational since both $\rat X$ and the fiber of $b$ are.
  Hence we obtain a map $\rat p \colon \rat P \to \rat B \eq B$, i.e.\ a natural transformation $\epsilon \colon q r^* \to \id$ on $\Retr[\QQ,\nil]{\rat X}$.
  Since $r$ is a rational equivalence and all animas involved are nilpotent, it follows from \cref{lemma:rat_pullback} that $\epsilon$ is an equivalence.
\end{proof}

Our goal is now to deduce a stable version of the preceding lemma.
To this end, we first prove that (rational) nilpotent parametrized spectra over $X$ arise as the stabilization of the $\infty$-category of (fiberwise rational) nilpotent retractive animas over $X$.

\begin{lemma} \label{lemma:param_spectrum_ob}
  Let $X$ be a connected anima.
  Then there are natural equivalences
  \[ \Sp[X]_\nil \eq \Spob(\Retr[\nil]{X})  \qquad \text{and} \qquad  \Sp[X]^\QQ_\nil \eq \Spob(\Retr[\QQ,\nil]{X}) \]
  of contravariant functors from animas to $\infty$-categories.
\end{lemma}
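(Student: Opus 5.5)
We first identify $\Sp[X]$ with $\Spob(\Retr{X})$ and then cut down to the nilpotent objects on both sides. For any anima $X$, straightening \eqref{eq:straighten} gives $\Retr{X} \eq \Fun(X, \Anpt)$. Stabilization commutes with functor categories, since limits in $\Fun(X, \blank)$ are computed pointwise. Hence
\[ \Spob(\Retr{X}) \eq \Spob\bigl(\Fun(X, \Anpt)\bigr) \eq \Fun\bigl(X, \Spob(\Anpt)\bigr) \eq \Fun(X, \Sp) = \Sp[X], \]
where $\Loopsinf$ corresponds to the pointwise infinite loop functor. Naturality in $X$ under pullback holds because every step is natural in the contravariant functoriality $X \mapsto \Fun(X, \blank)$; for the straightening step this is \cite[Corollary~A.32]{GHN}.

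Next we show that nilpotence matches on the two sides. A spectrum object in $\Retr{X}$ is a sequence $(A_n)_n$ of retractive animas with $A_n \eq \Omega_X A_{n+1}$. The subcategory $\Retr[\nil]{X}$ is closed under finite limits taken in $\Retr{X}$. Indeed, a fiberwise pullback of nilpotent maps with connected total fibers is again nilpotent on basepoint components, by \cref{lemma:rat_pullback} and \cref{lemma:nilpotent_composite}, and nilpotent actions are closed under extensions. So $\Spob(\Retr[\nil]{X})$ is a full subcategory of $\Spob(\Retr{X})$ and it remains to determine its image.

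The delicate point is connectivity: nilpotent objects have connected fibers, so $\Omega_X$ of them need not stay in the subcategory. I would therefore define $\Spob$ via reduced excisive functors from finite pointed animas, as in \cite[§1.4.2]{LurHA}. For such a functor $F$, the value $F(S^n)$ is computed as the pullback $X \times_{F(S^{n-1})} X$. The $\Omega$-spectrum $A_n$ corresponding to an $X$-spectrum $E$ has fibers $\Loopsinf \Sigma^n x^*E$, and I would replace these by their connected covers $\tau_{\ge 1}$ at the basepoint section. The homotopy groups of this cover are $\pi_k(E, x)$ in the range $k + n \ge 1$, with the induced $\pi_1(X)$-actions; for $n = 1$ one also gets the $\pi_1(X)$-action on the fiber's $\pi_1$. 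This action is nilpotent for all $n$ exactly when $E$ is nilpotent in the sense of \cref{def:nilpotent_spectrum}. Passing to connected covers does not change the limit $\lim_n \Omega^n_X$, so the image of $\Spob(\Retr[\nil]{X})$ is exactly $\Sp[X]_\nil$.

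For the rational version we intersect with fiberwise rational retractive animas. A nilpotent anima is rational if and only if its homotopy groups are uniquely divisible, with Malcev-completeness required for $\pi_1$; in the stable range these groups are abelian. So under the above correspondence, fiberwise rationality of the $A_n$ for all $n$ matches rationality of $E$, since $\pi_k(E,x)$ appears as a fiber homotopy group of $A_n$ for large $n$. Naturality of both equivalences follows because $f^*$ preserves all these conditions. I expect the main obstacle to be the bookkeeping of the connectivity truncation, in particular showing that the inclusion $\Spob(\Retr[\nil]{X}) \to \Spob(\Retr{X})$ is well defined and fully faithful despite $\Retr[\nil]{X}$ not being closed under $\Omega_X$. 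Note that connectedness of $X$ is needed for $\Retr[\nil]{X}$ to be defined.
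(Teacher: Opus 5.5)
There is a genuine gap where you pass from $\Retr{X}$ to the nilpotent subcategory. Your claim that $\Retr[\nil]{X}$ is closed under finite limits taken in $\Retr{X}$ is false. For example, $X \times S^1 \to X$ with section $X \times \{*\}$ lies in $\Retr[\nil]{X}$, but its fiberwise loop object $X \times_{X \times S^1} X \eq X \times \ZZ$ is not fiberwise connected. For the rational version, use $K(\QQ,1)$ in place of $S^1$.

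What you actually check (``nilpotent on basepoint components'') is closure under \emph{truncated} pullbacks. So the inclusion $\Retr[\nil]{X} \to \Retr{X}$ is not left exact, and composing with it does not preserve reduced excisive functors. Hence there is no induced functor $\Spob(\Retr[\nil]{X}) \to \Spob(\Retr{X})$ at all, let alone a fully faithful one. You notice this in your next paragraph, but the remedy (``passing to connected covers does not change $\lim_n \Omega^n_X$'') is only asserted. You then list exactly this as the unresolved main obstacle, so the comparison functor on which the rest depends is never constructed. Also, excisiveness gives $F(S^{n-1}) \eq X \times_{F(S^n)} X$, not the formula you wrote.

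The missing idea is to go through $\Retr[\ge 1]{X} \eq \Fun(X, \Anpt^{\ge 1})$. Its finite limits are the basepoint components of those in $\Retr{X}$, since $\tau_{\ge 1}$ is a right adjoint. The paper's proof has three steps.
\begin{enumerate}
\item It proves $\Spob(\Anpt^{\ge 1}) \eq \Spob(\Anpt) \eq \Sp$. The inclusion $\iota \colon \Anpt^{\ge 1} \subset \Anpt$ and the suspension functors (which land in connected objects) form a commutative diagram in $\PrL$. Applying $\Sp \tensor \blank$ makes both suspensions equivalences, so $\Sp \tensor \iota$ is one too. Hence $\Sp[X] \eq \Spob(\Retr[\ge 1]{X})$ naturally in $X$. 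This step can also be done by hand: on spectrum objects, $(B_n)_n \mapsto (\Omega B_{n+1})_n$ is inverse to $(\tau_{\ge 1})_*$.
\item It shows that $\Retr[\nil]{X}$ and $\Retr[\QQ,\nil]{X}$ are closed under finite limits \emph{of $\Retr[\ge 1]{X}$}. This uses \cite[Corollary~3.8]{HRS} to express nilpotence of a retractive map via the $\pi_1(X)$-action on the fiber. Note that the definition of a nilpotent map refers to $\pi_1$ of the total space, so your tacit switch to $\pi_1(X)$-actions needs this input. It also uses \cref{lemma:rat_pullback} applied fiberwise, and closure of nilpotent actions under subgroups, quotients and extensions. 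This yields a fully faithful functor $\Spob(\Retr[\nil]{X}) \to \Spob(\Retr[\ge 1]{X}) \eq \Sp[X]$.
\item It identifies the image as you intend: $M$ is nilpotent iff each $\tau_{\ge 1} \Loopsinf[X] \Susp[n] M$ lies in the nilpotent subcategory. These objects are fiberwise H-spaces and hence simple.
\end{enumerate}
Your homotopy-group bookkeeping is essentially step~3. What is missing is the intermediate category and step~1, which together make the comparison functor exist.
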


\begin{proof}
  Note that $\Sp = \Spob(\An) \eq \Spob(\Anpt)$ by \cite[Remark~1.4.2.18]{LurHA}; we will begin by proving that this is equivalent to $\Spob(\Anpt^{\ge 1})$, where $\Anpt^{\ge 1} \subset \Anpt$ denotes the full subcategory spanned by the connected pointed animas.
  More precisely it is given by the pullback
  \[
  \begin{tikzcd}
    \Anpt^{\ge 1} \ar[r] \ar[d, "\iota"'] & \{ * \} \ar[d] \\
    \Anpt \ar[r, "\pi_0"] & \Setpt
  \end{tikzcd}
  \]
  in $\Catinf$.
  Since $\pi_0 = \tau_{\leq 0}$ is a left adjoint by \cite[Proposition~5.5.6.18]{LurHTT}, this is also a pullback diagram in $\PrL$ by \cite[Proposition~5.5.3.13]{LurHTT}.
  In particular the functor $\iota$ is left adjoint; since it also preserves the terminal object, it thus commutes with suspensions.
  Since the suspension functor of $\Anpt$ lands in $\Anpt^{\ge 1}$, we obtain a commutative diagram
  \begin{equation} \label{eq:Ange1_Susp}
  \begin{tikzcd}
    \Anpt^{\ge 1} \ar[r, "\Susp"] \ar[d, "\iota"'] & \Anpt^{\ge 1} \ar[d, "\iota"] \\
    \Anpt \ar[r, "\Susp"] \ar[ur] & \Anpt
  \end{tikzcd}
\end{equation}
  in $\PrL$.
  By \cite[Example~4.8.1.23]{LurHA}, there is an equivalence $\Spob(\cat C) \eq \Sp \otimes \cat C$ for a presentable $\infty$-category $\cat C$.
  We claim that, if $\cat C$ is furthermore pointed, we have $\Sp \tensor \Susp_{\cat C} \eq \Susp_{\Spob(\cat C)}$.
  Indeed, using \cref{lemma:Spobj}, this follows from the equivalences
  \[ ( \Sp \tensor \Susp_{\cat C} ) \circ \Suspinf[\cat C]  \eq  {\Suspinf[\cat C]} \circ \Susp_{\cat C}  \eq  {\Susp_{\Spob(\cat C)}} \circ \Suspinf[\cat C] \]
  where the first one follows from $\Suspinf[\cat C] \eq {\Suspinf[\An]} \tensor \cat C$ (see \cite[Proof of Proposition~4.8.2.18]{LurHA}), and the second one from $\Suspinf \colon \cat C \to \Spob(\cat C)$ being a left adjoint, which also preserves the terminal object since $\cat C$ is pointed.
  Applying $\Sp \otimes \blank$ to the diagram \eqref{eq:Ange1_Susp} and using that $\Sp \tensor \Susp$ is an equivalence by the claim, we conclude that $\Spob(\Anpt^{\ge 1}) \eq \Sp$.
  
  Now note that
  \[ \Sp[X]  =  \Fun(X, \Sp)  \eq  \Fun \bigl( X, \Spob(\Anpt^{\ge 1}) \bigr)  \eq  \Spob \bigl( \Fun(X, \Anpt^{\ge 1}) \bigr) \]
  using \cite[Remark~1.4.2.9]{LurHA}.
  By unstraightening as in \eqref{eq:straighten}, we furthermore have equivalences
  \[ \Fun(X, \Anpt^{\ge 1}) \eq \undercat{\Fun(X, \An^{\ge 1})}{\const_*} \eq \undercat{(\Anover[\ge 1]{X})}{\id_X} = \Retr[\ge 1]{X} \]
  that are natural in $X$.
  
  We claim that the full subcategories $\Retr[\nil]{X} \subset \Retr[\ge 1]{X}$ and $\Retr[\QQ,\nil]{X} \subset \Retr[\nil]{X}$ are closed under finite limits; by \cite[Corollaries~4.4.2.4 and 4.4.2.5]{LurHTT} it is enough to show that they contain the terminal object and are closed under pullbacks.
  First note that $\Retr[\QQ,\nil]{X}$ does in fact contain the terminal object $\id_X$.
  By \cite[Corollary~3.8]{HRS}, a map $f \colon E \to X$ under $\id_X$ is nilpotent if and only if $F \defeq \fib f$ is nilpotent and the action of $\pi_1(X)$ on $\pi_n(F)$, induced by the corresponding functor $X \to \Anpt$, is nilpotent for all $n \ge 1$.
  Hence $\Retr[\nil]{X}$ is equivalent to the full subcategory $\Fun(X, \Anpt^\nil)^\nil \subset \Fun(X, \Anpt^{\ge 1})$ spanned by those functors $F$ such that, for all $x \in X$, the anima $F(x)$ is nilpotent and the induced action of $\pi_1(X, x)$ on $\pi_n(F(x))$ is nilpotent for all $n \ge 1$.
  Now let the following diagram be a map of horizontal fiber sequences in $\Fun(X, \Anpt)$
  \begin{equation} \label{eq:pullback_nilpotent}
    \begin{tikzcd}
      F \rar \dar[equal] & P \rar \dar & A \dar{a} \\
      F \rar & B \rar & C &
    \end{tikzcd}
  \end{equation}
  such that the right-hand square is a pullback and $A(x)$, $B(x)$, and $C(x)$ are connected for all $x \in X$.
  If $A(x)$, $B(x)$, and $C(x)$ are nilpotent, then by \cref{lemma:rat_pullback} so is $\tau_{\ge 1} P(x)$, where $\tau_{\ge 1} \colon \Anpt \to \Anpt^{\ge 1}$ denotes the right adjoint of the fully faithful inclusion $\iota \colon \Anpt^{\ge 1} \to \Anpt$, i.e.\ the functor given by taking the connected component of the basepoint.
  Since $\tau_{\ge 1}$ preserves limits, this implies that $\Fun(X, \Anpt^\nil) \subset \Fun(X, \Anpt^{\ge 1})$ is closed under pullbacks.
  If we furthermore assume that $A(x)$, $B(x)$, and $C(x)$ are rational, then so is $\tau_{\ge 1} P(x)$ by the same lemma.
  Hence $\Fun(X, \Anpt^{\QQ,\nil}) \subset \Fun(X, \Anpt^\nil)$ is closed under pullbacks.
  Now assume that in the situation of \eqref{eq:pullback_nilpotent} the action of $\pi_1(X, x)$ on the homotopy groups of $B(x)$ and $C(x)$ is nilpotent.
  Since the class of nilpotent actions is closed under taking subgroups, quotients, and extensions, this implies that the action of $\pi_1(X, x)$ on the homotopy groups of $\tau_{\ge 1} F(x)$ is nilpotent; if additionally the action on the homotopy groups of $A(x)$ is nilpotent, then the same argument implies that this also holds for $\tau_{\ge 1} P(x)$.
  Hence $\Fun(X, \Anpt^{\ge 1})^\nil \subseteq \Fun(X, \Anpt^{\ge 1})$ is closed under pullbacks, completing the proof of the claim.
  
  By the claim, the inclusion of $\Retr[\nil]{X} \eq \Fun(X, \Anpt^\nil)^\nil$ into $\Retr[\ge 1]{X} \eq \Fun(X, \Anpt^{\ge 1})$ preserves finite limits.
  In particular we can apply $\Spob(\blank)$ to obtain the commutative diagram
  \[
  \begin{tikzcd}[column sep = 35]
    \Spob \bigl( \Fun(X, \Anpt^\nil)^\nil \bigr) \rar \dar & \Spob \bigl( \Fun(X, \Anpt^{\ge 1}) \bigr) \dar & \lar{(\tau_{\ge 1})_*}[swap]{\eq} \Spob \bigl( \Fun(X, \Anpt) \bigr) \dar{\Loopsinf[X]} \\
    \Fun(X, \Anpt^\nil)^\nil \rar & \Fun(X, \Anpt^{\ge 1}) & \lar[swap]{\tau_{\ge 1}} \Fun(X, \Anpt)
  \end{tikzcd}
  \]
  where all vertical maps are given by the corresponding $\Loopsinf$ functor.
  Note that $\tau_{\ge 1} \after \Loopsinf[X]$ lands in $\Fun(X, \Anpt^\nil)$ since its value at any $x \in X$ is an H-space and hence simple (see e.g.\ \cite[Corollary~1.4.5]{MP}).
  Moreover an object $M \in \Spob ( \Fun(X, \Anpt) ) \eq \Sp[X]$ lies in $\Sp[X]_\nil$ if and only if $\tau_{\ge 1} \Loopsinf[X] (\Susp[n] M)$ lies in $\Fun(X, \Anpt^{\ge 1})^\nil$ for all $n \in \ZZ$.
  Hence the fully faithful functor $\Spob(\Retr[\nil]{X}) \to \Spob(\Retr[\ge 1]{X})$ restricts to an equivalence $\Spob(\Retr[\nil]{X}) \eq \Sp[X]_\nil$.
  By a similar argument the fully faithful functor $\Spob(\Retr[\QQ,\nil]{X}) \to \Spob(\Retr[\ge 1]{X})$ restricts to an equivalence $\Spob(\Retr[\QQ,\nil]{X}) \eq \Sp[X]^\QQ_\nil$.
\end{proof}

\begin{proposition} \label{lemma:param_Sp_rational_eq}
  Let $f \colon X \to Y$ be a rational equivalence between nilpotent animas.
  Then the functor $f^*$ restricts to an equivalence
  \[ \Sp[Y]^\QQ_\nil  \xlongto{\eq}  \Sp[X]^\QQ_\nil \]
  of $\infty$-categories.
\end{proposition}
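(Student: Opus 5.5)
The plan is to deduce the statement directly from the unstable \cref{lemma:param_An_rational_eq} by stabilizing, with \cref{lemma:param_spectrum_ob} converting between the two sides. Since $X$ and $Y$ are nilpotent, they are in particular connected, so \cref{lemma:param_spectrum_ob} applies to both. It gives equivalences $\Sp[Y]^\QQ_\nil \eq \Spob(\Retr[\QQ,\nil]{Y})$ and $\Sp[X]^\QQ_\nil \eq \Spob(\Retr[\QQ,\nil]{X})$. These are natural with respect to the contravariant functoriality in the base, so under them the functor $f^* \colon \Sp[Y]^\QQ_\nil \to \Sp[X]^\QQ_\nil$ corresponds to $\Spob(f^*)$, where $f^* \colon \Retr[\QQ,\nil]{Y} \to \Retr[\QQ,\nil]{X}$ is pullback of retractive animas.

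First we check that the restricted functor $f^*$ is well defined on the unstable side. This was already noted at the start of the proof of \cref{lemma:param_An_rational_eq}: pulling back along $f$ preserves fibers, hence fiberwise rationality, and preserves nilpotence of the structure map. The same holds for spectra, since $\pi_n(f^*E, x) = \pi_n(E, f(x))$ and the $\pi_1(X,x)$-action factors through $\pi_1(Y, f(x))$, and a nilpotent action restricts to a nilpotent action. By \cref{lemma:param_An_rational_eq}, $f^* \colon \Retr[\QQ,\nil]{Y} \to \Retr[\QQ,\nil]{X}$ is an equivalence of $\infty$-categories. In particular it preserves finite limits, and both categories have finite limits by the closure argument in the proof of \cref{lemma:param_spectrum_ob}. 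Hence $\Spob(f^*)$ is defined.

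Since $\Spob(\blank)$ is a functor on $\infty$-categories with finite limits and finite-limit-preserving functors, it sends equivalences to equivalences. So $\Spob(f^*)$ is an equivalence, and therefore so is $f^*$ on rational nilpotent parametrized spectra.

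The one point needing care is the identification of $\Spob(f^*)$ with the spectral pullback. This is exactly the naturality assertion in \cref{lemma:param_spectrum_ob}. That naturality comes from the naturality of unstraightening \eqref{eq:straighten} in the base and of $\Fun(\blank, \Spob(\Anpt^{\ge 1})) \eq \Spob(\Fun(\blank, \Anpt^{\ge1}))$. So I expect no genuine obstacle beyond invoking it correctly.
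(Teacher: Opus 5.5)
Your proposal is correct and is essentially the paper's proof, which simply combines the natural identification $\Sp[X]^\QQ_\nil \eq \Spob(\Retr[\QQ,\nil]{X})$ of \cref{lemma:param_spectrum_ob} with the unstable equivalence of \cref{lemma:param_An_rational_eq} and applies $\Spob$. Your additional checks, that $f^*$ preserves the relevant subcategories and that naturality identifies the spectral $f^*$ with $\Spob(f^*)$, are exactly the details the paper leaves implicit.
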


\begin{proof}
  This follows from \cref{lemma:param_spectrum_ob,lemma:param_An_rational_eq}.
\end{proof}

\section{The fiberwise THH transfer as a trace} \label{sec:trace}

In this section, we begin by recalling the definition of topological Hochschild homology (THH for short) as a higher categorical trace, following Hoyois--Scherotzke--Sibilla \cite{HSS}.
We then use work of Carmeli--Cnossen--Ramzi--Yanovski \cite{CCRY} to prove that fiberwise THH, and its transfer map, can be computed internally to parametrized spectra.

\subsection{Dualizable objects and the trace}

As a preparation for the rest of this section, we recall here the notion of a dualizable object of a symmetric monoidal ($\infty$-)category, as well as the definition of the trace of an endomorphism of a dualizable object.

\begin{definition} \label{def:dualizable}
  An object $C$ of a symmetric monoidal $\infty$-category $\cat C$ is \emph{dualizable} if there exists an object $\dual C \in \cat C$ and maps $\coev_C \colon \unit \to C \tensor \dual C$ and $\ev_C \colon \dual C \tensor C \to \unit$ such that the \emph{triangle identities} hold, i.e.\ both of the composites
  \[ C  \xlongto{{\coev_C} \tensor \id}  C \tensor \dual C \tensor C  \xlongto{{\id} \tensor \ev_C}  C  \qquad \text{and} \qquad  \dual C  \xlongto{{\id} \tensor \coev_C}  \dual C \tensor C \tensor \dual C  \xlongto{{\ev_C} \tensor \id}  \dual C \]
  are homotopic to the identity.
\end{definition}

By the $1$-dimensional cobordism hypothesis (see Harpaz \cite{Har1}), given a dualizable object $C$ of $\cat C$, the space of choices for $\dual C$, $\ev_C$, $\coev_C$, and the coherence data connecting them, is contractible.

\begin{definition}
  Let $f \colon X \to Y$ be a map between dualizable objects of a symmetric monoidal $\infty$-category.
  Then the composite
  \[ \dual{f}  \colon  \dual{Y}  \xlongto{{\id} \tensor \coev_X}  \dual{Y} \tensor X \tensor \dual{X}  \xlongto{{\id} \tensor f \tensor \id}  \dual{Y} \tensor Y \tensor \dual{X}  \xlongto{\ev_Y \tensor \id}  \dual{X} \]
  is the \emph{dual morphism} of $f$.
\end{definition}

\begin{observation} \label{obs:dual_morphism}
  Let $f \colon X \to Y$ be a map between dualizable objects of a symmetric monoidal $\infty$-category.
  Then the two diagrams
  \[
  \begin{tikzcd}
    \unit \rar{\coev} \dar[swap]{\coev} & Y \tensor \dual Y \dar{{\id} \tensor \dual f} & \dual Y \tensor X \rar{{\id} \tensor f} \dar[swap]{\dual f \tensor \id} & \dual Y \tensor Y \dar{\ev} \\
    X \tensor \dual X \rar{f \tensor \id} & Y \tensor \dual X & \dual X \tensor X \rar{\ev} & \unit
  \end{tikzcd}
  \]
  canonically commute using the triangle identities.
\end{observation}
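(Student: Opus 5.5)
Both squares follow by unwinding the definition of $\dual f$ and then applying one triangle identity; the only care needed is the bookkeeping of symmetric monoidal structure (associators and symmetries), which we suppress as usual. For the first square, we expand $({\id} \tensor \dual f) \after \coev_Y$ as the composite
\[ \unit \xlongto{\coev_Y} Y \tensor \dual Y \xlongto{{\id} \tensor {\id} \tensor \coev_X} Y \tensor \dual Y \tensor X \tensor \dual X \xlongto{{\id} \tensor {\id} \tensor f \tensor {\id}} Y \tensor \dual Y \tensor Y \tensor \dual X \xlongto{{\id} \tensor \ev_Y \tensor {\id}} Y \tensor \dual X . \]
By functoriality of $\tensor$ (interchange), the first two maps can be swapped, giving $\coev_Y \tensor \coev_X \colon \unit \to Y \tensor \dual Y \tensor X \tensor \dual X$. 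The map $f$ acts only on the $X$-factor, so we may likewise slide it past $\coev_Y$. The composite then becomes
\[ \unit \xlongto{\coev_X} X \tensor \dual X \xlongto{f \tensor \id} Y \tensor \dual X \xlongto{\coev_Y \tensor {\id} \tensor {\id}} Y \tensor \dual Y \tensor Y \tensor \dual X \xlongto{{\id} \tensor \ev_Y \tensor {\id}} Y \tensor \dual X . \]
The last two maps form the first triangle identity for $Y$ (of \cref{def:dualizable}), tensored with $\dual X$. They are therefore canonically homotopic to the identity, leaving $(f \tensor \id) \after \coev_X$, as required.

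The second square is dual. We expand $\ev_X \after (\dual f \tensor \id)$ as
\[ \dual Y \tensor X \xlongto{{\id} \tensor \coev_X \tensor {\id}} \dual Y \tensor X \tensor \dual X \tensor X \xlongto{{\id} \tensor f \tensor {\id} \tensor {\id}} \dual Y \tensor Y \tensor \dual X \tensor X \xlongto{\ev_Y \tensor {\id} \tensor {\id}} \dual X \tensor X \xlongto{\ev_X} \unit . \]
By interchange, $\ev_Y$ and $\ev_X$ act on disjoint factors, so the last two maps combine into $\ev_Y \tensor \ev_X$. We then apply $\ev_X$ first, directly after $\coev_X$. The composite $X \to X \tensor \dual X \tensor X \to X$ is the first triangle identity for $X$, hence homotopic to the identity. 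What remains is $\ev_Y \after ({\id} \tensor f)$, as required.

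The only potential subtlety is that the homotopies must be canonical, in the sense of being determined by the coherent duality data. This is automatic: we use only the chosen triangle-identity homotopies together with the symmetric monoidal coherences, and by the remark following \cref{def:dualizable} (Harpaz) the space of such data is contractible. I do not expect a genuine obstacle here.
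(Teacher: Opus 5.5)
Your proposal is correct and takes the same approach as the paper, which gives no argument beyond ``using the triangle identities''. Your unwinding of $\dual f$ via interchange checks out: the first square reduces to $({\id} \tensor \ev_Y) \after (\coev_Y \tensor {\id}) \simeq {\id}$ tensored with $\dual X$, and the second to $({\id} \tensor \ev_X) \after (\coev_X \tensor {\id}) \simeq {\id}$ tensored with $\dual Y$.
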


\begin{definition} \label{def:trace}
  The \emph{trace} of an endomorphism $f \colon C \to C$ of a dualizable object in a symmetric monoidal $\infty$-category $\cat C$ is defined to be the composite endomorphism
  \[ \tr(f)  \colon  \unit  \xlongto{\coev}  C \tensor \dual C  \xlongto{f \tensor \id}  C \tensor \dual C  \eq  \dual C \tensor C  \xlongto{\ev}  \unit \]
  of the unit of $\cat C$.
  The \emph{dimension} $\dim(C)$ of a dualizable object $C \in \cat C$ is $\tr(\id_C)$.
\end{definition}

We will recall in the next subsection how to promote $\dim(\blank)$ to a functor.

\subsection{THH as a higher categorical trace}

We will require some basics of $(\infty, 2)$-category theory, for which we follow Gaitsgory--Rozenblyum \cite{GR} (also see Loubaton--Ruit \cite{LR25}, who explain that all assertions claimed in \cite[Ch.~10, §0.4]{GR} have now been proved).
In particular, for us an $(\infty,2)$-category is a complete Segal object $\opcat{\Delta} \to \Catinf$ and a $2$-functor $\cat C \to \cat D$ between $(\infty, 2)$-categories is a natural transformation of functors $\opcat{\Delta} \to \Catinf$.
A natural transformation between such $2$-functors is a $2$-functor $\cat C \times [1] \to \cat D$; this yields in particular a notion of adjunctions between $(\infty, 2)$-categories.

\begin{notation}
  For an $(\infty, 2)$-category $\cat C$, we write $\maxsub{1} \cat C$ for its underlying $\infty$-category, i.e.\ the maximal sub-$\infty$-category.
  Given another $(\infty, 2)$-category $\cat D$, we write $\Fun(\cat C, \cat D)$ for the $(\infty, 2)$-category of $2$-functors $\cat C \to \cat D$ (see \cite[Ch.~10, §2.5]{GR}).
\end{notation}

\begin{definition}
  Let $\cat C$ be an $(\infty, 2)$-category.
  We say that $\cat C$ \emph{admits limits} if the diagonal functor $\cat C \to \Fun(\cat I, \cat C)$ admits a right adjoint $\lim_{\cat I}$ for every small $(\infty, 1)$-category $\cat I$.
\end{definition}

For an $(\infty, 1)$-category $\cat I$, we have $\maxsub{1} \Fun(\cat I, \cat C) \eq \Fun(\cat I, \maxsub{1} \cat C)$.
In particular limits in $\cat D$ can be computed in $\maxsub{1} \cat D$.

\begin{notation}
  We write $\twoCatinf$ for the $(\infty, 2)$-category of $\infty$-categories (see \cite[Ch.~10, §2.4]{GR}), and $\catMod{\Sp} = \twoPrLst$ for the symmetric monoidal $(\infty, 2)$-category of presentable stable $\infty$-categories and left adjoint functors (see \cite[Ch.~10, §2.4 and Ch.~1, 6.1.8]{GR} and \cite[§4.4]{HSS}).
\end{notation}

Recall that the underlying $\infty$-categories of $\twoCatinf$ and $\catMod{\Sp}$ are $\Catinf$ and $\PrLst$, respectively.
Furthermore note that the $(\infty, 2)$-categories $\twoCatinf$ and $\catMod{\Sp}$ admit limits (e.g.\ by combining \cite[Theorem~2.6]{DM} and \cite[Proposition~4.1.2]{AGH}).

\begin{notation}
  Given an object $C$ of an $(\infty, 2)$-category $\cat C$, we denote by $\Loops[C] \cat C$ the $\infty$-category of endomorphisms of $C$.
  When $\cat C$ is monoidal with unit $\unit$, we abbreviate $\Loops \cat C \defeq \Loops[\unit] \cat C$.
\end{notation}

Note that $\Loops \catMod{\Sp}$ is the $\infty$-category of left adjoint functors $\Sp \to \Sp$, which is equivalent to $\Sp$ itself via evaluation at the sphere spectrum $\SS$ (see \cite[Corollary~1.4.4.6]{LurHA}).

\begin{notation}
  For a symmetric monoidal $(\infty, 2)$-category $\cat C$, we denote by $\dbl{\cat C}$ the $\infty$-category of dualizable objects in $\cat C$ as in \cite[Definition~2.18]{CCRY}.
  We write $\dim \colon \dbl{\cat C} \to \Loops \cat C$ for the dimension functor of \cite[Definition~2.24]{CCRY}.
  We furthermore write
  \[ \Dim  \colon  \dbl{(\catMod{\Sp})}  \xlongto{\dim}  \Loops \catMod{\Sp}  \eq  \Sp \]
  (cf.\ \cite[Definition~4.1]{CCRY}).
\end{notation}

Recall that the forgetful functor $\dbl{\cat C} \to \cat C$ identifies $\dbl{\cat C}$ with the subcategory of $\maxsub{1} \cat C$ given by the dualizable objects and the left adjoint morphisms, see \cite[Corollary~2.21]{CCRY}.
By construction, the functor $\dim$ sends a dualizable object $C$ of $\cat C$ to the dimension of $C$ (in the sense of \cref{def:trace}) considered as an object of $\Loops \cat C$.

Writing $\THH(\cat D)$ for the topological Hochschild homology of a stable $\infty$-category $\cat D$ (see e.g.\ \cite[Definition~4.3]{HNS}), there is an equivalence $\THH(\cat D) \eq \Dim(\Ind \cat D)$ by \cite[Proposition~4.24]{HSS} (see also \cite[Remark~4.38]{HNS}).
For an anima $X$ we in particular have $\THH(\cptobj{\Sp[X]}) \eq \Dim(\Sp[X])$, where we note that $\Sp[\blank] \colon \An \to \maxsub{1} \catMod{\Sp}$ factors through $\dbl{(\catMod{\Sp})}$ by \cite[Proposition~4.11 and Corollary~3.12]{CCRY}.
This justifies the following definition.

\begin{definition}
  We write
  \[ \THH  \colon  \An  \xlongto{\Sp[\blank]}  \dbl{(\catMod{\Sp})}  \xlongto{\Dim}  \Sp \]
  for the \emph{topological Hochschild homology} of animas.
\end{definition}

Recall that there is a natural equivalence $\THH(X) \eq \SS[\L X]$, where $\L X \defeq \Map(\Sphere 1, X)$ denotes the free loop space of $X$ (see e.g.\ \cite[Theorem~4.40]{CCRY} or \cref{lemma:THH_loops} below).
In particular this yields a canonical equivalence $\THH(*) \eq \SS$.

\begin{definition}
  For an anima $B$, we write
  \begin{align*}
    \Sp[\blank]_B  &\colon  \Anover{B}  \eq  \Fun(B, \An)  \xlongto{\Sp[\blank]_*}  \Fun \bigl( B, \dbl{(\catMod{\Sp})} \bigr) \\
    \THH_B  &\colon  \Anover{B}  \xlongto{\Sp[\blank]_B}  \Fun \bigl( B, \dbl{(\catMod{\Sp})} \bigr)  \xlongto{\Dim_*}  \Fun(B, \Sp) = \Sp[B]
  \end{align*}
  and call the latter \emph{fiberwise topological Hochschild homology}.
\end{definition}

We now define the fiberwise THH transfer, which is a wrong way map on $\THH_B$.
To this end, we use that $\THH_B$ factors through the larger category $\Fun ( B, \dbl{(\catMod{\Sp})} )$, and that the pointwise right adjoint of $\Sp[f]_B$ sometimes defines a map in that $\infty$-category.
More specifically, we observe the following.

\begin{observation}
  For a map of animas $f \colon X \to Y$ with compact fibers, the right adjoint $f_*$ of the functor $f^* \colon \Sp[Y] \to \Sp[X]$ preserves colimits by \cref{lemma:f_*} and is thus a left adjoint.
  Hence $f^*$ is an internal left adjoint in the $(\infty, 2)$-category $\catMod{\Sp}$.
  
  Let now $f \colon X \to Y$ be a map of animas over an anima $B$.
  Its image $\Sp[f]_B$ in $\Fun(B, \catMod{\Sp})$ is pointwise an internal left adjoint and hence an internal left adjoint by a result of Haugseng \cite[Theorem~4.6]{Hau}.
  The internal right adjoint of $\Sp[f]_B$ is pointwise given by pulling back and thus again an internal left adjoint when the fibers of $f$ are compact.
\end{observation}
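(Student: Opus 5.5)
The Observation has three assertions, and I would prove them in order.

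The first assertion is that $f_*$ is itself a left adjoint when the fibers of $f$ are compact. \Cref{lemma:f_*} gives a natural equivalence $f_* \eq f_!(\blank \tensor_X \DS f)$. The right-hand side preserves colimits, because $f_!$ is a left adjoint and $\blank \tensor_X \DS f$ preserves colimits (the symmetric monoidal structure on $\Sp[X]$ is pointwise, and $\Sp$ is closed monoidal). Since $\Sp[X]$ and $\Sp[Y]$ are presentable, the adjoint functor theorem shows that $f_*$ has a right adjoint. Hence $f^* \dashv f_*$ is an adjunction inside $\PrLst$. Since $\catMod{\Sp}$ is the $(\infty,2)$-category with 1-morphisms the left adjoints and 2-morphisms all natural transformations, the unit and counit of $f^* \dashv f_*$ live in $\catMod{\Sp}$. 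They satisfy the triangle identities there because they do so in $\twoCatinf$. So $f^*$ is an internal left adjoint.

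The second assertion concerns a map $f$ over $B$. I would identify $\Sp[f]_B$ with the natural transformation $B \times [1] \to \catMod{\Sp}$ whose component at $b \in B$ is the functor between the categories of spectra over the fibers $X_b$ and $Y_b$ induced by $f_b \colon X_b \to Y_b$. Here I use the covariant functoriality via $(f_b)_!$. The map $f_b$ is again a map with compact fibers, since its fibers are fibers of $f$. Hence $(f_b)_!$ has the right adjoint $f_b^*$, which by the first step is itself an internal left adjoint. In particular, each component of $\Sp[f]_B$ is an internal left adjoint in $\catMod{\Sp}$. Because $B$ is an $\infty$-groupoid, Haugseng's result \cite[Theorem~4.6]{Hau} now applies: for a natural transformation between functors out of an $\infty$-groupoid, being pointwise internally left adjoint implies being internally left adjoint in $\Fun(B, \catMod{\Sp})$, with right adjoint computed pointwise.

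The third assertion identifies that right adjoint and shows it is again an internal left adjoint. The pointwise right adjoint is $f_b^*$, and since $f_b$ has compact fibers, $f_b^*$ is an internal left adjoint by the first step. I would then apply the Haugseng argument once more to conclude that the transformation assembled from the $f_b^*$ is an internal left adjoint in $\Fun(B, \catMod{\Sp})$.

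The main obstacle I expect is in this third step. One has to check that the pointwise right adjoints assemble into a transformation whose naturality squares are the mates of those of $\Sp[f]_B$, and that these mates are equivalences, so that the transformation is a genuine morphism in $\maxsub{1}\Fun(B,\catMod{\Sp})$ rather than merely lax. For this I would invoke the pull--push (Beck--Chevalley) formula $f_! a^* \eq b^* g_!$ from \cref{sec:param_spectra}, applied to parallel transport along paths in $B$; transport is given by equivalences of fibers, so the relevant squares are pullbacks.
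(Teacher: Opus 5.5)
\textbf{Same route as the paper, with one gap.} You follow the paper's route: colimit preservation of $f_*$ via \cref{lemma:f_*}, then Haugseng's pointwise criterion over the $\infty$-groupoid $B$. You apply it once to $\Sp[f]_B$ and once more to its pointwise right adjoint.

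\textbf{The gap: you assume compact fibers too early.} In your second step you justify that each component $(f_b)_!$ is an internal left adjoint by saying that $f_b$ has compact fibers and invoking the first step. But the second assertion of the observation is stated for an \emph{arbitrary} map $f$ over $B$; compactness is imposed only in the final sentence. So your argument does not cover the assertion as stated.

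\textbf{The fix.} $f_b^* \colon \Sp[Y_b] \to \Sp[X_b]$ is precomposition, so it preserves colimits for any $f$ (it has right adjoint $(f_b)_*$). Hence $(f_b)_! \dashv f_b^*$ is always an adjunction in $\catMod{\Sp}$. Compactness is needed only to make $f_b^*$ itself an internal left adjoint, i.e.\ only for the third assertion.

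\textbf{Why the generality matters.} It is genuinely used later. In the proof of \cref{lemma:transfer_diag}, the (co)evaluation of $\Sp[X]_B$ uses the internal right adjoints $X^*$ and $\Delta^*$ of $X_!$ and $\Delta_!$ for an arbitrary anima $X$ over $B$. The diagonal $\Delta \colon X \to X \times_B X$ has (based) loop spaces of the fibers as its fibers, and these are not compact in general.

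\textbf{The anticipated obstacle is not there.} Haugseng's theorem in your second step already produces the right adjoint as a genuine (strong) morphism of $\Fun(B, \catMod{\Sp})$. Moreover, over an $\infty$-groupoid the mate squares are automatically invertible, because the transport functors along paths in $B$ are equivalences. Your pull--push argument for this is correct but redundant.
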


\begin{definition} \label{def:fiberwise_astrology}
  Let $f \colon X \to Y$ be a map of animas over $B$.
  We write $f_! \defeq \Sp[f]_B$ as a map of $\Fun ( B, \catMod{\Sp} )$ and $f^* \colon \Sp[Y]_B \to \Sp[X]_B$ for its internal right adjoint.
  If $f$ has compact fibers, we define the \emph{fiberwise THH transfer} of $f$ to be the image of $f^*$ under $\Dim_* \colon \Fun(B, \dbl{(\catMod{\Sp})}) \to \Fun(B, \Sp)$, which we also denote by $f^* \colon \THH_B(Y) \to \THH_B(X)$.
  In the case that $Y = B$, we call the resulting map the \emph{fiberwise THH Euler characteristic} and denote it by $\chi \colon \SS_B \eq \THH_B(B) \to \THH_B(X)$.
\end{definition}

\subsection{Fiberwise THH via parametrized \texorpdfstring{$\infty$}{infinity}-categories}

In this subsection, we prove that fiberwise THH and the fiberwise transfer can also be computed as a trace on the $(\infty, 2)$-category $\Fun ( B, \catMod{\Sp} )$.
The following is a variant of \cite[Lemma~5.9]{CCRY}.

\begin{lemma} \label{lemma:parametrized_trace}
  Let $B$ be an anima and $\cat C$ a symmetric monoidal $(\infty, 2)$-category.
  Then, in the following diagram, there are essentially unique vertical maps
  \[
  \begin{tikzcd}
    \Fun (B, \cat C) \dar[equal] & \lar \dbl { \Fun (B, \cat C) }  \ar[r, "\dim"] \ar[d, "\eq"'] &  \Loops \Fun(B, \cat C)  \ar[d, "\eq"] \\
    \Fun (B, \cat C) & \lar \Fun \bigl( B, \dbl{\cat C} \bigr)  \ar[r, "\dim_*"] & \Fun(B, \Loops \cat C)
  \end{tikzcd}
  \]
  that are natural in $B$ and restrict to the identity for $B = *$.
  Furthermore the vertical maps are equivalences and the diagram commutes.
\end{lemma}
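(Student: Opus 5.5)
Since $B$ is an anima, every functor $B \to \cat C$ is pointwise invertible in the relevant sense, so $\Fun(B, \cat C) \eq \lim_B \cat C$, the limit of the constant $B$-shaped diagram at $\cat C$ in symmetric monoidal $(\infty,2)$-categories. The plan is to use this description and check that the three constructions $\cat C \mapsto \cat C$, $\cat C \mapsto \dbl{\cat C}$, and $\cat C \mapsto \Loops \cat C$ preserve limits, compatibly with $\dim$. Naturality in $B$ and the normalization at $B = *$ then come for free, because everything is functorial in the indexing anima. Essential uniqueness follows because $\An$ is freely generated under colimits by the point (\cref{obs:assembly}). Concretely, the functor $B \mapsto \Fun(B, \cat C)$ sends colimits in $\An$ to limits. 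So a natural transformation between limit-preserving functors $\opcat{\An} \to \Catinf$ (or to arrows in $\Catinf$) is determined by its value at $B = *$, and it is an equivalence iff it is one at $*$. I would therefore first show that all functors in the diagram, viewed as functors of $B$, send colimits in $\An$ to limits. Then I would construct the vertical maps by right Kan extension from the point, i.e.\ as the canonical comparison maps.

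The key steps are as follows.
\begin{enumerate}
  \item $\Fun(\blank, \cat C)$ turns colimits of animas into limits of symmetric monoidal $(\infty,2)$-categories, with the pointwise monoidal structure. This is formal.
  \item The functor $\dbl{(\blank)}$ preserves limits of symmetric monoidal $(\infty,2)$-categories. By \cite[Corollary~2.21]{CCRY}, $\dbl{\cat C}$ is the subcategory of $\maxsub{1}\cat C$ on dualizable objects and left adjoint morphisms. Dualizability and being an internal left adjoint are detected pointwise in a limit, because the duality data (resp.\ adjunction data) form a contractible space and limits of contractible spaces are contractible. This is the content behind \cite[Lemma~5.9]{CCRY}, and Haugseng's result \cite[Theorem~4.6]{Hau} for adjoints. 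Hence $\dbl{\Fun(B,\cat C)} \eq \Fun(B, \dbl{\cat C})$ as subcategories of $\Fun(B, \maxsub{1}\cat C)$.
  \item $\Loops$ preserves limits, since the endomorphism category of the unit in a limit is the limit of the endomorphism categories. This gives $\Loops\Fun(B,\cat C) \eq \Fun(B, \Loops \cat C)$.
  \item The square with $\dim$ commutes. The functor $\dim$ of \cite[Definition~2.24]{CCRY} is natural with respect to symmetric monoidal $2$-functors. Applying this naturality to the projections $\Fun(B,\cat C) \to \cat C$ given by evaluation at points $b \in B$, and assembling over the limit, yields the commutativity.
\end{enumerate}

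I expect step 4, the coherent naturality of $\dim$, to be the main obstacle. One needs $\dim$ as a natural transformation $\dbl{(\blank)} \Rightarrow \Loops(\blank)$ of functors on symmetric monoidal $(\infty,2)$-categories, and not merely objectwise. I would try first to take this directly from the construction in \cite{CCRY}, where $\dim$ arises functorially via the cobordism hypothesis / the universal dualizable object. Failing that, I would reduce to $B=*$ by the uniqueness argument above, exhibiting both composites as limit-preserving in $B$ and agreeing at the point.
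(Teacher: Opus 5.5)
Your proposal is correct and follows the paper's proof: the paper likewise notes that $\Fun(\blank, \cat C)$ sends colimits of animas to limits and that $\dbl{(\blank)}$ and $\Loops$ preserve limits, citing (the proof of) \cite[Lemma~2.26]{CCRY} where you instead argue that dualizability and left adjointness are detected pointwise. It then obtains existence, uniqueness, invertibility and commutativity all at once from \cref{obs:assembly}, which is exactly your fallback for step~4; that fallback still needs $\dim$ to be natural along the restriction functors $\Fun(B, \cat C) \to \Fun(B', \cat C)$ so that the top row is natural in $B$, but this naturality comes with the construction of $\dim$ in \cite{CCRY}, so it is not a real obstacle.
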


\begin{proof}
  First note that $\Fun(\blank, \cat C)$ preserves limits as a functor in $\opcat{\An}$ (since the inclusion of $\An$ into the $\infty$-category of $(\infty, 2)$-categories is left adjoint and hence preserves colimits), and that $\dbl{(\blank)}$ and $\Loops$ preserve limits by (the proof of) \cite[Lemma~2.26]{CCRY}.
  Then the claim follows from \cref{obs:assembly}.
\end{proof}

To apply this lemma, we require an explicit description of the functor $\Loops \Fun(B, \cat C) \to \Fun(B, \Loops \cat C)$.
The following lemma provides such a description (specialized to $\cat C = \catMod{\Sp}$, though the argument works more generally).

\begin{lemma} \label{lemma:loop_fun_eval}
  Let $B$ be an anima.
  Then the composite
  \begin{equation} \label{eq:loop_fun_eval}
    \Loops \Fun(B, \catMod{\Sp})  \xlongto{\Loops {\lim}}  \Loops[\Fun(B, \Sp)] \catMod{\Sp}  \xlongto{\ev_{\SS_B}}  \Fun(B, \Sp)
  \end{equation}
  is homotopic to the map $\Loops \Fun(B, \catMod{\Sp}) \to \Fun(B, \Loops \catMod{\Sp})$ of \cref{lemma:parametrized_trace}.
\end{lemma}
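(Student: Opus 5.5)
Both functors $\Loops \Fun(B, \catMod{\Sp}) \to \Fun(B, \Sp)$ are natural in $B$ (contravariantly, for $B \in \opcat{\An}$) and agree for $B = *$, where each reduces to the equivalence $\ev_\SS \colon \Loops \catMod{\Sp} \eq \Sp$. So I would run the uniqueness argument of \cref{lemma:parametrized_trace} again, via \cref{obs:assembly}, and argue in three steps.

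First, I make \eqref{eq:loop_fun_eval} natural in $B$. The diagonal $\catMod{\Sp} \to \Fun(B, \catMod{\Sp})$ is symmetric monoidal, and $\lim_B$ is its right adjoint, so $\lim_B$ is lax symmetric monoidal. It sends the unit $\const_\Sp$ to $\Fun(B, \Sp)$, and the unit map $\Sp \to \Fun(B,\Sp)$ is $B^*$, which sends $\SS$ to $\SS_B$. An endomorphism $\alpha$ of $\const_\Sp$ therefore yields the left adjoint $\lim \alpha \colon \Fun(B,\Sp) \to \Fun(B,\Sp)$, and evaluating at $\SS_B = B^*\SS$ gives $(\lim\alpha)(B^*\SS)$. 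For $g \colon B' \to B$, restriction along $g$ is compatible with $\lim$ through $g^* \colon \lim_B \to \lim_{B'}$, which is induced by the counit of the adjunction, and $g^*\SS_B \eq \SS_{B'}$. This makes \eqref{eq:loop_fun_eval} a natural transformation between functors $\opcat{\An} \to \Catinf$. At $B = *$ it is $\ev_\SS$.

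Second, I pass from natural transformations to their values at the point. The source $B \mapsto \Loops\Fun(B,\catMod{\Sp})$ and the target $B \mapsto \Fun(B,\Sp)$ both send colimits in $\An$ to limits in $\Catinf$. By \cref{obs:assembly}, applied to $\opcat{\Catinf}$ or equivalently in its dual form, a natural transformation between such functors is determined up to contractible choice by its component at $*$. More precisely, the mapping space of transformations is equivalent to $\Map(\Loops\catMod{\Sp}, \Sp)$ through evaluation at $*$. The transformation of \cref{lemma:parametrized_trace} also has component $\ev_\SS$ at $B = *$, since it restricts to the identity under $\Loops\catMod{\Sp} \eq \Sp$. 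The two transformations therefore agree, and in particular their components at $B$ are homotopic.

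The main obstacle is the first step: making sure that the lax monoidal structure of $\lim$, together with the identification of the unit $\SS_B$, is coherently natural in $B$, and not merely objectwise. I would handle this by describing \eqref{eq:loop_fun_eval} more directly as pointwise evaluation. An endomorphism $\alpha$ of $\const_\Sp$ in $\Fun(B,\catMod{\Sp})$ is a family $\alpha_b \colon \Sp \to \Sp$, and for each $b \in B$ we have $b^*(\lim\alpha)(\SS_B) \eq \alpha_b(\SS)$. This is because $b^* \circ \lim$ is the projection to the factor at $b$. This formula exhibits \eqref{eq:loop_fun_eval} as $\Fun(B, \ev_\SS)$ composed with the canonical comparison map. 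That comparison is visibly natural in $B$, so it also gives the homotopy directly.
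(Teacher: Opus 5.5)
Your top-level strategy is the paper's. You exhibit \eqref{eq:loop_fun_eval} as a natural transformation of functors $\opcat{\An} \to \Catinf$, note that it is $\ev_\SS$ at $B = *$, and conclude by \cref{obs:assembly}. Your second step is fine; in fact only the target $B \mapsto \Fun(B, \Sp)$ needs to send colimits to limits.

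The gap is the first step. It is the entire content of the lemma, and you flag it but do not carry it out. For each $g \colon B' \to B$ you give a comparison $\lim_B \to \lim_{B'} g^*$ and an equivalence $g^* \SS_B \eq \SS_{B'}$. This specifies only the action on individual morphisms, without the higher coherences. \Cref{obs:assembly} needs an honest transformation of $\infty$-functors out of $\opcat{\An}$. That requires the adjunction $B^* \dashv \lim_B$ to vary coherently in $B$, equivalently the lax monoidal structure of $\lim_B$ together with its unit $\Sp \to \lim_B B^* \Sp$.

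Your fallback does not close this. An equivalence $b^* (\lim \alpha)(\SS_B) \eq \alpha_b(\SS)$ for each $b$ is an objectwise statement, not a homotopy of functors into $\Fun(B, \Sp) \eq \lim_B \Sp$. Moreover, the assertion that \eqref{eq:loop_fun_eval} \emph{is} $\Fun(B, \ev_\SS)$ composed with the canonical comparison map is exactly what the lemma claims. Calling that comparison ``visibly natural'' therefore begs the question.

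The missing idea, supplied in the paper, is to extract the coherence from a universal property.
\begin{itemize}
  \item By the adjunction, \eqref{eq:loop_fun_eval} is the hom-equivalence $\Fun(B, \catMod{\Sp})(B^* \Sp, B^* \Sp) \eq \catMod{\Sp}(\Sp, \lim_B B^* \Sp)$ followed by evaluation at $\SS$. Here the hom-equivalence means applying $\lim_B$ and then precomposing with the unit $\eta$.
  \item The functor $X \mapsto \Fun(B, \catMod{\Sp})(B^* X, B^* \Sp)$ on $\oneopcat{\catMod{\Sp}}$ is manifestly contravariantly functorial in $B$ and pointwise representable. The fully faithful $(\infty,2)$-categorical Yoneda embedding therefore promotes $B \mapsto \lim_B B^* \Sp$ to a functor $\opcat{\An} \to \catMod{\Sp}$ for which the hom-equivalence is natural in $B$.
  \item The unit $\eta \colon \Sp \to \lim_B B^* \Sp \eq \Fun(B, \Sp)$ is then natural in $B$, because it corresponds to $\id_{B^* \Sp}$ under this equivalence.
  \item At $B = *$ this is the identity, and the target preserves limits. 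The same uniqueness argument thus identifies $\eta$ with $B^*$, which gives $\eta(\SS) \eq \SS_B$ naturally in $B$. Your objectwise observation that ``the unit map is $B^*$'' becomes coherent only through this step.
\end{itemize}
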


\begin{proof}
  Throughout this proof, for an $(\infty, 2)$-category $\cat C$ and two objects $X, Y \in \cat C$, we write $\cat C(X, Y)$ for the $\infty$-category of morphisms from $X$ to $Y$.
  As explained in \cite[Ch.~11, §5.3]{GR}, this assignment can be made fully functorial in $X$ and $Y$, i.e.\ it lifts to a $2$-functor $\oneopcat{\cat C} \times \cat C \to \twoCatinf$, where $\oneopcat{\cat C}$ denotes $\cat C$ with the direction of the $1$-morphisms inverted (cf.\ \cite[Ch.~10, 2.1.6]{GR}).
  
  For $B = *$ the composite \eqref{eq:loop_fun_eval} is the canonical equivalence $\ev_\SS \colon \Loops \catMod{\Sp} \to \Sp$.
  Hence the claim follows once we show that the composite can be lifted to a natural transformation in $B \in \opcat{\An}$.
  To this end, we denote by $B^* \colon \catMod{\Sp} \to \Fun(B, \catMod{\Sp})$ the functor given by pulling back along the constant map $B \to *$ and by $\lim_B$ its right adjoint.
  This adjunction yields, for $X \in \catMod{\Sp}$ and $Y \in \Fun(B, \catMod{\Sp})$, an equivalence natural in $X$ and $Y$
  \[ \Fun(B, \catMod{\Sp})(B^* X, Y)  \xlongto{\lim_B}  \catMod{\Sp}(\lim_B B^* X,  \lim_B Y)  \xlongto{\eta^*}  \catMod{\Sp}(X, \lim_B Y) \]
  where $\eta \colon X \to \lim_B B^* X$ is the unit of the adjunction (see e.g.\ \cite[Proposition~4.1.2]{AGH}).
  Specializing to $Y = B^* \Sp$, we claim that the resulting equivalence
  \begin{equation} \label{eq:limcstadjunction}
    \Fun(B, \catMod{\Sp})(B^* (\blank), B^* \Sp) \eq \catMod{\Sp}(\blank, \lim_B B^* \Sp)
  \end{equation}
  can be lifted to a natural equivalence of functors $\opcat{\An} \to \Fun(\oneopcat{\catMod{\Sp}}, \Catinf)$.
  First note that the left-hand side is indeed such a functor, and that it lands in the representable functors (using the equivalence pointwise).
  By the fully faithfulness of the $(\infty, 2)$-categorical Yoneda embedding (see \cite[Ch.~11, Proposition~5.3.7]{GR}) we thus obtain a functor $\opcat{\An} \to \catMod{\Sp}$ that is pointwise given by $B \mapsto \lim_B B^* \Sp$ and that makes \eqref{eq:limcstadjunction} natural.
  Applying \eqref{eq:limcstadjunction} to $\Sp$, we obtain an equivalence
  \[ \Fun(B, \Sp)  \eq  \Fun(B, \Loops \catMod{\Sp})  \eq  \Loops \Fun(B, \catMod{\Sp})  \eq  \lim_B B^* \Sp \]
  that is natural in $B$.
  Lastly we claim that the composite
  \[ \Sp  \xlongto{\eta}  \lim_B B^* \Sp  \eq  \Fun(B, \Sp) \]
  is given by pulling back along $B \to *$.
  Since this is evidently true for $B = *$, it is enough to prove that $\eta$ is natural in $B$.
  This follows from the fact that it corresponds to $\id_{B^*\Sp}$ under \eqref{eq:limcstadjunction}.
\end{proof}

\subsection{Fiberwise THH via parametrized spectra}

We now prove the main result of this section, providing a formula for fiberwise THH and its transfer internally to parametrized spectra.
Recall the dualizing spectrum $\DS f$ of a map $f$ of animas from \cref{def:DS}.

\begin{proposition} \label{lemma:transfer_diag}
  Let $X$ be an anima over $B$.
  Then $\THH_B(X)$ is given by evaluating the following composite at $\SS_B$
  \[ \Sp[B]  \xlongto{X^*}  \Sp[X]  \xlongto{\Delta_!}  \Sp[X \times_B X]  \xlongto{\Delta^*}  \Sp[X]  \xlongto{X_!}  \Sp[B] \]
  where $X \colon X \to B$ denotes the structure map and $\Delta \colon X \to X \times_B X$ the diagonal.
  Furthermore, let $f \colon X \to Y$ be a map of animas over $B$ such that $f$ has compact fibers.
  Then the fiberwise THH transfer $f^* \colon \THH_B(Y) \to \THH_B(X)$ is given by evaluating the following pasting at $\SS_B$
  \begin{equation} \label{eq:transfer_diag}
  \begin{tikzcd}[sep = 17]
      \Sp[B] \rar{Y^*} \dar[swap]{X^*} &[-10] \Sp[Y] \rar{\Delta_!} & \Sp[Y \times_B Y] \dar[swap]{({\id} \times f)^*} \rar[equals] &[-10] \Sp[Y \times_B Y] \ar{dd}{(f \times \id)^*} \ar[bend left = 15]{ddrr}{\id}[swap, name=U]{} & &[-10] \\
      \Sp[X] \dar[swap]{\Delta_!} \ar[phantom]{rr}{\text{\eqref{eq:transfer_diag_1}}} & & \Sp[Y \times_B X] \dar[swap]{\tensor \pr_2^*(\DS{f})} & & & \\
      \Sp[X \times_B X] \ar{rr}{(f \times \id)_*} \ar[bend right = 15]{ddrr}[name=C]{}[swap]{\id} & & \Sp[Y \times_B X] \ar{dd}[swap]{(f \times \id)^*} \ar[to=C, Rightarrow, shorten = 1em, "\epsilon"'] & \Sp[X \times_B Y] \dar{({\id} \times f)^*} \ar{rr}[swap]{(f \times \id)_*} \ar[from=U, Rightarrow, shorten = 1em, "\eta"'] & & \Sp[Y \times_B Y] \dar{\Delta^*} \\
       & & & \Sp[X \times_B X] \dar{\tensor \pr_2^*(\DS{f})} \ar[phantom]{rr}{\text{\eqref{eq:transfer_diag_2}}} & & \Sp[Y] \dar{Y_!} \\
       & & \Sp[X \times_B X] \rar[equals] & \Sp[X \times_B X] \rar{\Delta^*} & \Sp[X] \rar{X_!} & \Sp[B]
  \end{tikzcd}
  \end{equation}
  where $\eta$ and $\epsilon$ are the respective (co)unit.
  Using \cref{lemma:f_*,lemma:DS_pullback}, the projection formula, and the pull--push formula, the upper left-hand square commutes via the composite equivalence
  \begin{align}
    ({\id_Y} \times f)^* \Delta_! Y^*(\blank) \tensor \pr_2^*(\DS f)
    &\eq (f, \id_X)_! f^* Y^* (\blank) \tensor \pr_2^*(\DS f) \nonumber \\
    &\eq (f, \id_X)_! \bigl( f^* Y^* (\blank) \tensor (f, \id_X)^* \pr_2^*(\DS f) \bigr) \nonumber \\
    &\eq (f \times \id_X)_! \Delta_! \bigl( X^* (\blank) \tensor \Delta^* \pr_1^*(\DS f) \bigr) \label{eq:transfer_diag_1} \\
    &\eq (f \times \id_X)_! \bigl( \Delta_! X^* (\blank) \tensor \DS{f \times \id} \bigr) \nonumber \\
    &\eq (f \times \id_X)_* \Delta_! X^* (\blank) \nonumber
  \intertext{the lower right-hand square commutes via the composite equivalence}
    Y_! \Delta^* (f \times \id_Y)_* (\blank) \nonumber 
    &\eq Y_! \Delta^* (f \times \id_Y)_! (\blank \tensor \DS {f \times \id}) \nonumber \\
    &\eq Y_! f_! (\id_X, f)^* (\blank \tensor \pr_1^*(\DS f)) \nonumber \\
    &\eq X_! \bigl( (\id_X, f)^* (\blank) \tensor (\id_X, f)^* \pr_1^*(\DS f) \bigr) \label{eq:transfer_diag_2} \\
    &\eq X_! \bigl( \Delta^* ({\id_X} \times f)^* (\blank) \tensor \Delta^* \pr_2^*(\DS f) \bigr) \nonumber \\
    &\eq X_! \Delta^* \bigl( ({\id_X} \times f)^* (\blank) \tensor \pr_2^*(\DS f) \bigr) \nonumber
  \end{align}
  and the middle square commutes via the obvious equivalence.
\end{proposition}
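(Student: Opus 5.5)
The plan is to reduce everything to a computation in the $(\infty,2)$-category $\Fun(B, \catMod{\Sp})$ via \cref{lemma:parametrized_trace}, and then to transport that computation to $\Sp[B]$ by applying $\lim_B$ as in \cref{lemma:loop_fun_eval}. By \cref{lemma:parametrized_trace}, $\THH_B(X) = \Dim_*(\Sp[X]_B)$ is the image of $\dim(\Sp[X]_B)$, computed in $\Fun(B, \catMod{\Sp})$, under the map $\Loops \Fun(B,\catMod{\Sp}) \to \Fun(B, \Sp)$. By \cref{lemma:loop_fun_eval}, that map is ``apply $\lim_B$ and evaluate at $\SS_B$''. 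So the first step is to write down explicit duality data for $\Sp[X]_B$ in $\Fun(B, \catMod{\Sp})$. Pointwise over $b \in B$, the object $\Sp[X_b]$ is self-dual, with $\coev = (\Delta_b)_! (X_b)^* \colon \Sp \to \Sp[X_b \times X_b]$ and $\ev = (X_b)_! \Delta_b^*$. These assemble to natural transformations in $b$ (by the pull--push formulas, via the unstraightening \eqref{eq:straighten}), and hence give duality data in $\Fun(B, \catMod{\Sp})$, since the triangle identities can be checked pointwise.

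Next I would identify $\lim_B$ of these families. Since $\Sp[\blank]$ with the $*$-functoriality sends colimits of animas to limits, and $X \eq \colim_{b} X_b$ and $X \times_B X \eq \colim_b X_b \times X_b$, we get $\lim_B \Sp[X_b] \eq \Sp[X]$ and $\lim_B \Sp[X_b \times X_b] \eq \Sp[X \times_B X]$, with $\lim_B$ of the unit family equal to $\Sp[B]$. Under these identifications, the limit of the pointwise functors $(\Delta_b)_!(X_b)^*$ and $(X_b)_!\Delta_b^*$ is $\Delta_! X^*$ and $X_! \Delta^*$, respectively; this uses that the pull--push formulas exhibit $f^*$ and $f_!$ as compatible with restriction to fibers. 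Hence $\lim_B$ of the trace is the composite $X_! \Delta^* \Delta_! X^*$, and evaluating at $\SS_B$ proves the first claim.

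For the transfer, I would use the description of the functoriality of $\dim$ on a left adjoint morphism from \cite{CCRY}/\cite{HSS}: for $F \colon C \to D$ with right adjoint $G$, the induced map $\dim(C) \to \dim(D)$ is the composite $\ev_C \coev_C \xrightarrow{\eta} \ev_C (G F \tensor \id) \coev_C \eq \ev_D (F \tensor \dual G) \coev_C$, followed by the identification of $(F \tensor \dual G)\coev_C$ with $\coev_D$ and the counit. Here $F = f^*$ and $G = f_*$, with $\dual G$ computed using self-duality. Applying $\lim_B$ as above turns each piece into the corresponding parametrized functor over $X \times_B X$, $Y \times_B X$, and so on. The units and counits of $(f \times \id)^* \dashv (f \times \id)_*$ appear from the pointwise units and counits, because $\lim_B$ is a $2$-functor.

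The main obstacle is the last identification: checking that the $2$-cells $F \tensor \dual G$ versus $G^\vee$-type squares coming from the abstract dimension functor agree with the explicit equivalences \eqref{eq:transfer_diag_1} and \eqref{eq:transfer_diag_2}. These are built from \cref{lemma:f_*}, \cref{lemma:DS_pullback}, and the projection and pull--push formulas, and they encode the dual of $f_*$ via the dualizing spectrum. I would handle this by showing that both sides are mates of the canonical pull--push equivalence. For this I would invoke \cref{lemma:norm_pull-push}, \cref{lemma:projection_pullback}, and \cref{lemma:mates}, reducing to a diagram chase with triangle identities, first for $B = *$ and then naturally in $B$.
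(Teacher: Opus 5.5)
Your outline has the same architecture as the paper's proof: the reduction via \cref{lemma:parametrized_trace,lemma:loop_fun_eval}, self-duality of $\Sp[X]_B$ with $\coev = \Delta_! X^*$ and $\ev = X_! \Delta^*$, transport along the $2$-functor $\lim_B$, and \cite[Lemma~2.25]{CCRY} for $\dim(f^*)$. You identify $\lim_B$ through the $*$-functoriality of $\Fun(\blank, \Sp)$ and then pass to adjoints; this is a harmless dual of the paper's use of $\colim_B \eq \lim_B$ in $\PrL$.

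The step that fails as stated is ``the triangle identities can be checked pointwise''. In $\Fun(B, \catMod{\Sp})$ a triangle identity is an invertible $2$-cell between an endomorphism $\psi$ of $\Sp[X]_B$ and the identity. The relevant endomorphism $\infty$-category is computed as a limit over $B$, so pointwise homotopies $\psi_b \eq \id$ need not assemble.

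This matters because $\dim$ lands in $\Fun(B, \Sp)$. Take the unit object over $B = \Sphere 1$ with $\ev = \id$ and $\coev = L \tensor \blank$, where $L$ is the $\Sphere 1$-spectrum with fiber $\SS$ and monodromy $-1$. The triangle identities hold pointwise, yet $\ev \after \coev = L \not\eq \SS_{\Sphere 1}$.

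In your situation the identities do hold globally, but you must produce the $2$-cells inside $\Fun(B, \catMod{\Sp})$:
\begin{itemize}
\item take $X^*$ and $\Delta^*$ to be internal right adjoints (\cite[Theorem~4.6]{Hau}) rather than ``assembling'' them via pull--push;
\item use the functoriality cells of $\Sp[\blank]_B$;
\item use the Beck--Chevalley mate of the pullback square formed by $\Delta \times \id$ and ${\id} \times \Delta$. This mate exists in any $(\infty,2)$-category, and only its \emph{invertibility} may be checked pointwise.
\end{itemize}
This is what the paper extracts from $\Sp[\blank]_B$ being a symmetric monoidal bivariant theory (\cite[Proposition~4.11, Lemma~3.16, and Corollary~3.12]{CCRY}).

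Second, the point where $\DS f$ enters is left at ``$\dual G$ computed using self-duality''. This is the actual bridge to \eqref{eq:transfer_diag_1} and \eqref{eq:transfer_diag_2}, and it should be carried out in three steps:
\begin{itemize}
\item identify $f^* \tensor \id$ with $(f \times \id)^*$ and $f_* \tensor \id$ with $(f \times \id)_*$, because $\blank \tensor \Sp[Y]_B$ is a $2$-functor and so preserves internal adjunctions;
\item unwind $\dual{f_*}$ as $(\pr_2)_! (\Delta \times \id)^* ({\id} \times f \times \id)_* ({\id} \times \Delta)_! \pr_1^*$;
\item show that after $\lim_B$ this is $f^*(\blank) \tensor \DS f$, using \cref{lemma:f_*}, the projection formula, pull--push for the two relevant pullback squares, and \cref{lemma:DS_pullback}.
\end{itemize}

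Finally, your reduction ``first for $B = *$, then naturally in $B$'' is unnecessary. It would itself require making the whole pasting, mates included, coherently natural in $B$. It is simpler to note that each square of the abstract pasting commutes in $\Fun(B, \catMod{\Sp})$ via pastings of functoriality cells and their iterated mates, which $\lim_B$ preserves. The chase with \cref{lemma:norm_pull-push,lemma:projection_pullback,lemma:mates} can then be done directly over $B$.
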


\begin{proof}
  By \cref{lemma:parametrized_trace,lemma:loop_fun_eval} there is a commutative diagram
  \[
  \begin{tikzcd}
    \Fun ( B, \maxsub{1} \catMod{\Sp} ) & \lar[hook'] \dbl { \Fun (B, \catMod{\Sp}) } \rar{\dim} \dar[swap]{\eq} & \Loops \Fun(B, \catMod{\Sp}) \rar{\Loops {\lim}} \dar{\eq} & \Loops[{\Sp[B]}] \catMod{\Sp} \dar{\ev_{\SS_B}} \\
    \Anover{B} \uar{\Sp[\blank]_B} \rar{\Sp[\blank]_B} & \ular[hook'] \Fun ( B, \dbl{\catMod{\Sp}} ) \rar{\Dim_*} & \Fun(B, \Sp) \rar[equal] & \Sp[B]
  \end{tikzcd}
  \]
  in which $\THH_B$ is the bottom horizontal composite.
  Hence, to compute $\THH_B(X)$, we can take the dimension of $\Sp[X]_B$, apply $\Loops{\lim}$, and then evaluate at $\SS_B$.
  
  By \cite[Proposition~4.11~(1)]{CCRY} the functor $\Sp[\blank] \colon \An \to \catMod{\Sp}$ is a so-called symmetric monoidal bivariant theory; by the same argument as in \cite[Proof of Lemma~3.16]{CCRY}, the functor $\Sp[\blank]_B \colon \Anover{B} \to \Fun(B, \catMod{\Sp})$ is thus a symmetric monoidal bivariant theory as well.
  By \cite[(Proof of) Corollary~3.12]{CCRY}, this implies that $\Sp[X]_B$ is self-dual with (co)evaluation given by
  \begin{align*}
    \coev  &\colon  \Sp[B]_B  \xlongto{X^*}  \Sp[X]_B  \xlongto{\Delta_!}  \Sp[X \times_B X]_B  \eq  \Sp[X]_B \tensor \Sp[X]_B \\
    \ev  &\colon  \Sp[X]_B \tensor \Sp[X]_B  \eq  \Sp[X \times_B X]_B  \xlongto{\Delta^*}  \Sp[X]_B  \xlongto{X_!}  \Sp[B]_B
  \end{align*}
  with notation as in \cref{def:fiberwise_astrology} (and using that $\Sp[\blank]_B$ is in particular strong symmetric monoidal).
  The triangle identities are provided by the following commutative diagram (resp.\ the analogous one with the order of all products inverted)
  \[
  \begin{tikzcd}[column sep = 40, row sep = 30]
    \Sp[X]_B \dar[swap]{(X \times \id)^*} \rar{\id} & \Sp[X]_B \drar{\Delta_!} \rar{\id} & \Sp[X]_B \\
    \Sp[X \times_B X]_B \rar{(\Delta \times \id)_!} \urar{\Delta^*} & \Sp[X \times_B X \times_B X]_B \rar{({\id} \times \Delta)^*} & \Sp[X \times_B X]_B \uar[swap]{({\id} \times X)_!}
  \end{tikzcd}
  \]
  where the middle square commutes via the pull--push formula.
  
  Hence the dimension of $\Sp[X]_B$ is the composite
  \[ \Sp[B]_B  \xlongto{X^*}  \Sp[X]_B  \xlongto{\Delta_!}  \Sp[X \times_B X]_B  \xlongto{\Delta^*}  \Sp[X]_B  \xlongto{X_!}  \Sp[B]_B \]
  and we now want to evaluate the functor ${\ev_{\SS_B}} \after \Loops{\lim_B}$ on this composite.
  Since the functor $\Sp[\blank] \colon \An \to \maxsub{1} \catMod{\Sp}$ preserves colimits and $\colim_B \eq \lim_B$ as functors $\Fun(B, \maxsub{1} \catMod{\Sp}) \to \maxsub{1} \catMod{\Sp}$ (this follows from the corresponding fact for $\PrL$, see \cite[Example~4.3.11]{HL} or \cite[Proposition~2.26]{Ben}, combined with \cite[Corollary~4.2.3.3]{LurHA}), we furthermore have a commutative diagram
  \[
  \begin{tikzcd}
    \overcat \An B \rar{\eq} \drar[swap]{\pr} & \Fun(B, \An) \dar{\colim} \rar{\Sp[\blank]_*} &[25] \Fun(B, \maxsub{1} \catMod{\Sp}) \dar{\lim} \\
    & \An \rar{\Sp[\blank]} & \maxsub{1} \catMod{\Sp}
  \end{tikzcd}
  \]
  i.e.\ a natural equivalence ${\lim_B} \after \Sp[\blank]_B \eq \Sp[\blank]$.
  Noting that $\lim_B$ is a $2$-functor and hence preserves internal adjunctions completes the proof of the first claim.
  
  For the second claim, recall that the fiberwise THH transfer $f^* \colon \THH_B(Y) \to \THH_B(X)$ is given by $\Dim_*(f^*)$; we will denote the internal right adjoint of $f^*$ in $\Fun(B, \catMod{\Sp})$ by $f_* \colon \Sp[X]_B \to \Sp[Y]_B$.
  By \cite[Lemma~2.25]{CCRY} the dimension of $f^*$ is given by the pasting
  \begin{equation} \label{eq:transfer_diag_pre}
  \begin{tikzcd}[sep = 15]
    \Sp[B]_B \rar{Y^*} \dar[swap]{X^*} &[-12] \Sp[Y]_B \rar{\Delta_!} & \Sp[Y \times_B Y]_B \ar{dd}[swap]{{\id} \tensor \dual{f_*}} \rar[equals] & \Sp[Y \times_B Y]_B \ar{dd}{f^* \tensor \id} \ar[bend left = 15]{ddrr}{\id}[swap, name=U]{} & &[-12] \\
    \Sp[X]_B \dar[swap]{\Delta_!} & & & & & \\
    \Sp[X \times_B X]_B \ar{rr}{f_* \tensor \id} \ar[bend right = 15]{ddrr}[name=C]{}[swap]{\id} & & \Sp[Y \times_B X]_B \ar{dd}[swap]{f^* \tensor \id} \ar[to=C, Rightarrow, shorten = 1em, "\epsilon"'] & \Sp[X \times_B Y]_B \ar{dd}{{\id} \tensor \dual{f_*}} \ar{rr}[swap]{f_* \tensor \id} \ar[from=U, Rightarrow, shorten = 1em, "\eta"'] & & \Sp[Y \times_B Y]_B \dar{\Delta^*} \\
    & & & & & \Sp[Y]_B \dar{Y_!} \\
    & & \Sp[X \times_B X]_B \rar[equals] & \Sp[X \times_B X]_B \rar{\Delta^*} & \Sp[X]_B \rar{X_!} & \Sp[B]_B
  \end{tikzcd}
  \end{equation}
  where we implicitly identify $\Sp[\blank]_B \tensor \Sp[\blank]_B \eq \Sp[\blank \times_B \blank]_B$ and use \cref{obs:dual_morphism} for the upper left-hand and lower right-hand squares.
  Under this identification, the map $f^* \tensor \id$ corresponds to $(f \times \id)^*$ since $\blank \tensor \Sp[Y]_B$ is a $2$-functor and hence preserves internal adjunctions; analogously ${\id} \tensor f^*$ corresponds to $({\id} \times f)^*$, $f_* \tensor \id$ to $(f \times \id)_*$, and ${\id} \tensor f_*$ to $({\id} \times f)_*$.
  Expanding its definition using similar arguments, we see that $\dual{f_*}$ is given by the composite
  \[
  \begin{tikzcd}
    \Sp[Y]_B \rar{\pr_1^*} & \Sp[Y \times_B X]_B \rar{({\id} \times \Delta)_!} &[10] \Sp[Y \times_B X \times_B X]_B \dar{({\id} \times f \times \id)_*} \\
    \Sp[X]_B & \lar[swap]{(\pr_2)_!} \Sp[Y \times_B X]_B & \lar[swap]{(\Delta \times \id)^*} \Sp[Y \times_B Y \times_B X]_B
  \end{tikzcd}
  \]
  which is preserved when applying the $2$-functor $\lim_B$.
  Hence we have
  \begin{align*}
    \lim_B(\dual{f_*})(\blank) &\eq (\pr_2)_! (\Delta \times \id)^* ({\id} \times f \times \id)_* ({\id} \times \Delta)_! \pr_1^* (\blank) \\
    &\eq (\pr_2)_! (\Delta \times \id)^* ({\id} \times f \times \id)_! \bigl( ({\id} \times \Delta)_! \pr_1^* (\blank) \tensor \DS{{\id} \times f \times \id} \bigr) \\
    &\eq (\pr_2)_! (\Delta \times \id)^* ({\id} \times f \times \id)_! ({\id} \times \Delta)_! \bigl( \pr_1^*(\blank) \tensor ({\id} \times \Delta)^* \pr_2^*(\DS f) \bigr) \\
    &\eq (\pr_2)_! (f, \id)_! (f, \id)^* \bigl( \pr_1^*(\blank) \tensor \pr_2^*(\DS f) \bigr) \\
    &\eq (f, \id)^* \pr_1^*(\blank) \tensor (f, \id)^* \pr_2^*(\DS f) \\
    &\eq f^*(\blank) \tensor \DS{f}
  \end{align*}
  where we use \cref{lemma:f_*}, the projection formula, the pull--push formula applied to the two pullback squares
  \[
  \begin{tikzcd}
    X \rar{\Delta} \dar[swap]{(f, \id)} & X \times_B X \dar[swap]{(f, \id) \times \id} \rar{f \times \id} &[15] Y \times_B X \dar{\Delta \times \id} \\
    Y \times_B X \rar{{\id} \times \Delta} & Y \times_B X \times_B X \rar{{\id} \times f \times \id} & Y \times_B Y \times_B X
  \end{tikzcd}
  \]
  as well as \cref{lemma:DS_pullback}.
  
  By construction, the upper left-hand square of \eqref{eq:transfer_diag_pre} commutes via the pasting
  \[
  \begin{tikzcd}
    \Sp[B]_B \rar{Y^*} \dar[swap]{X^*} &[20] \Sp[Y]_B \rar{\Delta_!} \dar{({\id} \times X)^*} \drar[phantom, start anchor = center, end anchor = center]{\textnormal{(a)}} &[20] \Sp[Y \times_B Y]_B  \dar{({\id} \times {\id} \times X)^*} \\
    \Sp[X]_B \rar{(Y \times \id)^*} \dar[swap]{\Delta_!} \drar[phantom, start anchor = center, end anchor = center]{\textnormal{(a)}} & \Sp[Y \times_B X]_B \rar{(\Delta \times \id)_!} \dar{({\id} \times \Delta)_!} & \Sp[Y \times_B Y \times_B X]_B \dar{({\id} \times {\id} \times \Delta)_!} \\
    \Sp[X \times_B X]_B \dar[swap]{(f \times \id)_*} \rar{(Y \times {\id} \times \id)^*} \drar[phantom, start anchor = center, end anchor = center]{\textnormal{(a)}} & \Sp[Y \times_B X \times_B X]_B \rar{(\Delta \times {\id} \times \id)_!} \dar{({\id} \times f \times \id)_*} \drar[phantom, start anchor = center, end anchor = center]{\textnormal{(b)}} & \Sp[Y \times_B Y \times_B X \times_B X]_B \dar{({\id} \times {\id} \times f \times \id)_*} \\
    \Sp[Y \times_B X]_B \drar[swap]{\id} \rar{(Y \times {\id} \times \id)^*} & \Sp[Y \times_B Y \times_B X]_B \rar{(\Delta \times {\id} \times \id)_!} \dar{(\Delta \times \id)^*} \drar[phantom, start anchor = center, end anchor = center]{\textnormal{(a)}} & \Sp[Y \times_B Y \times_B Y \times_B X]_B \dar{({\id} \times \Delta \times \id)^*} \\
    & \Sp[Y \times_B X]_B \drar[swap]{\id} \rar{(\Delta \times \id)_!} & \Sp[Y \times_B Y \times_B X]_B \dar{({\id} \times Y \times \id)_!} \\
    & & \Sp[Y \times_B X]_B 
  \end{tikzcd}
  \]
  where the unlabeled squares and triangles commute via the transformations obtained from the functoriality of $\Sp[\blank]_B$ (and taking adjoints), the squares labeled (a) commute via the mates of these, and the square (b) in turn commutes via the mate of that mate.
  To see that these are the correct transformations, we note that, for an adjunction $l \dashv r$ and a morphism $g$ in a monoidal $2$-category, the canonical transformations making the following two squares commute
  \[
  \begin{tikzcd}
    C \tensor D \rar{l \tensor \id} \dar[swap]{{\id \tensor g}} & C' \tensor D \dar{{\id \tensor g}} & & C \tensor D \dar[swap]{{\id \tensor g}} & \lar[swap]{r \tensor \id} C' \tensor D \dar{{\id \tensor g}} \\
    C \tensor D' \rar{l \tensor \id} & C' \tensor D' & & C \tensor D' & \lar[swap]{r \tensor \id} C' \tensor D'
  \end{tikzcd}
  \]
  are mates of each other.
  Similar arguments yield pastings making the other two squares of \eqref{eq:transfer_diag_pre} commute.
  The claims about the transformation making the squares of \eqref{eq:transfer_diag} commute then follow from diagram chases (using \cref{lemma:norm_pull-push,lemma:projection_pullback} and that mates compose).
\end{proof}

To conclude this section, we provide explicit descriptions for fiberwise versions of constructions related to THH.
We begin with the equivalence $\THH(X) \eq \SS[\L X]$, adapting the proof of \cite[Theorem~4.40]{CCRY}.

\begin{definition}
  Let $\cat C$ be an $\infty$-category that admits finite limits.
  We write $\L \colon \cat C \to \cat C$ for the \emph{free loop space} functor, defined to be the pullback
  \begin{equation} \label{eq:L}
  \begin{tikzcd}
    \L X \rar{e} \dar[swap]{e} & X \dar{\Delta} \\
    X \rar{\Delta} & X \times X
  \end{tikzcd}
  \end{equation}
  where $\Delta \colon X \to X \times X$ is the diagonal.
  We write $L_B$ for the free loop space functor of $\Anover{B}$ (note that in this case, the product $X \times X$ above is a fiber product over $B$).
\end{definition}

\begin{lemma} \label{lemma:THH_loops}
  Let $B$ be an anima.
  Then the composite
  \[ \Anover{B}  \xlongto{\L_B}  \Anover{B}  \xlongto{\SS_B[\blank]}  \Sp[B] \]
  is equivalent to $\THH_B$, contravariantly natural in $B$.
  For $X \in \Anover{B}$, this equivalence is given by
  \[ \THH_B(X)  \eq  X_! \Delta^* \Delta_! X^* (\SS_B)  \eq  X_! e_! e^* X^* (\SS_B)  \eq  (\L_B X)_! (\L_B X)^* (\SS_B)  =  \SS_B[\L_B X] \]
  where we use \cref{lemma:transfer_diag} and the pull--push formula associated to the pullback square \eqref{eq:L}.
\end{lemma}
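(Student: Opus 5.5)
The plan has three parts: get the chain of equivalences pointwise, make it natural in $X$ by recognizing it as the image of a map of functors on $\Anover{B}$, and then make it natural in $B$.

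\textbf{Pointwise equivalence.} The first equivalence in the displayed chain is the first part of \cref{lemma:transfer_diag}.

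The second comes from the pullback square \eqref{eq:L} in $\Anover{B}$. It is also a pullback square of animas: $\L_B X \eq X \times_{X \times_B X} X$. The pull--push formula for this square gives $\Delta^* \Delta_! \eq e_! e^*$ as functors $\Sp[X] \to \Sp[X]$.

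The third is functoriality of lower-shriek and upper-star along $\L_B X \xrightarrow{e} X \to B$. The last identification is \cref{lemma:susp_map}.

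So for each fixed $X$ the equivalence holds directly; the work is naturality.

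\textbf{Naturality in $X$.} Both sides should be exhibited as functors $\Anover{B} \to \Sp[B]$, with the chain as the value of a natural transformation. The hard part is the first step: \cref{lemma:transfer_diag} describes $\THH_B(X)$ as $X_! \Delta^* \Delta_! X^*(\SS_B)$, but the functoriality of $\THH_B$ in maps $g \colon X \to X'$ comes from $\Dim_*$ applied to $g_!$. I would follow the proof of \cite[Theorem~4.40]{CCRY}. There the dimension functor applied to a symmetric monoidal bivariant theory, here $\Sp[\blank]_B$ by the argument cited in the proof of \cref{lemma:transfer_diag}, is identified with the theory evaluated on the free loop object in a span-type category. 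Concretely, the self-duality data $(\Delta_! X^*, X_! \Delta^*)$ is the image of the span self-duality of $X$ in $\Anover{B}$. The composite $\coev$ followed by $\ev$ corresponds to composing spans $B \leftarrow X \to X \times_B X$ and $X \times_B X \leftarrow X \to B$, and that composition is the span $B \leftarrow \L_B X \to B$ via \eqref{eq:L}. By \cite[Lemma~2.25]{CCRY}, the effect of a map $g_!$ on dimensions matches the map of free loop objects $\L_B g$. Then \cref{lemma:susp_map} identifies the induced map with $\SS_B[\L_B g]$.

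\textbf{Naturality in $B$.} For $b \colon B' \to B$, the functor $\L_B$ commutes with pullback, since pullback preserves finite limits in slices. $\SS_B[\blank]$ commutes with $b^*$ by the pull--push formula. On the THH side, \cref{lemma:parametrized_trace} is natural in $B$. Since every identification used is built from pull--push and projection equivalences, which are compatible with base change by \cref{lemma:projection_pullback}, the equivalence is contravariantly natural.

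\textbf{Main obstacle.} I expect the higher-coherence step in the middle part to be the obstacle: transporting the span-level identification of the trace through $\lim_B$ and $\ev_{\SS_B}$ as in \cref{lemma:loop_fun_eval}. I would handle it by working entirely in $\Fun(B, \catMod{\Sp})$ and invoking the CCRY statement for bivariant theories before evaluating.
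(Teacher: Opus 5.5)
Your pointwise chain is fine, but the real content of the lemma is the coherent statement: an equivalence of \emph{functors} $\Anover{B} \to \Sp[B]$ that is moreover natural in $B$. The tools you propose give homotopies only one morphism, or one base change, at a time. You mention the right ingredients (spans, the proof of \cite[Theorem~4.40]{CCRY}), but the concrete steps fall short. \cite[Lemma~2.25]{CCRY} computes the dimension of a single left adjoint morphism as a pasting; it does not give a natural equivalence between $\Dim_* \circ \Sp[\blank]_B$ and a functor built from $\L_B$. Likewise, \cref{lemma:susp_map} identifies each induced map $\SS_B[X] \to \SS_B[Y]$ only up to homotopy. Checking via \cref{lemma:projection_pullback} that each identification is compatible with a given base change $b$ does not produce a natural transformation of functors of $B$: you would also need compatibility with composites of base changes and all higher coherences.

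The missing idea is to build the equivalence abstractly rather than promote the pointwise chain. The paper does this in four steps:
\begin{enumerate}
\item It extends the symmetric monoidal bivariant theory $\Sp[\blank]_B$ to a symmetric monoidal functor $\Span(\Anover{B}) \to \Fun(B, \catMod{\Sp})$, using \cite[Theorem~4.4.6]{Macp}.
\item It uses \cite[Theorem~3.19]{CCRY} to identify $\dim$ on $\dbl{\Span(\Anover{B})}$ coherently with $\L_B$.
\item It uses naturality of $\dim$ along that symmetric monoidal functor, together with \cref{lemma:parametrized_trace}.
\item This leaves identifying the induced functor $\Anover{B} \eq \Loops \Span(\Anover{B}) \to \Loops \Fun(B, \catMod{\Sp}) \eq \Fun(B, \Sp)$ with $\SS_B[\blank]$. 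The paper does this not by a morphism-wise chase but by a reduction to the point. The whole construction is natural in $B$, and $B \mapsto \Fun(B, \Sp)$ sends colimits to limits. So by \cref{obs:assembly} such a natural transformation is determined by its value at $B = *$, where the proof of \cite[Theorem~4.40]{CCRY} identifies it with $\SS[\blank]$.
\end{enumerate}
This single reduction gives naturality in $X$ and in $B$ at once. The explicit pointwise chain in the statement is then recovered afterwards, by chasing through \cite[Proof of Lemma~3.11]{CCRY}, rather than serving as the starting point.
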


\begin{proof}
  Recall from the proof of \cref{lemma:transfer_diag} that $\Sp[\blank]_B \colon \Anover{B} \to \Fun(B, \catMod{\Sp})$ is a symmetric monoidal bivariant theory.
  By a result of Macpherson \cite[Theorem~4.4.6]{Macp} (see also \cite[Proposition~3.6]{CCRY}), this implies that it extends uniquely to a strong symmetric monoidal functor $\Span(\Anover{B}) \to \Fun(B, \catMod{\Sp})$ out of the $(\infty, 2)$-category of spans in $\Anover{B}$.
  This yields the following commutative diagram
  \[
  \begin{tikzcd}
    \Anover{B} \rar \dar[swap]{\L_B} & \dbl { \Span ( \Anover{B} ) } \dar[swap]{\dim} \rar & \dbl { \Fun(B, \catMod{\Sp}) } \dar{\dim} \rar{\eq} & \Fun(B, \dbl {\catMod{\Sp}}) \dar{\Dim_*} \\
    \Anover{B} \rar{\eq} & \Loops \Span ( \Anover{B} ) \rar & \Loops \Fun(B, \catMod{\Sp}) \rar{\eq} & \Fun(B, \Sp)
  \end{tikzcd}
  \]
  where the left-hand square is the one of \cite[Theorem~3.19]{CCRY}, the middle square commutes by naturality of the dimension, and the right-hand square commutes by \cref{lemma:parametrized_trace}.
  The whole diagram is (contravariantly) functorial in $B$.
  The bottom composite $\Anover{B} \to \Sp[B]$ is, by \cite[Proof of Theorem~4.40]{CCRY}, equivalent to $\SS[\blank] \colon \An \to \Sp$ for $B = *$; hence it is equivalent to $\SS_B[\blank]$ by \cref{obs:assembly}.
  Since the upper composite is $\THH_B$, this proves that $\THH_B \eq \SS_B \after \L_B$ naturally in $B$.
  Chasing through the proof of \cite[Theorem~3.19]{CCRY}, the explicit pointwise formula follows from \cite[Proof of Lemma~3.11]{CCRY}.
\end{proof}

Note that, since the functor $\Fun(\blank, \Sp) \colon \opcat{\An} \to \Catinf$ preserves limits, \cref{obs:assembly} implies that the equivalence $\THH_B \eq \SS_B[\blank] \after \L_B$ is equivalent to using the equivalence $\THH \eq \SS[\blank] \after \L$ pointwise.
Since the functor $\SS[\blank] \colon \An \to \Sp$ preserves colimits, \cref{obs:assembly} also yields the following.

\begin{definition} \label{def:THH_assembly}
  The \emph{assembly map} is the unique natural transformation $\alpha \colon \SS[X] \to \THH(X)$ that restricts to the equivalence $\SS \eq \THH(*)$ for $X = *$.
  For an anima $B$ and $X \in \Anover{B}$, the \emph{fiberwise assembly map} is the map $\SS_B[X] \to \THH_B(X)$ given by applying $\alpha$ to $X \colon B \to \An$.
\end{definition}

The following lemma provides an explicit description of the fiberwise assembly map using the description of THH resulting from \cref{lemma:transfer_diag}.

\begin{lemma} \label{lemma:fiberwise_THH_assembly}
  Let $B$ be an anima and $X \in \overcat \An B$.
  Then the fiberwise assembly map $\SS_B[X] \to \THH_B(X)$ is equivalent to the map
  \[ \SS_B[X]  =  X_! X^* (\SS_B)  \xlongto{\eta}  X_! \Delta^* \Delta_! X^* (\SS_B)  \eq  \THH_B(X) \]
  given by the unit of the adjunction $\Delta_! \dashv \Delta^*$ associated to the diagonal map $\Delta \colon X \to X \times_B X$.
\end{lemma}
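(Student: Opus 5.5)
Both the fiberwise assembly map and the map $\eta$ of the statement are defined by pointwise constructions in $B$, so the plan is to reduce to the case $B = *$ and then settle that case with the universal property of \cref{obs:assembly}.

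For the reduction, note that $\SS_B[\blank]$, $\THH_B$, $X_! X^*(\SS_B)$ and $X_!\Delta^*\Delta_!X^*(\SS_B)$ are all obtained by applying the corresponding functors for $B=*$ pointwise along $X \colon B \to \An$. This uses $\Anover{B} \eq \Fun(B,\An)$, $\Sp[B] = \Fun(B,\Sp)$, and the fact that $\Fun(\blank,\Sp)$ sends colimits of animas to limits. The unit $\eta$ of $\Delta_! \dashv \Delta^*$ for $\Delta \colon X \to X\times_B X$ is also computed fiberwise, because $\Delta_!$ and $\Delta^*$ satisfy pull--push along the inclusions of fibers $X_b \to X$. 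Likewise the identification $\THH_B(X) \eq X_!\Delta^*\Delta_!X^*(\SS_B)$ from \cref{lemma:transfer_diag} is, by the remark after \cref{lemma:THH_loops} and by \cref{lemma:parametrized_trace}, the pointwise application of the case $B=*$. So it suffices to show that, for $X \in \An$,
\[ \SS[X] = X_!X^*\SS \xlongto{\eta} X_!\Delta^*\Delta_!X^*\SS \eq \THH(X) \]
is natural in $X$ and agrees with $\alpha$.

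Naturality in $X$ is the step I expect to be the main obstacle. For a map $g \colon X \to X'$, the induced map on $\THH$ is $\dim(\Sp[g])$ with $\Sp[g]=g_!$. By \cite[Lemma~2.25]{CCRY} and the bivariant-theory description used in the proof of \cref{lemma:transfer_diag}, this is the composite
\[ X_!\Delta^*\Delta_!X^* \to X'_!\Delta^*\Delta_!X'^* \]
built from the pull--push and counit transformations for $g$ and $g\times g$. The map $\SS[X]\to\SS[X']$ is described by \cref{lemma:susp_map}. I would check compatibility with the two units using \cref{lemma:mates}: the unit for $\Delta_X$ is carried to the unit for $\Delta_{X'}$ because $(g\times g)\Delta_X = \Delta_{X'} g$, and mates compose. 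A cleaner alternative is to transport everything along \cref{lemma:THH_loops} to $\SS[\L X]$. There the unit $\eta$ becomes the map induced by the constant-loop inclusion $c \colon X \to \L X$, which is the section of $e$ obtained from the pullback square \eqref{eq:L} by $(\id,\id)$. Naturality is then clear, since $c$ is a natural transformation $\id \to \L$. To justify this identification, I would apply \cref{lemma:susp_map} to $c$ and use that under the pull--push equivalence $\Delta^*\Delta_! \eq e_!e^*$ the unit $\id \to \Delta^*\Delta_!$ corresponds to $\id = c^*e^* \to \dots$, i.e.\ to the map induced by $c$ (again by \cref{lemma:mates}).

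With naturality in hand, both maps are natural transformations $\SS[\blank] \to \THH$ between colimit-preserving functors $\An\to\Sp$. Since $\THH \eq \SS[\L\blank]$, the target is not obviously colimit-preserving, so rather than invoking \cref{obs:assembly} directly I would use the adjunction $\iota_!\dashv\ev_*$. Because $\SS[\blank] \eq \iota_!(\SS)$, natural transformations out of $\SS[\blank]$ correspond to their values at $*$. At $X=*$, the diagonal is the identity, so $\eta$ is the identity of $\SS$, and the identification with $\THH(*)\eq\SS$ is the canonical one used in \cref{def:THH_assembly}. Hence the two maps agree with $\alpha$.
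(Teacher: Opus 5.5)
Your ``cleaner alternative'' is essentially the paper's proof. The paper transports $\alpha$ along \cref{lemma:THH_loops} and compares it, via \cref{obs:assembly}, with the constant-loop map $c_* \colon \SS[\blank] \to \SS[\L\blank]$, since both are the identity at $*$. It then identifies $c_*$ with the unit $\eta$ using \cref{lemma:susp_map} and the pull--push formula for the square \eqref{eq:L}. So the step you flag as the main obstacle, naturality of the $\eta$-map, is never needed, and the detour through \cite[Lemma~2.25]{CCRY} can be dropped. The map $c_*$ is natural for free, and the comparison $c_* \simeq \eta$ is only needed objectwise.

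The step that does not work as written is your initial reduction of the $\eta$-side to $B = *$.

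\begin{itemize}
\item Pull--push along the fiber inclusions only shows that, for each $b \in B$, the unit of $\Delta_! \dashv \Delta^*$ and the identification $X_!\Delta^*\Delta_!X^*(\SS_B) \eq \THH_B(X)$ of \cref{lemma:transfer_diag} restrict correctly to the fiber $X_b$.
\item Homotopic components at every point do not make two maps in $\Fun(B, \Sp)$ homotopic. For example, for $B = \Sphere 1$, maps $\SS_B \to \SS_B$ are classified up to homotopy by $\pi_0\SS \oplus \pi_1\SS$, and the $\pi_1\SS$-component is invisible on fibers.
\item To reduce to $B = *$ you would need the $\eta$-map and the identification of \cref{lemma:transfer_diag} to be coherently natural in $B$. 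The remark after \cref{lemma:THH_loops} provides this only for the equivalence $\THH_B \eq \SS_B[\L_B\blank]$, not for these two.
\end{itemize}

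The fix is to change the order of the steps, which is exactly how the paper proceeds.

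\begin{itemize}
\item Reduce only the comparison $\alpha \simeq c_*$ to $B=*$. There both sides, and the equivalence $\THH_B \eq \SS_B[\L_B\blank]$, are honestly whiskerings by $X \colon B \to \An$ of natural transformations defined on $\An$.
\item Carry out your mates argument identifying $c_*$ with $\eta$ directly in $\Sp[B]$. \Cref{lemma:susp_map}, \cref{lemma:mates}, and the pull--push formula for the pullback square defining $\L_B X$ apply there verbatim.
\end{itemize}
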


\begin{proof}
  Combining the assembly map with \cref{lemma:THH_loops}, we obtain a natural composite map $\beta \colon \SS[Y] \to \THH(Y) \eq \SS[\L Y]$ that is the identity for $Y = *$.
  The commutative diagram
  \[
  \begin{tikzcd}
    Y \rar{\id} \dar[swap]{\id} & Y \dar{\Delta} \\
    Y \rar{\Delta} & Y \times Y
  \end{tikzcd}
  \]
  induces a natural map $c \colon Y \to \L Y$ (which is the inclusion of the constant loops).
  The induced natural map $c_* \colon \SS[Y] \to \SS[\L Y]$ is also the identity for $Y = *$ and hence is equivalent to $\beta$ as natural transformations by \cref{obs:assembly}.
  This implies that the composite
  \[ \SS_B[X]  \longto  \THH_B(X)  \eq  \SS_B[\L_B X] \]
  is equivalent to the map induced by the analogously defined map $c \colon X \to \L_B X$.
  Using \cref{lemma:susp_map}, we deduce the claim via a diagram chase using the commutative diagram
  \[
  \begin{tikzcd}
    X \rar{c} \dar[swap]{\id} & \L_B X \rar{e} \dar[swap]{e} & X \dar{\Delta} \\
    X \rar{\id} & X \rar{\Delta} & X \times_B X
  \end{tikzcd}
  \]
  where the right-hand square is the pullback square defining $\L_B X$.
\end{proof}

\section{A rational model for the fiberwise THH transfer}

In this section, we construct a rational model for fiberwise THH and the fiberwise THH transfer.
This constitutes the main part of the paper.
Our overall strategy is to model our description of the fiberwise THH transfer from \cref{lemma:transfer_diag}.
To this end, we require rational models for parametrized spectra and various related constructions.
Thus we begin by recalling the former, and will then move on to providing the latter, before finally proving that the fiberwise THH transfer is modeled by the Hochschild homology transfer.

\subsection{Rational models for parametrized spectra}

In this subsection we recall work of Braunack-Mayer \cite{Bra}, who showed that there is, for a nilpotent anima $X$ modeled by a cofibrant cdga $R$, an equivalence between the rational homotopy theory of (certain) nilpotent $X$-spectra (cf.\ \cref{def:nilpotent_spectrum}) and the homotopy theory of (certain) $R$-modules.

\begin{definition}
  Let $X$ be an anima.
  We say that an $X$-spectrum $E$ is of \emph{finite rational type} if $\pi_n(E, x) \tensor \QQ$ is finite-dimensional for all $n \in \ZZ$ and $x \in X$.
  We say that an $X$-spectrum $E$ is \emph{bounded below} if there exists an integer $N$ such that $\pi_*(E, x)$ is concentrated in degrees $\ge N$ for all $x \in X$.
  We write $\Sp[X]_\ft$ and $\Sp[X]_\bbl$ for the full subcategories of $\Sp[X]$ spanned by the $X$-spectra that are of finite rational type and bounded below, respectively.
\end{definition}

\begin{definition}
  Let $R$ be a cdga.
  We say that an $R$-module $M$ is \emph{of finite homotopical type} if it admits a minimal model $R \tensor V$ such that $V$ is finite-dimensional in each degree; we say that $M$ is \emph{homotopically finite} if it admits a minimal model such that $V$ is finite-dimensional.
  We say that an $R$-module $M$ is \emph{bounded below} if its underlying cochain complex is bounded below.
  We write $(\Mod{R})_\fht$ and $(\Mod{R})_\bbl$ for the full subcategories of $\Mod{R}$ spanned by the $R$-modules that are of finite homotopical type and bounded below, respectively.
\end{definition}

Our definition of finite homotopical type is slightly weaker than the one of Braunack-Mayer \cite[Definition~4.18]{Bra}, who in addition requires $M$ to be bounded below.
However, the following observation allows us to ignore the distinction.

\begin{observation}
  Assume that the cdga $R$ is bounded below.
  Then the inclusion of the full subcategory $\Hocat(\Mod{R})_{\fht,\bbl} \subset \Hocat(\Mod{R})_\fht$, spanned by the bounded below $R$-modules of finite homotopical type, is an equivalence of categories since any minimal model is bounded below.
\end{observation}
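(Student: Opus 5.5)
The statement to prove is the observation just made: for a bounded below cdga $R$, the inclusion $\Hocat(\Mod{R})_{\fht,\bbl} \subset \Hocat(\Mod{R})_\fht$ is an equivalence of categories. Since it is the inclusion of a full subcategory, it is fully faithful, so it suffices to show it is essentially surjective on the homotopy category. Concretely, every $R$-module $M$ of finite homotopical type must be quasi-isomorphic to a bounded below $R$-module of finite homotopical type.

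By definition, $M$ admits a minimal model $R \tensor V \to M$, a quasi-isomorphism, with $V$ finite-dimensional in each degree. Two properties of $R \tensor V$ are needed:
\begin{enumerate}
  \item It is again of finite homotopical type, since the identity $R \tensor V \to R \tensor V$ is a minimal model of it with the same $V$.
  \item It is bounded below as a cochain complex.
\end{enumerate}

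The main step is showing that $V$ is bounded below. A minimal model comes with a basis $(v_\alpha)_{\alpha\in I}$ indexed by a well-ordered set with degrees non-decreasing in $\alpha$. If $I$ is nonempty, it has a least element $\alpha_0$, and then $\deg v_\alpha \ge \deg v_{\alpha_0}$ for all $\alpha$. So $V$ is concentrated in degrees $\ge \deg v_{\alpha_0}$; if $I$ is empty, $V = 0$. This is presumably what ``any minimal model is bounded below'' refers to, and it is the point requiring care. Without well-ordering one could have degrees decreasing to $-\infty$, but the monotonicity condition on a well-ordered index set forbids this.

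Given this, let $R$ be concentrated in degrees $\ge a$ and $V$ in degrees $\ge b$. As a graded module, $R \tensor V$ is concentrated in degrees $\ge a+b$, so it is bounded below. Hence $M \cong R \tensor V$ in $\Hocat(\Mod{R})$ with $R \tensor V$ in the subcategory, which proves essential surjectivity.
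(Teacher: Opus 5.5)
Your proof is correct and follows the paper's reasoning: the paper's justification that ``any minimal model is bounded below'' is exactly your observation that a minimal $R \otimes V$ has $V$ bounded below (degrees are monotone along a well-ordered basis) and hence $R \otimes V$ is bounded below when $R$ is. This gives essential surjectivity of the fully faithful inclusion. You have simply spelled out the details the paper leaves implicit, including the check that $R \otimes V$ is itself of finite homotopical type.
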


The following result is essentially due to Braunack--Mayer \cite{Bra}.
However, he only proves that his equivalence is compatible with the monoidal structures on the level of homotopy categories (and in a weak sense).
In the proof below, we will employ an $\infty$-categorical trick to obtain the fully coherent compatibility.

\begin{proposition}[Braunack--Mayer] \label{prop:BM}
  Let $X$ be a nilpotent anima that is modeled by a homologically connected cofibrant cdga $R$ of finite homotopical type.
  Then there is a right adjoint functor $\Psi_R \colon \opcat{\cUnderlying(\Mod{R})} \to \Sp[X]$ whose restriction lifts to an equivalence
  \[ \Psi^\tensor_R  \colon  \opcat{\monUnderlying(\Mod{R})_\fht}  \xlongto{\eq}  \Sp[X]^\QQ_{\nil,\ft,\bbl} \]
  of symmetric monoidal $\infty$-categories.
  Both $\Psi_R$ and $\Psi^\tensor_R$ are moreover natural in the pair $(X, R)$: contravariant in $X$ and covariant in $R$.
  In particular, given a map $f \colon X \to Y$ of nilpotent animas modeled by a map $\phi \colon R \to S$ of homologically connected cofibrant cdgas of finite homotopical type, there is a canonical homotopy $\Psi_S \after \Lder \phi_! \eq f^* \after \Psi_R$.
\end{proposition}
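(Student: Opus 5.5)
Our plan is to build $\Psi_R$ from Braunack-Mayer's Quillen adjunction and then upgrade the monoidal compatibility by a universal-property argument. Braunack-Mayer constructs, for a cofibrant cdga $R$ and a simplicial set model of $X$, an adjunction between $R$-modules (opposite) and parametrized spectra over $X$, built fiberwise from the Sullivan--de Rham adjunction \eqref{eq:Sullivan} applied to retractive spaces and their stabilizations. We take $\Psi_R$ to be the derived functor of the spatial-realization side. Since every object of $\cUnderlying(\Mod{R})$ is cofibrant, it is induced directly by a right Quillen functor (in the opposite model structure), so it is a right adjoint of $\infty$-categories by \cite[Proposition~1.5.1]{Hin}. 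Naturality in the pair $(X,R)$ comes from the fact that these Quillen adjunctions assemble into a pseudofunctor into $\ModCatR$ (strictified using Power's result). Composing with $\fUnderlying$ on the Duskin nerve then gives a functor into $\Catinf$. The homotopy $\Psi_S \after \Lder\phi_! \eq f^* \after \Psi_R$ is the image of the evident natural isomorphism of point-set functors, using that $f^*$ is modeled by restriction along the spatial realization of $\phi$.

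Next we check that $\Psi_R$ restricts to an equivalence $\opcat{\Underlying(\Mod{R})_\fht} \eq \Sp[X]^\QQ_{\nil,\ft,\bbl}$. This is Braunack-Mayer's main theorem \cite{Bra} at the level of homotopy categories. Because the restricted functor is a functor of $\infty$-categories that induces an equivalence on homotopy categories and on mapping spaces (mapping spaces can be detected through shifts, i.e.\ $\pi_n$ of mapping spectra, both sides being stable), it is an equivalence of $\infty$-categories. We can alternatively reduce to the point, using \cref{lemma:param_Sp_rational_eq} and \cref{lemma:param_spectrum_ob} to replace $X$ by its rationalization.

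The main obstacle is the symmetric monoidal refinement, since Braunack-Mayer only gives weak compatibility on homotopy categories. Our trick is to avoid constructing the lax structure on $\Psi_R$ directly and instead transport the structure along the equivalence. Because $\Psi_R$ restricted is an equivalence, $\opcat{\monUnderlying(\Mod{R})_\fht}$ (which is closed under $\tensor_R$ up to equivalence, being the full subcategory of the symmetric monoidal $\monUnderlying(\Mod{R})$) and $\Sp[X]^\QQ_{\nil,\ft,\bbl}$ each carry a symmetric monoidal structure. We then need to show that these agree. We first handle the symmetric monoidal functor $\Lder\APL$-side on the unstable level: the left adjoint of $\Psi_R$ on retractive animas sends fiberwise products to tensor products strongly, using the Künneth/Eilenberg--Moore argument of \cref{lemma:pullback_model}. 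The stabilization $\Spob(\blank)$ then inherits this compatibility, giving an oplax, hence (after checking on homotopy categories via \cite{Bra}) strong, symmetric monoidal structure. Naturality in $(X,R)$ of $\Psi^\tensor_R$ follows because all constructions are functorial, and to conclude coherently we use \cref{obs:assembly}-style uniqueness: the comparison is determined by its value on the unit and on tensor products up to the contractible space of choices. We expect checking that the oplax structure maps are equivalences on the finite-type, bounded-below part to be the delicate step, where we would reduce to fibers over a point and invoke the Künneth theorem for minimal models.
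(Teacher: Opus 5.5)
\paragraph{Parts that match the paper.} Your construction of $\Psi_R$ from Braunack-Mayer's Quillen functors, and your argument that it restricts to an equivalence on the finite-type parts, follow the paper. The paper uses \cite[Remark~4.7 and Theorem~4.20]{Bra} together with \cref{lemma:param_Sp_rational_eq}. One step you skip there: naturality in an arbitrary nilpotent anima $X$, rather than in $\Real(R)$, needs more than strictification. You must first descend to $\Underlying(\CDGA)$, using quasi-isomorphism invariance of $\cUnderlying(\Mod{R})$. Then you precompose with $\Lder \APL$ and restrict along the unit $X \to \Rder \Real \Lder \APL(X)$.

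\paragraph{The gap: the symmetric monoidal refinement.} This is exactly the part of the proposition that goes beyond \cite{Bra}, and your unstable-then-stabilize route does not produce it.
\begin{enumerate}
\item \cref{lemma:pullback_model} is a statement about individual pullback squares. It supplies no coherent (op)lax structure.
\item For retractive animas, the structure that stabilizes to $\tensor_X$ is the fiberwise smash product, not the fiber product.
\item Nothing in the paper or in \cite{Bra} says that an (op)lax structure passes through $\Spob(\blank)$ for these non-presentable, opposite, finite-type categories.
\item Your closing principle, that the comparison is ``determined by its value on the unit and on tensor products up to a contractible space of choices'', is false. A symmetric monoidal structure on a functor is an infinite tower of coherence data, and \cref{obs:assembly} says nothing of this kind.
\item Transporting the structure along the equivalence and then asking whether the two structures agree only restates the problem.
\item For the same reason, naturality of $\Psi^\tensor_R$ in $(X, R)$ is not automatic.
\end{enumerate}

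\paragraph{The missing idea.} Apply \cref{obs:assembly} in the variable $X$, not to objects.
\begin{itemize}
\item The functor $X \mapsto \Sp[X]^\QQ = \Fun(X, \Sp^\QQ)$ sends colimits of animas to limits, both as a functor to $\Catinf$ and as a functor to $\Catinfmon$.
\item Hence for any $F \colon \opcat{\An} \to \Catinfmon$, the space of natural transformations $F \to \Sp[\blank]^\QQ$ is equivalent to the space of their components at $*$. This holds both in $\Catinfmon$ and in $\Catinf$, compatibly with the forgetful map.
\item Apply this to $F(X) = \opcat{\monUnderlying(\Mod{\Lder \APL(X)})_\fht}$. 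The coherent, natural monoidal lift of $\rat{(\blank)} \after \Psi$ then reduces to the case $X = *$.
\item At the point, \cite[Lemma~4.4]{Bra} says $\Phi_*(\SS) \eq \QQ$ for the left adjoint $\Phi_*$. Together with \cref{cor:Sp}, this identifies $\Psi_*$, under $\monUnderlying(\Mod{\QQ}) \eq \Mod{\H \QQ}$, with the linear dual $D = \Hom(\blank, \H \QQ)$ followed by the forgetful functor to $\Sp$.
\item $D$ is canonically lax symmetric monoidal, and it is strong on the finite-homotopical-type part; this is a check on homotopy groups.
\item Finally, $\Mod{\H \QQ} \to \Sp^\QQ$ is a symmetric monoidal equivalence.
\end{itemize}
This argument needs no unstable input at all.
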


\begin{proof}
  By (the adjoint of) \cite[Remark~4.7]{Bra}, there is a pseudonatural transformation $\RealMod_R \colon \opcat{(\Mod{R})} \to \Sp^{\mathbb{N}}_{\Real(R)}$ of pseudofunctors $\CDGA \to \ModCatR$, where $\Sp^{\mathbb{N}}_{X}$ is the model category of sequential spectra in retractive spaces over the simplicial set $X$.
  (Using a result of Lack--Paoli \cite[Theorem~5.4]{LP}, we will henceforth implicitly replace $\Sp^{\mathbb{N}}_{(\blank)}$ and $\Mod{(\blank)}$ by \emph{normal} pseudofunctors, i.e.\ make them strictly unital; this allows us to apply the Duskin nerve to obtain $\infty$-functors.)
  By \cite[Theorem~2.40]{Bra21}, there is furthermore a natural equivalence $\fUnderlying ( \Sp^{\mathbb{N}}_{X} ) \eq \Sp[u(X)]$ of functors $\opcat{\sSet} \to \Catinf$, where $u \colon \sSet \to \An$ is the localization.
  Combining these, we obtain natural transformations of functors $\CDGA \to \Catinf$
  \[ \opcat{\cUnderlying(\Mod{R})}  \xlongfrom{\eq}  \opcat{\cUnderlying(\Mod{q(R)})}  \longto  \Sp[u \Real q(R)] \]
  where $q \colon \CDGA \to \CDGA$ is a cofibrant replacement functor.
  Since quasi-isomorphisms of cdgas induce Quillen equivalences of module categories by a result of Schwede--Shipley \cite[Theorem~4.3]{SS} (combined with \cref{lemma:cofibrant_flat}), and Quillen equivalences induce equivalences of the underlying $\infty$-categories, the left-hand map above is an equivalence and we furthermore obtain a natural transformation
  \[ \Rder \RealMod_R  \colon  \opcat{\cUnderlying(\Mod{R})}  \longto  \Sp[\Rder \Real(R)] \]
  of functors $\Underlying(\CDGA) \to \Catinf$.
  Precomposing with the functor $\Lder \APL \colon \An \to \opcat{\Underlying(\CDGA)}$, we obtain a natural transformation of functors $\opcat{\An} \to \Catinf$
  \[ \Psi_X  \colon  \opcat{\cUnderlying(\Mod{\Lder \APL(X)})}  \longto  \Sp[\Rder \Real \Lder \APL(X)]  \xlongto{\eta_X^*}  \Sp[X] \]
  where $\eta_X$ is the unit of the adjunction $\Lder \APL \dashv \Rder \Real$ at $X$.
  
  We now want to prove that, when the domain is restricted to $\opcat{\cUnderlying(\Mod{\Lder \APL(X)})_\fht}$, the natural transformation $\rat{(\blank)} \after \Psi$ can be lifted to the $\infty$-category $\Catinfmon$ of symmetric monoidal $\infty$-categories and (strong) symmetric monoidal functors.
  First note that $\cUnderlying(\Mod{\Lder \APL(X)})_\fht$ is indeed closed under the (derived) tensor product by \cite[Lemma~5.16]{Bra} and is clearly preserved under (derived) scalar extension.
  We now begin by considering the case $X = *$.
  We have $\Lder \APL (*) \eq \QQ$ and $\Rder \Real (\QQ) \eq *$.
  Furthermore, by \cite[Theorem~7.1.2.13]{LurHA}, there is an equivalence $\monUnderlying(\Mod{\QQ}) \eq \Mod{\H \QQ}$ of symmetric monoidal $\infty$-categories; by construction, taking homology groups on the left-hand side corresponds to taking homotopy groups on the right-hand side (here we think of $\Mod{\QQ}$ as \emph{homologically} graded).
  In particular note that $\cUnderlying(\Mod{\QQ})$ and its opposite are stable.
  By \cite[Lemma~4.4]{Bra}, the left adjoint $\Phi_*$ of $\Psi_*$ maps $\SS$ to $\QQ$.
  This property determines $\Phi_*$ up to equivalence by \cref{cor:Sp}.
  By \cite[Remark~5.2.5.8 and Construction~5.2.1.9]{LurHA}, the internal hom-functor
  \[ D \defeq \Hom(\blank, \H \QQ)  \colon  \opcat{(\Mod{\H \QQ})}  \longto  \Mod{\H \QQ} \]
  is right adjoint to its opposite $\opcat{D}$.
  In particular the composite
  \[ \Sp  \xlongto{\blank \tensor \H \QQ}  \Mod{\H \QQ}  \xlongto{\opcat{D}}  \opcat{(\Mod{\H \QQ})} \]
  is left adjoint and sends $\SS$ to $\H \QQ$.
  Thus it is equivalent to $\Phi_*$, and its right adjoint
  \[ \opcat{(\Mod{\H \QQ})}  \xlongto{D}  \Mod{\H \QQ}  \longto  \Sp \]
  is equivalent to $\Psi_*$.
  By \cite[Remark~5.2.2.25, Construction~5.2.5.27, and Remark~5.2.5.8]{LurHA} the functor $D$ is lax symmetric monoidal.
  We claim that its restriction to $\opcat{(\Mod{\H \QQ})_\fht}$ is strong symmetric monoidal.
  To see this, note that a $\QQ$-module is of finite homotopical type if and only if its homology is bounded above (in homological grading) and finite-dimensional in each degree; the claim follows by checking it on homotopy groups.%
  \footnote{Note that for $M, N \in \Mod{\H \QQ}$, the maps $\pi_*(M) \tensor \pi_*(N) \to \pi_*(M \tensor N)$ and $\pi_*(D(M)) \to \dual{\pi_*(M)}$ are isomorphisms: for a fixed $N$, the collection of those $M$ for which the maps are isomorphisms contains $\H \QQ$ and is closed under taking retracts, (co)fibers, and filtered colimits; by \cite[Proposition~7.2.4.2]{LurHA} this collections thus contains all objects of $\Mod{\H \QQ}$.}
  The composite $\Mod{\H \QQ} \to \Sp \to \Sp^\QQ$ can be lifted to a symmetric monoidal equivalence by \cite[Proposition~4.8.2.10]{LurHA}, and thus the restriction of $\rat{(\blank)} \after \Psi_*$ to $\opcat{(\Mod{\H \QQ})_\fht}$ lifts to a strong symmetric monoidal functor.
  Now, for any functor $F \colon \opcat{\An} \to \Catinfmon$, there is a commutative diagram
  \[
  \begin{tikzcd}
    \Map_{\Fun(\opcat{\An}, \Catinfmon)} \bigl( F, \Sp[\blank]^\QQ \bigr) \rar \dar[swap]{\eq} & \Map_{\Fun(\opcat{\An}, \Catinf)} \bigl( F, \Sp[\blank]^\QQ \bigr) \dar{\eq} \\
    \Map_{\Catinfmon} \bigl( F(*), \Sp^\QQ \bigr) \rar & \Map_{\Catinf} \bigl( F(*), \Sp^\QQ \bigr)
  \end{tikzcd}
  \]
  where the vertical maps are equivalences by \cref{obs:assembly} since $\Sp[\blank]^\QQ = \Fun(\blank, \Sp^\QQ)$ preserves limits, both as a functor to $\Catinfmon$ and $\Catinf$.
  Applying this to $F(X) = \opcat{\monUnderlying(\Mod{\Lder \APL(X)})_\fht}$, we obtain a lift of $\rat{(\blank)} \after \Psi$ along the upper horizontal map since $\rat{(\blank)} \after \Psi_*$ admits a lift along the bottom horizontal map.
  
  Lastly we note that, by \cite[Theorem~4.20]{Bra}, the functor $\rat{(\blank)} \after \Rder \RealMod_R$ restricts to an equivalence between $\opcat{\cUnderlying(\Mod{R})_\fht}$ and $\Sp[\Rder \Real(R)]^\QQ_{\nil,\ft,\bbl}$ when $R$ is cofibrant.
  Thus, by \cref{lemma:param_Sp_rational_eq}, the analogous statement is true for $\Psi_X$ when $X$ is nilpotent and of finite rational type.
  Observing that the full subcategory $\Sp[X]^\QQ_{\nil,\ft,\bbl} \subset \Sp[X]^\QQ$ is closed under tensor products by \cite[Lemma~5.15]{Bra}, and clearly preserved under restriction along maps of animas, completes the proof.
\end{proof}

Braunack--Mayer \cite[Proposition~5.3]{Bra} also provides models for the functors $\Suspinf[X]$ and $\Loopsinf[X]$.
In the following we prove a fully coherent version of this result using a different argument.

\begin{notation}
  Let $R$ be a non-negatively graded cdga, $M$ an $R$-module, and $n \in \ZZ$.
  We write $\trunc{< n} M \subseteq M$ for the smallest $R$-submodule (and thus in particular subcomplex) that contains $M^i$ for all $i < n$, and denote the cokernel of this inclusion by $\trunc{\ge n} M$.
  Note that $\trunc{\ge n}$ is the left adjoint of the inclusion $\Mod{R}^{\ge n} \subseteq \Mod{R}$ of the full subcategory spanned by those $R$-modules whose underlying cochain complex is concentrated in degrees $\ge n$.
  We furthermore write
  \[ \SAcn_R  \colon  \Mod{R}  \xlongto{\trunc{\ge 0}}  \Mod{R}^{\ge 0}  \xlongto{\SA_R}  \undercat{(\CDGA)}{R} \]
  where $\SA_R M$ denotes the free commutative $R$-algebra on $M$.
\end{notation}

Note that $\SAcn_R$ is left adjoint to the forgetful functor $\undercat{(\CDGA)}{R} \to \Mod{R}$.
Since the latter preserves fibrations and weak equivalences, this adjunction is Quillen.

\begin{lemma}[Braunack--Mayer] \label{lemma:Loops_model}
  In the situation of \cref{prop:BM}, the following diagram of $\infty$-categories commutes naturally in the pair $(X, R)$
  \[
  \begin{tikzcd}[column sep = 45]
    \opcat{\cUnderlying(\Mod{R})} \rar{\cUnderlying(\SAcn_R)} \dar[swap]{\Psi_R} & \opcat{\cUnderlying \bigl( \undercat{(\CDGA)}{R} \bigr)} \rar{\eq} & \opcat{\overcat{\Underlying(\CDGA)}{R}} \dar{\Rder \Real} \\
    \Sp[X] \rar{\Loopsinf[X]} & \Anover{X} & \lar[swap]{\eta^*} \Anover{\Rder \Real (R)}
  \end{tikzcd}
  \]
  where $\eta \colon X \to \Rder \Real \Lder \APL (X) \eq \Rder \Real (R)$ is the unit of the adjunction $\Lder \APL \dashv \Rder \Real$.
\end{lemma}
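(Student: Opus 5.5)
The plan is to reduce to the case $X = *$ via the universal property of \cref{obs:assembly}, as in the proof of \cref{prop:BM}. Both composites in the diagram are natural transformations of functors $\opcat{\An} \to \Catinf$ (after precomposing with $\Lder \APL$, so that $R = \Lder \APL(X)$). The source is $\opcat{\cUnderlying(\Mod{\Lder\APL(X)})}$ and the target is $\Anover{X} \eq \Fun(X, \An)$. The target functor $X \mapsto \Fun(X, \An)$ preserves limits. Hence, by the analogue of the diagram used in the proof of \cref{prop:BM}, a natural transformation into it is determined up to contractible choice by its component at $X = *$. It therefore suffices to construct both composites naturally in $X$ and then identify their values at $*$.

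\textbf{Naturality.} For the lower-left composite, $\Psi$ is natural by \cref{prop:BM}. The functor $\Loopsinf[X] = (\Loopsinf)_*$ commutes with $f^*$ because $f^*$ is precomposition. For the upper-right composite, the map $\SAcn_R$ commutes with scalar extension $\phi_!$ up to natural isomorphism, since $S \tensor_R \SA_R(\trunc{\ge 0} M) \iso \SA_S(\trunc{\ge 0}(S \tensor_R M))$ for non-negatively graded $S$. This gives a pseudonatural transformation of pseudofunctors $\CDGA \to \ModCatL$, and hence a natural transformation of $\infty$-functors after applying $\cUnderlying$. The identification $\cUnderlying(\undercat{(\CDGA)}{R}) \eq \overcat{\Underlying(\CDGA)}{R}$ (with the op) and the functor $\Rder \Real$ together with $\eta^*$ are natural in $X$ because $\Lder\APL \dashv \Rder\Real$ is an adjunction. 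The sliced functor $\opcat{\overcat{\Underlying(\CDGA)}{\Lder\APL X}} \to \Anover{\Rder\Real\Lder\APL X}$ is functorial in pullback along maps of bases, and $\eta^*$ is natural by naturality of the unit.

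\textbf{The case $X = *$.} Here $R \eq \QQ$, and the target is $\An$. The lower composite is $\Loopsinf \after \Psi_*$, which by the proof of \cref{prop:BM} is the functor $M \mapsto \Loopsinf(D(M))$, viewing $D(M)$ as a spectrum. The upper composite is $M \mapsto \Rder\Real(\SAcn_\QQ M)$. Both are right adjoints from $\opcat{\cUnderlying(\Mod{\QQ})}$ to $\An$. Their left adjoints are left adjoints $\An \to \opcat{\cUnderlying(\Mod\QQ)}$, which are determined by their value at $*$ by \cref{obs:assembly} (in the colimit-preserving form). The left adjoint of the lower composite sends $*$ to $\Phi_*(\SS) \eq \QQ$, using \cite[Lemma~4.4]{Bra}. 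For the upper composite, the left adjoint is $\APL$ followed by the forgetful functor from cdgas to modules, up to the direction of the opposite categories. Concretely, maps $\Rder\Real(\SAcn M) \leftarrow *$ correspond to cdga maps $\SAcn M \to \QQ$, which correspond to module maps $M \to \QQ$. So $*$ is again sent to $\QQ$. By uniqueness of adjoints, the two composites agree at $*$, and the claim follows.

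\textbf{Main obstacle.} I expect the main difficulty to be the naturality step. One has to assemble the upper composite, including the slice identification $\cUnderlying(\undercat{(\CDGA)}{R}) \eq \overcat{\Underlying(\CDGA)}{R}$ and the unit $\eta$, into an honest functor $\opcat{\An} \to \Fun([1], \Catinf)$. I would try to do this by working with the normal pseudofunctor $R \mapsto \undercat{(\CDGA)}{R}$ into $\ModCatL$ and its Duskin nerve, exactly as was done for $\Mod{(\blank)}$ in the proof of \cref{prop:BM}. I would then use the fact that the slice identification is natural in $R$ as a straightening of the cocartesian fibration $\ev_0 \colon \Underlying(\CDGA)^{[1]} \to \Underlying(\CDGA)$. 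The identification at $*$ also requires some care to check that the adjunction used matches the one from \cref{prop:Sullivan}, but it is essentially formal.
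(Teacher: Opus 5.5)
Your proposal is correct and follows essentially the same route as the paper. You establish naturality of both composites in $X$ (with $R = \Lder \APL(X)$), and use \cref{obs:assembly} together with the fact that $X \mapsto \Anover{X}$ sends colimits to limits to reduce to $(*, \QQ)$. You then pass to left adjoints and apply \cref{obs:assembly} once more, so that everything comes down to \cite[Lemma~4.4]{Bra}, i.e.\ that the left adjoint of $\Psi_\QQ$ sends $\SS$ to $\QQ$.

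The paper additionally makes explicit, via Schwede--Shipley, that quasi-isomorphisms of cdgas induce Quillen equivalences of the categories $\undercat{(\CDGA)}{R}$, so that $\cUnderlying(\SAcn_R)$ descends to functors on $\Underlying(\CDGA)$. It also handles the slice identification through its restriction functoriality and passage to left adjoints, rather than through the cocartesian fibration $\ev_0$; both are minor variations.
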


\begin{proof}
  First note that the assignment $R \mapsto \undercat{(\CDGA)}{R}$ extends to a pseudofunctor $\CDGA \to \ModCatL$ such that a map $R \to S$ is sent to $\blank \tensor_R S$ (using \cite[Theorem~5.4]{LP}, we again implicitly replace it by a normal pseudofunctor) and that the functors $\SAcn_R \colon \Mod{R} \to \undercat{(\CDGA)}{R}$ assemble into a pseudonatural transformation.
  Since a quasi-isomorphism of cdgas induces a Quillen equivalence of categories of algebras by \cite[Theorem~4.4]{SS} (combined with \cref{lemma:cofibrant_flat}), we thus obtain $\cUnderlying(\SAcn_R)$ as a natural transformation of functors $\Underlying(\CDGA) \to \Catinf$.
  Furthermore note that, for a cofibrant object $C$ of a model category $\cat M$, the canonical functor $\undercat{\cat M}{C} \to \undercat{\Underlying(\cat M)}{C}$ induces an equivalence $\Underlying(\undercat{\cat M}{C}) \eq \undercat{\Underlying(\cat M)}{C}$ (see e.g.\ \cite[Corollary~7.6.13]{Cis}).
  It is natural with respect to the contravariant functoriality of the under category (i.e.\ restriction) and thus also natural when passing to the covariant functoriality given by the left adjoints (i.e.\ taking pushouts).
  In particular all maps in the diagram above are indeed natural in the pair $(X, R)$, and it is enough to prove that it commutes naturally for pairs of the form $(X, \Lder \APL(X))$.
  
  Since the functor $\opcat{\An} \to \Catinf$ given by $X \mapsto \Anover X \eq \Fun(X, \An)$ preserves limits, it is by \cref{obs:assembly} enough to prove the claim for the pair $(*, \QQ)$, i.e.\ that the diagram
  \[
  \begin{tikzcd}[column sep = 45]
    \opcat{\cUnderlying(\Mod{\QQ})} \rar{\cUnderlying(\SAcn_{\QQ})} \dar[swap]{\Psi_\QQ} & \opcat{\cUnderlying(\CDGA)} \dar{\Rder \Real} \\
    \Sp \rar{\Loopsinf} & \An
  \end{tikzcd}
  \]
  commutes.
  Passing to left adjoints and again using \cref{obs:assembly}, it is enough to prove that the left adjoint of $\Psi_\QQ$ maps $\SS$ to $\QQ$.
  This follows from \cite[Lemma~4.4]{Bra}.
\end{proof}

\subsection{Equivariant models} \label{sec:eq}

We would like to also be able to model animas that are not necessarily nilpotent.
To this end, we observe that for a group $G$ there is an equivalence between $\Fun(\B G, \An_\nil)$ and the full subcategory of $\Anover{\B G}$ spanned by those maps $\alpha \colon X \to \B G$ whose fiber is nilpotent; note that $X$ itself does \emph{not} need to be nilpotent.
Furthermore $\Fun(\B G, \An^\QQ_{\nil,\ft})$ can be modeled by an appropriate category of equivariant cdgas.
This allows to model non-nilpotent connected animas $X$, for example using the canonical map $X \to \B \pi_1(X)$ (however note that there exist maps of animas that cannot be realized as a map of fiberwise nilpotent animas over $\B G$ for any fixed $G$).
We will now make this precise; also see \cite{BS,GHT,BZ} for further elaborations on these ideas.

\begin{definition}
  Let $G$ be a group.
  A \emph{$G$-equivariant cdga} is a cdga equipped with a left $G$-action.
  We denote by $\CDGAeq{G} \defeq \Fun(\B G, \CDGA)$ the category of non-negatively graded $G$-equivariant cdgas, equipped with the injective model structure, i.e.\ the weak equivalences are the quasi-isomorphisms and the cofibrations are the cofibrations of the underlying cdgas.\footnote{This injective model structure exists by \cite[Proposition~A.2.8.2]{LurHTT}.}
  We write $\CDGAeq[\ge 1, \fht]{G} \subseteq \CDGAeq{G}$ for the full subcategory spanned by those $G$-equivariant cdgas whose underlying cdga lies in $\CDGA[\ge 1, \fht]$.
\end{definition}

It follows from \cite[Proposition~1.3.4.25]{LurHA} that there is a canonical equivalence
\[ \Underlying \bigl( \CDGAeq{G} \bigr) \eq \Fun \bigl( \B G, \Underlying(\CDGA) \bigr) \]
that is compatible with restriction along maps of groups $H \to G$.
In particular, the equivalence is compatible with the respective restriction functors to $\Underlying(\CDGA)$.
Combined with \cref{prop:Sullivan}, this implies that there is an adjunction, which we denote $\APL^G \dashv \Real^G$,
\[
\begin{tikzcd}
  \Anover{\B G} \eq \Fun(\B G, \An) \rar[yshift = 7]{(\Lder \APL)_*}[below, name = T]{} &[20] \lar[yshift = -7]{(\Rder \Real)_*}[above, name = B]{} \Fun \bigl( \B G, \opcat{\Underlying(\CDGA)} \bigr) \eq \opcat{ \Underlying \bigl( \CDGAeq{G} \bigr) }
  \ar[from = T, to = B, phantom, "\vertdashv"]
\end{tikzcd}
\]
(here we implicitly use the canonical equivalence $\opcat{(\B G)} \eq \B G$).
It restricts to an adjoint equivalence
\[ \Anover[\QQ,\fnil,\ft]{\B G}  \eq  \Fun \bigl( \B G, \An^\QQ_{\nil,\ft} \bigr)  \eq  \opcat{ \Underlying \bigl( \CDGAeq{G} \bigr)_{\ge 1, \fht} } \]
(see \cref{def:fiberwise} for the notation).
In particular, by \cite[Proposition~1.3.4.23]{LurHA}, homotopy colimits in the model category $\CDGAeq{G}$, as long as they are again homologically connected and of finite homotopical type, represent limits in $\Anover[\QQ,\fnil,\ft]{\B G}$.
Note that the forgetful functor $\Anover{\B G} \to \An$ preserves weakly contractible limits by \cite[Lemma~2.2.7]{GHK} (in particular it preserves pullbacks).
The following are generalizations of \cref{def:model,lemma:pullback_model}.

\begin{definition}
  Let $G$ be a group and $\cat I$ a category.
  We say that a diagram $A \colon \cat I \to \opcat{(\CDGAeq[\ge 1, \fht]{G})}$ \emph{models} a diagram $X \colon \cat I \to \Anover[\fnil,\ft]{\B G}$ if it comes equipped with an equivalence $\Real^G \after u \after A \eq \rat X$ of functors $\cat I \to \Anover[\QQ,\fnil,\ft]{\B G}$, where $u \colon \opcat{(\CDGAeq[\ge 1, \fht]{G})} \to \opcat{\Underlying(\CDGAeq{G})_{\ge 1, \fht}}$ is the canonical functor.
\end{definition}

\begin{observation} \label{obs:pullback_model_eq}
  Let the following be a pullback diagram in $\Anover{\B G}$ and a homotopy pushout diagram in $\CDGAeq{G}$, respectively,
  \[
  \begin{tikzcd}
    X \times_Z Y \rar \dar & Y \dar{g}  &  P & \lar B \\
    X \rar{f} & Z  &  A \uar & \lar C \uar
  \end{tikzcd}
  \]
  such that the maps $X \to Z \from Y$ lie in $\Anover[\fnil,\ft]{\B G}$ and are modeled by the maps $A \from C \to B$, which lie in $\CDGAeq[\ge 1, \fht]{G}$.
  If $f$ or $g$ has connected fibers, then, by (the proof of) \cref{lemma:pullback_model}, the left-hand square is canonically modeled by the right-hand square.
\end{observation}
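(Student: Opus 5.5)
The strategy is to reduce to the non-equivariant \cref{lemma:pullback_model} by restricting along the basepoint inclusion $e \colon * \to \B G$. On the space side this sends $X \in \Anover{\B G} \eq \Fun(\B G, \An)$ to its fiber $e^* X$. On the algebra side it sends a $G$-equivariant cdga to its underlying cdga.

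I would begin by recording the relevant properties of these two restriction functors. The functor $e^* \colon \Fun(\B G, \An) \to \An$ preserves limits, in particular pullbacks, and it is conservative. The fiber of $f$ over a point of $Z$ coincides with the fiber of $e^* f$. Hence $e^* X \to e^* Z \from e^* Y$ is a diagram in $\An_{\nil,\ft}$ in which one of the maps has connected fibers, and its pullback is $e^*(X \times_Z Y)$.

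The forgetful functor $\CDGAeq{G} \to \CDGA$ preserves quasi-isomorphisms and cofibrations, since the model structure is injective. It also preserves colimits, so it preserves homotopy pushouts. The adjunction $\APL^G \dashv \Real^G$ is compatible with $e^*$ by the compatibility of $\Underlying(\CDGAeq{G}) \eq \Fun(\B G, \Underlying(\CDGA))$ with restriction. Consequently the underlying square of cdgas is a homotopy pushout modeling the square $e^* A \from e^* C \to e^* B$, in the sense of \cref{def:model}.

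Next I would apply \cref{lemma:pullback_model} to these underlying diagrams. It shows that the underlying cdga of $P$ lies in $\CDGA[\ge 1, \fht]$, so that $P \in \CDGAeq[\ge 1, \fht]{G}$. It also shows that the pullback of fibers lies in $\An_{\nil,\ft}$. Therefore $X \times_Z Y$ lies in $\Anover[\fnil,\ft]{\B G}$.

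To produce the modeling equivalence equivariantly, I would use that $\Real^G$ is a right adjoint out of $\opcat{\Underlying(\CDGAeq{G})}$, in which the homotopy pushout $P$ represents a pullback. This gives $\Real^G(P) \eq \Real^G A \times_{\Real^G C} \Real^G B \eq \rat X \times_{\rat Z} \rat Y$. There is a canonical comparison map $X \times_Z Y \to \rat X \times_{\rat Z} \rat Y$ in $\Anover{\B G}$, and I would show that it exhibits the target as the fiberwise rationalization. The target is fiberwise rational, nilpotent and of finite type, so it suffices to check that the map is a fiberwise rational equivalence. This can be checked after applying $e^*$, where it reduces to \cref{lemma:rat_pullback} together with the connectedness hypothesis on the fibers of $f$ or $g$.

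I expect the main obstacle to be the bookkeeping in this last step. One must identify rationalization in $\Anover[\fnil]{\B G}$ with fiberwise rationalization, which amounts to applying $(\blank)_\QQ$ pointwise in $\Fun(\B G, \An)$. One must also check that the resulting equivalence is natural enough to be called canonical, i.e.\ compatible with the given equivalences $\Real^G u A \eq \rat X$ and so on. Both points should follow because every construction involved commutes with $e^*$ and $e^*$ is conservative. One must also confirm that $\Anover{\B G} \to \An$ preserves the pullback, and this is the weakly contractible limit statement of \cite[Lemma~2.2.7]{GHK} quoted above.
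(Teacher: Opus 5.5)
Your proposal is correct and takes essentially the paper's approach. The paper just invokes (the proof of) \cref{lemma:pullback_model}: the nilpotence, finiteness, connectivity and finite-homotopical-type conditions are checked on fibers over $\B G$ and on underlying cdgas, and the modeling equivalence comes from homotopy pushouts in $\CDGAeq{G}$ representing pullbacks. Your restriction along $e \colon * \to \B G$, combined with the right-adjointness of $\Real^G$ and \cref{lemma:rat_pullback}, spells this out cleanly. (Minor slip: the underlying cdgas model $e^* X \to e^* Z \from e^* Y$, not ``$e^* A \from e^* C \to e^* B$''.)
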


\begin{remark} \label{rem:eq_models}
  In this paper, we will be mostly concerned with maps of connected animas $f \colon X \to Y$ whose fiber is simply connected; in particular $f$ induces an isomorphism on $\pi_1$.
  If we furthermore assume that the universal coverings of $X$ and $Y$ are of finite rational type, then we can consider $f$ as a map of fiberwise simply connected animas over $\B \pi_1(Y)$ of fiberwise finite rational type.
  In this situation, the map $f$ thus always admits a model in $\pi_1(Y)$-equivariant cdgas.
\end{remark}

\subsubsection*{Equivariant models for parametrized spectra}

Given a $G$-equivariant cdga that models an anima $X$ over $\B G$, we would also like to be able to model $X$-spectra.
This is achieved by a notion of $G$-equivariant modules, which we now define.

\begin{definition}
  Let $G$ be a group and $R$ a $G$-equivariant cdga.
  A \emph{$G$-equivariant $R$-module} is a module $M$ over the underlying cdga of $R$ together with a left $G$-action on the underlying cochain complex of $M$ such that the module structure map $R \tensor M \to M$ is $G$-equivariant (where $G$ acts diagonally on the tensor product).
  A \emph{map of $G$-equivariant $R$-modules} is a map of the underlying $R$-modules that is $G$-equivariant.
  We denote by $\Modeq{G}{R}$ the category of $G$-equivariant $R$-modules, equipped with the injective model structure, i.e.\ the weak equivalences are the quasi-isomorphisms and the cofibrations are the cofibrations of $R$-modules.%
  \footnote{This injective model structure exists by \cite[Theorem~2.30]{Bar}, noting that $\Modeq{G}{R}$ is equivalent to the category of left sections of the left Quillen presheaf $\B G \to \twoCat$ given by sending the unique object to $\Mod{R}$ and a morphism $g \in G$ to scalar extension along acting by $g$.}
  We write $(\Modeq{G}{R})_\fht \subseteq \Modeq{G}{R}$ the full subcategory spanned by those $G$-equivariant $R$-modules whose underlying $R$-module is of finite homotopical type.
\end{definition}

Equivalently, a $G$-equivariant cdga is a commutative monoid in the category of $G$-equivariant cochain complexes equipped with the symmetric monoidal structure given by the tensor product with the diagonal $G$-action, and a $G$-equivariant module is a module over such an algebra.

We now provide an equivariant version of Braunack--Meyer's models for parametrized spectra.
To do this, we need the fully coherent version of his result we obtained in \cref{prop:BM}.

\begin{notation}
  Let $G$ be a group and $\alpha \colon X \to \B G$ an anima over $\B G$.
  We denote by $\Sp[X]^{G,\QQ}_{\nil,\ft,\bbl} \subseteq \Sp[X]$ the full subcategory spanned by those $X$-spectra whose restriction to the fiber of $\alpha$ lies in $\Sp[\fib \alpha]^\QQ_{\nil,\ft,\bbl}$.
\end{notation}

\begin{proposition} \label{prop:BMeq}
  Let $G$ be a group and $R$ a $G$-equivariant homologically connected cofibrant cdga of finite homotopical type that models a fiberwise nilpotent anima $X$ over $\B G$.
  Then there is a right adjoint functor $\Psi_R \colon \opcat{\cUnderlying(\Modeq{G}{R})} \to \Sp[X]$ whose restriction lifts to an equivalence
  \[ \Psi^\tensor_R  \colon  \opcat{\monUnderlying(\Modeq{G}{R})_\fht}  \xlongto{\eq}  \Sp[X]^{G,\QQ}_{\nil,\ft,\bbl} \]
  of symmetric monoidal $\infty$-categories.
  Both $\Psi_R$ and $\Psi^\tensor_R$ are moreover natural in the pair $(X, R)$: contravariant in $X$ and covariant in $R$.
  In particular, given a map $f \colon X \to Y$ of fiberwise nilpotent animas over $\B G$ modeled by a map $\phi \colon R \to S$ of $G$-equivariant homologically connected cofibrant cdgas of finite homotopical type, there is a canonical homotopy $\Psi_S \after \Lder \phi_! \eq f^* \after \Psi_R$.
\end{proposition}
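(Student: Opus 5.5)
The plan is to deduce the equivariant statement from the non-equivariant \cref{prop:BM} by taking limits over $\B G$. The paper has promised (in \cref{sec:sections}) that a groupoidal limit of model categories models the limit of the underlying $\infty$-categories, and this is the tool I would lean on.

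\textbf{Source side.} $\Modeq{G}{R}$ is the category of sections of the left Quillen presheaf $\B G \to \twoCat$ sending the object to $\Mod{R}$ and $g$ to scalar extension along $g \colon R \to R$. By the limit result of \cref{sec:sections}, I expect
\[ \cUnderlying(\Modeq{G}{R}) \eq \lim_{\B G} \cUnderlying(\Mod{R}), \]
where $\B G$ acts via the functoriality of $R \mapsto \cUnderlying(\Mod{R})$. This functoriality is covariant in $R$, and the equivalence should be symmetric monoidal, since it can be run in $\Catinfmon$ with monoidal model categories. On the finite homotopical type parts it restricts to the limit of the full subcategories $(\Mod{R})_\fht$, because that condition is checked on the underlying $R$-module.

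\textbf{Target side.} Let $F \defeq \fib(\alpha \colon X \to \B G)$, which carries a $G$-action. Then $X \eq F_{hG}$, so $\Sp[X] \eq \lim_{\B G} \Sp[F]$, using that $\Sp[\blank]$ sends colimits of animas to limits. Under this equivalence, $\Sp[X]^{G,\QQ}_{\nil,\ft,\bbl}$ corresponds to $\lim_{\B G} \Sp[F]^\QQ_{\nil,\ft,\bbl}$ by definition. The underlying cdga of $R$ models $F$, and the $G$-action on $R$ models the $G$-action on $F$, via the equivalence $\Underlying(\CDGAeq{G}) \eq \Fun(\B G, \Underlying(\CDGA))$ and the adjunction $\APL^G \dashv \Real^G$.

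\textbf{Assembling.} The naturality of $\Psi_R$ and $\Psi^\tensor_R$ in the pair $(F, R)$ from \cref{prop:BM} means that composing the $G$-action on $(F, R)$ with this natural transformation gives a functor from $\B G$ into arrows of $\Catinf$, respectively of $\Catinfmon$. Taking $\lim_{\B G}$ produces $\Psi_R$ for $X$ and its monoidal refinement. It is a right adjoint because each component is one and the transition functors commute with the adjoints up to the mates of the naturality squares; I would argue this via $(\infty,2)$-functoriality of limits of adjunctions, as in the proof of \cref{lemma:loop_fun_eval}. It is an equivalence on finite type parts because a limit of equivalences is an equivalence.

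Naturality in $(X, R)$ requires doing this for all groups and equivariant pairs simultaneously. I would organize it as a functor out of the Grothendieck construction of pairs $(G, R)$ and apply the limit construction functorially. The identity $\Psi_S \after \Lder\phi_! \eq f^* \after \Psi_R$ then follows by taking $\lim_{\B G}$ of the corresponding homotopy from \cref{prop:BM}.

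\textbf{Main obstacle.} The hardest step is the coherence needed to make the $\B G$-indexed diagram of $\Psi$'s genuinely $\infty$-functorial, and not merely pseudonatural on homotopy categories. This is exactly why the fully coherent version of \cref{prop:BM} was needed. A second delicate point is identifying the injective model structure on $\Modeq{G}{R}$ with the section model structure, so that the result of \cref{sec:sections} applies. This should hold because cofibrations and weak equivalences are detected on the underlying modules.
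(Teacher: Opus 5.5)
Your proposal is correct and follows essentially the same route as the paper: identify $\Modeq{G}{R}$ with the section model category over $\B G$, apply \cref{cor:sections_monoidal} to obtain $\monUnderlying(\Modeq{G}{R}) \eq \lim_{\B G} \monUnderlying(\Mod{R})$ symmetric monoidally, write $\Sp[X] \eq \lim_{\B G} \Sp[F(\blank)]$, and take $\lim_{\B G}$ of the natural transformation of \cref{prop:BM}, restricted to the finite-type subcategories. You pass over one step the paper takes explicitly: it first strictifies the pseudofunctor $A \mapsto \Mod{A}$ into $\ModCatmon$, because the appendix requires a strict functor into $\truncat{1} \ModCatL$; your extra remarks on right adjointness (automatic over a groupoid, since the transition functors are equivalences and so the mates are invertible) and on varying $G$ go beyond what the paper spells out but are consistent with it.
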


\begin{proof}
  First choose a strictification $M$ of the pseudofunctor $\CDGA \to \ModCatmon$ given by $A \mapsto \Mod{A}$ and scalar extension.
  Note that this induces a pseudonatural equivalence $\monUnderlying(M(A)) \eq \monUnderlying(\Mod{A})$.
  Restricting along $R \colon \B G \to \CDGA$, we furthermore have $\Sect{\B G}(M) \eq \Modeq{G}{R}$ (cf.\ \cref{def:Sect}) and hence it follows from \cref{cor:sections_monoidal} that there is a natural equivalence of symmetric monoidal $\infty$-categories
  \[ \monUnderlying(\Modeq G R) \eq \lim_{\B G} \monUnderlying(\Mod{R}) \]
  that is compatible with the forgetful maps to $\cUnderlying(\Mod{R})$.
  Using \cref{prop:BM}, we thus obtain a natural right adjoint functor
  \[ \opcat{\cUnderlying(\Modeq G R)}  \eq  \lim_{\B G} \opcat{\cUnderlying(\Mod{R})}  \longto  \lim_{\B G} \Sp[F(\blank)]  \eq  \Sp[X] \]
  where $F \colon \B G \to \An$ is the functor corresponding to $X \in \Anover{\B G}$ (note that $\Sp[\blank] = \Fun(\blank, \Sp)$ sends colimits to limits).
  Restricting to the appropriate full subcategories, it moreover lifts to a natural equivalence
  \[ \opcat{\monUnderlying(\Modeq G R)_{\fht}}  \eq  \lim_{\B G} \opcat{\monUnderlying(\Mod{R})_{\fht}}  \xlongto{\eq}  \lim_{\B G} \Sp[F(\blank)]^\QQ_{\nil,\ft,\bbl}  \eq  \Sp[X]^{G,\QQ}_{\nil,\ft,\bbl} \]
  of symmetric monoidal $\infty$-categories.
\end{proof}

\begin{definition}
  In the situation of \cref{prop:BMeq}, let $\cat I$ be a category.
  We say that a diagram $M \colon \cat I \to \opcat{(\Modeq{G}{R})_\fht}$ \emph{models} a diagram $E \colon \cat I \to \Sp[X]^G_{\nil,\ft,\bbl}$ if it comes equipped with an equivalence $\Psi_R \after u \after M \eq \rat E$ of functors $\cat I \to \Sp[X]^{G,\QQ}_{\nil,\ft,\bbl}$, where $u$ is the composite
  \[ \opcat{(\Modeq{G}{R})_\fht}  \longto  \opcat{\Underlying(\Modeq{G}{R})_\fht}  \xlongto{q}  \opcat{\cUnderlying(\Modeq{G}{R})_\fht} \]
  with $q$ a cofibrant replacement functor.
\end{definition}

\subsubsection*{An equivariant model for parametrized suspension spectra}

We now provide equivariant models for the functors $\Loopsinf[X]$ and $\Suspinf[X]$.
To this end, we use our fully coherent version of the corresponding non-equivariant result due to Braunack--Mayer \cite[Proposition~5.3]{Bra}, which we provided in \cref{lemma:Loops_model}.

\begin{lemma} \label{lemma:Loops_model_eq}
  In the situation of \cref{prop:BMeq}, the following diagram of $\infty$-categories commutes naturally in the pair $(X, R)$
  \[
  \begin{tikzcd}[column sep = 45]
    \opcat{\cUnderlying(\Modeq{G}{R})} \rar{\cUnderlying(\SAcn_R)} \dar[swap]{\Psi_R} & \opcat{\cUnderlying \bigl( \undercat{(\CDGAeq{G})}{R} \bigr)} \rar{\eq} & \opcat{\overcat{\Underlying(\CDGAeq{G})}{R}} \dar{\Real^G} \\
    \Sp[X] \rar{\Loopsinf[X]} & \Anover{X} & \lar[swap]{\eta^*} \Anover{\Real^G (R)}
  \end{tikzcd}
  \]
  where $\eta \colon X \to \Real^G \APL^G (X) \eq \Real^G (R)$ is the unit of the adjunction $\APL^G \dashv \Real^G$.
\end{lemma}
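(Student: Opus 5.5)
The plan is to deduce the equivariant statement from the non-equivariant \cref{lemma:Loops_model} by passing to limits over $\B G$, in the same way that \cref{prop:BMeq} was deduced from \cref{prop:BM}. Let $F \colon \B G \to \An$ correspond to $X \in \Anover{\B G}$, and regard $R$ as a functor $\B G \to \CDGA$. I would rewrite each of the four corners of the diagram as a limit over $\B G$ of the corresponding corner for the underlying pair $(F(*), R)$, with $G$ acting through the functoriality of that corner.

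For the corners, I would argue as follows.
\begin{itemize}
  \item For $\cUnderlying(\Modeq G R)$ this is exactly the equivalence $\cUnderlying(\Modeq G R) \eq \lim_{\B G} \cUnderlying(\Mod R)$ used in the proof of \cref{prop:BMeq}, which comes from \cref{cor:sections_monoidal}.
  \item For $\cUnderlying(\undercat{(\CDGAeq G)}{R})$ I would apply the same sections result to a strictification of the pseudofunctor $A \mapsto \undercat{(\CDGA)}{A}$. A $G$-equivariant cdga under $R$ is precisely a section over $\B G$ of this pseudofunctor restricted along $R$, so this gives an equivalence with $\lim_{\B G} \cUnderlying(\undercat{(\CDGA)}{R})$.
  \item For the same corner I would also check that this identification matches $\overcat{\Underlying(\CDGAeq G)}{R} \eq \lim_{\B G} \overcat{\Underlying(\CDGA)}{R}$. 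This follows from $\Underlying(\CDGAeq G) \eq \Fun(\B G, \Underlying(\CDGA))$ and the fact that slice categories commute with these limits.
  \item On the topological side, $\Sp[X] \eq \lim_{\B G} \Sp[F(\blank)]$ and $\Anover{X} \eq \lim_{\B G} \Anover{F(\blank)}$, because $\Fun(\blank, \Sp)$ and $\Fun(\blank, \An)$ send colimits to limits and $X \eq \colim_{\B G} F$. The same holds for $\Anover{\Real^G(R)}$, since $\Real^G = (\Rder \Real)_*$ is computed pointwise.
\end{itemize}

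Next I would identify the functors as limits of the non-equivariant ones. For $\Psi_R$ this holds by construction in \cref{prop:BMeq}. The map $\SAcn_R$ is the $\B G$-indexed section of the pseudonatural transformation $\SAcn$ from the proof of \cref{lemma:Loops_model}. For $\Loopsinf[X] = (\Loopsinf)_*$ I would use that it is computed pointwise on $X$, hence on each $F(x)$. Finally, $\eta^*$ is obtained as the limit of the pointwise units, because the adjunction $\APL^G \dashv \Real^G$ is $(\Lder \APL)_* \dashv (\Rder \Real)_*$.

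With these identifications in place, \cref{lemma:Loops_model} supplies a homotopy that is natural in the pair $(F(*), R)$. Naturality means it is a homotopy of functors out of the category of such pairs, so it can be restricted along the $\B G$-diagram $g \mapsto (F(*), R)$ and then passed to $\lim_{\B G}$. This gives the desired commuting homotopy, and naturality in $(X, R)$ is inherited in the same way.

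The main obstacle is coherence: the sections equivalence for algebras under $R$ has to be compatible with the identification of $\cUnderlying$ of undercategories with undercategories of $\Underlying$ that is used in \cref{lemma:Loops_model}, which relies on \cite[Corollary~7.6.13]{Cis}. My first approach would be to make every identification natural in $A \in \CDGA$ before restricting along $R \colon \B G \to \CDGA$, so that the compatibility follows formally. For this I would use strictifications as in \cite[Theorem~5.4]{LP} and the fact that the equivalence of \cite{Cis} is natural in the base object.
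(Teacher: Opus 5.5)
Your proposal is correct and follows essentially the same route as the paper. You apply $\lim_{\B G}$ to the naturally commuting diagram of \cref{lemma:Loops_model}, identify the model-categorical corners via \cref{prop:sections} (using $\Sect{\B G}(\undercat{(\CDGA)}{R}) \eq \undercat{(\CDGAeq{G})}{R}$), and identify the slice corners with limits over $\B G$. The only difference is cosmetic: the paper justifies your ``slices commute with these limits'' step, and also $\lim_{\B G} \Anover{F(\blank)} \eq \Anover{X}$, uniformly by classifying $\overcat{\cat C}{\blank}$ via the cartesian fibration $\ev_1 \colon \cat C^{[1]} \to \cat C$, applying the dual of \cref{lemma:limit_sections}, and currying, rather than via $X \eq \colim_{\B G} F$.
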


\begin{proof}
  As in the proof of \cref{prop:BMeq}, this follows by applying $\lim_{\B G}$ to the diagram of \cref{lemma:Loops_model} and using \cref{prop:sections}.
  Note that, for an $\infty$-category $\cat C$ that admits pullbacks, the functor $\overcat{\cat C}{\blank} \colon \opcat{\cat C} \to \Catinf$ is classified by the cartesian fibration $\ev_1 \colon \cat C^{[1]} \to \cat C$ (cf.\ \kerodon{05SA}).
  By (the dual of) \cref{lemma:limit_sections}, for a functor $F \colon \cat B \to \cat C$ from an $\infty$-groupoid $\cat B$, the limit $\lim_{\opcat{\cat B}} (\overcat{\cat C}{\blank} \circ \opcat{F})$ is thus naturally equivalent to $\Fun_{\cat C}(\cat B, \cat C^{[1]})$ (note that any section of a cartesian fibration over an $\infty$-groupoid is cartesian).
  By currying, this is equivalent to $\overcat{\Fun(\cat B, \cat C)}{F}$.
  In particular there are equivalences
  \[ \lim_{\B G} \Anover{F(\blank)}  \eq  \overcat{\Fun(\B G, \An)}{F}  \eq  \overcat{(\Anover{\B G})}{X}  \eq  \Anover{X} \]
  where $F \colon \B G \to \An$ is the functor corresponding to $X \in \Anover{\B G}$ (and we implicitly identify $\opcat{\B G} \eq \B G$).
  Similarly we have $\Sect{\B G}(\undercat{(\CDGA)}{R}) \eq \undercat{(\CDGAeq{G})}{R}$.
\end{proof}

\begin{lemma} \label{lemma:Sullivan_over}
  Let $G$ be a group and $R$ a $G$-equivariant homologically connected cofibrant cdga of finite homotopical type.
  Then the adjunction $\APL^G \dashv \Real^G$ restricts to an adjoint equivalence
  \[ \opcat{\cUnderlying \bigl( \undercat{(\CDGAeq{G})}{R} \bigr)_{\fht,\ge 1}}  \eq  \overcat{(\Anover[\QQ,\fnil,\ft]{\B G})}{\Real^G (R)}^{\ge 1} \]
  where $\overcat{(\Anover[\QQ,\fnil,\ft]{\B G})}{\Real^G (R)}^{\ge 1}$ denotes the full subcategory of $\overcat{(\Anover[\QQ,\fnil,\ft]{\B G})}{\Real^G (R)} \subset \overcat{\An}{\Real^G (R)}$ spanned by those maps $X \to \Real^G (R)$ whose fiber is connected, and the subscripts $\fht$ and $\ge 1$ in the domain refer to the full subcategory spanned by those maps $\phi \colon R \to S$ of $G$-equivariant cdgas whose underlying map of cdgas admits a minimal model $R \tensor \SA V$ with $V$ positively graded and finite-dimensional in each degree.
\end{lemma}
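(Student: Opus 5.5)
The plan is to reduce to the equivalence $\APL^G \dashv \Real^G$ on $\Anover[\QQ,\fnil,\ft]{\B G}$ established above, and to pass to slice categories on both sides. Since $R$ is cofibrant, the canonical functor induces an equivalence $\cUnderlying\bigl(\undercat{(\CDGAeq{G})}{R}\bigr) \eq \undercat{\Underlying(\CDGAeq{G})}{R}$, as in the proof of \cref{lemma:Loops_model}. An adjoint equivalence $\opcat{\Underlying(\CDGAeq{G})_{\ge 1,\fht}} \eq \Anover[\QQ,\fnil,\ft]{\B G}$ then induces an equivalence of slice categories: under-categories of $R$ on the algebraic side correspond to over-categories of $\Real^G(R)$ on the topological side. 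This slice equivalence is given on objects by applying $\Real^G$. The inverse is $\APL^G$ followed by composing with the counit $R \eq \APL^G\Real^G(R)$.

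The statement therefore amounts to matching the two full subcategories.

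First, a map $\phi \colon R \to S$ lies in the domain precisely when its minimal model $R \tensor \SA V$ has $V$ positively graded and of finite type. We must show that $S$ is then homologically connected of finite homotopical type, and that $\Real^G(\phi)$ has connected fiber. I would check this non-equivariantly, since all conditions are on underlying objects.
\begin{itemize}
\item Homological connectivity and finite type of $S$ follow by combining a minimal model $\QQ \to \SA W$ of $R$ with the relative model: $\SA W \tensor \SA V$ is a Sullivan algebra of finite type. If $R$ is not minimal one first replaces $R$ by its minimal model.
\item For the fiber, the Sullivan fibration theorem (\cite[§§10--11]{BG}, used via \cref{lemma:pullback_model} with $C = R$, $A = \QQ$ using an augmentation from \cref{lemma:augmented}) identifies the fiber of $\Real(\phi)$ with $\Real(\QQ \tensor_R S) = \Real(\SA V, \bar d)$. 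This is connected since $V$ is positively graded.
\end{itemize}

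Conversely, suppose $E \to \Real^G(R)$ is fiberwise rational, nilpotent and of finite type over $\B G$ with connected fiber $F$. By \cref{lemma:nilpotent_composite}, the fiber of $E \to \Real^G(R)$ is nilpotent, and the map is nilpotent. Moreover $F$ is of finite rational type, by the Serre/Eilenberg--Moore argument of \cref{lemma:pullback_model} applied to the fiber sequences over $\B G$. Then $\APL$ of the map admits a relative minimal model $R \to R \tensor \SA V$. Its fiber model $\SA V$ is a minimal model of $F$, which forces $V$ positively graded (since $F$ is connected) and of finite type.

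The main obstacle is the converse step: a relative minimal model of a cofibration $R \to S$ exists in general, but identifying $V$ with the minimal model of the fiber, and deducing positivity of degrees, requires the nilpotence of the fibration $E \to \Real(R)$. That nilpotence is exactly what the connectivity of the fiber plus \cref{lemma:nilpotent_composite} provides. I would invoke \cite[Theorem~10.1 and §11]{BG} for the relative version. Finally, equivariance costs nothing: $\fht$ and $\ge 1$ are conditions on underlying objects, and limits over $\B G$ commute with passing to these full subcategories via \cref{prop:sections}.
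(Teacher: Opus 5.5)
Your argument is correct in outline, but it takes a different route from the paper. The paper's proof is two lines. Both sides are limits over $\B G$ of their non-equivariant counterparts (as in the proof of \cref{lemma:Loops_model_eq}, via \cref{prop:sections}), so it reduces to $G = 1$. It then simply cites \cite[\S5.1]{Bra} together with \cref{lemma:nilpotent_composite}.

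You instead slice the already established equivalence $\opcat{\Underlying(\CDGAeq{G})_{\ge 1,\fht}} \eq \Anover[\QQ,\fnil,\ft]{\B G}$ at $R$, and then match the two full subcategories by hand. Both conditions only see the underlying map: the fiber of $X \to \Real^G(R)$ is the fiber of the induced map of fibers over $\B G$. So your route handles equivariance with no limit argument at all, and your closing appeal to \cref{prop:sections} is superfluous. Your route also makes explicit the relative-minimal-model content that the paper outsources to Braunack-Mayer, at the cost of having to supply it yourself.

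One step is misjustified as written. In the forward direction you cannot invoke \cref{lemma:pullback_model} for the cospan $* \to \Real(R) \leftarrow \Real(S)$. That lemma needs one of the two maps to have connected fibers. The fiber of $* \to \Real(R)$ is $\Omega \Real(R)$, which is disconnected unless $\Real(R)$ is simply connected, and connectivity of the fiber of $\Real(\phi)$ is exactly what you are proving.

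The conclusion is still true by a direct argument:
\begin{itemize}
\item Replace $S$ by $R \tensor \SA V$.
\item $\Real$ sends the pushout $\QQ \tensor_R (R \tensor \SA V) = (\SA V, \bar d)$ to a strict pullback.
\item $\Real$ of the cofibration $R \to R \tensor \SA V$ is a Kan fibration \cite[Lemma~8.2]{BG}, so $\Real(\SA V, \bar d)$ is the homotopy fiber over the augmentation point.
\item It has a single vertex because $V^0 = 0$, and one fiber suffices since $\Real(R)$ is connected.
\end{itemize}

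In the converse, positivity of $V$ should be arranged first rather than ``forced'' afterwards. A connected fiber gives surjectivity on $\pi_1$, hence injectivity on $\Coho 1$. This is what yields a relative minimal model with $V = V^{\ge 1}$ (cf.\ \cite[Remark~7.9]{BG}, as used in the proof of \cref{lemma:pullback_model}). Only then does \cref{lemma:pullback_model} legitimately apply, since now the map has connected fibers. It identifies $(\SA V, \bar d)$ as a minimal model of $F$, so $V$ is of finite type.
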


\begin{proof}
  Since both sides are obtained as limits over $\B G$ (see the proof of \cref{lemma:Loops_model_eq}), it is enough to consider the case $G = 1$.
  In that case the claim follows from \cite[§5.1]{Bra} (using \cref{lemma:nilpotent_composite}).
\end{proof}

\begin{lemma} \label{lemma:Susp_model_eq}
  In the situation of \cref{prop:BMeq} (and using notation as in \cref{lemma:Sullivan_over}), the following diagram of $\infty$-categories commutes naturally in the pair $(X, R)$
  \begin{equation} \label{eq:Susp_model_eq}
  \begin{tikzcd}
    \Sp[X]^{G}_{\nil,\ft,\ge 1} \rar{\rat{(\blank)}} \dar[swap]{\Loopsinf[X]} & \Sp[X]^{G,\QQ}_{\nil,\ft,\ge 1} & \lar{\Psi_R}[swap]{\eq} \opcat{\Underlying(\Modeq{G}{R})_{\fht,\ge 1}} \dar{\Lder \SAcn_R} \\
    \overcat{(\Anover[\fnil,\ft]{\B G})}{X}^{\ge 1} \rar{\rat{(\blank)}} \dar[swap]{\Suspinf[X]} & \overcat{(\Anover[\QQ,\fnil,\ft]{\B G})}{\Real^G (R)}^{\ge 1} & \lar{\Real^G}[swap]{\eq} \opcat{\Underlying \bigl( \undercat{(\CDGAeq{G})}{R} \bigr)_{\fht,\ge 1}} \dar{\Rder \forget} \\
    \Sp[X]^{G}_{\nil,\ft,\bbl} \rar{\rat{(\blank)}} & \Sp[X]^{G,\QQ}_{\nil,\ft,\bbl} & \lar{\Psi_R}[swap]{\eq} \opcat{\Underlying(\Modeq{G}{R})_{\fht}}
  \end{tikzcd}
  \end{equation}
  where the subscripts $\ge 1$ in the upper row refer to the full subcategories spanned by those $X$-spectra (resp.\ $R$-modules) that are connected (resp.\ concentrated in positive degrees).
\end{lemma}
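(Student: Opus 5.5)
The diagram \eqref{eq:Susp_model_eq} consists of two squares. I will treat the upper square first, then obtain the lower square from it by adjunction.

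\textbf{Upper square.} This is essentially \cref{lemma:Loops_model_eq} restricted to full subcategories. The first step is to compose that lemma with the equivalence $\eta^* \colon \overcat{(\Anover[\QQ,\fnil,\ft]{\B G})}{\Real^G(R)} \to \Anover{X}$. The second step is to compare with rationalization: for $E \in \Sp[X]^G_{\nil,\ft,\ge 1}$, applying $\Loopsinf[X]$ to the rationalization map $E \to \rat E$ produces a fiberwise map of connected nilpotent animas. On fibers this map is rationalization, since for infinite loop spaces rationalization is computed on homotopy groups. Consequently $\rat{(\Loopsinf[X] E)} \eq \Loopsinf[X](\rat E)$, viewed over $\Real^G(R)$ via \cref{lemma:param_An_rational_eq} (fiberwise over $\B G$) together with the $\rat{(\blank)}$ in the middle row. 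The third step is to check that the functors restrict to the stated subcategories. A connected $X$-spectrum corresponds to a module concentrated in positive degrees, and $\SAcn_R$ of such a module gives a relative Sullivan algebra $R \tensor \SA V$ with $V$ positive and of finite type. Both checks reduce to the case $G=1$, where they are in \cite[§5]{Bra}. Naturality in $(X,R)$ follows because every construction used is natural.

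\textbf{Lower square.} Here I pass to left adjoints. Within the appropriate subcategories, $\Suspinf[X] \dashv \Loopsinf[X]$. On the algebraic side, in the opposite categories, $\Rder\forget$ is left adjoint to $\Lder\SAcn_R$, because $\SAcn_R \dashv \forget$ is a Quillen adjunction and taking opposites swaps the sides. The horizontal functors in the right column are equivalences. So the upper square, a commuting square of right adjoints, has a mate: a natural transformation from the composite $\Psi_R \circ \Rder\forget \circ (\Real^G)^{-1}$ to $\rat{(\blank)} \circ \Suspinf[X]$, after restricting to rational objects. I need to show this mate is an equivalence.

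\textbf{Main obstacle.} Two points make the mate argument delicate. First, $\rat{(\blank)}$ is not an equivalence on the left-hand column, so I cannot simply invert it. Second, the subcategories are not closed under the adjoints in general. To handle this, I will replace the left column by its rational version. This is possible because $\rat{(\blank)} \circ \Suspinf[X] \eq \Suspinf[X]$ composed with rationalization on fiberwise connected nilpotent animas, as suspension spectra of rational-equivalent animas are rationally equivalent. Once that is done, the claim becomes a statement about the rational $\Suspinf \dashv \Loopsinf$ adjunction, which is an honest adjunction on $\Sp[X]^{G,\QQ}$ after composing with $\rat{(\blank)}$.

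It then remains to show the mate is an equivalence. This can be checked after forgetting to fibers, that is, for $G=1$ and pointwise. By \cref{obs:assembly}-type arguments it can even be checked on generators, or directly by identifying fibers using \cite[Proposition~5.3]{Bra}, which proves the homotopy-category version of exactly this statement. Alternatively, I would verify it on homotopy groups: $\Psi_R(\ol{S})$ for $S = R \tensor \SA V$ computes the rational homology of the total space relative to the base, and this agrees with $\pi_*$ of the rational suspension spectrum. Carrying out this identification of the unit and counit is the step I expect to require the most care.
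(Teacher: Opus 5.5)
Your treatment of the upper square is essentially the paper's. The paper pastes \cref{lemma:Loops_model_eq} with $\eta^*$ and with the mate of the square saying that $\Loopsinf[X]$ commutes with the inclusions of rational objects; that mate is invertible on connected spectra because their infinite loop spaces are simple. It then restricts to the indicated subcategories.

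The lower square has a gap. You reduce it to showing that the mate of the upper square with respect to the vertical adjunctions is an equivalence, and then only list ways one might check this. The reductions to $G = 1$ and to fibers are fine, but they leave the real problem untouched. The remaining suggestions do not work as stated:
\begin{itemize}
  \item \cref{obs:assembly} needs colimit-preserving functors out of $\An$. One side of your mate, $\Psi_R \after \Rder\forget$, involves the right adjoint $\Psi_R$ and does not preserve colimits, and the restricted subcategories are not closed under colimits anyway.
  \item \cite[Proposition~5.3]{Bra}, or a comparison of homotopy groups, only gives an abstract isomorphism between source and target. It does not show that your particular mate map induces it, and that is exactly the unit/counit identification you leave open.
\end{itemize}

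The missing observation, which makes this step formal, is that $\Psi_R$ and $\Real^G$ are right adjoints as well. Their left adjoints are $\Phi_R$ and $\APL^G$, and on the relevant subcategories these restrict to the inverse equivalences, by \cite[Theorem~4.20]{Bra} and \cref{lemma:Sullivan_over}. The argument then runs as follows.
\begin{itemize}
  \item Before restricting, the square of \cref{lemma:Loops_model_eq} consists of four right adjoints. Passing to the left adjoints of all four gives $\Phi_R \after \Suspinf[\Real^G(R)] \eq \Rder\forget \after \APL^G$ by uniqueness of adjoints.
  \item Passing to left adjoints in the square of inclusions of rational objects gives $\rat{(\Suspinf[X] Y)} \eq \rat{(\Suspinf[X] \rat Y)}$. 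This is the correct form of your ``rational left column''.
  \item $\eta^*$ commutes with $\Suspinf$, since both are computed pointwise.
\end{itemize}
Pasting these and restricting to the subcategories on which $\Phi_R$ and $\APL^G$ invert $\Psi_R$ and $\Real^G$ yields the lower square directly. The fact that $\Suspinf$ does not preserve rational objects is harmless: $\Psi_R$ takes values in rational spectra, so its left adjoint $\Phi_R$ inverts rational equivalences. In particular, the mate you wanted to analyze is invertible (after rationalizing its source) for purely formal reasons, and no computation with units and counits is needed.
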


\begin{proof}
  We consider the diagram
  \begin{equation} \label{eq:Susp_model_eq_proof}
  \begin{tikzcd}
    \Sp[X]_{\ge 1} \rar{\rat{(\blank)}} \dar[swap]{\Loopsinf[X]} & \Sp[X] \dar[swap]{\Loopsinf[X]} & \lar[swap]{\eta^*} \Sp[\Real^G(R)] \dar{\Loopsinf[\Real^G(R)]} & \lar[swap]{\Psi_R} \opcat{\Underlying(\Modeq{G}{R})} \dar{\Lder \SAcn_R} \\
    \Anover{X} \rar{\rat{(\blank)}} & \Anover{X} & \lar[swap]{\eta^*} \Anover{\Real^G (R)} & \lar[swap]{\Real^G} \opcat{\Underlying \bigl( \undercat{(\CDGAeq{G})}{R} \bigr)}
  \end{tikzcd}
  \end{equation}
  where $\eta \colon X \to \Real^G (R)$ is the canonical map and the lower left-hand horizontal map is fiberwise rationalization over $X$.
  The right-hand square commutes by \cref{lemma:Loops_model_eq}, and the left-hand square commutes via the mate of the canonical equivalence making the following diagram commute
  \begin{equation} \label{eq:Loops_rat}
  \begin{tikzcd}
    \Sp[X] \dar[swap]{\Loopsinf[X]} & \lar \Sp[X]^\QQ \dar{\Loopsinf[X]} \\
    \Anover{X} & \lar \Anover[\QQ]{X}
  \end{tikzcd}
  \end{equation}
  (that this mate is an equivalence on $\Sp[X]_{\ge 1}$ follows from $\Loopsinf$ sending rational equivalences of connected spectra to rational equivalences of animas; to see this, note that the infinite loop space of a connected spectrum is nilpotent since it is an H-space and hence simple, see e.g.\ \cite[Corollary~1.4.5]{MP}).
  It follows from the proof of \cref{lemma:param_An_rational_eq} (combined with \cref{lemma:nilpotent_composite}) that $\eta^*$ restricts to an equivalence $\overcat{(\Anover[\fnil]{\B G})}{\Real^G (R)}^{\QQ,\ge 1} \to \overcat{(\Anover[\fnil]{\B G})}{X}^{\QQ,\ge 1}$ with inverse given by fiberwise rationalization over $\B G$ (and using the equivalence $\rat X \eq \Real^G(R)$).
  It follows from \cref{lemma:rat_pullback} that fiberwise rationalizing first over $X$ and then over $\B G$ is equivalent to just rationalizing over $\B G$ when restricted to $\overcat{(\Anover[\fnil]{\B G})}{X}^{\ge 1}$.
  Restricting the diagram \eqref{eq:Susp_model_eq_proof} to the appropriate subcategories hence yields the upper half of the diagram \eqref{eq:Susp_model_eq}.
  That $\Psi_R$ and $\Real^G$ restrict to equivalences as claimed follows from \cref{prop:BMeq,lemma:Sullivan_over}.
  
  To obtain the lower half of \eqref{eq:Susp_model_eq}, we consider the diagram
  \begin{equation} \label{eq:Susp_model_eq_proof_2}
  \begin{tikzcd}
    \Sp[X] \rar{\rat{(\blank)}} & \Sp[X] & \lar[swap]{\eta^*} \Sp[\Real^G(R)] \rar{\Phi_R} & \opcat{\Underlying(\Modeq{G}{R})} \\
    \Anover{X} \rar{\rat{(\blank)}} \uar{\Suspinf[X]} & \Anover{X} \uar{\Suspinf[X]} & \lar[swap]{\eta^*} \Anover{\Real^G (R)} \uar[swap]{\Suspinf[\Real^G(R)]} \rar{\APL^G} & \opcat{\Underlying \bigl( \undercat{(\CDGAeq{G})}{R} \bigr)} \uar[swap]{\Rder \forget}
  \end{tikzcd}
  \end{equation}
  where the right-hand square is obtained from the right-hand square of \eqref{eq:Susp_model_eq_proof} by passing to left adjoints, and the left-hand square is obtained from \eqref{eq:Loops_rat} by passing to left adjoints.
  As above, restricting to the appropriate full subcategories yields the desired diagram.
  To see this, note that the restriction of $\APL^G$ is inverse to the restriction of $\Real^G$ by \cref{lemma:Sullivan_over}, and that the analogous statement for $\Phi_R$ and $\Psi_R$ holds by \cite[Theorem~4.20]{Bra}.
\end{proof}

\begin{lemma} \label{lemma:Loops_Susp_adjoint}
  Let $G$ be a group and $R \to S$ a map of $G$-equivariant homologically connected cofibrant cdgas of finite homotopical type that models a map $f \colon Y \to X$ of fiberwise nilpotent animas over $\B G$.
  Assume that the fiber of $f$ is connected, and let $\alpha \colon M \to S$ be a map of $G$-equivariant $R$-modules of finite homotopical type that models a map $a \colon \Suspinf[X] Y \to P$ in $\Sp[X]^G_{\nil,\ft,\bbl}$.
  Assume that $P$ is connected and that $M$ is cofibrant and concentrated in positive degrees.
  Then the canonical map $\beta \colon \SA_R M \to S$ of $G$-equivariant cdgas models the map $b \colon Y \to \Loopsinf[X] P$ of animas over $\B G$ that is adjoint to $a$.
\end{lemma}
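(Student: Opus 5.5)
The plan is to deduce this from \cref{lemma:Susp_model_eq} by an adjunction argument. The map $a \colon \Suspinf[X] Y \to P$ corresponds under $\Suspinf[X] \dashv \Loopsinf[X]$ to $b \colon Y \to \Loopsinf[X] P$ in $\Anover{X}$. On the algebraic side, $\alpha \colon M \to S$ is a map of $G$-equivariant $R$-modules, and $\beta \colon \SA_R M \to S$ is its adjoint under $\SAcn_R \dashv \forget$. Since $M$ is concentrated in positive degrees, $\trunc{\ge 0} M = M$, so $\SAcn_R M = \SA_R M$. Because $M$ is cofibrant, $\SA_R M$ is a cofibrant object of $\undercat{(\CDGAeq{G})}{R}$ and computes $\Lder \SAcn_R(M)$. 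The task is therefore to show that the square of \cref{lemma:Susp_model_eq} identifies the two adjunctions, so that adjoint maps correspond.

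First, check that all objects lie in the subcategories of \eqref{eq:Susp_model_eq}. The module $M$ lies in $\Underlying(\Modeq{G}{R})_{\fht,\ge 1}$ and $P$ is connected. The map $R \to S$ lies in $\Underlying(\undercat{(\CDGAeq{G})}{R})_{\fht,\ge 1}$, because $f$ has connected fiber and $S$ is of finite homotopical type, which gives a relative minimal model with positively graded generators (using \cref{lemma:Sullivan_over}). Likewise $Y \to X$ lies in $\overcat{(\Anover[\fnil,\ft]{\B G})}{X}^{\ge 1}$.

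Next, use the two halves of \eqref{eq:Susp_model_eq}. The lower half says that $\Rder \forget$ models $\Suspinf[X]$ after rationalization, and by construction it was obtained in \eqref{eq:Susp_model_eq_proof_2} by passing to left adjoints from the upper half, \eqref{eq:Susp_model_eq_proof}, which models $\Loopsinf[X]$ by $\Lder \SAcn_R$. So the equivalences in the two halves are mates. By \cref{lemma:mates}, together with \cref{prop:Shulman} to represent the derived unit and counit by the strict ones on cofibrant objects, the derived counit $\Lder\SAcn_R \Rder\forget \to \id$ is compatible with the counit of $\Suspinf[X] \dashv \Loopsinf[X]$ (after rationalization). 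Hence the derived adjunction bijection on the algebraic side corresponds to the topological one. Since $\beta$ is the strict adjoint of $\alpha$, and $M$ is cofibrant while $S$ is fibrant (all cdgas are fibrant), the strict adjoint represents the derived adjoint.

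Finally, rationalization has to be dealt with. What the argument gives directly is that $\beta$ models the adjoint of $\rat a \colon \rat{(\Suspinf[X] Y)} \to \rat P$, viewed as a map into $\Loopsinf[X]\rat P$ followed by fiberwise rationalization. One identifies this with $\rat b$ using the left-hand square of \eqref{eq:Susp_model_eq_proof}, namely the mate of \eqref{eq:Loops_rat}, which is an equivalence on connected spectra, and the equivalence $\eta^*$ on fiberwise rational nilpotent objects.

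I expect the main obstacle to be this last compatibility. One must check that the composite of the rationalization mate with the $\Psi_R$/$\Real^G$ square is exactly the mate of the lower square, rather than merely some equivalence. This should again follow from \cref{lemma:mates} and the fact that mates compose, but the bookkeeping across the three columns of \eqref{eq:Susp_model_eq_proof} is the delicate part.
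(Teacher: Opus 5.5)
Your proposal is correct and is essentially the paper's argument: the squares modeling $\Loopsinf[X]$ by $\Lder \SAcn_R$ and $\Suspinf[X]$ by $\Rder \forget$ are mates, so the two adjunction bijections correspond; \cref{prop:Shulman} shows that the strict adjoint $\beta$ represents the derived one; and the passage to $\rat b$ is the same diagram chase, using that the left-hand squares of \eqref{eq:Susp_model_eq_proof} and \eqref{eq:Susp_model_eq_proof_2} are mates (the paper only differs in running the adjunction step over $\Real^G(R)$ with auxiliary maps $a'$, $b'$ before rationalizing, rather than in the restricted diagram \eqref{eq:Susp_model_eq}). There are two harmless slips: in the injective model structure on $\CDGAeq{G}$ not every object is fibrant, so ``all cdgas are fibrant'' is false in general, but the strict adjoint still computes the derived one because $\forget$ preserves all quasi-isomorphisms; and after passing to opposite categories the algebraic counit $\Lder\SAcn_R \Rder\forget \to \id$ corresponds to the unit $\id \to \Loopsinf[X]\Suspinf[X]$, not to a counit.
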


\begin{proof}
  We consider the commutative diagrams
  \[
  \begin{tikzcd}
    \Sp[\Real^G(R)] \dar[swap]{\Loopsinf[\Real^G(R)]} & \lar[swap]{\Psi_R} \opcat{\Underlying(\Modeq{G}{R})} \dar{\Lder \SAcn_R} & \Sp[\Real^G(R)] \rar{\Phi_R} & \opcat{\Underlying(\Modeq{G}{R})} \\
    \Anover{\Real^G (R)} & \lar[swap]{\Real^G} \opcat{\Underlying \bigl( \undercat{(\CDGAeq{G})}{R} \bigr)} & \Anover{\Real^G (R)} \uar{\Suspinf[\Real^G(R)]} \rar{\APL^G} & \opcat{\Underlying \bigl( \undercat{(\CDGAeq{G})}{R} \bigr)} \uar[swap]{\Rder \forget}
  \end{tikzcd}
  \]
  where the left-hand one is obtained from \cref{lemma:Loops_model_eq} and the right-hand one by taking left adjoints.
  By the proof of \cref{lemma:Susp_model_eq}, the fiberwise suspension spectrum $\Suspinf[\Real^G(R)] \rat Y$ of the rationalization $\rat f \colon \rat Y \to \rat X \eq \Real^G(R)$ is equivalent to the rationalization $\rat{(\Suspinf[X] Y)}$ under the equivalence $\Sp[X]^{G,\QQ}_\nil \eq \Sp[\Real^G(R)]^{G,\QQ}_\nil$ induced by the one of \cref{lemma:param_Sp_rational_eq}.
  Hence the rationalization of $a$ yields a map $a' \colon \Suspinf[\Real^G(R)] \rat Y \to \rat P \eq \Psi_R(M)$.
  We observe that the map $\APL^G(\rat Y) \to \Lder \SAcn_R (M)$ of $\opcat{\Underlying ( \undercat{(\CDGAeq{G})}{R} )}$, obtained as the adjoint of the following adjoint of $a'$
  \[ b'  \colon  \rat Y  \longto  \Loopsinf[\Real^G(R)] (\Psi_R (M))  \eq  \Real^G (\Lder \SAcn_R (M)) \]
  is homotopic to the map obtained as the adjoint of the other adjoint
  \[ \Rder \forget (\APL^G (\rat Y)) \eq \Phi_R (\Suspinf[\Real^G(R)] \rat Y) \to M \]
  of $a'$.
  Equivalently this says that the following composite (where the counit at the end is an equivalence by \cref{lemma:Sullivan_over})
  \[ S \eq \APL^G(\rat Y)  \xlongto{b'}  \APL^G ( \Loopsinf[\Real^G(R)] (\rat P) )  \eq   \APL^G ( \Real^G ( \Lder \SAcn_R (M) ) )  \xlongto{\eq}  \Lder \SAcn_R (M) \]
  is homotopic to the adjoint of the composite
  \[ S  =  \Rder \forget ( S )  \eq  \Rder \forget ( \APL^G (\rat Y) )  \eq  \Phi_R ( \Suspinf[\Real^G(R)] \rat Y )  \xlongto{a'}  \Phi_R (\rat P)  \eq  \Phi_R ( \Psi_R(M) )  \eq  M \]
  which is given by (the opposite of) $\alpha$ by assumption.
  By \cref{prop:Shulman}, this implies that $b' \colon \rat Y \to \Loopsinf[\Real^G(R)] (\rat P)$ is modeled by $\beta$.
  Lastly we note that a diagram chase shows that $b'$ is equivalent to the rationalization of $b$ (using that the natural transformations making the left-hand squares of \eqref{eq:Susp_model_eq_proof} and \eqref{eq:Susp_model_eq_proof_2} commute are mates of each other).
\end{proof}

\subsection{Models and the Wirthmüller context} \label{sec:model_Wirthmueller}

In this subsection, we provide equivariant rational models for the adjunction $f_! \dashv f^*$ of parametrized spectra, as well as related maps such as the pull--push equivalence and the projection formula (cf.\ \cref{sec:param_spectra}).
Non-equivariant versions of this are partially already contained in the work of Braunack--Mayer \cite[§5.2]{Bra}.

\begin{notation}
  Let $R$ be a cdga equipped with an action of a group $G$.
  Throughout the rest of this section, we write $q_R \colon \Modeq G {R} \to \Modeq G {R}$ for some fixed cofibrant replacement functor; we sometimes omit the subscript if there is no risk of confusion.
\end{notation}

\begin{lemma} \label{lemma:model_f_!}
  Let $G$ be a group and $\phi \colon R \to S$ a map of $G$-equivariant homologically connected cofibrant cdgas of finite homotopical type that models a map $f \colon X \to Y$ of fiberwise nilpotent animas over $\B G$.
  Assume that the fiber of $f$ is connected.
  Then $\Rder \phi^*$ and $f_!$ restrict to functors as in the diagram
  \[
  \begin{tikzcd}
    \opcat{\Underlying(\Modeq G {S})_\fht} \dar[swap]{\Rder \phi^*} \rar{\eq}[swap]{q_S} & \opcat{\cUnderlying(\Mod{S})_\fht} \rar{\eq}[swap]{\Psi_S} & \Sp[X]^{G,\QQ}_{\nil,\ft,\bbl} \dar{f_!} & \lar[swap]{\rat{(\blank)}} \Sp[X]^{G}_{\nil,\ft,\bbl} \dar{f_!} \\
    \opcat{\Underlying(\Modeq G {R})_\fht} \rar{\eq}[swap]{q_R} & \opcat{\cUnderlying(\Mod{R})_\fht} \rar{\eq}[swap]{\Psi_R} & \Sp[Y]^{G,\QQ}_{\nil,\ft,\bbl} & \lar[swap]{\rat{(\blank)}} \Sp[Y]^{G}_{\nil,\ft,\bbl}
  \end{tikzcd}
  \]
  and this diagram canonically commutes.
\end{lemma}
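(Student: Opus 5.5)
The plan is to obtain the square by passing to adjoints from the naturality statement of \cref{prop:BMeq}. That proposition provides a canonical homotopy $\Psi_S \after \Lder \phi_! \eq f^* \after \Psi_R$ of functors $\opcat{\cUnderlying(\Modeq G R)} \to \Sp[X]$. Here $\Lder\phi_! = S \tensor_R q_R(\blank)$ is scalar extension. Taking opposites, $\Lder\phi_!$ is the left adjoint of $\Rder\phi^*$ on $\Underlying(\Modeq G{\blank})$. Hence $\opcat{(\Rder\phi^*)}$ is the \emph{left} adjoint of $\opcat{(\Lder\phi_!)}$ between the opposite categories. The functor $f_!$ is left adjoint to $f^*$. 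So the mate of the homotopy above is a natural transformation
\[ \gamma \colon f_! \after \Psi_S \longto \Psi_R \after \Rder\phi^* \]
of functors $\opcat{\cUnderlying(\Modeq G S)} \to \Sp[Y]$. Strictly speaking, the Beck--Chevalley mate needs $\Psi_R,\Psi_S$ to be equivalences, or else one uses their left adjoints $\Phi_R,\Phi_S$. Cleanest is to take the mate of the equivalence $\Phi_R f_! \eq \Phi_S$-type square obtained by passing the homotopy of \cref{prop:BMeq} to left adjoints. Then, after restricting to the $\fht$ subcategories where $\Psi$ is an equivalence with inverse $\Phi$ (\cite[Theorem~4.20]{Bra}), this gives the desired homotopy.

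Concretely, the first step is to check the restrictions claimed in the diagram.
\begin{itemize}
\item[(a)] $f_!$ preserves $\Sp^{G,\QQ}_{\nil,\ft,\bbl}$, and likewise the non-rational version. Rationalization commutes with $f_!$, since $f_!$ is a colimit and $\blank\tensor\H\QQ$ is compatible with it by the projection formula.
\item[(b)] $\Rder\phi^*$ preserves modules of finite homotopical type.
\end{itemize}
For (b) I would argue by reducing along the $G$-action to the non-equivariant underlying cdgas, and by using the minimal model $R\tensor\SA V \to S$ with $V$ concentrated in positive degrees; this exists because the fiber of $f$ is connected (\cref{lemma:Sullivan_over}). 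A minimal $S$-module $S\tensor W$ with $W$ of finite type restricts to $R\tensor\SA V\tensor W$, which is semifree over $R$ with generators $\SA V\tensor W$. These generators are finite-dimensional in each degree because $V$ is positively graded of finite type and $W$ is bounded below. For (a) I would instead observe that (a) follows from (b) together with the square itself. Alternatively, use a Serre spectral sequence argument on fibers over $\B G$: the fiber of $f$ is connected and of finite rational type, and it is nilpotent by \cref{lemma:nilpotent_composite}.

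The second step, which I expect to be the main obstacle, is showing that the mate $\gamma$ is an equivalence on the $\fht$ subcategories. Both sides are compatible with limits over $\B G$, by the construction of $\Psi$ in \cref{prop:BMeq} and by the pull--push formula for $f_!$. So I would reduce to $G=1$ and, by the naturality of everything in the pair $(X,R)$, to checking on fibers. I would do this by testing against points: pull back along $y\colon *\to Y$. Using base change (pull--push for $f_!$ and the naturality of $\Psi$ for the pushout $\QQ\tensor_R S$, modeled by \cref{lemma:pullback_model}), one reduces to the case $Y=*$, $R=\QQ$. There $f_! = X_!$ and $\Rder\phi^*$ is the forgetful functor to $\QQ$-modules. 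The claim $X_!\Psi_S(M) \eq \Psi_\QQ(M)$ is then checked on generators: for $M=S$ it says $\rat{\SS[X]}$ corresponds to the cochain complex $S \eq \APL(X)$, which holds by \cite[Lemma~4.4]{Bra}/\cref{lemma:Susp_model_eq}. Since both sides are exact, the general case follows by a thick-subcategory and Postnikov/skeletal argument on minimal models of finite type, using that both functors preserve the relevant limits of bounded-below finite-type towers. The delicate point is this last convergence argument.
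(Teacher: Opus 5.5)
Your skeleton matches the paper's. The left square comes from the homotopy $\Psi_S \after \Lder\phi_! \eq f^* \after \Psi_R$ of \cref{prop:BMeq} by passing to adjoints, and the right square is the projection formula. You diverge in how the restrictions are obtained, and there your plan does much more work than needed.

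The paper takes both restrictions from \cite[Proposition~5.8]{Bra}, applied to the induced map $g$ between the fibers over $\B G$. It transfers the statement for $g_!$ to $f_!$ via the pull--push equivalence $j^* f_! \eq g_! i^*$, since membership in $\Sp[X]^{G,\QQ}_{\nil,\ft,\bbl}$ is tested on the fiber. Once both restrictions are known, nothing more is needed. On these full subcategories the adjunctions $\opcat{(\Rder\phi^*)} \dashv \opcat{(\Lder\phi_!)}$ and $f_! \dashv f^*$ restrict, and $\Psi_R,\Psi_S$ are equivalences intertwining the right adjoints. Hence $f_! \after \Psi_S \eq \Psi_R \after \Rder\phi^*$ by uniqueness of left adjoints.

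So your second step, which you call the main obstacle, is redundant; it reproves the rational half of Braunack--Mayer's proposition by hand. It can be made to work: the skeletal filtration of a finite-type minimal model has subquotients whose $\Psi$-images become arbitrarily highly connected, and $X_!$ preserves connectivity. But it costs a base-change compatibility of mates and a convergence argument the paper avoids. Also, the mate $\gamma$ with respect to the vertical adjunctions exists for arbitrary horizontal functors; only its invertibility uses that the $\Psi$'s are equivalences.

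Moreover, step 2 only gives the rational part of (a). The lemma also asserts that $f_!$ maps $\Sp[X]^{G}_{\nil,\ft,\bbl}$ into $\Sp[Y]^{G}_{\nil,\ft,\bbl}$. The substantive point there is that $\pi_1$ of the fiber of $Y \to \B G$ acts nilpotently on the homotopy of $g_! E$, and your Serre spectral sequence remark does not address it. You can argue as in the proof of \cref{lemma:f_*_nilpotent}, with homology in place of cohomology:
\begin{itemize}
\item decompose Postnikov truncations of $E$ into finitely many extensions by pieces $X^*(\Sigma^n \H A)$;
\item note that $g_!$ of such a piece has homotopy a shift of $\Ho*(F; A)$, with nilpotent action by Hilton's result;
\item use that $g_!$ preserves connectivity.
\end{itemize}
Alternatively, cite \cite[Proposition~5.8]{Bra}. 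Once this integral statement holds, the rational one follows from it by the projection formula, so step 2 becomes entirely superfluous.

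Your minimal-model argument for (b) is fine, provided you add one step. A bounded-below semifree $R$-module on finite-type generators has a minimal model of finite type. For instance, read off the minimal generators as $\Coho*(\QQ \dertensor_R M)$ via an augmentation, as in the proof of \cref{cor:model_f_*}.
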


\begin{proof}
  Consider the left-hand diagram of horizontal fiber sequences
  \[
  \begin{tikzcd}
    \fib \alpha \rar{i} \dar[swap]{g} & X \rar{\alpha} \dar{f} & \B G \dar[equal] & \Sp[X] \rar{i^*} \dar[swap]{f_!} & \Sp[\fib \alpha] \dar{g_!} \\
    \fib \beta \rar{j} & Y \rar{\beta} & \B G & \Sp[Y] \rar{j^*} & \Sp[\fib \beta]
  \end{tikzcd}
  \]
  and note that its left-hand square is a pullback.
  Hence the right-hand diagram commutes by the pull--push formula.
  That $\Rder \phi^*$ and $g_!$ restrict as claimed is contained in \cite[Proposition~5.8]{Bra}; hence $f_!$ also restricts as claimed.
  The commutativity of the left-hand square in the statement then follows from \cref{prop:BMeq} by taking adjoints.
  Commutativity of the right-hand square follows from the projection formula.
\end{proof}

Note that $\phi^*$ preserves quasi-isomorphisms, so that it itself is its right derived functor, i.e.\ there is a canonical equivalence $\Rder \phi^* \eq \Underlying(\phi^*)$.

\begin{lemma} \label{lemma:model_composition}
  Let $G$ be a group, and let the following left-hand side be maps of $G$-equivariant homologically connected cofibrant cdgas of finite homotopical type
  \[ T \xlongfrom{\psi} S \xlongfrom{\phi} R  \qquad  X \xlongto{f} Y \xlongto{g} Z \]
  that model the two right-hand maps of fiberwise nilpotent animas over $\B G$.
  Then the canonical natural equivalence $f^* g^* \eq (g f)^*$ is modeled by the equivalence $\Lder \psi_! \Lder \phi_! \eq \Lder (\psi \phi)_!$ induced by the canonical natural isomorphism $\psi_! \phi_! \iso (\psi \phi)_!$.
\end{lemma}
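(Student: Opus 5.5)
The plan is to get the statement directly from the naturality built into \cref{prop:BMeq}, with no separate argument for the composite. By \cref{prop:BMeq}, $\Psi$ is natural in the pair $(X, R)$. Concretely, it is a natural transformation between functors out of a category whose morphisms are maps of animas over $\B G$ together with models. Such a natural transformation carries more than the individual homotopies $\Psi_S \after \Lder \phi_! \eq f^* \after \Psi_R$: it also carries coherences showing that the homotopy for a composite is the pasting of the homotopies for the factors. So I would apply the functoriality of the source and target functors to the composable pair $(\phi, \psi)$ modeling $(f, g)$.

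First I would identify the two sides of this functoriality.

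\begin{itemize}
  \item On the algebraic side, the functor $R \mapsto \opcat{\cUnderlying(\Modeq{G}{R})}$ is obtained from a (normalized, strictified) pseudofunctor $\CDGA \to \ModCatL$. Its compositor at $(\phi, \psi)$ is the natural isomorphism $\psi_! \phi_! \iso (\psi\phi)_!$.
  \item Applying $\cUnderlying$ to the Duskin nerve turns this compositor into the $2$-simplex witnessing $\Lder \psi_! \after \Lder \phi_! \eq \Lder(\psi\phi)_!$. The discussion after \cref{prop:Shulman} shows that on homotopy categories this is exactly the zig-zag $\Lder\psi_!\Lder\phi_! = \psi_! q \phi_! q \xrightarrow{\eq} \psi_!\phi_! q \iso (\psi\phi)_! q$.
  \item On the topological side, the functor $\opcat{\An} \to \Catinf$, $X \mapsto \Sp[X] = \Fun(X, \Sp)$, is strictly functorial by precomposition. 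Its $2$-simplex for $X \to Y \to Z$ is the canonical equivalence $f^* g^* \eq (gf)^*$.
\end{itemize}

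Next, naturality of $\Psi$ gives a $3$-simplex-type coherence. This is a homotopy between two pastings: the homotopy for $\psi\phi$ precomposed with the compositor equivalence on the algebraic side, and the pasting of the homotopies for $\phi$ and $\psi$ followed by the equivalence $f^*g^* \eq (gf)^*$. Read through the definition of ``models'', this says precisely that the equivalence $\Lder\psi_!\Lder\phi_! \eq \Lder(\psi\phi)_!$ models $f^*g^* \eq (gf)^*$. I would then restrict to the full subcategories of objects of finite homotopical type and identify $\cUnderlying$ with $\Underlying$ via $q$, as in \cref{lemma:model_f_!}.

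The main obstacle is to make sure that the naturality in \cref{prop:BMeq} is really naturality with respect to the \emph{given} compositors. The proof of \cref{prop:BMeq} involves several replacements: a cofibrant replacement of cdgas, a strictification via Lack--Paoli or Power, and the equivalence $\monUnderlying(\Modeq G R) \eq \lim_{\B G} \monUnderlying(\Mod R)$. One has to check that none of these changes the $2$-cells. I would argue as follows.

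\begin{itemize}
  \item The strictification is a pseudonatural equivalence, so it intertwines the original compositors $\psi_!\phi_! \iso (\psi\phi)_!$ with identities.
  \item Duskin nerves convert pseudonatural data into $2$-simplices compatibly.
  \item Passage to $\lim_{\B G}$ is functorial and therefore preserves these $2$-cells.
\end{itemize}

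If that bookkeeping gets unwieldy, I would fall back on the universal property in \cref{obs:assembly}. Both sides are determined by their restriction to the case $X = *$ over the fibers, where everything reduces to $\QQ$-modules and the statement is immediate.
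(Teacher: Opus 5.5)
Your proposal is correct and takes the same approach as the paper. The paper's entire proof is that the statement follows from the naturality of $\Psi$ in the pair $(X, R)$ from \cref{prop:BMeq}, and your discussion of the compositors, the strictification and the Duskin nerve just spells out what that naturality contains. The fallback via \cref{obs:assembly} is unnecessary, and it would not obviously apply anyway, since the naturality here is over pairs $(X, R)$ rather than over a functor out of $\An$.
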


\begin{proof}
  This follows from \cref{prop:BMeq}.
\end{proof}

\begin{lemma} \label{lemma:pull-push_model}
  Let $G$ be a group, and let the following left-hand diagram be a pushout square of $G$-equivariant cdgas, all of which are cofibrant, homologically connected, and of finite homotopical type,
  \[
  \begin{tikzcd}
    R \tensor_\k S & \lar[swap]{\iota_S} S & X \times_B Y \rar{\pr_Y} \dar[swap]{\pr_X} & Y \dar{g} \\
    R \uar{\iota_R} & \lar[swap]{\phi} \k \uar[swap]{\psi} & X \rar{f} & B
  \end{tikzcd}
  \]
  that models the right-hand pullback square of animas over $\B G$, all of which are fiberwise nilpotent.
  Assume that the fiber of $f$ is connected.
  Then the pull--push equivalence $g^* f_! \eq (\pr_Y)_! (\pr_X)^*$ is modeled by the natural transformation $\Lder \psi_! \Rder \phi^* \to \Rder (\iota_S)^* \Lder (\iota_R)_!$ given as
  \[ q \phi^* (\blank) \tensor_\k S \xlongfrom{\eq} q \phi^* q (\blank) \tensor_\k S \longto \phi^* q (\blank) \tensor_\k S \iso (\iota_S)^* \bigl( q (\blank) \tensor_R (R \tensor_\k S) \bigr) \]
  (which is thus a quasi-isomorphism).
\end{lemma}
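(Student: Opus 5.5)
The plan is to reduce to the defining property of the pull--push equivalence as a mate, and then compute that mate on the algebraic side with \cref{prop:Shulman}. Recall that $g^* f_! \eq (\pr_Y)_! (\pr_X)^*$ is the mate of the canonical equivalence $(\pr_Y)^* g^* \eq (\pr_X)^* f^*$ with respect to the adjunctions $f_! \dashv f^*$ and $(\pr_Y)_! \dashv (\pr_Y)^*$. Under the equivalences $\Psi$ of \cref{prop:BMeq}, the functors involved are modeled as follows. The pullbacks $g^*$, $\pr_X^*$, $\pr_Y^*$, $f^*$ correspond to derived scalar extensions $\Lder \psi_!$, $\Lder(\iota_R)_!$, $\Lder(\iota_S)_!$, $\Lder\phi_!$. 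By \cref{lemma:model_f_!} (which needs that the fibers of $f$ and of $\pr_Y$ are connected; the latter holds since it is a pullback of $f$), the pushforward $f_!$ corresponds to $\Rder\phi^*$ and $(\pr_Y)_!$ to $\Rder(\iota_S)^*$. By \cref{lemma:pullback_model} and \cref{obs:pullback_model_eq}, the pushout square models the pullback square, so all of this is legitimate.

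The first step is to check that the equivalence $(\pr_Y)^* g^* \eq (\pr_X)^* f^*$ is modeled by $\Lder(\iota_S)_!\Lder\psi_! \eq \Lder(\iota_R)_!\Lder\phi_!$, induced from the strict commutativity $\iota_S\psi = \iota_R\phi$. This follows from \cref{lemma:model_composition} applied to both composites. The second step is to note that the equivalences $\Psi$ are compatible with these adjunctions: $\Psi$ intertwines the left adjoints ($\Lder\phi_!$ versus $f^*$, on the opposite categories), hence also their adjoints. Under the $\opcat{}$ conventions, the adjunction $f_! \dashv f^*$ corresponds to $\Lder\phi_! \dashv \Rder\phi^*$ read in $\opcat{}$. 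Consequently the mate computed on the spectra side corresponds to the mate of the algebraic transformation, where the mate is taken for the Quillen adjunctions $\phi_!\dashv\phi^*$ and $(\iota_S)_!\dashv(\iota_S)^*$ in the model categories of equivariant modules. Since $\Psi$ is an equivalence of $\infty$-categories compatible with units and counits by construction, mates are transported correctly. This is formal, but it should be stated, using \cref{lemma:mates} to ensure that the direction and identification of units are right.

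The third step is to compute that algebraic mate with \cref{prop:Shulman}. Here $\beta$ is the strict natural isomorphism $\psi_!\phi^* \to (\iota_S)^*(\iota_R)_!$, namely $N\tensor_\k S \iso N\tensor_R(R\tensor_\k S)$ regarded as $S$-modules. Shulman's zig-zag then specializes to exactly the displayed composite. The fibrant replacement $r$ can be omitted, since every module is fibrant in the projective/injective structures and all objects are fibrant; only the cofibrant replacements $q$ remain. The remaining check is that the map $q\phi^*q(\blank)\to\phi^*q(\blank)$ tensored with $S$ is well behaved. Since $q\phi^* q M \to \phi^* qM$ is a quasi-isomorphism and $S$ is cofibrant over $\k$ (because $\psi$ is a cofibration), \cref{lemma:cofibrant_flat} shows the tensor product is a quasi-isomorphism. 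Hence the composite is an equivalence, which is consistent with it modeling an equivalence.

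The main obstacle is the second step: transporting the mate across $\Psi$ with the variance reversal of $\opcat{}$. In particular, one must confirm that the equivalence in \cref{lemma:model_f_!} is the one induced by adjunction from \cref{prop:BMeq}, rather than some other identification. That lemma was proved precisely by taking adjoints of \cref{prop:BMeq}, so I would cite this and apply \cref{lemma:mates} to conclude that the induced identifications of units and counits agree. Equivariance causes no additional difficulty, because everything is a limit over $\B G$ of the non-equivariant statement.
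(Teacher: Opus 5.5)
Your proposal is correct and follows the paper's proof: the pull--push equivalence is the mate of the canonical equivalence $(\pr_Y)^* g^* \eq (\pr_X)^* f^*$, which is modeled via \cref{lemma:model_composition}, and that mate is then computed with \cref{prop:Shulman}; the paper leaves your transport of mates across $\Psi$ implicit. Two small corrections: first, objects of $\Modeq{G}{R}$ need not be fibrant in the injective model structure when $G$ is infinite, so the fibrant replacements in Shulman's zig-zag should instead be dropped because $\phi^*$ and $(\iota_S)^*$ preserve all quasi-isomorphisms; second, your separate check of the middle map assumes $\psi$ is a cofibration, which is not a hypothesis of the lemma, but the check is also unnecessary, since that map is a quasi-isomorphism precisely because the zig-zag models an equivalence.
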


\begin{proof}
  First note that the fiber of $\pr_Y$ is connected since it is a pullback of $f$.
  The pull--push equivalence $g^* f_! \eq (\pr_Y)_! (\pr_X)^*$ is defined to be the mate of the canonical equivalence $(\pr_X)^* g^* \eq (\pr_Y)^* f^*$.
  Then the claim follows from \cref{lemma:model_composition,prop:Shulman}.
\end{proof}

\begin{lemma} \label{lemma:pullback_monoidal_model}
  Let $G$ be a group and $\phi \colon R \to S$ a map of $G$-equivariant homologically connected cofibrant cdgas of finite homotopical type that models a map $f \colon X \to Y$ of fiberwise nilpotent animas over $\B G$.
  Then the canonical equivalence $f^*(\blank \tensor_Y \blank) \eq f^*(\blank) \tensor_X f^*(\blank)$ is modeled by the isomorphism
  \[ \Lder \phi_! \bigl( \blank \dertensor_R \blank \bigr)  \eq  \bigl( q(\blank) \tensor_R q(\blank) \bigr) \tensor_R S  \iso  \bigl( q(\blank) \tensor_R S \bigr) \tensor_S \bigl( q(\blank) \tensor_R S \bigr)  \eq  \Lder \phi_! (\blank) \dertensor_S \Lder \phi_! (\blank) \]
  of $G$-equivariant $S$-modules.
\end{lemma}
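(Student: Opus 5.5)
The plan is to deduce this directly from the symmetric monoidality and naturality of $\Psi$ in \cref{prop:BMeq}, in the same way that \cref{lemma:model_composition} was deduced from \cref{prop:BMeq}. That proposition gives a natural transformation $\Psi^\tensor$ of functors into $\Catinfmon$, natural in the pair $(X, R)$. Applying it to the map $(f, \phi)$ yields a commutative square in $\Catinfmon$. Its horizontal maps are $\Psi^\tensor_R$ and $\Psi^\tensor_S$, and its vertical maps are the symmetric monoidal functors $\Lder\phi_! \colon \monUnderlying(\Modeq G R)_\fht \to \monUnderlying(\Modeq G S)_\fht$ and $f^*$. The homotopy $\Psi_S \after \Lder\phi_! \eq f^* \after \Psi_R$ is therefore a \emph{monoidal} natural equivalence. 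By definition of a monoidal natural transformation, this means that the structure equivalence $f^*(\blank \tensor_Y \blank) \eq f^*(\blank) \tensor_X f^*(\blank)$, transported along $\Psi_R$ and $\Psi_S$, agrees with the image under $\Psi_S$ of the monoidal structure equivalence of $\Lder\phi_!$. Here I would implicitly restrict to objects of finite homotopical type, where $\Psi^\tensor$ is defined.

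It then remains to identify that monoidal structure of $\Lder\phi_!$ on underlying $\infty$-categories. The functor $\monUnderlying$ is applied to the strong symmetric monoidal left Quillen functor $\phi_! = \blank \tensor_R S$, which is a morphism of $\ModCatmon$ (after strictification as in the proof of \cref{prop:BMeq}, and, for the equivariant case, passing through \cref{cor:sections_monoidal}, which is compatible with the forgetful maps). Hence its monoidal structure on $\cUnderlying$ is, by construction of $\monUnderlying$ via $\Finpt \to \Duskin(\ModCatC)$, the image of the strict isomorphism $(M \tensor_R N) \tensor_R S \iso (M \tensor_R S) \tensor_S (N \tensor_R S)$ on cofibrant modules. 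The structure equivalence of the derived tensor product $\dertensor_R$ on $\Underlying$ is given by cofibrant replacement, i.e.\ $\blank \dertensor_R \blank = q(\blank) \tensor_R q(\blank)$. So after precomposing with $q$ and using the equivalences \eqref{eq:underlying}, I would obtain precisely the displayed zigzag. One further point is that $q(\blank)\tensor_R S$ is cofibrant, so no further replacement is needed; this uses \cref{lemma:cofibrant_flat}.

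I expect the main obstacle to be bookkeeping rather than mathematics. One has to check that the symmetric monoidal structure on $\Lder\phi_!$ produced by $\monUnderlying$ (via $\cUnderlying$ and pseudofunctoriality of the localization \eqref{eq:loc}) really is represented by that point-set isomorphism, compatibly with Hovey's pseudofunctor $\Hocat$. One also has to check that the equivariant strictification does not alter it. For this I would argue at the level of homotopy categories using the description of $\hocat \after \cUnderlying$ as Hovey's $\Hocat$, recalled before \cref{prop:Shulman}, which sends natural isomorphisms of left Quillen functors to the induced derived isomorphisms.
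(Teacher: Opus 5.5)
Your proposal is correct and is essentially the paper's argument. The paper's proof is the one-line remark that the statement follows from \cref{prop:BMeq}, i.e.\ from the naturality of $\Psi^\tensor$ in $\Catinfmon$; your identification of the monoidal structure of $\Lder\phi_!$ with the point-set isomorphism $(M \tensor_R N) \tensor_R S \iso (M \tensor_R S) \tensor_S (N \tensor_R S)$ is just the unpacking of that remark. One small correction: $q(\blank) \tensor_R S$ is cofibrant because $\phi_!$ is left Quillen, whereas \cref{lemma:cofibrant_flat} is what you need to compare the result with $\dertensor_S$.
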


\begin{proof}
  This follows from \cref{prop:BMeq}.
\end{proof}

\begin{lemma} \label{lemma:projection_model}
  Let $G$ be a group and $\phi \colon R \to S$ a map of $G$-equivariant homologically connected cofibrant cdgas of finite homotopical type that models a map $f \colon X \to Y$ of fiberwise nilpotent animas over $\B G$.
  Assume that the fiber of $f$ is connected.
  Then the projection formula $f_!(f^*(\blank) \tensor_X \blank) \eq \blank \tensor_Y f_!(\blank)$ is modeled by the natural equivalence $\Rder \phi^* (\Lder \phi_!(\blank) \dertensor_S \blank) \eq \blank \dertensor_R \Rder \phi^*(\blank)$ given as the following zig-zag of maps of $G$-equivariant $R$-modules
  \[ \phi^* \bigl( ( q (\blank) \tensor_R S ) \tensor_S q(\blank) \bigr)  \iso  q(\blank) \tensor_R \phi^* q (\blank)  \xlongto{\eq}  q(\blank) \tensor_R \phi^* (\blank)  \xlongfrom{\eq}  q(\blank) \tensor_R q \phi^* (\blank) \]
  (where the indicated maps are quasi-isomorphisms by \cref{lemma:cofibrant_flat}).
\end{lemma}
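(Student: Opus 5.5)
We reduce the statement to the identification of the adjunction $f_! \dashv f^*$ with $\Rder\phi^* \dashv \Lder\phi_!$ obtained in \cref{lemma:model_f_!}, together with the monoidality of \cref{lemma:pullback_monoidal_model}. Recall that the projection formula is defined as the composite
\[ f_!\bigl(f^*(P) \tensor_X Q\bigr) \xlongto{\eta} f_!\bigl(f^*(P) \tensor_X f^* f_!(Q)\bigr) \eq f_! f^*\bigl(P \tensor_Y f_!(Q)\bigr) \xlongto{\epsilon} P \tensor_Y f_!(Q), \]
built from the unit and counit of $f_! \dashv f^*$ and the monoidal structure equivalence of $f^*$. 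Under the contravariant equivalences $\Psi$ of \cref{prop:BMeq}, $f^*$ corresponds to $\Lder\phi_!$, $f_!$ to $\Rder\phi^*$ by \cref{lemma:model_f_!}, and the tensor products correspond to derived tensor products. Since the $\Psi$ are contravariant, the adjunction $f_! \dashv f^*$ corresponds to $\Rder \phi^* \dashv \Lder\phi_!$ read in the opposite categories. Thus the unit $\eta$ of $f_! \dashv f^*$ is modeled by the derived counit $\Lder\phi_!\Rder\phi^* \to \id$, and the counit $\epsilon$ by the derived unit $\id \to \Rder\phi^*\Lder\phi_!$.

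The first step is to make this compatibility of adjunctions precise. \Cref{lemma:model_f_!} is obtained by taking adjoints of the square $\Psi_S \after \Lder\phi_! \eq f^* \after \Psi_R$, so its commutativity equivalence is the mate of that square. By \cref{lemma:mates}, the units and counits then correspond under $\Psi$. \Cref{prop:Shulman} gives explicit representatives of the derived unit and counit: the unit as $\id \xlongfrom{\eq} q \to \phi^*(q(\blank)\tensor_R S)$, which needs no fibrant replacement since every object is fibrant, and the counit as $q\phi^*(\blank)\tensor_R S \to \phi^*(\blank)\tensor_R S \to \blank$ via the multiplication.

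Next we translate the composite term by term, obtaining a zig-zag of $G$-equivariant $R$-modules evaluated at modules $M$ over $R$ and $N$ over $S$:
\begin{itemize}
  \item the counit of $\Lder\phi_! \dashv \Rder\phi^*$, applied at $N$ inside $\Lder\phi_!(M)\dertensor_S \blank$;
  \item the monoidality isomorphism of \cref{lemma:pullback_monoidal_model};
  \item the unit of $\Lder\phi_! \dashv \Rder\phi^*$, applied at $M \dertensor_R \phi^* N$.
\end{itemize}
We then need to show that this zig-zag is homotopic, in $\Hocat(\Modeq G R)$ and naturally in $M$ and $N$, to the stated zig-zag
\[ \phi^*\bigl((qM\tensor_R S)\tensor_S qN\bigr) \iso qM\tensor_R\phi^*qN \to qM\tensor_R \phi^*N \leftarrow qM\tensor_R q\phi^* N. \]
This is a strict computation: after applying the unit, the module multiplication $S\tensor_R \phi^*N \to N$ collapses the composite to the canonical isomorphism $(qM\tensor_R S)\tensor_S qN \iso qM \tensor_R qN$, viewed as an $R$-module. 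The identity this rests on is the strict projection formula for $\phi_! \dashv \phi^*$ at the level of modules, together with a triangle identity. The replacement maps $q \to \id$ assemble into a commutative ladder by naturality of $q$. \Cref{lemma:cofibrant_flat} guarantees that the indicated maps are quasi-isomorphisms.

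The main obstacle is bookkeeping. We must check that the composite of derived transformations provided by Hovey's pseudofunctor really agrees, via the zig-zags of \cref{prop:Shulman}, with a single strict natural transformation up to the cofibrant replacements. In particular, the two replacements $q\phi^*$ and $\phi^* q$ must be compared through $q\phi^* q$, exactly as in \cref{lemma:pull-push_model}. We would handle this by working throughout with the representing zig-zags on cofibrant objects, where $\Lder\phi_!$ is simply $\blank\tensor_R S$, and only at the end invert the evident quasi-isomorphisms. The connectedness assumption on the fiber of $f$ enters only to ensure that $f_!$ and $\Rder\phi^*$ preserve the finite-type subcategories, as in \cref{lemma:model_f_!}.
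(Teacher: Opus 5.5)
Your proposal is correct and follows the paper's own proof, which recalls the projection formula as the composite of $\eta$, the monoidality equivalence of $f^*$, and $\epsilon$, and then concludes from \cref{lemma:model_f_!,lemma:pullback_monoidal_model,prop:Shulman} by a diagram chase, exactly as you outline. One small correction: $\Modeq{G}{R}$ carries the injective model structure, in which objects need not be fibrant, so you may drop the fibrant replacement in \cref{prop:Shulman} not because every object is fibrant but because $\phi^*$ preserves all quasi-isomorphisms, so that $\Rder \phi^* \eq \Underlying(\phi^*)$.
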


\begin{proof}
  Recall that the projection formula is defined to be the composite
  \[ f_! \bigl( f^*(M) \tensor_X N \bigr)  \xlongto{\eta}  f_! \bigl( f^*(M) \tensor_X f^* f_! (N) \bigr)  \eq  f_! f^* \bigl( M \tensor_Y f_! (N) \bigr)  \xlongto{\epsilon}  M \tensor_Y f_! (N) \]
  for $M \in \Sp[Y]$ and $N \in \Sp[X]$.
  Then the claim follows from \cref{lemma:model_f_!,lemma:pullback_monoidal_model,prop:Shulman} via a diagram chase.
\end{proof}

We record the following for later use.

\begin{lemma} \label{lemma:restrict_cofibration}
  Let $G$ be a group and $\phi \colon R \to S$ a map of $G$-equivariant cdgas such that $S$ is cofibrant as an $R$-module.
  Then the functor $\phi^* \colon \Modeq{G}{S} \to \Modeq{G}{R}$ preserves cofibrations (and in particular cofibrant objects).
\end{lemma}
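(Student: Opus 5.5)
The injective model structure on $\Modeq{G}{R}$ has as cofibrations exactly the maps whose underlying map of $R$-modules is a cofibration in $\Mod{R}$, and likewise for $S$. So the $G$-action plays no role, and it suffices to prove the non-equivariant statement: if $S$ is cofibrant as an $R$-module, then restriction $\phi^* \colon \Mod{S} \to \Mod{R}$ preserves cofibrations.

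\emph{Generating cofibrations.} The projective model structure on $\Mod{S}$ is cofibrantly generated (\cite[Theorem~3.3]{BMR}). Its generating cofibrations are the maps $S \tensor i$, where $i$ ranges over the generating cofibrations $i \colon A \to B$ of $\Mod{\QQ}$, namely the inclusions $\shift[n]\QQ \to \Cone(\shift[n]\QQ)$. Hence every cofibration of $S$-modules is a retract of a transfinite composition of pushouts of such maps. The functor $\phi^*$ is a left adjoint, with right adjoint $\Hom_R(S, \blank)$. It therefore preserves pushouts, transfinite compositions and retracts. Cofibrations in $\Mod{R}$ are closed under these operations, so it is enough to check that $\phi^*(S \tensor i) = S \tensor i$ is a cofibration of $R$-modules.

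\emph{The key step.} The map $S \tensor i$, viewed as a map of $R$-modules, is the tensor product of the cofibration $\emptyset \to S$ of $R$-modules (that is, $0 \to S$, which is a cofibration because $S$ is cofibrant) with the cofibration $i$ of cochain complexes. Since $\Mod{R}$ is a $\Mod{\QQ}$-enriched model category (\cite{BMR}), the pushout-product axiom applies. It says that the pushout-product of $0 \to S$ with $i$ is a cofibration, and this pushout-product is exactly $S \tensor A \to S \tensor B$. Alternatively, one can see this directly: $S \tensor \Cone(\shift[n]\QQ)$ is obtained from $S \tensor \shift[n]\QQ$ by attaching a cone, and cofibrant $R$-modules are retracts of cell complexes, so the same holds after restriction.

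\emph{Conclusion.} Cofibrant objects are preserved by applying the result to $0 \to M$; this map is sent to $0 \to \phi^* M$, since $\phi^*$ preserves the initial object. The only point needing care is the enrichment/pushout-product step. The $\Mod{\QQ}$-enrichment has already been recalled in the paper, so this step is routine.
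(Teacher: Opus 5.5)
Your proof is correct, but after the common reduction to $G = 1$ it argues differently from the paper.

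\textbf{The paper's route.} It notes that the right adjoint $\Hom_R(S, \blank) \colon \Mod{R} \to \Mod{S}$ of $\phi^*$ is right Quillen because $S$ is cofibrant in $\Mod{R}$. This is the fact recorded in the preliminaries as a consequence of $\Mod{R}$ being a monoidal model category under $\tensor_R$. Hence $\phi^*$ is left Quillen.

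\textbf{Your route.} You work cell by cell. Since $\phi^*$ preserves colimits, you reduce to the generating cofibrations $S \tensor i$ of the projective structure on $\Mod{S}$. You then handle these with the pushout-product axiom for the $\Mod{\QQ}$-tensoring on $\Mod{R}$, applied to $0 \to S$ and $i$.

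\textbf{Comparison.} Both arguments rest on a pushout-product axiom: the paper's on the one for $\tensor_R$ (in adjoint form), yours on the one for the tensoring over $\QQ$.
\begin{itemize}
  \item The paper's version is a one-liner given the recorded facts, and it directly yields a Quillen adjunction.
  \item Yours needs the explicit generating cofibrations of $\Mod{S}$. In exchange, it uses only the $\Mod{\QQ}$-enrichment rather than the monoidal structure of $\Mod{R}$; this is the same style of argument the paper uses in the proof of the bar construction lemma.
\end{itemize}
Your version also gives left Quillen-ness at no extra cost. Since $\phi^*$ preserves quasi-isomorphisms, preserving cofibrations implies preserving trivial cofibrations, which is what the paper needs later.
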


\begin{proof}
  Since cofibrations of $G$-equivariant modules are defined to be the cofibrations of the underlying modules, it is enough to prove this in the case $G = 1$.
  In that case, the functor $\phi^*$ is left adjoint to the functor $\Hom_R(S, \blank) \colon \Mod{R} \to \Mod{S}$, which is right Quillen when $S$ is cofibrant in $\Mod{R}$.
  Hence $\phi^*$ is left Quillen and in particular it preserves cofibrations.
\end{proof}

\subsection{Modeling the dualizing spectrum} \label{sec:model_dualizing}

In this subsection, our goal is to provide an (equivariant) rational model for the adjunction $f^* \dashv f_*$ of parametrized spectra, and the related dualizing spectrum $\DS{f}$ (cf.\ \cref{sec:param_spectra}).
The main obstacle is that the functor $f^*$ is modeled by scalar extension, which does not generally admit a left adjoint.
However, when $\phi \colon R \to S$ is a map of commutative rings and $S$ is finitely generated and projective as an $R$-module, then there is a natural isomorphism $\blank \tensor_R S \iso \Hom_R(\Hom_R(S, R), \blank )$.
In this case $\phi_!$ is thus right adjoint to the functor $\blank \tensor_S \Hom_R(S, R)$.
We begin with a homotopical version of this observation.

\begin{observation}
  Let $G$ be a group, $R$ a cdga with a $G$-action, and $M$ and $N$ two $G$-equivariant $R$-modules.
  Then the $R$-module $\Hom_R(M, N)$ becomes $G$-equivariant by equipping it with the conjugation action.
  When $R \to S$ is a map of $G$-equivariant cdgas and $M$ or $N$ is a $G$-equivariant $S$-module, then $\Hom_R(M, N)$ canonically lifts to a $G$-equivariant $S$-module.
\end{observation}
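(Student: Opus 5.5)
The plan is to write down the action explicitly and check the required identities directly; everything reduces to the fact that $G$ acts on $R$, $M$, $N$ (and $S$) by degree-preserving cochain automorphisms compatible with the module structure maps. For $g \in G$ and a homogeneous $R$-linear map $h \in \Hom_R(M, N)$ of degree $\deg h$, I will define
\[ (g \cdot h)(m) \defeq g \cdot h(g^{-1} \cdot m). \]
The action of $g$ is by degree-$0$ maps, so no Koszul signs arise from $g$ itself. The identities $1 \cdot h = h$ and $(g g') \cdot h = g \cdot (g' \cdot h)$ are immediate.

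I will check the required properties in the following order.

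First, $g \cdot h$ is again $R$-linear. Using equivariance of the module structure maps of $M$ and $N$ (i.e.\ $g^{-1}(r m) = (g^{-1} r)(g^{-1} m)$) and $R$-linearity of $h$ with its sign $(-1)^{\deg h \deg r}$, one gets
\[ (g \cdot h)(r m) = (-1)^{\deg h \deg r}\, r \,(g \cdot h)(m). \]

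Second, the action commutes with the differential $d h = d_N h - (-1)^{\deg h} h\, d_M$. This holds because $g$ and $g^{-1}$ commute with $d_M$ and $d_N$.

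Third, the structure map $R \tensor \Hom_R(M, N) \to \Hom_R(M, N)$, given by $(r h)(m) = r\, h(m)$, is equivariant for the diagonal action, i.e.\ $g \cdot (r h) = (g r)(g \cdot h)$. This follows from
\[ g\bigl(r\, h(g^{-1} m)\bigr) = (g r)\, g\bigl(h(g^{-1} m)\bigr). \]
Together these show that $\Hom_R(M, N)$ is a $G$-equivariant $R$-module.

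For the lift to $S$-modules, suppose first that $N$ is a $G$-equivariant $S$-module. Then I will use the $S$-action $(s h)(m) \defeq s\, h(m)$. If instead $M$ is a $G$-equivariant $S$-module, I will use $(s h)(m) \defeq (-1)^{\deg s \deg h} h(s m)$, which is $R$-linear because $S$ is graded commutative. In either case the same computation as in the third step gives $g \cdot (s h) = (g s)(g \cdot h)$, using equivariance of the $S$-module structure. Restricting this $S$-action along the $G$-equivariant map $\phi \colon R \to S$ recovers the original $R$-action: in the $N$ case this is by definition, and in the $M$ case it uses graded commutativity of $R$. Hence this structure is a lift.

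No step is a real obstacle. The only point needing care is the sign bookkeeping in the $R$-linearity and in the $M$-side $S$-action, and that bookkeeping is the same as in the non-equivariant case.
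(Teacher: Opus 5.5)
Your proposal is correct and is the construction the paper intends: the paper states this as an observation without proof, and your conjugation action $(g\cdot h)(m) = g\,h(g^{-1}m)$, together with the $S$-action through $N$ or through $M$, is exactly what ``conjugation action'' and ``canonically lifts'' refer to, with your routine checks being the only content. Two small remarks. First, in the $M$-case, recovering the original $R$-action after restricting along $\phi$ comes from the sign in the $R$-linearity of $h$, not from graded commutativity of $R$. Second, you did not state that the $S$-action satisfies the Leibniz rule for the differential on $\Hom_R(M,N)$; this holds by the same computation as in the non-equivariant case.
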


Recall that we write $\dual{M}_R \defeq \Hom_R(M, R)$ for the dual of an $R$-module $M$.

\begin{definition}
  Let $G$ be a group and $\phi \colon R \to S$ a map of $G$-equivariant cdgas.
  We write
  \begin{align*}
    \phi^\antishriek &\defeq \phi^* \bigl( \blank \tensor_S q_S (\dual{S}_R) \bigr) \colon \Modeq{G}{S} \longto \Modeq{G}{R} \\
    \phi_\antishriek &\defeq \Hom_R \bigl( q_S (\dual S_R), \blank \bigr) \colon \Modeq{G}{R} \longto \Modeq{G}{S}
  \end{align*}
  and note that these two functors form a canonical adjunction $\phi^\antishriek \dashv \phi_\antishriek$.
\end{definition}

Note that the adjunction $\phi^\antishriek \dashv \phi_\antishriek$ is Quillen when $S$ is cofibrant as an $R$-module since in that case $\phi^\antishriek$ preserves cofibrations and trivial cofibrations by \cref{lemma:restrict_cofibration}.
Furthermore note that $\phi_\antishriek$ preserves quasi-isomorphisms, so that it itself is its right derived functor, i.e.\ there is a canonical equivalence $\Rder \phi_\antishriek \eq \Underlying(\phi_\antishriek)$.

\begin{lemma} \label{lemma:scalar_ext_right_adjoint}
  Let $R$ be a cdga, and let $M$ and $N$ be $R$-modules such that $N$ is cofibrant and homotopically finite.
  Let $\epsilon \colon q' (\dual N_R) \to \dual N_R$ be any quasi-isomorphism of $R$-modules with $q' (\dual N_R)$ cofibrant.
  Then the composite
  \begin{align*}
    M \tensor_R N  &\longto  \Hom_R (\dual N_R, M)  \xlongto{\epsilon^*}  \Hom_R \bigl( q' (\dual N_R), M \bigr) \\
    m \tensor n  &\longmapsto  \bigl( \alpha \mapsto (-1)^{\deg \alpha \deg n} m \alpha(n) \bigr)
  \end{align*}
  is a quasi-isomorphism.
  In particular, for a group $G$ and a map $\phi \colon R \to S$ of $G$-equivariant cdgas such that $S$ is cofibrant and homotopically finite as an $R$-module, this yields a natural quasi-isomorphism $\phi_! \to \phi_\antishriek$ of functors $\Modeq{G}{R} \to \Modeq{G}{S}$.
\end{lemma}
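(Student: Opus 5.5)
The plan is to reduce to the case where $N$ is a finite cell complex and then use the two-out-of-three property along the cell filtration.

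First I would dispose of $\epsilon^*$. The functor $\Hom_R(\blank, M)$ sends quasi-isomorphisms between cofibrant $R$-modules to quasi-isomorphisms, since every object of $\Mod{R}$ is fibrant and the model structure is enriched. So once I know that $\dual N_R$ is itself cofibrant, the map $\epsilon^*$ is a quasi-isomorphism. It then suffices to show that the evaluation map $M \tensor_R N \to \Hom_R(\dual N_R, M)$ is a quasi-isomorphism. With the sign as stated this map is natural in $N$ and compatible with shifts and cones, which I will need below.

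Second, I would replace $N$ by a finite semifree module. Since $N$ is homotopically finite, it has a minimal model $P = R \tensor V \to N$ with $V$ finite-dimensional. This is a quasi-isomorphism between cofibrant modules, hence a homotopy equivalence, because all objects are fibrant. The three relevant constructions are $M \tensor_R \blank$, $\dual{(\blank)}_R$, and $\Hom_R(\blank, M)$. Each is an additive $\Ch$-enriched functor, so each preserves chain homotopies and therefore homotopy equivalences. Naturality of the evaluation map then reduces the claim to $N = P$.

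Third, I would induct on the semifree filtration of $P$. This filtration is finite: there are finitely many basis elements $v_\alpha$, with $d v_\alpha \in R \tensor V_{<\alpha}$. So $P$ is built by finitely many pushouts along $M_k \to \Cone(M_k)$ with $M_k \iso R$ shifted, that is, by iterated mapping cones. For $P = \shift[n] R$, both $\dual P_R \iso \shift[-n] R$ and the evaluation map are isomorphisms; this is the only place the sign needs checking. The dual of a finite semifree module is again finite semifree, using the dual filtration, and in particular is cofibrant, which justifies the first step. Dualizing turns a short exact sequence $0 \to P' \to P \to \shift[n]R \to 0$ that is split as graded modules into another graded-split short exact sequence. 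The functors $M \tensor_R \blank$ and $\Hom_R(\blank, M)$ carry graded-split sequences to short exact sequences. The evaluation map gives a map of long exact homology sequences, and the five lemma completes the induction.

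Finally, for the equivariant statement I would take $N = S$ and $q' = q_S$. The map $M \tensor_R S \to \Hom_R(q_S(\dual S_R), M)$ is $S$-linear and $G$-equivariant: $G$ acts diagonally on $M \tensor_R S$ and by conjugation on the hom-complex, and evaluation is equivariant for these actions. So it defines a natural transformation $\phi_! \to \phi_\antishriek$. Since weak equivalences are detected on underlying modules, the first part shows it is a quasi-isomorphism. I expect the main obstacle to be bookkeeping rather than ideas: the sign conventions, and checking that evaluation commutes with the cone and shift identifications in the induction.
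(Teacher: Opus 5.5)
Your overall strategy is the same as the paper's: reduce to the minimal model $P = R \tensor V$, where the evaluation map is an isomorphism. Steps~2 and~3 are fine. In Step~3 the induction is not even needed: for $V$ finite-dimensional, $M \tensor_R P \to \Hom_R(\dual P_R, M)$ is already an isomorphism of underlying graded modules, hence of complexes.

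The gap is in Step~1. There you reduce the statement for the \emph{original} $N$ to the evaluation map, assuming that $\dual N_R$ is cofibrant. Your only justification is that the dual of the finite semifree module $P$ is finite semifree, which says nothing about $\dual N_R$. In fact $\dual N_R$ need not be cofibrant. $N$ is only assumed cofibrant and homotopically finite, so it may contain an infinite contractible semifree summand, and the dual of that summand is an infinite product. For instance, take $R = \QQ[x]$ with $\deg x = 2$ and $N = R \oplus \bigoplus_{i \ge 0} \Cone(\shift[-2i] R)$. This $N$ is semifree with minimal model $R$. But the underlying graded module of $\dual N_R$ has $\prod_{i \ge 0} \shift[2i] R$ as a direct summand, and that graded module is not projective, so $\dual N_R$ is not cofibrant. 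As written, Step~1 therefore does not show that $\epsilon^*$ is a quasi-isomorphism.

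The repair uses only your Step~2.
\begin{itemize}
\item The dual $k \colon \dual N_R \to \dual P_R$ of the minimal model map is an $R$-linear homotopy equivalence.
\item So $k \after \epsilon$ is a quasi-isomorphism between cofibrant modules, and $(k \after \epsilon)^* = \epsilon^* \after k^*$ is a quasi-isomorphism.
\item Since $k^*$ is a homotopy equivalence, $\epsilon^*$ is a quasi-isomorphism by two-out-of-three.
\end{itemize}
The paper avoids the issue differently. It never isolates $\epsilon^*$. Instead it compares the whole composite with the one for the minimal model through a fixed cofibrant replacement $q(\dual N_R) \to \dual N_R$, which is a trivial fibration, and lifts $\epsilon$ through it.

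Finally, in the equivariant part you need $q_S(\dual S_R)$ to be cofibrant as an $R$-module, not just as an $S$-module, since that is what the first part requires of $q'$. This holds because restriction along $\phi$ preserves cofibrant objects when $S$ is cofibrant over $R$ (\cref{lemma:restrict_cofibration}).
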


\begin{proof}
  Let $\widetilde N \defeq R \tensor V$ be the minimal model of $N$.
  We obtain the commutative diagram
  \[
  \begin{tikzcd}
    M \tensor_R \widetilde N \rar{\iso} \dar[swap]{\eq} & \Hom_R (\dual {\widetilde N}_R, M) \rar{\eq} \dar & \Hom_R \bigl( q (\dual {\widetilde N}_R), M \bigr) \dar{\eq} \\
    M \tensor_R N \rar &  \Hom_R (\dual N_R, M) \rar &  \Hom_R \bigl( q (\dual N_R), M \bigr) \rar{\eq} & \Hom_R \bigl( q' (\dual N_R), M \bigr)
  \end{tikzcd}
  \]
  where the upper left-hand horizontal map is an isomorphism since $V$ is finite-dimensional, and the left-hand vertical map is a quasi-isomorphism by \cref{lemma:cofibrant_flat}.
  To see that the upper right-hand horizontal map is a quasi-isomorphism, note that $\dual {\widetilde N}_R$ is cofibrant by the proof of \cref{lemma:Hom_hf} below.
  For the bottom right-most map, we use that a dashed lift as in the diagram
  \[
  \begin{tikzcd}
     & q (\dual N_R) \dar[two heads]{\eq} \\
    q' (\dual N_R) \rar{\eq} \urar[dashed]{\eq} & \dual N_R
  \end{tikzcd}
  \]
  always exists.
\end{proof}

\begin{lemma} \label{lemma:Hom_hf}
  Let $R$ be a cdga and $M$ and $N$ two $R$-modules such that $M$ is cofibrant and homotopically finite.
  If $N$ is of finite homotopical type, then so is $\Hom_R(M, N)$.
  If $N$ is homotopically finite, then so is $\Hom_R(M, N)$.
\end{lemma}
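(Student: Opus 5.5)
We first reduce to the case where $M$ is a minimal semifree module $R \tensor V$ with $V$ finite-dimensional. Since $M$ is cofibrant and homotopically finite, it admits a minimal model $\widetilde M = R \tensor V \to M$, and this is a quasi-isomorphism between cofibrant $R$-modules. Recall that $\Hom_R(\blank, N)$ is right Quillen on $\opcat{\Mod{R}}$, so it sends weak equivalences between cofibrant objects to quasi-isomorphisms (Ken Brown's lemma; all $R$-modules are fibrant). Hence $\Hom_R(M, N) \to \Hom_R(\widetilde M, N)$ is a quasi-isomorphism. Both properties in question are invariant under quasi-isomorphism, because they are defined via minimal models. It therefore suffices to treat $M = R \tensor V$ with $V = \langle v_1, \dots, v_k \rangle$, where we use the minimality ordering with $d v_j \in R \tensor V_{<j}$.

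Next we filter. Let $M_j \defeq R \tensor \langle v_1, \dots, v_j \rangle$. These are $R$-submodules, and each quotient $M_j / M_{j-1} \iso R \cdot v_j$ is a shifted free module of rank one. The maps $M_{j-1} \to M_j$ are split injections of graded modules, so applying $\Hom_R(\blank, N)$ gives short exact sequences of $R$-modules
\[ 0 \longto \Hom_R(M_j/M_{j-1}, N) \longto \Hom_R(M_j, N) \longto \Hom_R(M_{j-1}, N) \longto 0 \]
in which the outer terms are, respectively, a shift of $N$ and (inductively) controlled. Thus $\Hom_R(M, N)$ is built from finitely many shifts of $N$ by iterated extensions. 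This amounts to finitely many fiber sequences in $\Underlying(\Mod{R})$, obtained by passing a short exact sequence to a cofiber sequence.

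It remains to show that the classes ``finite homotopical type'' and ``homotopically finite'' are closed under shifts and under extensions, that is, under 2-out-of-3 in cofiber sequences. I expect this to be the main obstacle. For homotopically finite modules, given a cofiber sequence $A \to B \to C$ with $A$ and $C$ homotopically finite, write $B$ as the fiber of a map $C \to \shift A$. Take minimal models and replace the map by one between semifree models. Its fiber is then a semifree module $R \tensor (V_A \oplus V_C)$, suitably shifted, with a finite-dimensional generating space; this is semifree because $C$ is semifree. The minimal model of a semifree module with finitely many generators is obtained by cancelling linear parts (Braunack-Mayer, \S3.2), and this has no larger generating space, so it is finite.

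For finite homotopical type, the same argument works degreewise. The one point to check is that the generating space of the fiber is finite in each degree, and boundedness below needs care: minimal models are bounded below since $R$ is connected-ish, being homologically connected. Here I would use the reduction $V \mapsto \QQ \tensor_R (\blank)$, since the indecomposables $\Coho *(\QQ \tensor_R^{\mathbb L} B)$ compute the minimal generating space. We may use this because $R$ admits an augmentation by \cref{lemma:augmented} (after passing to a minimal model of $R$). This gives a long exact sequence, which bounds the dimension of the generating space of $B$ in each degree by those of $A$ and $C$. With that criterion in hand, both closure statements follow immediately, and hence so does the lemma.
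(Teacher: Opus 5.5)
Your reduction to $M = R \tensor V$ and the filtration writing $\Hom_R(M, N)$ as a finite iterated extension of shifts of $N$ are fine. The gap is the step you yourself flag as the main obstacle: it fails for an arbitrary cdga $R$, which is what the statement allows, because the two finiteness classes are \emph{not} closed under extensions in general.

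Take $R = \QQ[t]$ with $\deg t = 0$ and zero differential. In a minimal $R$-module $R \tensor V$, the element $d v_\alpha$ has degree $\deg v_\alpha + 1$. But it lies in $R \tensor V_{<\alpha}$, which is concentrated in degrees $\le \deg v_\alpha$ since $R = R^0$. So every minimal module has zero differential and free cohomology. Now $0 \to R \to \Cone(t) \to \shift R \to 0$ exhibits $\Cone(t)$ as an extension of homotopically finite modules. Yet its cohomology is $\QQ[t]/(t)$, which is not free, so $\Cone(t)$ has no minimal model at all.

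Your supporting tools all import hypotheses on $R$ that the lemma does not make:
\begin{itemize}
  \item cancelling linear parts needs $R^0 = \QQ$;
  \item existence of minimal models for modules with bounded below homology needs connectivity assumptions;
  \item an augmentation via \cref{lemma:augmented} needs $R$ homologically connected, cofibrant and of finite homotopical type.
\end{itemize}
Indeed you silently assume $R$ is homologically connected. Under such hypotheses, which do hold wherever the paper applies the lemma, your route can be completed, granting the existence theory for minimal models. For augmented non-negatively graded $R$, every minimal module satisfies $\QQ \tensor_R (R \tensor V) \iso V$ with zero differential, so the long exact sequence bounds the generators of an extension. But this proves a weaker statement than the lemma.

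The missing idea is that no extension argument is needed, because the Hom complex of two minimal modules is itself visibly minimal. This is how the paper argues.
\begin{itemize}
  \item Take minimal models $R \tensor V \to M$ and $R \tensor W \to N$. The maps $\Hom_R(M, N) \to \Hom_R(R \tensor V, N) \leftarrow \Hom_R(R \tensor V, R \tensor W)$ are quasi-isomorphisms, as you observed for the first.
  \item Since $V$ is finite-dimensional, the target is isomorphic to $R \tensor W \tensor \dual V$.
  \item Give $\dual V$ the dual basis in reverse order, and order the basis of $W \tensor \dual V$ first by degree and then lexicographically.
  \item For $f = w_\beta \tensor \dual{v_\alpha}$, the term $d_N \after f$ only involves $w_{\beta'} \tensor \dual{v_\alpha}$ with $\beta' < \beta$. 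The term $f \after d_M$ only involves $w_\beta \tensor \dual{v_\gamma}$ with $\gamma > \alpha$. Both kinds of term are of no larger degree and come earlier in this order.
\end{itemize}
This works for any cdga $R$. It shows directly that the generating space $W \tensor \dual V$ is of finite type, respectively finite-dimensional, whenever $W$ is.
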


\begin{proof}
  Let $R \tensor V \to M$ and $R \tensor W \to N$ be the respective minimal models.
  Then the two induced maps of $R$-modules
  \[ \Hom_R(M, N)  \xlongto{\eq}  \Hom_R(R \tensor V, N)  \xlongfrom{\eq}  \Hom_R(R \tensor V, R \tensor W) \]
  are quasi-isomorphisms, using that $M$ is cofibrant for the first one.
  The right-hand side is isomorphic to $\Hom(V, R \tensor W) \iso R \tensor W \tensor \dual V$ since $V$ is finite-dimensional.
  Chasing the differential of $\Hom_R(R \tensor V, R \tensor W)$ through the two isomorphisms, we claim that they in fact exhibit it as being minimal.
  To see this, choose bases of $V$ and $W$ exhibiting $M$ and $N$ as minimal and consider the dual basis of $\dual V$ equipped with the reverse order.
  Then ordering the canonical basis of $W \tensor \dual V$ first by degree and then lexicographically proves the claim.
  Observing that $W \tensor \dual V$ is of finite type when $W$ is, and finite-dimensional when $W$ is, completes the proof.
\end{proof}

\begin{lemma} \label{cor:extension_left_adjoint}
  Let $G$ be a group and $\phi \colon R \to S$ a map of $G$-equivariant cdgas such that $S$ is cofibrant and homotopically finite as an $R$-module.
  Then there is a natural equivalence $\Lder \phi_! \eq \Rder \phi_\antishriek$ of functors $\Underlying(\Modeq{G}{R}) \to \Underlying(\Modeq{G}{S})$.
  In particular $\Lder \phi^\antishriek$ is left adjoint to $\Lder \phi_!$.
\end{lemma}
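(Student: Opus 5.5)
The plan is to represent $\Lder \phi_!$ by the point-set composite $\phi_! \after q_R$, represent $\Rder \phi_\antishriek$ by $\phi_\antishriek$ itself (it preserves quasi-isomorphisms, as noted before \cref{lemma:scalar_ext_right_adjoint}), and show that the quasi-isomorphism of \cref{lemma:scalar_ext_right_adjoint} identifies the two. That lemma gives a natural transformation $\phi_! \to \phi_\antishriek$ of functors $\Modeq{G}{R} \to \Modeq{G}{S}$ which is objectwise a quasi-isomorphism. To produce a natural transformation of underlying $\infty$-categories, I restrict it to cofibrant objects, where $\phi_!$ preserves weak equivalences and so induces $\Lder \phi_!$.

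For the localization argument, recall that a natural weak equivalence between two relative functors $(\cofibobj{\Modeq{G}{R}}, \We{}) \to (\Modeq{G}{S}, \We{})$ is the same as a relative functor out of $(\cofibobj{\Modeq{G}{R}} \times [1], \We{} \times [1])$. Its image under the localization functor \eqref{eq:loc} is a natural transformation of functors $\cUnderlying(\Modeq{G}{R}) \to \Underlying(\Modeq{G}{S})$ that is pointwise an equivalence, hence a natural equivalence. Precomposing with $q_R$ gives $\Lder \phi_! = \phi_! q_R \eq \phi_\antishriek q_R \eq \phi_\antishriek$. The last step uses that $\phi_\antishriek$ sends the weak equivalence $q_R \to \id$ to a weak equivalence.

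Two points need checking. First, the quasi-isomorphism of \cref{lemma:scalar_ext_right_adjoint} must be $G$-equivariant and $S$-linear. Equivariance holds because the evaluation pairing $M \tensor_R S \to \Hom_R(\dual S_R, M)$ is equivariant for the conjugation action, and $\epsilon \colon q_S(\dual S_R) \to \dual S_R$ is an equivariant map of $S$-modules. I take $q' = q_S$ in that lemma. This is legitimate because $q_S(\dual S_R)$ is cofibrant as an $R$-module by \cref{lemma:restrict_cofibration}, which applies since $S$ is cofibrant over $R$. Second, the lemma's hypotheses hold for the underlying $R$-modules, namely $S$ cofibrant and homotopically finite; they are properties of underlying modules, so the non-equivariant lemma applies pointwise.

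For the adjunction, $\phi^\antishriek \dashv \phi_\antishriek$ is Quillen, because $S$ is cofibrant over $R$ and \cref{lemma:restrict_cofibration} shows $\phi^\antishriek$ preserves (trivial) cofibrations. By \cite[Proposition~1.5.1]{Hin} the derived functors form an adjunction $\Lder \phi^\antishriek \dashv \Rder \phi_\antishriek$ of $\infty$-categories. Transporting the right adjoint along the equivalence $\Rder \phi_\antishriek \eq \Lder \phi_!$ gives $\Lder \phi^\antishriek \dashv \Lder \phi_!$.

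The main obstacle I expect is bookkeeping, not mathematics: making sure the cofibrant-replacement conventions match so that the composite really is $\Lder \phi_!$ as defined in the notation for left Quillen functors. Concretely, one must check that $\phi_!$ on cofibrant objects agrees with $\cUnderlying(\phi_!)$. This follows because $\phi_!$ is left Quillen and hence preserves cofibrant objects and weak equivalences between them.
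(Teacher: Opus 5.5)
Your proposal is correct and follows essentially the same route as the paper. The paper also defines the equivalence as the composite $\Lder\phi_! = \phi_! q \to \phi_\antishriek q \to \phi_\antishriek = \Rder\phi_\antishriek$, using the natural quasi-isomorphism of \cref{lemma:scalar_ext_right_adjoint}, and obtains the adjunction from the Quillen adjunction $\phi^\antishriek \dashv \phi_\antishriek$. Your additional checks (equivariance, taking $q' = q_S$ via \cref{lemma:restrict_cofibration}, and the localization bookkeeping) spell out details the paper leaves implicit, essentially re-deriving the ``in particular'' clause of \cref{lemma:scalar_ext_right_adjoint}.
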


\begin{proof}
  The natural isomorphism is obtained as the composite
  \[ \Lder \phi_!  =  \phi_! q  \xlongto{\eq}  \phi_\antishriek q  \xlongto{\eq}  \phi_\antishriek  =  \Rder \phi_\antishriek \]
  where the first natural quasi-isomorphism is the one of \cref{lemma:scalar_ext_right_adjoint}.
\end{proof}

We now prove that, for a map of animas $f$ modeled by a map of cdgas $\phi \colon R \to S$, the functor $f_*$ is modeled by the functor $\Lder \phi^\antishriek$ as long as the fiber of $f$ is compact.
As the following lemma shows, on the algebraic side this corresponds to the condition encountered above that $S$ is homotopically finite as an $R$-module.

\begin{lemma} \label{lemma:compact_fibs}
  Let $G$ be a group and $\phi \colon R \to S$ a map of $G$-equivariant homologically connected cofibrant cdgas of finite homotopical type that models a map $f \colon X \to Y$ of fiberwise nilpotent animas over $\B G$.
  If the fiber of $f$ is compact and connected, then $S$ is homotopically finite as an $R$-module.
\end{lemma}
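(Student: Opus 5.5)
We first reduce to the case $G = 1$ and to the fiber.
Homotopical finiteness of $S$ as an $R$-module is a property of the underlying $R$-module, so the $G$-action can be forgotten.
Let $F$ be the fiber of $f$, and choose a point of $Y$ together with the induced augmentation $R \to \QQ$ (which exists by \cref{lemma:augmented}).
By \cref{lemma:pullback_model} (or \cref{obs:pullback_model_eq}), the fiber $F$ is then modeled by the cdga $\QQ \tensor_R S$; this uses that $S$ is cofibrant as an $R$-module, so that the actual pushout computes the homotopy pushout.
The fiber $F$ is compact and connected, and also nilpotent because $f$ is nilpotent by \cref{lemma:nilpotent_composite}.
Hence $\Coho * (\QQ \tensor_R S) \iso \Coho * (F; \QQ)$ is finite-dimensional in total.

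The key step is to show that an $R$-module $M$ which is bounded below and of finite homotopical type, and whose derived fiber $\QQ \dertensor_R M$ has finite-dimensional total cohomology, is homotopically finite.
We take the minimal model $R \tensor V \to M$, whose existence is recalled in the paper from \cite[§3.2]{Bra}; this is where $M = S$ of finite homotopical type is used.
Minimality of the basis means that the differential of each generator lies in $R^{+} \tensor V$ plus lower-degree generators with coefficients in $R^{\ge 1}$.
Because $R$ is connected, we may replace it by its minimal model if necessary, since quasi-isomorphisms induce Quillen equivalences of module categories by \cite{SS}.
It then follows that the differential of $\QQ \tensor_R (R \tensor V) \iso V$ vanishes.
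Since $R \tensor V$ is cofibrant, this gives $\Coho * (\QQ \dertensor_R M) \iso V$, so $V$ is finite-dimensional.

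The main obstacle is this minimality claim, namely that $d(V) \subseteq R^{\ge 1} \tensor V$.
The definition of minimal module in the paper only orders the basis by degree, and that ordering allows components of $dv$ in $R^0 \tensor V = \QQ \tensor V$ on generators of degree one less.
I would argue that such components can always be split off as contractible pairs, which does not change the quasi-isomorphism type.
Since minimal models are unique up to isomorphism, such components therefore cannot occur in the minimal model, which settles the claim.
If this turns out to be delicate, the alternative is to use the filtration by degree of generators and the associated spectral sequence $\Coho * (R) \tensor V \Rightarrow \Coho * (M)$ with $E_1$-term computed by $\QQ \tensor_R -$.

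Finally, we check the hypotheses needed in the key step.
The module $S$ is bounded below since it is non-negatively graded.
It is of finite homotopical type as an $R$-module because the relative minimal model of $R \to S$ is $R \tensor \SA W$ with $W$ of finite type and positively graded, as in \cref{lemma:Sullivan_over}.
Since $R$ is homologically connected, $\SA W$ is then of finite type in each degree.
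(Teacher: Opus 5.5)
Your proposal is correct and has the same skeleton as the paper's proof. Both choose an augmentation $\epsilon\colon R\to\QQ$ modeling a point $y$ of $Y$, take the minimal model $R\otimes V\to S$, note that $\QQ\otimes_R(R\otimes V)=V$ has zero differential, identify $V$ with the rational (co)homology of the fiber, and conclude by compactness. The one genuine difference is the identification step. The paper uses the stable dictionary of \cref{prop:BMeq,lemma:model_f_!}: $\Lder\epsilon_!\Rder\phi^*(S)\simeq V$ models $y^*f_!(\SS_X)\simeq\SS[F_y]$. You instead use the unstable statement \cref{lemma:pullback_model}, which applies since $f$ has connected fiber, to see that the derived pushout $\QQ\dertensor_R S$ is a cdga model of $F$, so $V\cong H^*(F;\QQ)$. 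Your route needs only Sullivan theory plus the Eilenberg--Moore convergence built into \cref{lemma:pullback_model}, and does not depend on Braunack-Mayer's models. The paper's route is a one-liner given the parametrized-spectrum models it has just set up.

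The ``main obstacle'' you identify does not exist. In the paper's definition, $\beta<\alpha$ forces $\deg v_\beta\le\deg v_\alpha$. Hence in any term $r\otimes v_\beta$ of $dv_\alpha$ one has $\deg r=\deg v_\alpha+1-\deg v_\beta\ge 1$. A coefficient in $R^0$ would require a generator of degree one \emph{more} than $v_\alpha$, not one less, and such a generator comes later in the order. So $d(V)\subseteq R^{\ge 1}\otimes V$ holds by definition. Since $\epsilon$ vanishes on $R^{\ge 1}$ (as $\QQ$ sits in degree $0$), you need neither connectivity of $R$ (it is only homologically connected) nor a passage to its minimal model. Your contractible-pairs and uniqueness argument would have needed considerably more care, so it is good that it is unnecessary.

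Two smaller corrections:
\begin{itemize}
\item The statement does not assume $\phi$ is a cofibration, so $S$ need not be a cofibrant $R$-module. Either replace $\phi$ by a cofibration first, or, as you eventually do, work with $\QQ\dertensor_R S$ computed via $R\otimes V$.
\item Finite homotopical type of $S$ as an $R$-module is never used. The minimal model exists because $S$ is bounded below, and finiteness of $V$ is the output of the argument. Your last paragraph can therefore be dropped. As written it would also be incomplete, since a semifree relative Sullivan model of finite type does not by itself control the \emph{minimal} module model.
\end{itemize}
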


\begin{proof}
  By restricting along the group homomorphism $1 \to G$, it is enough to prove the claim for $G = 1$.
  In that case, let $y \colon * \to Y$ be a map of animas and choose a map of cdgas $\epsilon \colon R \to \QQ$ that models it.
  Furthermore, let $M = R \tensor V$ be the minimal model of $S$ as an $R$-module.
  By \cref{prop:BMeq,lemma:model_f_!}, the $\QQ$-module $\Lder \epsilon_! \Rder \phi^* (S) \eq V$ is a model for $y^* f_! (\SS_X)$, which is equivalent to the suspension spectrum $\SS[F_y]$ of the fiber $F_y$ of $f$ over $y$.
  Since $F_y$ is compact, its rational cohomology is finite-dimensional, and thus $V$ is as well.
\end{proof}

\begin{lemma} \label{cor:model_f_*}
  Let $G$ be a group and $\phi \colon R \to S$ a map of $G$-equivariant homologically connected cofibrant cdgas of finite homotopical type that models a map $f \colon X \to Y$ of fiberwise nilpotent animas over $\B G$.
  Assume that the fiber of $f$ is compact and connected and that $S$ is cofibrant as an $R$-module.
  Then $\Lder \phi^\antishriek$ and $f_*$ restrict to functors as follows
  \[
  \begin{tikzcd}
    \opcat{\cUnderlying(\Modeq G {S})_\fht} \dar[swap]{\Lder \phi^\antishriek} \rar{\eq}[swap]{\Psi_S} & \Sp[X]^{G,\QQ}_{\nil,\ft,\bbl} \dar{f_*} & \lar[swap]{\rat{(\blank)}} \Sp[X]^{G}_{\nil,\ft,\bbl} \dar{f_*} \\
    \opcat{\cUnderlying(\Modeq G {R})_\fht} \rar{\eq}[swap]{\Psi_R} & \Sp[Y]^{G,\QQ}_{\nil,\ft,\bbl} & \lar[swap]{\rat{(\blank)}} \Sp[Y]^{G}_{\nil,\ft,\bbl}
  \end{tikzcd}
  \]
  and this diagram canonically commutes.
\end{lemma}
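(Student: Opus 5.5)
The plan is to obtain the statement by passing to right adjoints in \cref{lemma:model_f_!}, using \cref{cor:extension_left_adjoint} to identify the relevant adjoints on the algebraic side.

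\textbf{Step 1: algebraic adjunction.} By \cref{lemma:compact_fibs}, $S$ is homotopically finite as an $R$-module, and it is cofibrant by assumption. So \cref{cor:extension_left_adjoint} gives $\Lder\phi^\antishriek \dashv \Lder\phi_! \eq \Rder\phi_\antishriek$. Passing to opposite categories, $\opcat{(\Lder\phi^\antishriek)}$ is right adjoint to $\opcat{(\Lder\phi_!)}$.

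\textbf{Step 2: compare with the parametrized spectra side.} On that side $f_*$ is right adjoint to $f^*$. By \cref{prop:BMeq}, $\Psi_S\after\Lder\phi_!\eq f^*\after\Psi_R$. If both adjunctions restrict to the finite-type subcategories, we can form the mate of this equivalence. Since $\Psi_R$ and $\Psi_S$ are equivalences there, the mate is an equivalence $\Psi_R\after\Lder\phi^\antishriek\eq f_*\after\Psi_S$. This gives the left square.

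\textbf{Step 3: the functors restrict.} For the algebraic side, $\phi^\antishriek(M)=\phi^*(M\tensor_S q_S(\dual S_R))$. By \cref{lemma:Hom_hf}, $\dual S_R$ is homotopically finite over $R$. Since $S$ is also homotopically finite over $R$, I then need that the tensor product over $S$ of an $S$-module of finite homotopical type with $q_S(\dual S_R)$, restricted to $R$, is of finite homotopical type. I would argue this via minimal models: $S\tensor_R$ of the minimal model shows $q_S(\dual S_R)$ is built from finitely many cells over $S$ up to the $R$-finiteness, and \cref{lemma:model_f_!} already shows $\Rder\phi^*$ preserves finite homotopical type.

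For the spectral side, use \cref{lemma:f_*} to write $f_*\eq f_!(\blank\tensor\DS f)$. Then it suffices that $\DS f$ lies in the nilpotent, finite type, bounded below subcategory and that this subcategory is $\tensor$-closed (\cite[Lemma~5.15]{Bra}) and preserved by $f_!$ (\cref{lemma:model_f_!}). For $\DS f$, base change along the fiber (\cref{lemma:DS_pullback}) reduces to Klein's dualizing spectrum of the compact simply-connected-up-to-nilpotence fiber $F$. Its fibers are spectra whose homotopy is the rationally finite homology of $F$ up to shift, and the nilpotence of the $\pi_1$-action comes from the nilpotence of $f$.

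Alternatively, and more cleanly, I would prove that the mate of Step 2 is an equivalence on all of $\opcat{\cUnderlying(\Modeq G S)_\fht}$ before restricting. Full faithfulness of $\Psi_R$ on bounded-below finite type objects then forces $f_*\Psi_S(M)$ to land in the essential image, provided $f_*\Psi_S(M)$ is known to be nilpotent, finite type and bounded below. So both routes need the spectral restriction claim.

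\textbf{Step 4: the right square.} Commutativity of the right square, meaning that rationalization commutes with $f_*$ on these objects, follows as in \cref{lemma:model_f_!}. Write $f_*=f_!(\blank\tensor\DS f)$ and use the projection formula with $\blank\tensor Y^*(\H\QQ)$.

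\textbf{Main obstacle.} I expect the hardest part to be Step 3 on the spectral side, namely showing that $\DS f$, equivalently $f_*$, preserves nilpotence and finite rational type. If it proves awkward directly, I would instead show that $\Psi_R(\Lder\phi^\antishriek M)$ corepresents the correct functor on all of $\Sp[Y]^{G,\QQ}_{\nil,\ft,\bbl}$. Because $f^*$ preserves that subcategory, this would identify it with the rationalized $f_*$ restricted to that subcategory, after checking that such objects detect equivalences of nilpotent rational spectra.
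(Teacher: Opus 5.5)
\textbf{Gap: neither finiteness claim in your Step 3 is established, and two of the supporting assertions are false.} Your Steps 1--2 match the paper's skeleton: you get $\Lder\phi^\antishriek \dashv \Lder\phi_!$ from \cref{lemma:compact_fibs,cor:extension_left_adjoint}, then take the mate of $\Psi_S \after \Lder\phi_! \eq f^* \after \Psi_R$ from \cref{prop:BMeq}. As you note, this only works once both $\Lder\phi^\antishriek$ and $f_*$ are known to preserve the finite-type subcategories. That is the real content of the lemma.

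\emph{Algebraic side.} The module $q_S(\dual S_R)$ is in general \emph{not} built from finitely many $S$-cells. Take $R=\QQ$ and $S$ a Sullivan model of $F=\Sphere 2\vee\Sphere 2$. The generators of the minimal $S$-model of $\dual S_\QQ$ are linearly dual to $\mathrm{Ext}_S(\QQ,S)$. That group is nonzero in infinitely many degrees; a short computation with the formal model $H^*(F)$ shows this, and it reflects the failure of Poincaré duality.

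Finiteness over $S$ is also not what the argument needs; the paper uses only finiteness over $R$.
\begin{itemize}
\item Let $N=S\tensor V$ be the minimal model of $M$. Its basis filtration makes $K=\phi^*(N\tensor_S q_S(\dual S_R))$ an exhaustively filtered $R$-module with associated graded $V\tensor q_S(\dual S_R)$.
\item By \cref{lemma:compact_fibs,lemma:Hom_hf}, $\dual S_R$ is homotopically finite over $R$, with minimal model $R\tensor V'$ say. Hence $K$ is bounded below and has a minimal model $R\tensor W$.
\item Tensoring down along an augmentation $R\to\QQ$ (\cref{lemma:augmented}) filters $W \eq K\tensor_R\QQ$ with associated graded $V\tensor V'$. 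This is finite in each degree because $V'$ is finite-dimensional and $V$ is of finite type and bounded below.
\end{itemize}

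\emph{Spectral side.} Your description of $\DS f$ confuses it with $f_*(\SS_X)$, which is the fiberwise Spanier--Whitehead dual of $F$. The fiber of $\DS f$ is Klein's dualizing spectrum $\SS[\Omega F]^{h\Omega F}$. This is a shifted sphere only for Poincaré duality complexes, and it is rationally infinite-dimensional for $F=\Sphere 2\vee\Sphere 2$. So nilpotence, finite rational type and bounded-belowness of $\DS f$ are not established.

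Proving them is no easier than the claim about $f_*$ itself, since $\DS f=(\pr_1)_*\Delta_!(\SS_X)$. Moreover, when $F$ is only connected, $\Delta_!(\SS_X)$ is not even nilpotent: for $F = \Sphere 1$, $\pi_1F$ acts on $\ZZ[\pi_1F]$ by translation.

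The paper instead proves directly that $f_*$ restricts (\cref{lemma:f_*_nilpotent}):
\begin{itemize}
\item It reduces to $G=1$ by the pull--push formula.
\item It writes a bounded below nilpotent $P$ via its Postnikov tower, whose layers are finite extensions of $X^*(\Susp[n]\H A)$ \cite[Theorem~2.35]{Bra}.
\item Each $f_*X^*(\Susp[n]\H A)$ has fiberwise homotopy $\Coho * (F; \shift[n] A)$. This is concentrated in finitely many degrees because $F$ is compact, and it carries a nilpotent $\pi_1(Y)$-action.
\item Finally it passes to the limit.
\end{itemize}

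Your fallbacks do not avoid this. $\Psi_R$ is only an equivalence on the finite-type part, so some independent finiteness input is required. Also, the right-hand column of the statement asserts that $f_*$ preserves \emph{integral} nilpotence and bounded-belowness, which no rational or corepresentability argument can give.

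Your Step~4 is fine, and simpler than the paper's tower argument. Since $f_*\eq f_!(\blank\tensor\DS f)$, the projection formula shows that $f_*$ commutes with $\blank\tensor Y^*(\H\QQ)$ for every $P$.
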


\begin{proof}
  That $f_*$ restricts as claimed and that the right-hand square commutes is the content of \cref{lemma:f_*_nilpotent} below.
  Commutativity of the left-hand square then follows from \cref{prop:BMeq,cor:extension_left_adjoint} as long as we can prove that $\Lder \phi^\antishriek$ restricts to a functor $\Hocat(\Modeq{G}{S})_\fht \to \Hocat(\Modeq{G}{R})_\fht$.
  To this end, let $M$ be a $G$-equivariant $S$-module of finite homotopical type and $N = S \tensor V$ its minimal model.
  Then $\Lder \phi^\antishriek(M)$ is quasi-isomorphic to $K \defeq \phi^* ( N \tensor_S q_S(\dual{S}_R) )$.
  By definition, the $S$-module $N$ has an exhaustive filtration with associated graded the free $S$-module $S \tensor V$.
  Hence $K$ has an exhaustive filtration with associated graded isomorphic to the $R$-module $V \tensor q_S(\dual{S}_R)$.
  Note that $\dual{S}_R$ is homotopically finite as an $R$-module by \cref{lemma:compact_fibs,lemma:Hom_hf}; let $R \tensor V'$ be its minimal model.
  In particular its homology is bounded below; thus the same is true for $V \tensor q_S(\dual{S}_R)$ and hence $K$, so that the latter admits a minimal model $R \tensor W$.
  Now let $\epsilon \colon R \to \QQ$ be an augmentation (which exists by \cref{lemma:augmented}).
  Note that $\Lder \epsilon_! (K) \eq W$; on the other hand, the $R$-module $\Lder \epsilon_! (K) = K \tensor_R \QQ$ admits an exhaustive filtration with associated graded $(V \tensor q_S(\dual{S}_R)) \tensor_R \QQ \eq V \tensor V'$.
  Since $V'$ is finite-dimensional and $V$ is finite-dimensional in each degree, the latter is also true for $V \tensor V'$ and thus $W$.
  Hence $K$ is of finite homotopical type.
\end{proof}

\begin{lemma} \label{lemma:f_*_nilpotent}
  Let $G$ be a group and $f \colon X \to Y$ a map of fiberwise nilpotent animas over $\B G$.
  If the fiber of $f$ is compact and connected, then $f_*$ restricts to functors as follows
  \[
  \begin{tikzcd}
    \Sp[X]^{G}_{\nil,\ft,\bbl} \rar{\rat{(\blank)}} \dar[swap]{f_*} & \Sp[X]^{G,\QQ}_{\nil,\ft,\bbl} \dar{f_*} \\
    \Sp[Y]^{G}_{\nil,\ft,\bbl} \rar{\rat{(\blank)}} & \Sp[Y]^{G,\QQ}_{\nil,\ft,\bbl}
  \end{tikzcd}
  \]
  and this diagram canonically commutes.
\end{lemma}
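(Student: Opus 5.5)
We use the formula $f_*(\blank) \eq f_!(\blank \tensor_X \DS{f})$ of \cref{lemma:f_*}. This reduces both claims to three facts: $f_!$ preserves the relevant subcategories and commutes with rationalization, $\DS f$ is well behaved, and rationalization is symmetric monoidal. The compatibility of $f_!$ with rationalization, and the fact that $f_!$ preserves $\Sp^{G}_{\nil,\ft,\bbl}$ and $\Sp^{G,\QQ}_{\nil,\ft,\bbl}$, is contained in \cref{lemma:model_f_!} (its right-hand square comes from the projection formula). The restriction statement there is inherited from \cite[Proposition~5.8]{Bra} via the pull--push formula; it only needs the fiber of $f$ to be connected, and no cdga model is required.

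\textbf{Step 1: reduce to the fibers over $\B G$.} By definition, $\Sp[X]^{G}_{\nil,\ft,\bbl}$ is detected by restriction along $i \colon \fib\alpha \to X$. The base-change square of fibers over $\B G$ is a pullback, so the pull--push formula for $f_*$ gives $j^* f_* \eq g_* i^*$. We may therefore assume $G = 1$, i.e.\ that $X$ and $Y$ are nilpotent. The map $f$ is then nilpotent by \cref{lemma:nilpotent_composite}.

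\textbf{Step 2: the rationalization square.} Write $\rat E = E \tensor_X X^*(\H\QQ)$. We have $f^*Y^*(\H\QQ) \eq X^*(\H\QQ)$, so the projection formula gives
\[ f_!(E \tensor \DS f) \tensor_Y Y^*\H\QQ \eq f_!(E \tensor \DS f \tensor X^*\H\QQ) \eq f_!(\rat E \tensor \DS f). \]
Hence $\rat{(f_* E)} \eq f_*(\rat E)$, naturally. One still has to check that this agrees with the canonical comparison map. That comparison is the mate of $f^*$ commuting with $\tensor X^*\H\QQ$, and the check is a diagram chase using \cref{lemma:mates}. In fact this step works for arbitrary $E$.

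\textbf{Step 3: $f_*$ preserves the subcategories.} Since $f_* E \eq f_!(E \tensor \DS f)$ and $f_!$ preserves the relevant subcategories (see above), it suffices to show that $E \tensor \DS f$ lies in $\Sp[X]_{\nil,\ft,\bbl}$ whenever $E$ does. Pull back along a point $x$ of $X$ lying over $y \in Y$. By \cref{lemma:DS_pullback}, $x^*\DS f$ is the Spanier--Whitehead dual of $\Suspinf F_y$ up to the usual twist, i.e.\ the Klein dualizing spectrum of the compact fiber $F_y$ evaluated at a point. This is a finite spectrum, so it is bounded below and of finite rational type. Tensoring with it therefore preserves the conditions bounded below and finite rational type fiberwise; this is a Künneth argument, where finiteness of $\DS f$ together with $E$ being bounded below controls each degree.

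\textbf{Step 4: nilpotence (main obstacle).} The remaining point is nilpotence of the $\pi_1(X)$-action on $\pi_*(E \tensor \DS f)$. It follows from two ingredients. First, $\pi_1(X)$ acts nilpotently on $\pi_*(\DS f)$: this action is induced through $\pi_1$ of the fiber, which acts nilpotently because $f$ is nilpotent, and through the monodromy of $F$ over $Y$, which is nilpotent on homology by \cite[Corollary~2.2]{Hil} and hence on its dual. Second, nilpotent actions are closed under tensor products, extensions and filtered colimits. To combine these I would first try a cell filtration of the finite spectrum $x^*\DS f$ compatible with the Atiyah--Hirzebruch / Künneth spectral sequence. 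If that becomes awkward, I would fall back on the stabilization description of \cref{lemma:param_spectrum_ob} and show closure under the relevant operations directly.
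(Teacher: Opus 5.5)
Your Step 1 is the paper's reduction to $G = 1$. Your Step 2 is correct and more general than the paper's argument. The paper compares Postnikov towers of bounded below nilpotent $P$ and uses compactness of $F$. You note instead that $f_* \eq f_!(\blank \tensor_X \DS f)$ is $\Sp[Y]$-linear by the projection formula, so it commutes with $\blank \tensor Y^*(\H\QQ)$ for every $E$.

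The gap is that the rewriting $f_* \eq f_!(\blank \tensor_X \DS f)$ moves the whole content of the lemma into the claim $\DS f \in \Sp[X]_{\nil,\ft,\bbl}$. Neither of your justifications for that claim holds.

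In Step 3, $x^*\DS f$ is not a finite spectrum in general. By \cref{lemma:DS_pullback} and pull--push it is $\SS[\Omega F]^{h\Omega F}$, i.e.\ the limit over $F$ of the local system $z \mapsto \SS[\Omega_{x,z}F]$. The Spanier--Whitehead dual of $\SS[F]$ is something else: it is the pushforward $F_*(\SS_F) \eq F_!(\DS F)$. The fibers of $\DS F$ are spheres when $F$ is a Poincaré duality space, but compactness alone gives nothing like this. For example, take $F = S^2 \vee S^4$. Then $C_*(\Omega F;\QQ) \eq T = T(a_1, a_3)$ is free on generators of degrees $1$ and $3$. Consequently $x^*\DS F \tensor \H\QQ \eq \operatorname{RHom}_{T}(\QQ, T)$, which has infinite-dimensional total homology: it contains the cokernel of $T \to \Hom(\QQ a_1 \oplus \QQ a_3, T)$. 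So bounded-belowness and finite rational type of $\DS f$ need their own proof. When $\pi_1(F) \neq 1$, which the statement allows, even finite type is unclear, since $\Ho 0 (\Omega F;\ZZ)$ is a group ring.

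In Step 4, nilpotence of the action on $\pi_*(\DS f)$ is asserted, not proved. Nilpotence of $f$ controls the action on $\pi_*(F)$, and Hilton's result controls the action on $\Ho *(F;\ZZ)$; neither of these is $\pi_*(\SS[\Omega F]^{h\Omega F})$.

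Deriving nilpotence from the definition $\DS f = (\pr_1)_*\Delta_!(\SS_X)$ would be circular, since it is the lemma itself applied to $\pr_1$, whose fiber is again $F$. Moreover, if $\pi_1(F) \neq 1$ then $\Delta_!(\SS_X)$ is not even nilpotent: its fiberwise $\pi_0$ is $\ZZ[\pi_1(F)]$ with the translation action. Two smaller problems:
\begin{itemize}
\item Nilpotent actions are not closed under filtered colimits, because the filtration lengths can be unbounded.
\item \cref{lemma:model_f_!} is stated under cdga-model hypotheses that this lemma does not make.
\end{itemize}

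The paper avoids $\DS f$ altogether.
\begin{itemize}
\item By \cite[Theorem~2.35]{Bra}, the Postnikov stages of a nilpotent $P$ are finite iterated extensions by $X^*(\Susp[n]\H A)$ with constant coefficients.
\item Applying $f_*$ to such a piece gives fiberwise $\H A$-cohomology of $F$, with homotopy $\Coho *(F;\shift[n]A)$. These groups are concentrated in finitely many degrees by compactness.
\item The $\pi_1(Y)$-action on them is nilpotent: Hilton's result for $\Ho *(F;\ZZ)$ transfers through $\Hom$ and $\operatorname{Ext}$.
\item Since $f_*$ preserves limits, \cite[Proposition~2.33 and Corollary~2.37]{Bra} give nilpotence and bounded-belowness of $f_*(P)$.
\item Finite type and rationality then follow by counting the groups $A$.
\end{itemize}
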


\begin{proof}
  We first prove that $f_*$ restricts to functors as claimed; for this it is, by the pull--push formula, enough to consider the case $G = 1$.
  By \cite[Theorem~2.35]{Bra} an $X$-spectrum $P$ is nilpotent if and only if each step $P_{\ge n} \to P_{\ge n-1}$ in the Postnikov tower of $P$ factors as a finite number of extensions by $X^*(\Susp[n] \H A)$ for varying abelian groups $A$.
  The $Y$-spectrum $f_* X^*(\Susp[n] \H A)$ is obtained from $f_! (\SS_X)$ by fiberwise applying $\Hom_\Sp(\blank, \Susp[n] \H A)$ (see e.g.\ \cite[Proposition~3.4]{ABG}).
  Note that $f_! (\SS_X) \eq \SS_Y[X]$, so that, writing $F$ for the fiber of $f$, the fiberwise homotopy groups of $f_* X^*(\Susp[n] \H A)$ are given by $\Coho * (F; \shift[n] A)$ with its canonical $\pi_1(Y)$-action (cf.\ \cref{obs:pi_1_action}).
  The analogous $\pi_1(Y)$-action on $\Ho * (F; \ZZ)$ is degreewise nilpotent by \cite[Corollary~2.2]{Hil} (using that $f$ is nilpotent by \cref{lemma:nilpotent_composite} since $F$ is connected).
  Hence the action on $\Coho * (F; \shift[n] A)$ is nilpotent as well, using that the class of nilpotent actions is closed under extensions and applying $\Hom(\blank, \shift[n] A)$ and $\operatorname{Ext}(\blank, \shift[n] A)$ by \cite[Ch.~I, Proposition~4.15]{HMR} (or, more precisely, the same argument applied to a contravariant functor).
  Since $F$ is compact, the cohomology $\Coho * (F; A)$ is furthermore concentrated in finitely many degrees.
  Thus $f_* X^*(\Susp[n] \H A)$ is nilpotent and concentrated in finitely many degrees.
  Since $f_*$ preserves limits, this argument implies (using \cite[Proposition~2.33 and Corollary~2.37]{Bra}) that if an $X$-spectrum $P$ is bounded below nilpotent, then so is $f_*(P)$.
  If $P$ is additionally of finite rational type, then, for each $n$, the sum of the dimensions of the vector spaces $A \tensor \QQ$ is finite; hence $f_*(P)$ is again of finite rational type.
  Similarly, if $P$ is rational, then each $A$ can be chosen to be a rational vector space, and hence $f_*(P)$ is rational as well.
  
  Lastly, note that whether the canonical map $\alpha \colon \rat{f_*(P)} \to f_*(\rat P)$ is an equivalence can be checked fiberwise, so it is again enough to prove this for $G = 1$.
  Note that $\alpha$ is an equivalence for $P = X^*(\Susp[n] \H A)$ since $\Coho * (F; A) \tensor \QQ \to \Coho * (F; A \tensor \QQ)$ is an isomorphism.
  Since $F$ is compact, for every $k$, the maps $\pi_k(f_*(P_{\ge n})) \to \pi_k(f_*(P_{\ge n-1}))$ are isomorphisms for $n$ large enough.
  Hence taking the limit of this tower commutes with rationalization, and so $\alpha$ is an equivalence for any bounded-below nilpotent $X$-spectrum $P$.
\end{proof}

We now provide rational models for the (unit and counit of the) adjunction $f^* \dashv f_*$, as well as the corresponding pull--push equivalence.

\begin{lemma} \label{lemma:f_*_adjunction_model}
  In the situation of \cref{cor:model_f_*}, the unit of the adjunction $f^* \dashv f_*$ is modeled by the map $\Lder \phi^\antishriek \Lder \phi_! \to \id$ given at $M \in \Modeq{G}{R}$ as
  \[ \phi^* \bigl( q(M) \tensor_R S \tensor_S q(\dual{S}_R) \bigr)  \longto  \phi^* \bigl( M \tensor_R S \tensor_S \dual{S}_R \bigr)  \xlongto{\ev}  M \]
  and the counit is modeled by the map $\id \to \Lder \phi_! \Lder \phi^\antishriek$ given at $N \in \Modeq{G}{S}$ by the zig-zag
  \[ N  \xlongfrom{\eq}  q(N)  \xlongto{\eta}  \Hom_R \bigl( q(\dual{S}_R), q(N) \tensor_S q(\dual{S}_R) \bigr)  \xlongfrom{\eq}  \phi^* \bigl( q(N) \tensor_S q(\dual{S}_R) \bigr) \tensor_R S \]
  where $\eta$ is the unit of the adjunction $\phi^\antishriek \dashv \phi_\antishriek$ and the last map is the one of \cref{lemma:scalar_ext_right_adjoint}.
\end{lemma}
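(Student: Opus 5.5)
We will deduce both statements from the identification of \cref{cor:model_f_*}, which models $f_*$ by $\Lder \phi^\antishriek$, together with \cref{lemma:model_f_!} and \cref{prop:BMeq}, which model $f^*$ by $\Lder \phi_!$. The plan is to realize the derived adjunction $\Lder\phi^\antishriek \dashv \Lder\phi_!$ of \cref{cor:extension_left_adjoint} explicitly, and then transport it. This adjunction is opposite to $f^* \dashv f_*$ under the contravariant equivalences $\Psi_R$ and $\Psi_S$. Under this dictionary the unit of $f^* \dashv f_*$ corresponds to the counit of $\Lder\phi^\antishriek \dashv \Lder\phi_!$, and vice versa. Since an adjunction's unit and counit are determined (up to contractible choice) by the adjunction data, it will suffice to show two things: that the equivalences of \cref{cor:model_f_*} and \cref{prop:BMeq} are compatible with the adjunctions, and that the stated zig-zags represent the derived (co)unit of $\Lder\phi^\antishriek \dashv \Lder\phi_!$.

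First I will check compatibility. The square in \cref{cor:model_f_*} was obtained by taking adjoints of the square for $f^*$ and $\Lder\phi_!$, using \cref{cor:extension_left_adjoint}. Hence, by construction, the transformation making it commute is the mate of the one in \cref{prop:BMeq}. Applying \cref{lemma:mates} to these mates shows that the equivalences intertwine the (co)units. This reduces the problem to an algebraic computation in $\Hocat(\Modeq{G}{R})$ and $\Hocat(\Modeq{G}{S})$.

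Second, I will compute the derived (co)units. The adjunction $\Lder\phi^\antishriek \dashv \Lder\phi_!$ is the composite of two pieces: the Quillen adjunction $\phi^\antishriek \dashv \phi_\antishriek$, which is Quillen since $S$ is cofibrant over $R$ by \cref{lemma:restrict_cofibration}, and the equivalence $\phi_! q \eq \phi_\antishriek$ of \cref{lemma:scalar_ext_right_adjoint}. For the Quillen adjunction, \cref{prop:Shulman} gives the derived unit as $\id \xleftarrow{\eq} q \xrightarrow{\eta} \phi_\antishriek\phi^\antishriek q$ (no fibrant replacement is needed, since all modules are fibrant). It gives the derived counit as $\phi^\antishriek q \phi_\antishriek \to \phi^\antishriek\phi_\antishriek \xrightarrow{\epsilon} \id$. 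Composing the unit with the inverse of the quasi-isomorphism $\phi_!q \to \phi_\antishriek$ of \cref{lemma:scalar_ext_right_adjoint} yields exactly the stated zig-zag for $\id \to \Lder\phi_!\Lder\phi^\antishriek$.

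For the other map, I precompose the counit with $\phi^\antishriek q$ applied to $\phi_! q M \to \phi_\antishriek M$. This requires one check: that the counit $\epsilon$ of $\phi^\antishriek\dashv\phi_\antishriek$, restricted along $M\tensor_R S \to \Hom_R(q(\dual S_R),M)$, equals the evaluation map $\phi^*(M\tensor_R S\tensor_S \dual S_R) \to M$ after the map $q(\dual S_R)\to \dual S_R$. This follows from the formula $m\tensor s \mapsto (\alpha\mapsto \pm m\alpha(s))$ in \cref{lemma:scalar_ext_right_adjoint}, with the sign matching the evaluation convention.

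I expect the main obstacle to be the bookkeeping in the first step. Because the equivalences $\Psi$ are contravariant, one must make sure that the transformation produced in \cref{cor:model_f_*} is genuinely the mate of the one for $f^*$, rather than merely some equivalence. This may require tracing through how \cref{prop:BMeq} was used there and invoking \cref{lemma:mates} twice, once for the unit and once for the counit. The remaining verifications are routine cofibrancy checks using \cref{lemma:cofibrant_flat}.
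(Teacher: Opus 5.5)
Your proposal is correct and follows the paper's proof. Like the paper, you obtain the derived (co)unit of $\Lder\phi^\antishriek \dashv \Rder\phi_\antishriek$ from \cref{prop:Shulman} and transport it along the equivalence $\Rder\phi_\antishriek \eq \Lder\phi_!$ from (the proof of) \cref{cor:extension_left_adjoint}; your compatibility step via \cref{lemma:mates} only makes explicit what the paper leaves implicit. One small correction: for nontrivial $G$, objects of $\Modeq{G}{R}$ need not be fibrant in the injective model structure, so the correct reason you may drop the fibrant replacement $r$ in Shulman's zig-zags is that $\phi_\antishriek$ preserves quasi-isomorphisms.
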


\begin{proof}
  Using \cref{prop:Shulman}, we obtain descriptions of the unit and counit of the adjunction $\Lder \phi^\antishriek \dashv \Rder \phi_\antishriek$ in terms of the adjunction $\phi^\antishriek \dashv \phi_\antishriek$.
  Combining this with the description of the equivalence $\Rder \phi_\antishriek \eq \Lder \phi_!$ from (the proof of) \cref{cor:extension_left_adjoint} yields the desired statement.
\end{proof}

For our model of the pull--push equivalence of $f_*$, we require the following preliminaries.

\begin{lemma} \label{lemma:extend_dual}
  Let $G$ be a group, $\k \to R$ and $\k \to S$ two $G$-equivariant maps of cdgas, and $M$ and $N$ two $G$-equivariant $R$-modules such that $M$ is cofibrant and homotopically finite.
  Assume that $S$ is cofibrant as a $\k$-module.
  Then the following map is a quasi-isomorphism of $G$-equivariant $(R \tensor_\k S)$-modules
  \begin{align*}
    \Hom_R(M, N) \tensor_\k S  &\xlongto{\eq}  \Hom_{R \tensor_\k S}(M \tensor_\k S, N \tensor_\k S) \\
    \alpha \tensor s  &\longmapsto  \alpha \tensor s_*
  \end{align*}
  where $s_* \colon S \to S$ denotes multiplication by $s$.
\end{lemma}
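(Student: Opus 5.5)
First reduce to the case $G = 1$: the map is $G$-equivariant by construction, since $G$ acts on $\Hom$ by conjugation and diagonally on tensor products. Being a quasi-isomorphism of $G$-equivariant modules is a property of the underlying map. So it suffices to show that the underlying map of $(R \tensor_\k S)$-modules is a quasi-isomorphism. One also checks directly that $\alpha \tensor s \mapsto \alpha \tensor s_*$ is a well-defined, $(R \tensor_\k S)$-linear chain map; this uses the graded commutativity of $S$ and the Koszul signs.

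Next reduce to the case where $M$ is minimal. Let $\widetilde M = R \tensor V \to M$ be the minimal model, with $V$ finite-dimensional. The map is natural in $M$, so we get a commutative square whose vertical arrows are
\[ \Hom_R(M, N) \tensor_\k S \longto \Hom_R(\widetilde M, N) \tensor_\k S \]
and
\[ \Hom_{R \tensor_\k S}(M \tensor_\k S, N \tensor_\k S) \longto \Hom_{R \tensor_\k S}(\widetilde M \tensor_\k S, N \tensor_\k S). \]
The first vertical arrow is a quasi-isomorphism. Indeed, $\Hom_R(\blank, N)$ sends weak equivalences between cofibrant objects to quasi-isomorphisms, since it is right Quillen from $\opcat{\Mod{R}}$ and every object of $\Mod{R}$ is fibrant. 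Tensoring with the cofibrant $\k$-module $S$ preserves quasi-isomorphisms by \cref{lemma:cofibrant_flat}. For the second vertical arrow, use the adjunction isomorphism
\[ \Hom_{R \tensor_\k S}(P \tensor_\k S, N \tensor_\k S) \iso \Hom_R(P, N \tensor_\k S). \]
Under it, the arrow becomes restriction along $\widetilde M \to M$ in $\Hom_R(\blank, N \tensor_\k S)$, which is a quasi-isomorphism by the same right Quillen argument. By two-out-of-three, it now suffices to treat $M = \widetilde M = R \tensor V$.

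For $M = R \tensor V$ with $V$ finite-dimensional, we compute on the level of graded modules. There are isomorphisms
\[ \Hom_R(R \tensor V, N) \iso N \tensor \dual V \qquad \text{and} \qquad \Hom_{R \tensor_\k S}\bigl((R \tensor V) \tensor_\k S, N \tensor_\k S\bigr) \iso \Hom(V, N \tensor_\k S) \iso N \tensor_\k S \tensor \dual V. \]
These use only that $V$ is finite-dimensional, so that $\Hom(V, \blank)$ commutes with the tensor product. Under these identifications the map becomes the evident isomorphism of graded modules
\[ (N \tensor \dual V) \tensor_\k S \iso N \tensor_\k S \tensor \dual V, \]
up to a sign that we check. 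Since the map is a chain map, it is then an isomorphism of complexes, and in particular a quasi-isomorphism.

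The main obstacle is this last step. The twisted differential of $R \tensor V$ enters both sides, so we must check that the identifications above respect the differentials and the Koszul signs. The cleanest way to avoid this is to note that the map is already a chain map and is bijective degreewise, which suffices. So only bijectivity needs verifying, and that is purely graded-linear algebra using $\dim V < \infty$.
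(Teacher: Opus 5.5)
Your proposal is correct and follows the paper's proof. Both compare with the minimal model $R \tensor V \to M$ via a commutative square whose vertical maps are quasi-isomorphisms (using \cref{lemma:cofibrant_flat} on the left), and then observe that the map is an isomorphism when $M$ is a finite-dimensional quasi-free $R$-module. The only cosmetic difference is in the right-hand vertical map: you justify it via the extension--restriction isomorphism $\Hom_{R \tensor_\k S}(P \tensor_\k S, N \tensor_\k S) \iso \Hom_R(P, N \tensor_\k S)$, whereas the paper uses that $M \tensor_\k S$ and $(R \tensor V) \tensor_\k S$ are cofibrant $(R \tensor_\k S)$-modules (the same adjunction isomorphism appears in its bottom row anyway).
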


\begin{proof}
  It follows from the definitions that the map is $G$-equivariant.
  To prove that it is a quasi-isomorphism, let $M' \defeq R \tensor V$ be the minimal model of $M$.
  We obtain a commutative diagram
  \[
  \begin{tikzcd}
    \Hom_R(M, N) \tensor_\k S \rar \dar[swap]{\eq} & \Hom_{R \tensor_\k S}(M \tensor_\k S, N \tensor_\k S) \dar{\eq} \\
    \Hom_R(M', N) \tensor_\k S \rar & \Hom_{R \tensor_\k S}(M' \tensor_\k S, N \tensor_\k S) \rar{\iso} & \Hom_R(M', N \tensor_\k S)
  \end{tikzcd}
  \]
  where the left-hand vertical map is a quasi-isomorphism by \cref{lemma:cofibrant_flat} since $S$ is cofibrant as a $\k$-module, and the right-hand vertical map is a quasi-isomorphism since both $M \tensor_\k S$ and $M' \tensor_\k S$ are cofibrant $(R \tensor_\k S)$-modules.
  The bottom horizontal composite is an isomorphism since $M'$ is a finite-dimensional quasi-free $R$-module.
  This implies the claim.
\end{proof}

\begin{observation} \label{obs:pr_DS}
  In the situation of \cref{lemma:extend_dual}, let $R \to T$ be a map of $G$-equivariant cdgas and assume that $T$ is cofibrant and homotopically finite as an $R$-module.
  Considering the case $M = T$ and $N = R$, there exists a dashed quasi-isomorphism of $G$-equivariant $(T \tensor_\k S)$-modules
  \[
  \begin{tikzcd}
    q_T (\dual T_R) \tensor_\k S \rar[dashed]{\eq} \dar[swap]{\eq} & q_{T \tensor_\k S} \bigl( \dual {(T \tensor_\k S)}_{R \tensor_\k S} \bigr) \dar[two heads]{\eq} \\
    \dual T_R \tensor_\k S \rar{\eq} & \dual {(T \tensor_\k S)}_{R \tensor_\k S}
  \end{tikzcd}
  \]
  such that the diagram commutes.
\end{observation}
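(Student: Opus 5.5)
Here $\q \colon q_{T \tensor_\k S}(\dual{(T \tensor_\k S)}_{R \tensor_\k S}) \to \dual{(T \tensor_\k S)}_{R \tensor_\k S}$ denotes the cofibrant replacement map; it is a trivial fibration (a degreewise surjective quasi-isomorphism) in the injective model structure on $\Modeq{G}{T \tensor_\k S}$, since the replacement functor comes from functorial factorization. The plan is to obtain the dashed arrow as a lift in the square whose top-left corner is $0$, bottom-left is $q_T(\dual T_R) \tensor_\k S$, top-right is $q_{T \tensor_\k S}(\dual{(T \tensor_\k S)}_{R \tensor_\k S})$ and bottom-right is $\dual{(T \tensor_\k S)}_{R \tensor_\k S}$. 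The bottom map is the composite of the left vertical and bottom horizontal maps of the statement. A lift exists as soon as $q_T(\dual T_R) \tensor_\k S$ is cofibrant as a $G$-equivariant $(T \tensor_\k S)$-module.

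First I identify the maps. The left vertical map is $\q' \tensor_\k S$, where $\q' \colon q_T(\dual T_R) \to \dual T_R$ is the cofibrant replacement in $\Modeq{G}{T}$. It is a quasi-isomorphism by \cref{lemma:cofibrant_flat}, because $S$ is cofibrant over $\k$; one applies the lemma over $\k$ with $S$ as the cofibrant factor. The bottom map is the map of \cref{lemma:extend_dual} with $M = T$ and $N = R$, which is a quasi-isomorphism because $T$ is cofibrant and homotopically finite over $R$. That map is $T \tensor_\k S$-linear, since its target $\dual{(T \tensor_\k S)}_{R \tensor_\k S}$ is a $T \tensor_\k S$-module via the source.

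Next, cofibrancy. Cofibrations in the injective structure are exactly the underlying cofibrations, so it suffices to forget the $G$-action. The functor $\blank \tensor_\k S \cong \blank \tensor_T (T \tensor_\k S)$ from $T$-modules to $(T \tensor_\k S)$-modules is scalar extension. It is left Quillen, so it preserves cofibrant objects. Hence $q_T(\dual T_R) \tensor_\k S$ is cofibrant, and $G$-equivariantly so.

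Finally, lifting. Because lifts in the injective model structure on $\Modeq{G}{T \tensor_\k S}$ are $G$-equivariant, the cofibration $0 \to q_T(\dual T_R) \tensor_\k S$ lifts against the trivial fibration $\q$, giving the dashed map. The square commutes by construction. The dashed map is a quasi-isomorphism by two-out-of-three, since the other three maps in the square are quasi-isomorphisms.

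The main obstacle is the lifting step: one must make sure the lift is taken in the injective model structure on $\Modeq{G}{T \tensor_\k S}$, with $\q$ a trivial fibration there, so that the dashed map is $G$-equivariant rather than merely $T \tensor_\k S$-linear.
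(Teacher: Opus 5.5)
Your proposal is correct and is essentially the argument the paper intends. The paper states this observation without proof, but the two-headed arrow points to exactly your argument: lift the cofibrant object $q_T(\dual T_R) \tensor_\k S$ against the trivial fibration $q_{T \tensor_\k S}\bigl(\dual{(T \tensor_\k S)}_{R \tensor_\k S}\bigr) \to \dual{(T \tensor_\k S)}_{R \tensor_\k S}$, then apply two-out-of-three, as in the proof of \cref{lemma:scalar_ext_right_adjoint}. One small caveat: your parenthetical should not be read as a characterization, since a degreewise surjective quasi-isomorphism need not be a trivial fibration in the injective model structure; your argument only uses that $q$ comes from the injective factorization, so nothing breaks.
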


\begin{lemma} \label{lemma:pull-copush_model}
  Let $G$ be a group, and let the following left-hand diagram be a $G$-equivariant pushout square of cdgas, all of which are cofibrant, homologically connected, and of finite homotopical type,
  \[
  \begin{tikzcd}
    R \tensor_\k S & \lar[swap]{\iota_S} S & X \times_B Y \rar{\pr_Y} \dar[swap]{\pr_X} & Y \dar{g} \\
    R \uar{\iota_R} & \lar[swap]{\phi} \k \uar[swap]{\psi} & X \rar{f} & B
  \end{tikzcd}
  \]
  that models the right-hand pullback square of fiberwise nilpotent animas over $\B G$.
  Assume that the fiber of $f$ is compact and connected, and that $R$ and $S$ are cofibrant as $\k$-modules.
  Then the pull--push equivalence $g^* f_* \eq (\pr_Y)_* (\pr_X)^*$ is modeled by the natural equivalence $\Lder \psi_! \Lder \phi^\antishriek \eq \Lder (\iota_S)^\antishriek \Lder (\iota_R)_!$ given by the map
  \[ \bigl( q_R (\blank) \tensor_R q_R (\dual{R}_\k) \bigr) \tensor_\k S  \xlongto{\eq}  q_R (\blank) \tensor_R q_{R \tensor_\k S} \dual{(R \tensor_\k S)}_S  \iso  \bigl( q_R (\blank) \tensor_\k S \bigr) \tensor_{R \tensor_\k S} q_{R \tensor_\k S} \dual{(R \tensor_\k S)}_S \]
  of \cref{obs:pr_DS}.
\end{lemma}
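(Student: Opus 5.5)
We reduce to the analogous statement for $f_!$, i.e.\ \cref{lemma:pull-push_model}, by passing to adjoints. The pull--push equivalence $g^* f_* \eq (\pr_Y)_* (\pr_X)^*$ is by definition the mate of $(\pr_X)^* g^* \eq (\pr_Y)^* f^*$ with respect to $f^* \dashv f_*$ and $\pr_X^* \dashv (\pr_X)_*$. By \cref{cor:model_f_*}, $f_*$ and $(\pr_X)_*$ are modeled by $\Lder \phi^\antishriek$ and $\Lder (\iota_S)^\antishriek$, with (co)units modeled as in \cref{lemma:f_*_adjunction_model}. The fiber of $\pr_X$ is a pullback of the fiber of $f$, hence compact and connected, and $R \tensor_\k S$ is cofibrant as an $S$-module, so these results apply to $\iota_S$. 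The pullback functors are modeled by scalar extensions, and \cref{lemma:model_composition} models the square $(\pr_X)^* g^* \eq (\pr_Y)^* f^*$ by the canonical isomorphism $\Lder (\iota_R)_! \Lder \phi_! \eq \Lder (\iota_S)_! \Lder \psi_!$ (here I read the maps in the pushout square with the correct variance). The mate is the composite
\[ \psi_!\phi^\antishriek \xrightarrow{\eta} (\iota_S)^\antishriek (\iota_S)_! \psi_! \phi^\antishriek \iso (\iota_S)^\antishriek (\iota_R)_! \phi_! \phi^\antishriek \xrightarrow{\epsilon} (\iota_S)^\antishriek (\iota_R)_!, \]
which we then compute explicitly.

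The key steps are as follows. First, we use \cref{prop:Shulman} and the explicit zig-zag models of \cref{lemma:f_*_adjunction_model} to write down this mate as a zig-zag of maps of $G$-equivariant $S$-modules. The unit of $\iota_S$ involves the coevaluation-type map through $\Hom_S(q(\dual{(R\tensor_\k S)}_S), \blank)$ and \cref{lemma:scalar_ext_right_adjoint}, and the counit of $\phi$ is the evaluation $R \tensor_R \dual{R}_\k \to \k$ applied after $\phi^*$. Second, we simplify this zig-zag. Both sides are computed on cofibrant inputs, and every backwards arrow is a quasi-isomorphism by \cref{lemma:cofibrant_flat}, \cref{lemma:scalar_ext_right_adjoint} and \cref{lemma:extend_dual}. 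It therefore suffices to show that the composite agrees, in the homotopy category, with the single map of \cref{obs:pr_DS} tensored with $q_R(\blank)$. Third, we verify this by an element-level check. Take $m \tensor \alpha \tensor s$ with $\alpha \in q_R(\dual R_\k)$. Pushing it through the unit gives a map $\beta \mapsto \pm\, m \tensor \alpha \tensor s \tensor \beta$. Applying $\ev$ on the $\k$-dual part and the identification of \cref{lemma:extend_dual} ($\alpha \tensor s \mapsto \alpha \tensor s_*$) recovers exactly the image of $m \tensor \alpha \tensor s$ under the dashed map of \cref{obs:pr_DS}. This holds up to the triangle identity for $\phi^\antishriek \dashv \phi_\antishriek$, which we use via \cref{lemma:mates}. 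Since all maps are quasi-isomorphisms, this also shows that the given map is an equivalence.

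The main obstacle is the second step. The derived unit and counit involve several cofibrant replacements, for instance $q_S$ versus $q_{R\tensor_\k S}$ of the duals, and their comparison requires choosing lifts as in the proofs of \cref{lemma:scalar_ext_right_adjoint} and \cref{obs:pr_DS}. We handle this by fixing the lift of \cref{obs:pr_DS} once and for all and checking that the relevant squares commute strictly on the nose, where possible after precomposing with the quasi-isomorphism $q_R(\dual R_\k) \tensor_\k S \to \dual R_\k \tensor_\k S$. The sign bookkeeping in the evaluation formula is routine.
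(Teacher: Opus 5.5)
Your framework matches the paper's. The pull--push equivalence for $f_*$ is the mate of $(\pr_X)^* f^* \eq (\pr_Y)^* g^*$, which \cref{lemma:model_composition} models by the canonical isomorphism $a \colon (\iota_R)_! \phi_! \iso (\iota_S)_! \psi_!$, and \cref{lemma:extend_dual} and \cref{obs:pr_DS} are the right ingredients. You also consistently swap $\pr_X$ and $\pr_Y$: it is $\pr_Y$, modeled by $\iota_S$, whose fiber is that of $f$.

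The composite you sketch, however, is the wrong one. Because $\Psi$ is contravariant, \cref{lemma:f_*_adjunction_model} models the unit of $\pr_Y^* \dashv (\pr_Y)_*$ by the evaluation $\Lder (\iota_S)^\antishriek \Lder (\iota_S)_! \to \id$. It models the counit of $f^* \dashv f_*$ by the $\Hom$-coevaluation zig-zag for $\phi$. Your displayed composite only makes sense with arrows read in the opposite categories. Read that way, its structure maps are evaluation for $\iota_S$ and coevaluation for $\phi$, which is the reverse of your description. Algebraically the mate is a map $\Lder (\iota_S)^\antishriek \Lder (\iota_R)_! \to \Lder \psi_! \Lder \phi^\antishriek$, inverse to the displayed map. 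Your element chase (coevaluating against $\beta$, then evaluating on $\dual R_\k$) therefore computes a different composite.

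The more serious problem appears once this is fixed. The $\phi$-part is the zig-zag $q(N) \to \Hom_\k\bigl(q_R(\dual R_\k), \phi^\antishriek q(N)\bigr) \xlongfrom{\eq} \phi^\antishriek q(N) \tensor_\k R$. Its last arrow, from \cref{lemma:scalar_ext_right_adjoint}, is only a quasi-isomorphism, while $a$ acts only on the scalar-extension side. So there is no chain-level composite to push elements through, and ``all backward arrows are quasi-isomorphisms, so compare elementwise'' is not a proof.

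The missing idea is to move $a$ to the $\Hom$-side strictly. The paper does this as follows:
\begin{itemize}
  \item It introduces the auxiliary Quillen adjunction $(\iota_S)^\dag \dashv (\iota_S)_\dag$ with $(\iota_S)_\dag = \Hom_S\bigl(q_R(\dual R_\k) \tensor_\k S, \blank\bigr)$.
  \item It uses \cref{lemma:extend_dual} to get a strict natural quasi-isomorphism $b \colon (\iota_R)_! \phi_\antishriek \to (\iota_S)_\dag \psi_!$.
  \item It checks that $b$ forms a strictly commuting square with $a$ and the comparisons $\phi_! \to \phi_\antishriek$ and $(\iota_S)_! \to (\iota_S)_\antishriek \to (\iota_S)_\dag$. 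The last of these is precomposition with the map of \cref{obs:pr_DS}.
  \item It applies \cref{prop:Shulman} to identify the derived mate of $b$ with its point-set mate. That mate is the canonical isomorphism $(\iota_S)^\dag (\iota_R)_! \iso \psi_! \phi^\antishriek$.
\end{itemize}
The map of \cref{obs:pr_DS} then enters only through the comparison $\Lder (\iota_S)^\antishriek \eq \Lder (\iota_S)^\dag$. Without this strictification, or an explicit homotopy-commutative diagram replacing it, your second and third steps do not establish the claim.
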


\begin{proof}
  The pull--push equivalence $g^* f_* \eq (\pr_Y)_* (\pr_X)^*$ is defined to be the mate of the canonical equivalence $(\pr_X)^* f^* \eq (\pr_Y)^* g^*$.
  By \cref{lemma:model_composition}, this latter equivalence is modeled by the equivalence $\alpha \colon \Lder (\iota_R)_! \Lder \phi_! \eq \Lder (\iota_S)_! \Lder \psi_!$ induced by the canonical natural isomorphism $a \colon (\iota_R)_! \phi_! \iso (\iota_S)_! \psi_!$.
  By \cref{cor:model_f_*}, our goal thus is to compute its mate $\Lder (\iota_S)^\antishriek \Lder (\iota_R)_! \to \Lder \psi_! \Lder \phi^\antishriek$, using the adjunctions $\Lder \phi^\antishriek \dashv \Rder \phi_\antishriek \eq \Lder \phi_!$ and $\Lder (\iota_S)^\antishriek \dashv \Rder (\iota_S)_\antishriek \eq \Lder (\iota_S)_!$ of \cref{cor:extension_left_adjoint} (note that the fiber of $\pr_Y$ is connected since it is a pullback of $f$).
  
  To this end, we define the two auxiliary functors
  \begin{align*}
    (\iota_S)^\dag &\defeq (\iota_S)^* \bigl( \blank \tensor_{R \tensor_\k S} ( q_R (\dual{R}_\k) \tensor_\k S ) \bigr) \colon \Modeq{G}{R \tensor_\k S} \longto \Modeq{G}{S} \\
    (\iota_S)_\dag &\defeq \Hom_S \bigl( q_R (\dual R_\k) \tensor_\k S, \blank \bigr) \colon \Modeq{G}{S} \longto \Modeq{G}{R \tensor_\k S}
  \end{align*}
  which form an adjunction $(\iota_S)^\dag \dashv (\iota_S)_\dag$ that is Quillen since $q_R (\dual R_\k) \tensor_\k S$ is cofibrant as an $S$-module (using \cref{lemma:restrict_cofibration}).
  Note that there is a natural quasi-isomorphism $(\iota_S)_! \to (\iota_S)_\dag$ given at $M \in \Modeq{G}{S}$ by the map
  \[ M \tensor_\k R  \xlongto{\eq}  \Hom_\k \bigl( q_R(\dual R_\k), M \bigr)  \iso  \Hom_S \bigl( q_R(\dual R_\k) \tensor_\k S, M \bigr) \]
  of \cref{lemma:scalar_ext_right_adjoint} (note that $R$ is homotopically finite as an $R$-module by \cref{lemma:compact_fibs}).
  As in (the proof of) \cref{cor:extension_left_adjoint}, this yields an equivalence $\Lder (\iota_S)_! \eq \Rder (\iota_S)_\dag$.
  Moreover note that $(\iota_S)_! \to (\iota_S)_\dag$ factors as $(\iota_S)_! \to (\iota_S)_\antishriek \to (\iota_S)_\dag$, where the first map is the one of \cref{lemma:scalar_ext_right_adjoint} and the second is given by precomposition with the map $q_R (\dual{R}_\k) \tensor_\k S  \to  q_{R \tensor_\k S} \dual{(R \tensor_\k S)}_S$ of \cref{obs:pr_DS}.
  This also implies that we have a factorization $\Lder (\iota_S)_! \to \Rder (\iota_S)_\antishriek \to \Rder (\iota_S)_\dag$ through the map of \cref{cor:extension_left_adjoint}.
  
  There is a natural equivalence $b \colon (\iota_R)_! \phi_\antishriek \to (\iota_S)_\dag \psi_!$ given at $M \in \Modeq{G}{\k}$ by the quasi-isomorphism
  \[ \Hom_\k \bigl( q_R (\dual R_\k), M \bigr) \tensor_\k S  \xlongto{\eq}  \Hom_S \bigl( q_R (\dual R_\k) \tensor_\k S, M \tensor_\k S \bigr) \]
  of \cref{lemma:extend_dual} (using that $\dual R_\k$ is homotopically finite by \cref{lemma:Hom_hf}); this induces a map $\beta \colon \Lder (\iota_R)_! \Rder \phi_\antishriek \to \Rder (\iota_S)_\dag \Lder \psi_!$.
  Chasing through the definitions shows that the following left-hand square commutes
  \[
  \begin{tikzcd}
    (\iota_R)_! \phi_! \ar{rr} \dar{\iso}[swap]{a} & & (\iota_R)_! \phi_\antishriek \dar{b}[swap]{\eq} & \Lder (\iota_R)_! \Lder \phi_! \ar{rr}{\eq} \dar{\eq}[swap]{\alpha} & & \Lder (\iota_R)_! \Rder \phi_\antishriek \dar{\beta} \\
    (\iota_S)_! \psi_! \rar{\eq} & (\iota_S)_\antishriek \psi_! \rar{\eq} & (\iota_S)_\dag \psi_! & \Lder (\iota_S)_! \Lder \psi_! \rar{\eq} & \Rder (\iota_S)_\antishriek \Lder \psi_! \rar{\eq} & \Rder (\iota_S)_\dag \Lder \psi_!
  \end{tikzcd}
  \]
  which implies (via a diagram chase) that the right-hand square commutes as well.
  The desired mate of $\alpha$ is thus equivalent to the mate $\Lder (\iota_S)^\antishriek \Lder (\iota_R)_! \to \Lder \psi_! \Lder \phi^\antishriek$ of the composite
  \begin{gather*}
    \Lder (\iota_R)_! \Rder \phi_\antishriek  \xlongto{\beta}  \Rder (\iota_S)_\dag \Lder \psi_!  \eq  \Rder (\iota_S)_\antishriek \Lder \psi_!
    \intertext{which is given by the composite}
    \Lder (\iota_S)^\antishriek \Lder (\iota_R)_!  \eq  \Lder (\iota_S)^\dag \Lder (\iota_R)_!  \xlongto{\gamma}  \Lder \psi_! \Lder \phi^\antishriek
  \end{gather*}
  where $\gamma$ is the mate of $\beta$ and the equivalence $\Lder (\iota_S)^\antishriek \eq \Lder (\iota_S)^\dag$ is adjoint to the equivalence $\Rder (\iota_S)^\antishriek \eq \Rder (\iota_S)^\dag$.
  By \cref{prop:Shulman}, the map $\gamma$ is induced by the mate $c \colon (\iota_S)^\dag (\iota_R)_! \to \psi_! \phi^\antishriek$ of $b$.
  This map $c$ is, at $M \in \Modeq{G}{R}$, given by the canonical isomorphism
  \[ (\iota_S)^* \bigl( (M \tensor_\k S) \tensor_{R \tensor_\k S} ( q_R (\dual{R}_\k) \tensor_\k S ) \bigr)  \iso  \phi^* \bigl( M \tensor_R q_R (\dual{R}_\k) \bigr) \tensor_\k S \]
  of $G$-equivariant $S$-modules.
  Similarly, the equivalence $\Lder (\iota_S)^\antishriek \eq \Lder (\iota_S)^\dag$ is induced by the map $(\iota_S)^\dag \to (\iota_S)^\antishriek$ adjoint to $(\iota_S)_\antishriek \to (\iota_S)_\dag$; it is given at $M \in \Modeq{G}{R \tensor_\k S}$ by the map
  \[ M \tensor_{R \tensor_\k S} \bigl( q_R (\dual{R}_\k) \tensor_\k S \bigr)  \longto  M \tensor_{R \tensor_\k S} q_{R \tensor_\k S} \dual{(R \tensor_\k S)}_S \]
  of \cref{obs:pr_DS}.
  Combining everything, we obtain the desired statement.
\end{proof}

\subsubsection*{The dualizing spectrum}

We are now ready to deduce a rational model for the dualizing spectrum (see \cref{def:DS}).
Afterwards we provide rational models for two related equivalences, which we will need later.

\begin{lemma} \label{lemma:model_DS}
  Let $G$ be a group and $\phi \colon \k \to S$ a cofibration of $G$-equivariant homologically connected cofibrant cdgas of finite homotopical type that models a map $f \colon X \to Y$ of fiberwise nilpotent animas over $\B G$.
  Assume that the fiber of $f$ is compact and simply connected.
  Then the dualizing spectrum $\DS f \in \Sp[X]$ is modeled by the $G$-equivariant $S$-module $\dual S_\k$.
\end{lemma}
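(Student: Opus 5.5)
We model the definition $\DS f = (\pr_1)_* \Delta_!(\SS_X)$ step by step, using the dictionary built above. Since $\phi$ is a cofibration, $S$ is cofibrant as a $\k$-module, so the pushout $S \tensor_\k S$ of $\phi$ along itself is a homotopy pushout. By \cref{obs:pullback_model_eq} it models $X \times_Y X$, and the fiber of $f$ is connected, so the conditions there hold. The two coprojections $\iota_1, \iota_2 \colon S \to S \tensor_\k S$ model $\pr_1, \pr_2$. The multiplication $\mu \colon S \tensor_\k S \to S$ models the diagonal $\Delta$. The unit $S$ models $\SS_X$, since $\Psi$ is symmetric monoidal.

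First we model $\Delta_!(\SS_X)$. The fiber of $\Delta$ is the loop space of the fiber $F$ of $f$, which is connected because $F$ is simply connected. By \cref{lemma:model_f_!}, $\Delta_!(\SS_X)$ is therefore modeled by $\Rder \mu^*(S) = \mu^* S$, that is, $S$ regarded as an $(S \tensor_\k S)$-module via $\mu$. To stay within the hypotheses of the lemma, I would first factor $\mu$ as a cofibration $S \tensor_\k S \to P$ followed by a quasi-isomorphism $P \to S$. I would then check that $P$ lies in the required class by the same Mayer--Vietoris argument as in \cref{lemma:pullback_model}.

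Next we apply $(\pr_1)_*$. The fiber of $\pr_1$ is a pullback of the fiber of $f$, so it is compact and connected. Moreover, $S \tensor_\k S$ is cofibrant as an $S$-module via $\iota_1$, because $S$ is cofibrant over $\k$. So \cref{cor:model_f_*} applies: $(\pr_1)_* \Delta_!(\SS_X)$ is modeled by
\[ \Lder \iota_1^\antishriek (\mu^* S) = \iota_1^*\bigl( q(\mu^* S) \tensor_{S \tensor_\k S} q(\dual{(S \tensor_\k S)}_S) \bigr). \]
Here the dual is taken over $S$ via $\iota_1$. By \cref{obs:pr_DS} (with $T = S$, $R = \k$), $q(\dual{(S\tensor_\k S)}_S)$ may be replaced by $S \tensor_\k q(\dual S_\k)$, with the factors ordered so that the $\k$-dual sits on the second tensor factor.

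It remains to compute the result. The target module is $\mu^* S \tensor_{S \tensor_\k S} (S \tensor_\k q(\dual S_\k))$. By \cref{lemma:cofibrant_flat}, the cofibrant replacement of $\mu^* S$ may be dropped, since the other factor is cofibrant over $S \tensor_\k S$. The tensor product then collapses to $S \tensor_S q(\dual S_\k) \iso q(\dual S_\k)$, with $S$-module structure coming from the first factor via $\iota_1$. The $G$-equivariance holds by construction, since each map is equivariant.

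The main obstacle is bookkeeping. We must verify that the module structures, meaning which tensor factor is the $\k$-dual and which $\iota_i$ is used, match $\pr_1$ in \cref{def:DS}. We must also verify that the replacement of $\mu$ by a cofibration is compatible with the finiteness hypotheses of \cref{lemma:model_f_!}. Simple connectivity of the fiber enters exactly in ensuring the connectivity of the fiber of $\Delta$.
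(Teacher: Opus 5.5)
Your proposal is correct and follows the paper's proof essentially step by step. Both arguments model $X \times_Y X$ by $S \tensor_\k S$ and $\Delta$ by $\mu$, then apply \cref{lemma:model_f_!} and \cref{cor:model_f_*} to obtain $\iota_1^*\bigl(\mu^*(S) \tensor_{S \tensor_\k S} q(\dual{(S \tensor_\k S)}_{S})\bigr)$, replace the dual via \cref{obs:pr_DS}, and collapse the tensor product to $q(\dual S_\k)$. Note that \cref{obs:pr_DS} also needs $S$ to be homotopically finite over $\k$; you did not state this, and the paper supplies it via \cref{lemma:compact_fibs}.

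Your factorization of $\mu$ through a cofibration, together with the extra Mayer--Vietoris check, is unnecessary. \cref{lemma:model_f_!} only asks for a map of cofibrant, homologically connected cdgas of finite homotopical type, and $S \tensor_\k S$ already lies in that class by \cref{obs:pullback_model_eq}.
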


\begin{proof}
  By definition, we have $\DS f  \eq  (\pr_1)_* \Delta_! (\SS_X)$ where $\Delta \colon X \to X \times_Y X$ is the diagonal and $\pr_1 \colon X \times_Y X \to X$ the first projection.
  By \cref{obs:pullback_model_eq}, the left-hand pullback square of animas over $\B G$
  \[
  \begin{tikzcd}
    X \times_Y X \rar{\pr_2} \dar[swap]{\pr_1} & X \dar{f}  &  S \tensor_\k S & \lar[swap]{\iota_2} S \\
    X \rar{f} & Y  &  S \uar{\iota_1} & \lar[swap]{\phi} \k \uar[swap]{\phi}
  \end{tikzcd}
  \]
  is modeled by the right-hand pushout square of $G$-equivariant cdgas (which is a homotopy pushout square since $\phi$ is a cofibration).
  This also implies that $\Delta$ is modeled by the multiplication map $\mu \colon S \tensor_\k S \to S$.
  Furthermore note that the fiber of $\pr_1$ is equivalent to the fiber of $f$ and hence compact and simply connected, and that the fiber of the diagonal map $\Delta \colon X \to X \times_Y X$ is the (based) loop space of the fiber of $f$ and hence connected.
  
  Hence \cref{lemma:model_f_!,cor:model_f_*} imply that $\Delta_!$ is modeled by $\Rder \mu^* = \mu^*$ and that $(\pr_1)_*$ is modeled by $\Lder (\iota_1)^\antishriek$.
  Hence $\DS f$ is modeled by the $G$-equivariant $S$-module
  \[ \iota_1^* \bigl( \mu^*(S) \tensor_{S \tensor_\k S} q \bigl( \dual{(S \tensor_\k S)}_{S \tensor_\k \k} \bigr) \bigr)  \eq  \iota_1^* \bigl( \mu^*(S) \tensor_{S \tensor_\k S} \bigl( S \tensor_\k q (\dual S_\k) \bigr) \bigr)  \iso  q (\dual S_\k)  \eq  \dual{S}_\k \]
  where we use \cref{obs:pr_DS} for the first equivalence (using that $S$ is homotopically finite as a $\k$-module by \cref{lemma:compact_fibs}).
\end{proof}

\begin{lemma} \label{lemma:model_f_*_DS}
  In the situation of \cref{lemma:model_DS}, the equivalence $f_!(\blank \tensor_X \DS{f}) \eq f_*(\blank)$ of \cref{lemma:f_*} is modeled by the identity of $\phi^* ( q(\blank) \tensor_S q (\dual{S}_\k) )$.
\end{lemma}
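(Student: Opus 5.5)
The strategy is to model each step of the composite \eqref{eq:f_*} defining the equivalence of \cref{lemma:f_*}. We then check that, under the identifications already fixed, the composite becomes the identity of $\phi^*(q(\blank) \tensor_S q(\dual S_\k))$. Both sides are functors $\Sp[X] \to \Sp[Y]$. By \cref{lemma:model_f_!} the functor $f_!(\blank \tensor_X \DS f)$ is modeled by $\Rder\phi^*(q(\blank) \dertensor_S \dual S_\k)$, using \cref{lemma:model_DS} for $\DS f$ and \cref{lemma:pullback_monoidal_model} for the tensor product. By \cref{cor:model_f_*} the functor $f_*$ is modeled by $\Lder\phi^\antishriek = \phi^*(q(\blank) \tensor_S q(\dual S_\k))$. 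So both sides have literally the same model, and the claim is that the comparison transformation goes to the identity.

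Since \cref{lemma:f_*} defines the equivalence as the adjoint of a map $f^*(f_!(\blank \tensor \DS f)) \to \id$, it suffices to check that the model of \eqref{eq:f_*} agrees with the model of the counit of $f^* \dashv f_*$ precomposed with the identity. By \cref{lemma:f_*_adjunction_model}, on models that is the composite
\[ \Lder\phi_! \Lder\phi^\antishriek(N) \longto N, \]
which is obtained through the zig-zag involving the map of \cref{lemma:scalar_ext_right_adjoint} and the evaluation pairing $q(N) \tensor_S q(\dual S_\k) \tensor_\k S \to N$.

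Next we model \eqref{eq:f_*} step by step. Work with the pushout square $S \to S \tensor_\k S \leftarrow S$ over $\k$, as in the proof of \cref{lemma:model_DS}.
\begin{itemize}
\item The first step is the pull--push equivalence $f^* f_! \eq (\pr_2)_! \pr_1^*$. By \cref{lemma:pull-push_model} it is modeled by the canonical isomorphism $\phi^*(M) \tensor_\k S \iso \iota_2^*(M \tensor_S (S \tensor_\k S))$, up to cofibrant replacements.
\item The monoidality of $\pr_1^*$ is modeled by the isomorphism of \cref{lemma:pullback_monoidal_model}.
\item The counit of $\pr_1^* \dashv (\pr_1)_*$ is modeled by \cref{lemma:f_*_adjunction_model} applied to $\iota_1$, combined with \cref{obs:pr_DS} and the identification $\DS f \eq \dual S_\k$ from \cref{lemma:model_DS}. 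Concretely it becomes the evaluation $(S \tensor_\k q(\dual S_\k)) \tensor (S \tensor_\k S) \to \mu^*(S)$, which pairs $\dual S_\k$ against the second tensor factor.
\item The projection formula for $\Delta$ is modeled by \cref{lemma:projection_model}.
\item The final pull--push step $(\pr_2)_!\Delta_! \eq \id$ is modeled by \cref{lemma:model_composition}.
\end{itemize}
Composing these, the model of \eqref{eq:f_*} at $M$ is the map
\[ \phi^*\bigl(q(M) \tensor_S q(\dual S_\k)\bigr) \tensor_\k S \longto M, \qquad m \tensor \alpha \tensor s \longmapsto \pm\, m\,\alpha(s). \]
This is exactly the evaluation appearing in the counit of \cref{lemma:f_*_adjunction_model}. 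Hence the adjoint of the model is the identity of $\phi^*(q(\blank) \tensor_S q(\dual S_\k))$, as claimed.

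The main obstacle is the diagram chase: we must check that the many cofibrant-replacement zig-zags from \cref{prop:Shulman} are compatible, and that the counit of $\pr_1^* \dashv (\pr_1)_*$, transported via \cref{obs:pr_DS}, really produces the pairing on the correct tensor factor with the correct sign. I would handle this by choosing all replacements so that the dashed map of \cref{obs:pr_DS} is used consistently. Then every zig-zag reduces to strict maps between cofibrant objects, and the computation can be carried out on elements.
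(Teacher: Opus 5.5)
Your reduction matches the paper's. Both functors are modeled by $\Lder\phi^\antishriek$, and the equivalence is adjoint to \eqref{eq:f_*}, so it suffices to show that \eqref{eq:f_*} is modeled by the same map as the counit of $f^* \dashv f_*$. The gap is in what you take that map to be.

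The functor $\Psi_R$ is defined on $\opcat{\cUnderlying(\Mod{R})}$. So a map of parametrized spectra is modeled by a map of modules in the \emph{opposite} direction, and adjunctions flip: $f^* \dashv f_*$ corresponds to $\Lder\phi^\antishriek \dashv \Lder\phi_!$. Accordingly, \cref{lemma:f_*_adjunction_model} models the counit $f^* f_* \to \id$ by the zig-zag
\[ N \xlongfrom{\eq} q(N) \xlongto{\eta} \Hom_\k\bigl(q(\dual S_\k), q(N) \tensor_S q(\dual S_\k)\bigr) \xlongfrom{\eq} \phi^*\bigl(q(N) \tensor_S q(\dual S_\k)\bigr) \tensor_\k S. \]
This is the unit $\id \to \Lder\phi_!\Lder\phi^\antishriek$, built from the unit of $\phi^\antishriek \dashv \phi_\antishriek$. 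The evaluation map in that lemma instead models the \emph{unit} $\id \to f_* f^*$, and its source is $\Lder\phi^\antishriek\Lder\phi_!(M)$. You have conflated the two halves of the lemma and reversed the arrow.

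This is not merely notation. The map $m \tensor \alpha \tensor s \mapsto \pm m\,\alpha(s)$ that your chase is supposed to produce does not exist on $\phi^*(M \tensor_S \dual S_\k) \tensor_\k S$. The elements $ms' \tensor \alpha \tensor s$ and $m \tensor s'\alpha \tensor s$ are equal, but they would go to $ms'\alpha(s)$ and $\pm m\,\alpha(s's)$, which differ in general (take $M = S$ and $m = s = 1$).

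The same reversal affects your step for the counit of $\pr_1^* \dashv (\pr_1)_*$. On models it is a coevaluation-type map $\mu^*(S) \to (\iota_1)_!(\iota_1)^\antishriek\mu^*(S)$, not an evaluation into $\mu^*(S)$; compare the map $S^e \to \Hom_R(q(\dual S_R), q(\dual S_R))$, $1 \mapsto \id$, in the proof of \cref{thm:transfer_model_cdga}. So the chase as you set it up aims at a target that does not exist.

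The repair is to run the chase with all algebraic arrows reversed. Using \cref{lemma:f_*_adjunction_model,lemma:projection_model,lemma:pull-push_model}, show that \eqref{eq:f_*} is modeled by the unit of $\Lder\phi^\antishriek \dashv \Rder\phi_\antishriek \eq \Lder\phi_!$. That unit is adjoint to the identity of $\Lder\phi^\antishriek$, which gives the claim; this is the paper's argument.

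A minor point: the tensor product on $\Sp[X]$ is modeled via the symmetric monoidal equivalence of \cref{prop:BMeq}, not via \cref{lemma:pullback_monoidal_model}.
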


\begin{proof}
  The equivalence $f_!(\blank \tensor_X \DS{f}) \eq f_*(\blank)$ is defined to be the adjoint of the map \eqref{eq:f_*}.
  Via a diagram chase involving \cref{lemma:f_*_adjunction_model,lemma:projection_model,lemma:pull-push_model}, one checks that this map is modeled by the unit of the adjunction $\Lder \phi^\antishriek \dashv \Rder \phi_\antishriek \eq \Lder \phi_!$.
  Since the unit is adjoint to the identity, this implies the claim.
\end{proof}

\begin{lemma} \label{lemma:model_DS_product}
  Let $G$ be a group, and let the following upper row be cofibrations of $G$-equivariant homologically connected cofibrant cdgas of finite homotopical type
  \[
  \begin{tikzcd}[column sep = 15, row sep = 5]
    T & \lar[swap]{\phi} R & \lar \k \rar{\psi} & S \\
    X \rar{f} & Y \rar & B & \lar[swap]{g} Z
  \end{tikzcd}
  \]
  that model the lower row of maps of fiberwise nilpotent animas over $\B G$.
  Assume that the fiber of $f$ is compact and simply connected, and that the fiber of $g$ is connected.
  Note that the left-hand pushout square of $G$-equivariant cdgas
  \[
  \begin{tikzcd}
    T \tensor_\k S & \lar[swap]{\phi \tensor \id} R \tensor_\k S  &  X \times_B Z \rar{f \times \id} \dar[swap]{\pr_1} & Y \times_B Z \dar{\pr_1} \\
    T \uar & \lar[swap]{\phi} R \uar  &  X \rar{f} & Y
  \end{tikzcd}
  \]
  models the right-hand pullback square of animas over $\B G$.
  Then the equivalence $\pr_1^*(\DS f) \eq \DS {f \times \id}$ of \cref{lemma:DS_pullback} is modeled by the quasi-isomorphism
  \[ q_T (\dual T_R) \tensor_\k S  \xlongto{\eq}  q_{T \tensor_\k S} \bigl( \dual {(T \tensor_\k S)}_{R \tensor_\k S} \bigr) \]
  of \cref{obs:pr_DS}.
\end{lemma}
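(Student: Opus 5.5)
The plan is to unwind the equivalence of \cref{lemma:DS_pullback} into its three constituent pull--push equivalences and model each of them using the results of \cref{sec:model_Wirthmueller,sec:model_dualizing}. Concretely, write $a \defeq \pr_1 \colon X \times_B Z \to X$. The equivalence is the composite
\[ a^* (\pr_1)_* \Delta_! (\SS_X) \eq (\pr_1)_* (a \times a)^* \Delta_! (\SS_X) \eq (\pr_1)_* \Delta_! a^* (\SS_X) \eq (\pr_1)_* \Delta_! (\SS_{X \times_B Z}) \]
associated with the two pullback squares displayed in \cref{lemma:DS_pullback}. On the algebraic side these squares are modeled by the pushout squares
\[ T \to T \tensor_R T \leftarrow T \quad\text{(multiplication and first inclusion)} \]
tensored over $\k$ with $S$. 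This uses the identification
\[ (T \tensor_\k S) \tensor_{R \tensor_\k S} (T \tensor_\k S) \iso (T \tensor_R T) \tensor_\k S, \]
together with \cref{obs:pullback_model_eq}. The connectivity hypotheses needed there hold: the fibers of $\Delta$ are loop spaces of the simply connected fiber of $f$, and the fiber of $\pr_1$ agrees with that of $f$.

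First, I model $\DS f$ and $\DS{f \times \id}$ exactly as in the proof of \cref{lemma:model_DS}, with $\k$ replaced by $R$ (respectively by $R \tensor_\k S$). Thus $\Delta_!$ is modeled by $\mu^*$ and $(\pr_1)_*$ by $\Lder(\iota_1)^\antishriek$, via \cref{lemma:model_f_!,cor:model_f_*}. Tracking these through gives $q_T(\dual T_R)$ and $q_{T \tensor_\k S}(\dual{(T \tensor_\k S)}_{R \tensor_\k S})$. Note that $T$ is cofibrant and homotopically finite over $R$ by \cref{lemma:compact_fibs} and \cref{obs:cofib_cdgas_retract}.

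Second, I model each step of the composite:
\begin{itemize}
  \item The pull--push equivalence $a^* (\pr_1)_* \eq (\pr_1)_*(a \times a)^*$ is modeled by \cref{lemma:pull-copush_model}, applied to the pushout of $T \tensor_R T \leftarrow R \to R \tensor_\k S$ (suitably rewritten). This gives exactly a map of the form in \cref{obs:pr_DS}.
  \item The pull--push equivalence $(a \times a)^* \Delta_! \eq \Delta_! a^*$ is modeled by \cref{lemma:pull-push_model}. It is the evident isomorphism $\mu^*(T) \tensor_\k S \iso \mu^*(T \tensor_\k S)$, up to the cofibrant replacements.
  \item The last step, $a^* \SS_X \eq \SS_{X \times_B Z}$, is the unit isomorphism $T \tensor_T (T \tensor_\k S) \iso T \tensor_\k S$, by \cref{lemma:model_composition}.
\end{itemize}

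Third, I compose these models. The expected outcome is that the $\mu^*$-parts and the second tensor factor cancel exactly as in the last display of the proof of \cref{lemma:model_DS}, using the isomorphism $\iota_1^*(\mu^*(T) \tensor_{T \tensor_R T} (T \tensor_R q(\dual T_R))) \iso q(\dual T_R)$ and its analogue after applying $\tensor_\k S$. What remains is the map of \cref{obs:pr_DS}. The required compatibility is that the square in \cref{obs:pr_DS} for $(T \tensor_R T) \tensor_\k S$ restricts, after $\mu^*$ and $\iota_1^*$, to the one for $T \tensor_\k S$. This follows from naturality of $\dual{(\blank)}$ and of the map in \cref{lemma:extend_dual}.

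The main obstacle is the bookkeeping in this third step. The cofibrant replacements $q$ appear in different module categories, and the zig-zags produced by \cref{prop:Shulman} must be shown to be compatible. I would handle this by choosing the dashed lifts of \cref{obs:pr_DS} compatibly: first fix $q_T(\dual T_R)$ and then define all others via base change. Any two lifts are homotopic since the source is cofibrant and the target map is a trivial fibration, so the result is independent of choices in the homotopy category. With these choices, every map in the composite becomes an honest map of modules, and the identification reduces to a strict diagram check.
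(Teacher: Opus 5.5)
Your proposal is correct and follows essentially the same route as the paper. The paper's proof is a diagram chase combining \cref{lemma:pull-copush_model,lemma:pull-push_model} (for the two pullback squares of \cref{lemma:DS_pullback}) with the identifications of \cref{lemma:model_DS} for $f$ and for $f \times \id$, after noting that $f \times \id$, being a pullback of $f$, again has compact simply connected fibers. One small precision: the square to feed into \cref{lemma:pull-copush_model} has corner $T$ (which models $X$), namely $T \tensor_R T \leftarrow T \to T \tensor_\k S$; its pushout agrees with the one you wrote.
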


\begin{proof}
  First note that, by \cref{obs:pullback_model_eq}, the left-hand square indeed models the right-hand square since the fibers of $f$ and $g$ are connected (the latter we need to see that $T \tensor_\k S$ and $R \tensor_\k S$ indeed model $X \times_B Z$ and $Y \times_B Z$, respectively).
  Furthermore note that, since $f \times \id$ is a pullback of $f$, it also has compact simply connected fibers.
  Then the claim follows from a diagram chase using \cref{lemma:pull-copush_model,lemma:pull-push_model}.
\end{proof}

\subsection{Modeling fiberwise THH}

In this subsection, we provide (equivariant) rational models for fiberwise THH and various related maps.
We begin with the classical observation that the Hochschild complex of a cdga $R$ is the left derived functor of scalar extension along the multiplication map of $R$.
(Recall the definition of the two-sided bar construction and the Hochschild complex from \cref{def:bar,def:HH}.)

\begin{notation}
  Let $\k \to R$ be a map of cdgas.
  We write $\mu_R \colon R \tensor_\k R \to R$ for the multiplication map, and $R^e \defeq \mu_R^*(R)$ for $R$ considered as an $(R \tensor_\k R)$-module (though we sometimes omit the superscript if there is no risk of confusion).
\end{notation}

\begin{observation} \label{obs:HH_derived}
  Let $G$ be a group, $\iota \colon \k \to R$ a map of $G$-equivariant cdgas, and $M$ a $G$-equivariant $(R \tensor_\k R)$-module.
  Then the two-sided bar construction $\B_\k(R, R, R)$ is a $G$-equivariant $(R \tensor_\k R)$-module, and hence the Hochschild complex $\HH_\k(R, M) \defeq \B_\k(R, R, R) \tensor_{R \tensor_\k R} M$ is a $G$-equivariant $\k$-module.
  The $(R \tensor_\k R)$-module map $\epsilon \colon \B_\k(R, R, R) \to R^e$ is $G$-equivariant and induces a natural transformation $\HH_\k(R, \blank) \to (\mu_R)_!$.
  In particular we obtain a natural $G$-equivariant map $\epsilon \colon \HH_\k(R, \mu_R^*(M)) \to M$.
  
  When $R$ is cofibrant as a $\k$-module, then $\B_\k(R, R, R)$ is cofibrant as an $(R \tensor_\k R)$-module and $\epsilon \colon \B_\k(R, R, R) \to R$ is a quasi-isomorphism by \cref{lemma:bar_complex}.
  In particular, using \cref{lemma:cofibrant_flat}, we obtain the following two natural $G$-equivariant quasi-isomorphisms of $\k$-modules
  \[ \HH_\k(R, M)  \xlongfrom{\eq}  \B_\k(R, R, R) \tensor_{R \tensor_\k R} q(M)  \xlongto{\eq}  R^e \tensor_{R \tensor_\k R} q(M)  =  \Lder (\mu_R)_! (M) \]
  which identify $\HH_\k(R, \blank)$ with $\iota^* \Lder (\mu_R)_!$.
  Under this equivalence, the counit of the adjunction $\Lder (\mu_R)_! \dashv \Rder \mu_R^*$ is identified with the map $\epsilon \colon \HH_\k(R, \mu_R^*(M)) \to M$ (using \cref{prop:Shulman}).
\end{observation}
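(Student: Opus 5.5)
The plan is to verify the three parts of the observation in order: the equivariance assertions, the two quasi-isomorphisms, and the identification of the counit.

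\emph{Equivariance.} First check that $\B_\k(R, R, R)$ is a $G$-equivariant $(R \tensor_\k R)$-module. Let $G$ act diagonally on each $R \tensor_\k R^{\tensor_\k n} \tensor_\k R$. Since $G$ acts on $R$ by $\k$-semilinear cdga automorphisms (compatibly with its action on $\k$), this action is well defined on the relative tensor products. The differentials $d_n$ are alternating sums of multiplications, so they commute with the action, and so does the outer $(R \tensor_\k R)$-module structure. The total complex is then a $G$-equivariant module. The augmentation $\epsilon = \mu_R$ is equivariant because $\mu_R$ is, and $\mu_R^*$, $(\mu_R)_!$ and relative tensor products of equivariant modules are again equivariant (using diagonal actions). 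Hence $\HH_\k(R, M)$ is a $G$-equivariant $\k$-module. The transformation $\HH_\k(R, \blank) = \B_\k(R,R,R) \tensor_{R \tensor_\k R} \blank \to R^e \tensor_{R \tensor_\k R} \blank = (\mu_R)_!$ is $\epsilon \tensor \id$. Evaluating it at $\mu_R^*(M)$ and composing with the canonical isomorphism $R \tensor_{R \tensor_\k R} \mu_R^*(M) \iso M$ gives the equivariant map $\epsilon$.

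\emph{The quasi-isomorphisms.} When $R$ is $\k$-cofibrant, \cref{lemma:bar_complex} shows that $\epsilon$ is a quasi-isomorphism and that $\B_\k(R,R,R)$ is cofibrant as an $(R \tensor_\k R)$-module; equivariance is irrelevant here, since weak equivalences and cofibrations are detected on underlying modules. For the left map $\B \tensor q(M) \to \B \tensor M$, apply \cref{lemma:cofibrant_flat} with the cofibrant module $\B_\k(R,R,R)$ fixed and $q(M) \to M$ a quasi-isomorphism. For the right map $\epsilon \tensor \id_{q(M)}$, apply \cref{lemma:cofibrant_flat} again, now with $q(M)$ cofibrant and $\epsilon$ a quasi-isomorphism; this uses the symmetric form of the lemma, in which the cofibrant factor is on the other side, which holds since $R \tensor_\k R$ is commutative. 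Both maps are natural and equivariant. This identifies $\HH_\k(R,\blank)$ with $\iota^* \Lder(\mu_R)_!$ as functors to $G$-equivariant $\k$-modules, the restriction $\iota^*$ being implicit in regarding the result as a $\k$-module.

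\emph{The counit.} By \cref{prop:Shulman}, the derived counit of $\Lder(\mu_R)_! \dashv \Rder \mu_R^*$ is represented by the zig-zag $(\mu_R)_! q \mu_R^* r(M) \to (\mu_R)_! \mu_R^* r(M) \xrightarrow{\epsilon} r(M) \xleftarrow{\eq} M$. Since $\mu_R^*$ preserves all weak equivalences, we may drop $r$ up to a natural equivalence. Then compare this with the map $\epsilon \colon \HH_\k(R, \mu_R^*M) \to M$ via the zig-zag of the previous step. The relevant square commutes on the nose: both paths send $b \tensor x \in \B \tensor q(\mu_R^* M)$ to $\epsilon(b) \cdot \bar{x}$, where $\bar{x}$ is the image of $x$ in $M$. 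The main obstacle is only this bookkeeping of the zig-zags, namely checking that the replacements $q$ and $r$ are compatible with the identification; no new homotopical input is needed.
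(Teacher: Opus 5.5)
Your proposal is correct and follows the same route as the paper, which justifies the observation inline by citing \cref{lemma:bar_complex}, \cref{lemma:cofibrant_flat} and \cref{prop:Shulman}. You additionally spell out the equivariance check, the use of commutativity to apply \cref{lemma:cofibrant_flat} with $q(M)$ as the cofibrant factor for the map induced by $\epsilon$, and the strict commutativity of the square comparing the derived counit with $\epsilon$.
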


\begin{proposition} \label{prop:THH_model}
  Let $G$ be a group and $\iota \colon \k \to R$ a cofibration of $G$-equivariant homologically connected cofibrant cdgas of finite homotopical type that models a map $p \colon X \to B$ of fiberwise nilpotent animas over $\B G$.
  Assume that the fiber of $p$ is simply connected.
  Then the $B$-spectrum $\THH_B(X)$ is nilpotent and modeled by the $G$-equivariant $\k$-module $\HH_\k(R)$.
\end{proposition}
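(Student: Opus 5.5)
We model, functor by functor, the composite of \cref{lemma:transfer_diag}, which gives
\[ \THH_B(X) \eq X_! \Delta^* \Delta_! X^*(\SS_B), \]
where $X$ here denotes the structure map $p$ and $\Delta \colon X \to X \times_B X$ is the diagonal. The Braunack-Mayer equivalence \cref{prop:BMeq} is contravariant, so each of these functors should correspond to a functor on modules.

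First we model the spaces and maps. By \cref{obs:pullback_model_eq}, the pullback $X \times_B X$ is modeled by $R \tensor_\k R$, which is a homotopy pushout because $\iota$ is a cofibration; here we use that the fibers of $p$ are connected. The diagonal $\Delta$ is then modeled by $\mu_R \colon R \tensor_\k R \to R$. The fiber of $\Delta$ is the based loop space of the fiber of $p$, which is connected because that fiber is simply connected. This connectedness is what \cref{lemma:model_f_!} requires. With these models in place:
\begin{itemize}
  \item $X^*$ is modeled by $\Lder\iota_!$ (\cref{prop:BMeq}), so $\SS_B \mapsto \k \mapsto R$.
  \item $\Delta_!$ is modeled by $\Rder\mu_R^* = \mu_R^*$ (\cref{lemma:model_f_!}), giving $R^e$.
  \item $\Delta^*$ is modeled by $\Lder(\mu_R)_!$, giving $\Lder(\mu_R)_!(R^e)$.
  \item $X_!$ is modeled by $\iota^*$.
\end{itemize}
Composing, \cref{obs:HH_derived} identifies $\iota^*\Lder(\mu_R)_!(R^e)$ with $\HH_\k(R)$. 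That observation needs $R$ to be cofibrant as a $\k$-module, which holds because $\iota$ is a cofibration of cdgas. So $\HH_\k(R)$ is our candidate model for $\THH_B(X)$.

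The main obstacle is that every step must stay inside the subcategories where \cref{prop:BMeq} is an equivalence, namely the nilpotent, finite type, bounded below spectra and the modules of finite homotopical type. Otherwise "models" has no meaning, and we cannot show that $\THH_B(X)$ is nilpotent. I would check the steps one at a time.
\begin{itemize}
  \item $\SS_B$ lies in $\Sp[B]^G_{\nil,\ft,\bbl}$, and $X^*$ preserves this class.
  \item $\Delta_!$ and $X_!$ preserve these classes by \cref{lemma:model_f_!}, which uses the connected fibers of $\Delta$ and $p$ and invokes \cite[Proposition~5.8]{Bra}.
  \item $\Delta^*$ preserves nilpotence and finite type, since pullback sends the modules of finite homotopical type for $R \tensor_\k R$ to those for $R$ (Braunack-Mayer's fht condition is stable under scalar extension).
\end{itemize}

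Rationality needs separate care. \cref{lemma:model_f_!} is stated compatibly with $\rat{(\blank)}$ via the projection formula, and pullback commutes with rationalization. Hence the rationalized composite is the composite of the rationalized functors, and we get $\Psi_\k(\HH_\k(R)) \eq \rat{\THH_B(X)}$.

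As an alternative route to nilpotence, we could use \cref{lemma:THH_loops}, which gives $\THH_B(X) \eq \SS_B[\L_B X]$. The fiber of $\L_B X \to B$ is the free loop space of a simply connected nilpotent space, so it is connected and nilpotent. I would rely on the functor-by-functor closure argument above and keep this as a fallback if a closure step fails.
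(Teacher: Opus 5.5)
Your proposal is correct and follows the paper's proof: it models $p_! \Delta^* \Delta_! p^*(\SS_B)$ by $\iota^* \Lder(\mu_R)_! \mu_R^* \Lder\iota_!(\k) \eq \iota^* \Lder(\mu_R)_!(R^e)$ using \cref{prop:BMeq,lemma:model_f_!}, with connectedness of the fiber of $\Delta$ as the key input, and then identifies this with $\HH_\k(R)$ via \cref{obs:HH_derived}. Nilpotence already follows from the subcategory restrictions built into those two results, so your free-loop-space fallback is not needed.
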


\begin{proof}
  By \cref{lemma:transfer_diag}, we have $\THH_B(X) \eq p_! \Delta^* \Delta_! p^*(\SS_B)$ where $\Delta \colon X \to X \times_B X$ is the diagonal.
  Since $\iota$ is a cofibration, this diagonal is modeled by the multiplication map $\mu_R \colon R \tensor_\k R \to R$.
  Moreover the fiber of $\Delta$ is the (based) loop space of the fiber of $p$ and hence connected.
  Thus, by \cref{prop:BMeq,lemma:model_f_!}, the $B$-spectrum $\THH_B(X)$ is nilpotent and modeled by $\Rder \iota^* \Lder (\mu_R)_! \Rder \mu_R^* \Lder \iota_! (\k) \eq \iota^* \Lder (\mu_R)_! (R^e)$, which is equivalent to $\HH_\k(R, R)$ by \cref{obs:HH_derived}.
\end{proof}

\begin{lemma} \label{lemma:assembly_model}
  In the situation of \cref{prop:THH_model}, the fiberwise assembly map $\SS_B[X] \to \THH_B(X)$ of \cref{def:THH_assembly} is modeled by the map $\epsilon \colon \HH_\k(R) \to R$ of $G$-equivariant $\k$-modules.
\end{lemma}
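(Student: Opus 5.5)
We use the explicit description of \cref{lemma:fiberwise_THH_assembly}: the fiberwise assembly map is the composite
\[ \SS_B[X] = p_! p^*(\SS_B) \xlongto{\eta} p_! \Delta^* \Delta_! p^*(\SS_B) \eq \THH_B(X) \]
induced by the unit $\eta$ of $\Delta_! \dashv \Delta^*$. As in the proof of \cref{prop:THH_model}, the diagonal $\Delta$ is modeled by $\mu_R \colon R \tensor_\k R \to R$ (because $\iota$ is a cofibration), and its fiber is connected. By \cref{lemma:model_f_!}, $\Delta_!$ is therefore modeled by $\Rder \mu_R^* = \mu_R^*$ and $\Delta^*$ by $\Lder (\mu_R)_!$. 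The same lemma models $p_!$ by $\iota^*$, and \cref{prop:BMeq} models $p^*$ by $\Lder \iota_!$. Since $\Psi$ is contravariant, the unit $\eta$ of $\Delta_! \dashv \Delta^*$ is modeled by the counit of the derived adjunction $\Lder(\mu_R)_! \dashv \Rder \mu_R^*$.

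The first step is to evaluate everything at $\k$, i.e.\ on the model of $\SS_B$. We get $\Lder \iota_!(\k) \eq R$ and $\mu_R^*(R) = R^e$. The source $p_! p^*(\SS_B) = \SS_B[X]$ is then modeled by $\iota^*(R) = R$, compatibly with \cref{lemma:susp_map}. The target is modeled by $\iota^* \Lder(\mu_R)_!(R^e)$.

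The second step is to identify the counit. By \cref{obs:HH_derived}, $\iota^* \Lder(\mu_R)_!$ is identified with $\HH_\k(R, \blank)$ through the zig-zag of quasi-isomorphisms there. Under this identification, the counit of $\Lder(\mu_R)_! \dashv \Rder \mu_R^*$ evaluated at $R$ becomes $\epsilon \colon \HH_\k(R, \mu_R^*(R)) \to R$. That observation derives this from \cref{prop:Shulman}, using that $\mu_R^*$ preserves weak equivalences, so no fibrant replacement is needed. This yields exactly $\epsilon \colon \HH_\k(R) \to R$. By \cref{prop:THH_model}, the identification $\HH_\k(R) \eq \iota^* \Lder(\mu_R)_!(R^e)$ is the one modeling $\THH_B(X)$.

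The main obstacle is coherence: the models for the individual functors must be compatible with the composite equivalences used in \cref{lemma:fiberwise_THH_assembly} and \cref{prop:THH_model}. In particular, the model for the unit $\eta$ must be whiskered correctly by $p_!$ and by $\Delta_! p^*$. For this we apply \cref{lemma:model_composition} to identify the composites, and \cref{lemma:mates} to move the (co)unit through the equivalences of \cref{prop:BMeq} relating $\Psi_S \after \Lder \mu_!$ and $\Delta^* \after \Psi$. A short diagram chase should then show that the induced map agrees with $\iota^*$ applied to $\epsilon$.
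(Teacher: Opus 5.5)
Your proposal is correct and takes essentially the same route as the paper. The paper's proof is the one-line combination of \cref{lemma:fiberwise_THH_assembly}, which expresses the assembly map via the unit of $\Delta_! \dashv \Delta^*$, with \cref{obs:HH_derived}, which identifies the counit of $\Lder(\mu_R)_! \dashv \Rder \mu_R^*$ with $\epsilon$ under $\iota^* \Lder(\mu_R)_! \eq \HH_\k(R, \blank)$. The bookkeeping you spell out is left implicit there: contravariance of $\Psi$ turning the unit into a counit, and whiskering by $p_!$ and $\Lder \iota_!$.
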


\begin{proof}
  This follows from \cref{lemma:fiberwise_THH_assembly,obs:HH_derived}.
\end{proof}

\begin{lemma} \label{lemma:eval_model}
  In the situation of \cref{prop:THH_model}, the following composite of the equivalence of \cref{lemma:THH_loops} and evaluation at the basepoint of $\Sphere 1$
  \begin{equation} \label{eq:THH_eval}
    \THH_B(X) \eq \SS_B[\L_B X]  \xlongto{e}  \SS_B[X]
  \end{equation}
  is modeled by the inclusion $R \to \HH_\k(R)$.
\end{lemma}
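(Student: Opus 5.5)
We will express the map \eqref{eq:THH_eval} entirely in terms of the Wirthmüller context. By \cref{lemma:THH_loops}, the equivalence $\THH_B(X) \eq \SS_B[\L_B X]$ is the composite
\[ X_! \Delta^* \Delta_! X^*(\SS_B) \eq X_! e_! e^* X^*(\SS_B) \]
of pull--push equivalences. By \cref{lemma:susp_map}, the map $e_* \colon \SS_B[\L_B X] \to \SS_B[X]$ is given by the counit $\epsilon \colon e_! e^* \to \id$ of $e_! \dashv e^*$. So the composite \eqref{eq:THH_eval} is
\[ X_! \Delta^* \Delta_! X^* (\SS_B) \eq X_! e_! e^* X^* (\SS_B) \xlongto{\epsilon} X_! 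X^*(\SS_B). \]
Writing the pull--push equivalence $\Delta^* \Delta_! \eq e_! e^*$ as a mate of $e^* \Delta^* \eq e^* \Delta^*$, \cref{lemma:mates} lets us rewrite this composite as $X_!$ applied to a map $\Delta^* \Delta_! \to \id$ on $\Sp[X]$. Concretely, it is the composite of $\Delta^* \Delta_! \to \Delta^* \Delta_! \Delta^* \Delta_!$ (via a unit) and the counit, i.e.\ essentially a map $\Delta^*\Delta_! \to \id$ obtained from the diagonal; the upshot should be a map dual to the assembly map of \cref{lemma:fiberwise_THH_assembly}.

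We then model this algebraically. The pullback square defining $\L_B X$ is modeled by the pushout square $R \tensor_{R \tensor_\k R} R$ via \cref{obs:pullback_model_eq}; we use the bar resolution, so that $\L_B X$ is modeled by $\HH_\k(R)$ with its cdga structure, and $e$ is modeled by the inclusion $R \to \HH_\k(R)$ as degree-zero Hochschild chains. The fiber of $\Delta$ is connected, so \cref{lemma:pull-push_model} applies. It models the pull--push equivalence $\Delta^*\Delta_! \eq e_! e^*$ by the zig-zag relating $\Lder(\mu_R)_! \mu_R^*$ to $\Lder e_! \Rder e^*$ with $e\colon R \to \HH_\k(R)$, which on $R$ is exactly the quasi-isomorphism $\B_\k(R,R,R)\tensor_{R\tensor_\k R} R \eq \HH_\k(R)$ of \cref{obs:HH_derived}. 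Note that \cref{lemma:pull-push_model} is contravariant: $e_!$ corresponds to $\Rder e^*$, the restriction along the cdga map $R\to\HH_\k(R)$ modeling $e$.

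Next we model the counit $\epsilon$ of $e_! \dashv e^*$, i.e.\ of $\Rder e^* \dashv \Lder e_!$ on the opposite side, using \cref{prop:Shulman}. Dually, this counit is the unit $M \to e^* (M \tensor_R \HH_\k(R))$, which at $M = R$ is the inclusion $R \to \HH_\k(R)$ as an $R$-module map. After applying $\Rder\iota^*$, modeling $X_!$ by \cref{lemma:model_f_!}, we obtain the inclusion $R \to \HH_\k(R)$ of $G$-equivariant $\k$-modules.

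The main obstacle will be checking coherence. The equivalence of \cref{lemma:THH_loops} is built from \cref{lemma:transfer_diag}, and \cref{obs:HH_derived} gives one identification of the model of $\THH_B(X)$ with $\HH_\k(R)$. We must verify that the second identification, through the pushout model of $\L_B X$, is compatible with it. We would first check this on underlying chain complexes, using that both are induced by the augmentation $\B_\k(R,R,R)\to R$. The remaining diagram chase is routine, in the style of \cref{lemma:model_f_*_DS}.
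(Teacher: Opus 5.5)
Your proof takes a different route from the paper's, and as written its central step does not go through.

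The paper never models $\L_B X$. Using that mates compose, together with the commutative diagram with rows $\L_B X \to X \to B$ and $X \to X \times_B X \to B$ (vertical maps $e$, $\Delta$, $\id$), it rewrites \eqref{eq:THH_eval} as $p_! \Delta^* \Delta_! \Delta^*(\SS_{X \times_B X}) \to p_! \Delta^*(\SS_{X \times_B X})$. This is $p_! \Delta^*$ applied to the counit of $\Delta_! \dashv \Delta^*$ at $\SS_{X \times_B X}$, using $\SS_X \simeq \Delta^*(\SS_{X \times_B X})$. By \cref{prop:Shulman} this counit is modeled by the unit $R \otimes_\k R \to \mu_R^*(\mu_R)_!(R \otimes_\k R) \cong R^e$, which is just $\mu_R$. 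Applying $\iota^* \Lder(\mu_R)_!$ gives a zig-zag through the quasi-isomorphism $\HH_\k(R, R \otimes_\k R) \to \iota^*(R)$. Its section $r \mapsto r \otimes 1$ identifies the result with the inclusion. Only the model $\mu_R$ of $\Delta$ already used in \cref{prop:THH_model} enters, so no comparison of identifications is needed.

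Your second paragraph gestures at this reduction but misstates it. A unit followed by a counit gives a map $\Delta^* \Delta_! \to \Delta^* \Delta_!$, which is in fact the identity by a triangle identity. There is no natural map $\Delta^* \Delta_! \to \id$ in play. The relevant map is $\Delta^* \epsilon$, which only exists on objects of the form $\Delta^*(-)$.

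Your actual argument instead models $\L_B X$ by $\HH_\k(R)$ with the shuffle product, and there the cited results do not apply.
\begin{itemize}
\item The unnormalized complex $\HH_\k(R)$ is not non-negatively graded: $1 \otimes \shift 1$ sits in degree $-1$. So it is not an object of $\CDGAeq{G}$ and is not a model in the paper's sense.
\item \cref{lemma:pull-push_model,obs:pullback_model_eq} need honest pushouts of cofibrant cdgas, whereas $R \otimes_\k R \to \B_\k(R,R,R)$ is not shown to be a cofibration.
\end{itemize}
To repair this, you would factor $\mu_R$ as a cofibration $R \otimes_\k R \to Q$ followed by a quasi-isomorphism $Q \to R$, and model $\L_B X$ by $R \otimes_{R \otimes_\k R} Q$. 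You would then compare this with $\HH_\k(R)$ through a lift $Q \to \B_\k(R,R,R)$ along the augmentation. You would also need to relate the two legs $R \to R \otimes_{R \otimes_\k R} Q \leftarrow Q$ of the square using a section of $Q \to R$. The second $\Delta^*$ is then modeled by extension along $R \otimes_\k R \to Q$ rather than along $\mu_R$ as in \cref{prop:THH_model}. Matching your model of $\THH_B(X)$ with the one of \cref{prop:THH_model} therefore requires the naturality of $\Psi$ from \cref{prop:BMeq} along $Q \to R$.

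Your guess that all these comparisons are induced by the augmentation is correct, so the route can likely be completed. But this ``coherence'', which you defer as routine, is the entire content of the lemma in your approach. It is exactly what the paper's mate argument avoids.
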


\begin{proof}
  Recall from \cref{lemma:THH_loops} that the identification $\THH_B(X) \eq \SS_B[\L_B X]$ is given by the composite
  \[ \THH_B(X)  \eq  p_! \Delta^* \Delta_! (\SS_X)  \eq  p_! e_! e^* (\SS_X)  \eq  \SS_B[\L_B X] \]
  and note that under the final identification the map $e \colon \SS_B[\L_B X] \to \SS_B[X] = p_! (\SS_X)$ corresponds to the counit of the adjunction $e_! \dashv e^*$ by \cref{lemma:susp_map}.
  Using that mates compose and the commutative diagram
  \[
  \begin{tikzcd}
    \L_B X \rar{e} \dar[swap]{e} & X \dar{\Delta} \rar{p} & B \dar{\id} \\
    X \rar{\Delta} & X \times_B X \rar & B
  \end{tikzcd}
  \]
  we see that the map \eqref{eq:THH_eval} is equivalent to the map
  \begin{equation} \label{eq:eval_model}
    \THH_B(X)  \eq  p_! \Delta^* \Delta_! (\SS_X)  \eq  p_! \Delta^* \Delta_! \Delta^* (\SS_{X \times_B X})  \xlongto{\epsilon}  p_! \Delta^* (\SS_{X \times_B X})  \eq  p_! (\SS_X)
  \end{equation}
  given by the counit of the adjunction $\Delta_! \dashv \Delta^*$.
  
  By \cref{prop:BMeq}, the equivalence $\Delta^* (\SS_{X \times_B X}) \eq \SS_X$ is modeled by the canonical equivalence $\Lder (\mu_R)_! (R \tensor_\k R) \eq (\mu_R)_! (R \tensor_\k R) \iso R$.
  By \cref{prop:Shulman}, the counit $\Delta_! \Delta^* (\SS_{X \times_B X}) \to \SS_{X \times_B X}$ is modeled by the unit $\eta \colon R \tensor_\k R \to \mu_R^* (\mu_R)_! (R \tensor_\k R) \iso R^e$, which is given by $\mu_R$.
  Hence the map \eqref{eq:eval_model} is modeled by the upper composite in the commutative diagram
  \[
  \begin{tikzcd}
    \iota^* \Lder (\mu_R)_! (\mu_R)^* (R) & \lar[swap]{\iso} \iota^* \Lder (\mu_R)_! (\mu_R)^* (\mu_R)_! (R \tensor_\k R) & \lar[swap]{\eta} \iota^* \Lder (\mu_R)_! (R \tensor_\k R) \rar{\eq} & \iota^*(R) \\
    \HH_\k(R, R^e) \uar{\eq} & & \ar{ll} \HH_\k(R, R \tensor_\k R) \uar[swap]{\eq} \urar[bend right, start anchor = east]
  \end{tikzcd}
  \]
  in $\Underlying(\Modeq{G}{\k})$.
  The right-hand diagonal map is given by projection onto Hochschild degree $0$ and then multiplying.
  Noting that it has a $G$-equivariant section given by $r \mapsto r \tensor 1$ completes the proof.
\end{proof}

\subsection{Modeling the fiberwise THH transfer}

We are finally ready to prove the main result of the paper: that the fiberwise THH transfer is modeled by the Hochschild homology transfer.
We begin by recalling the Hochschild homology transfer; see e.g.\ Keller \cite[Theorem~5.1]{Kel21}.

\begin{construction} \label{con:HH_transfer}
  Let $G$ be a group, let $\k \to R$ and $\k \to S$ be maps of $G$-equivariant cdgas such that $R$ and $S$ are cofibrant as $\k$-modules, and let $M$ be an $(S \tensor_\k R)$-module that is cofibrant and homotopically finite as an $R$-module.
  Then the \emph{Hochschild homology transfer} $\transfer{M} \colon \HH_\k(S) \to \HH_\k(R)$ is the map of $\Underlying(\Modeq{G}{\k})$ defined to be zig-zag
  \[
  \begin{tikzcd}[row sep = 15]
    \HH_\k(S, S) \rar{\nu_*} & \HH_\k \bigl( S, \Hom_R(M, M) \bigr) & \lar[swap]{\eq} \HH_\k \bigl( S, M \tensor_R \B_\k(R, R, R) \tensor_R \dual{M}_R \bigr) \dar{\iso} \\
    \HH_\k(R, R) & \lar[swap]{\ev_*} \HH_\k \bigl( R, \dual{M}_R \tensor_S M \bigr) & \lar \HH_\k \bigl( R, \dual{M}_R \tensor_S \B_\k(S, S, S) \tensor_S M \bigr)
  \end{tikzcd}
  \]
  where $\nu$ is the map $S \to \Hom_R(M, M)$ of $(S \tensor_\k S)$-modules determined by $\nu(1) = \id$, the vertical isomorphism is given by a cyclic permutation, and the quasi-isomorphism is induced by the composite
  \[ M \tensor_R \B_\k(R, R, R) \tensor_R \dual{M}_R  \xlongto{\eq}  M \tensor_R \dual{M}_R  \xlongto{\eq}  \Hom_R(M, M) \]
  where the first map is a quasi-isomorphism by \cref{lemma:bar_complex} and the second by \cref{lemma:Hom_qiso} below.
  In the case that $\k \to S$ factors through a map $\phi \colon R \to S$, and $M$ is equal to $S$, we also write $\phi^*$ for $\transfer{S}$.
\end{construction}

\begin{lemma} \label{lemma:Hom_qiso}
  Let $G$ be a group, $R$ a $G$-equivariant cdga, and $M$ and $N$ two $G$-equivariant cofibrant $R$-modules such that $M$ is homotopically finite.
  Then the map
  \begin{align*}
    N \tensor_R \dual{M}_R  &\longto  \Hom_R(M, N) \\
    n \tensor \phi  &\longmapsto  n \cdot \phi(\blank)
  \end{align*}
  is a quasi-isomorphism of $G$-equivariant $R$-modules.
\end{lemma}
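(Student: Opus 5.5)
The map in the statement is the composite of the $R$-linear map $N \tensor_R \dual{M}_R \to N \tensor_R \Hom_R(M, R)$ with the evaluation $n \tensor \phi \mapsto n \cdot \phi(\blank)$. I would first note that it is well defined and $G$-equivariant. Equivariance holds because $G$ acts on $\dual{M}_R = \Hom_R(M, R)$ and on $\Hom_R(M, N)$ by conjugation, and on the tensor product diagonally. The formula is natural in both $M$ and $N$, so equivariance is a direct check. Since the weak equivalences in $\Modeq{G}{R}$ are detected on underlying $R$-modules, it then suffices to prove that the map is a quasi-isomorphism after forgetting the $G$-action. So assume $G = 1$.

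The strategy is the same reduction to the minimal model used in the proofs of \cref{lemma:scalar_ext_right_adjoint,lemma:Hom_hf}. Let $\widetilde M = R \tensor V \to M$ be the minimal model of $M$, where $V$ is finite-dimensional because $M$ is homotopically finite. The map $\widetilde M \to M$ is a quasi-isomorphism between cofibrant $R$-modules, using that semifree modules are cofibrant. Naturality in $M$ gives a commutative square
\[
\begin{tikzcd}
  N \tensor_R \dual{M}_R \rar \dar & \Hom_R(M, N) \dar \\
  N \tensor_R \dual{\widetilde M}_R \rar{\iso} & \Hom_R(\widetilde M, N)
\end{tikzcd}
\]
with vertical maps given by restriction along $\widetilde M \to M$.

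The bottom map is an isomorphism. Indeed, $\widetilde M$ is quasi-free on the finite-dimensional $V$, so both sides are identified with $N \tensor \dual V$ as graded modules, and the map is compatible with the differentials since it is a map of complexes. The right vertical map is a quasi-isomorphism because $\Hom_R(\blank, N)$ is right Quillen on $\opcat{\Mod{R}}$, hence preserves weak equivalences between cofibrant objects. For $N$ this holds even if $N$ is not fibrant, since every object is fibrant in the projective structure.

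The left vertical map $N \tensor_R \dual{M}_R \to N \tensor_R \dual{\widetilde M}_R$ is the main point. By the same right Quillen argument, $\dual{M}_R \to \dual{\widetilde M}_R$ is a quasi-isomorphism. Since $N$ is cofibrant, \cref{lemma:cofibrant_flat} (with the roles of the two tensor factors swapped, using commutativity of $R$) shows that tensoring with $N$ preserves this quasi-isomorphism. This step is where the cofibrancy of $N$ is really used: $\dual{M}_R$ itself need not be cofibrant, so it cannot be the factor we rely on. With the left vertical map a quasi-isomorphism, two-out-of-three in the square gives that the top map is a quasi-isomorphism.

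Besides this flatness step, the other point needing care is the sign convention in the isomorphism $N \tensor_R \dual{\widetilde M}_R \iso \Hom_R(\widetilde M, N)$ for finite quasi-free $\widetilde M$. This is routine and parallels \cref{lemma:scalar_ext_right_adjoint}.
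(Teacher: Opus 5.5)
Your proof is correct and follows the paper's argument essentially verbatim. Like the paper, you compare with the minimal model $R \otimes V$ via the same commutative square, obtain the left vertical quasi-isomorphism from the cofibrancy of $N$ and \cref{lemma:cofibrant_flat}, and use the finite-dimensionality of $V$ for the bottom isomorphism. The differences are cosmetic: you spell out, via the right Quillen property of $\Hom_R(\blank, N)$, why the right vertical map is a quasi-isomorphism, which the paper leaves implicit; and the tensor-factor swap you mention is unnecessary, since $N$ already sits on the left in your formulation.
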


\begin{proof}
  It follows from the definitions that the map is $G$-equivariant.
  To prove that it is a quasi-isomorphism, let $M' \defeq R \tensor V$ be the minimal model of $M$.
  We obtain a commutative diagram
  \[
  \begin{tikzcd}
    \Hom_R(M, R) \tensor_R N \rar \dar[swap]{\eq} & \Hom_R(M, N) \dar{\eq} \\
    \Hom_R(M', R) \tensor_R N \rar{\iso} & \Hom_R(M', N)
  \end{tikzcd}
  \]
  where the left-hand vertical map is a quasi-isomorphism by \cref{lemma:cofibrant_flat} since $N$ is cofibrant.
  The bottom horizontal composite is an isomorphism since $M'$ is a finite-dimensional quasi-free $R$-module.
  This implies the claim.
\end{proof}

Note that by (the same argument as in) \cref{rem:eq_models}, any two maps of connected spaces $X \to Y \to B$ with simply connected fibers admit $\pi_1(B)$-equivariant cdga models, as required by the following theorem (as long as their universal coverings are of finite rational type).

\begin{theorem} \label{thm:transfer_model_cdga}
  Let $G$ be a group, and let the following two left-hand maps be cofibrations of $G$-equivariant homologically connected cofibrant cdgas of finite homotopical type
  \[ S \xlongfrom{\phi} R \xlongfrom{\iota_R} \k  \qquad  X \xlongto{f} Y \xlongto{p_Y} B \]
  that model the two right-hand maps of fiberwise nilpotent animas over $\B G$.
  Assume that the fiber of $p_Y$ is simply connected and that the fiber of $f$ is simply connected and compact.
  Then the fiberwise THH transfer $f^*  \colon  \THH_B(Y)  \to  \THH_B(X)$ is modeled by the Hochschild homology transfer $\transfer{S} \colon \HH_\k(S) \to \HH_\k(R)$ as a map of $\Underlying(\Modeq{G}{\k})$.
\end{theorem}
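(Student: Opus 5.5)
The plan is to model, step by step, the pasting diagram \eqref{eq:transfer_diag} of \cref{lemma:transfer_diag}, evaluated at $\SS_B$, and compare the result with the zig-zag in \cref{con:HH_transfer}. First I fix the algebraic models of all spaces appearing there. Since $\iota_R$ and $\phi$ are cofibrations, $S$ and $R$ are cofibrant over $\k$. By \cref{obs:pullback_model_eq}, the fiber products $X \times_B X$, $Y \times_B X$, $X \times_B Y$, $Y \times_B Y$ are modeled by $S \tensor_\k S$, $R \tensor_\k S$, $S \tensor_\k R$, $R \tensor_\k R$. The diagonals are modeled by the multiplication maps, and $f \times \id$ and $\id \times f$ by $\phi \tensor \id$ and $\id \tensor \phi$. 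The connectivity hypotheses are satisfied: the fibers of the diagonals are loop spaces of simply connected fibers, hence connected. The fibers of $f \times \id$ are fibers of $f$, hence compact and simply connected.

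Next I translate each functor and natural transformation in \eqref{eq:transfer_diag}. Pullbacks $h^*$ become derived scalar extensions $\Lder h_!$ (\cref{prop:BMeq}), and pushforwards $h_!$ become restrictions (\cref{lemma:model_f_!}). The functor $(f \times \id)_*$ becomes $\Lder(\phi \tensor \id)^\antishriek$ (\cref{cor:model_f_*}). Tensoring with $\pr_2^*(\DS f)$ becomes tensoring with $R \tensor_\k q(\dual S_R)$ or its symmetric version (\cref{lemma:model_DS,lemma:model_DS_product}). The unit $\eta$ and counit $\epsilon$ are given by \cref{lemma:f_*_adjunction_model}. The equivalences \eqref{eq:transfer_diag_1} and \eqref{eq:transfer_diag_2} are composites of pull--push formulas, projection formulas and the identifications of \cref{lemma:model_f_*_DS,lemma:DS_pullback}, which are modeled by \cref{lemma:pull-push_model,lemma:projection_model,lemma:model_f_*_DS,lemma:model_DS_product}.

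Composing along the top-left path starting from $\k$, I expect to obtain, via \cref{obs:HH_derived} as in \cref{prop:THH_model}, the object $\HH_\k(S)$. Resolving the copies of $S$ and $R$ in the bimodule slots by bar constructions, the unit $\eta$ should become the map $\nu \colon S \to \Hom_R(S,S)$ followed by the quasi-isomorphism $S \tensor_R \B_\k(R,R,R) \tensor_R \dual S_R \to \Hom_R(S,S)$ of \cref{lemma:Hom_qiso}. The middle square, which swaps the tensor factors, becomes the cyclic permutation in \cref{con:HH_transfer}. The counit $\epsilon$ should become the evaluation map $\dual S_R \tensor_S S \to R$, landing in $\HH_\k(R)$ via the last $\Delta^*$ and $Y_!$.

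The main obstacle is bookkeeping. I must check that the specific zig-zags produced by \cref{prop:Shulman}, including cofibrant replacements $q$ and the maps of \cref{lemma:scalar_ext_right_adjoint} and \cref{obs:pr_DS}, compose to exactly the zig-zag of \cref{con:HH_transfer}, up to the quasi-isomorphisms $\B_\k(R,R,R) \to R$. My approach is to choose the cofibrant replacements concretely as bar resolutions, so that each derived functor becomes a strict Hochschild-type complex, and then to verify commutativity square by square in $\Underlying(\Modeq{G}{\k})$. The two squares \eqref{eq:transfer_diag_1} and \eqref{eq:transfer_diag_2} are the most delicate. I expect that, after the simplifications from \cref{lemma:model_f_*_DS}, they reduce to the canonical isomorphisms $\phi^*(N \tensor_S q(\dual S_R)) \iso N \tensor_S q(\dual S_R)$, which shuffle tensor factors, and this would complete the identification.
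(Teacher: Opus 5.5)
Your plan matches the paper's proof. Model every cell of the pasting \eqref{eq:transfer_diag} using \cref{lemma:f_*_adjunction_model,lemma:model_f_*_DS,lemma:model_DS_product,lemma:projection_model,lemma:pull-push_model,lemma:model_composition}, evaluate at $\k$, and identify the composite with \cref{con:HH_transfer} after replacing $R^e$ by $\B_\k(R,R,R)$ and $q(\dual S_R)$ by $\dual S_R$; this is done in diagrams \eqref{eq:model_diag_left} and \eqref{eq:HH_rotate}.

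Two labels need correcting, though. Because the model is contravariant, the counit $\epsilon$ on $\Sp[X \times_B X]$ is the cell modeled by $1 \mapsto \id$ (your $\nu$), and the unit $\eta$ on $\Sp[Y \times_B Y]$ is modeled by evaluation. Also, the cyclic rotation comes from modeling the square \eqref{eq:transfer_diag_2} together with \cref{obs:HH_derived}; the middle square is just the canonical isomorphism of iterated scalar extensions.
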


\begin{proof}
  By \cref{lemma:transfer_diag}, the transfer $f^* \colon \THH_B(Y) \to \THH_B(X)$ is given by evaluating the following pasting at $\SS_B$
  \begin{equation} \label{eq:model_transfer_diag}
  \begin{tikzcd}[sep = 17]
    \Sp[B] \rar{Y^*} \dar[swap]{X^*} &[-10] \Sp[Y] \rar{\Delta_!} & \Sp[Y \times_B Y] \dar[swap]{({\id} \times f)^*} \rar[equals] &[-10] \Sp[Y \times_B Y] \ar{dd}{(f \times \id)^*} \ar[bend left = 15]{ddrr}{\id}[swap, name=U]{} & &[-10] \\
    \Sp[X] \dar[swap]{\Delta_!} \ar[phantom]{rr}{\text{\eqref{eq:transfer_diag_1}}} & & \Sp[Y \times_B X] \dar[swap]{\tensor \pr_2^*(\DS{f})} & & & \\
    \Sp[X \times_B X] \ar{rr}{(f \times \id)_*} \ar[bend right = 15]{ddrr}[name=C]{}[swap]{\id} & & \Sp[Y \times_B X] \ar{dd}[swap]{(f \times \id)^*} \ar[to=C, Rightarrow, shorten = 1em, "\epsilon"'] & \Sp[X \times_B Y] \dar{({\id} \times f)^*} \ar{rr}[swap]{(f \times \id)_*} \ar[from=U, Rightarrow, shorten = 1em, "\eta"'] & & \Sp[Y \times_B Y] \dar{\Delta^*} \\
    & & & \Sp[X \times_B X] \dar{\tensor \pr_2^*(\DS{f})} \ar[phantom]{rr}{\text{\eqref{eq:transfer_diag_2}}} & & \Sp[Y] \dar{Y_!} \\
    & & \Sp[X \times_B X] \rar[equals] & \Sp[X \times_B X] \rar{\Delta^*} & \Sp[X] \rar{X_!} & \Sp[B]
  \end{tikzcd}
  \end{equation}
  where $\eta$ and $\epsilon$ are the respective (co)unit.
  We will now proceed to model all natural transformations occurring in this diagram, starting from the bottom left.
  
  In the following, all tensor products are over $\k$ unless otherwise specified.
  We will also write $\iota_S \defeq \phi \after \iota_R$.
  Note that the $G$-equivariant cdgas $R \tensor R$, $R \tensor S$, $S \tensor R$, and $S \tensor S$ are cofibrant, and that they respectively model $Y \times_B Y$, $Y \times_B X$, $X \times_B Y$, and $X \times_B X$ as fiberwise nilpotent animas over $\B G$ by \cref{obs:pullback_model_eq}.
  We will use throughout that since $R$ and $S$ are cofibrant $\k$-modules and $S$ is a cofibrant $R$-module, the functors $\phi_!$, $(\iota_R)_!$, and $(\iota_S)_!$ preserve weak equivalences, and hence are equivalent to their left derived functors; since $q (\dual{S}_R)$ is cofibrant, this is also true for $\phi^\antishriek$.
  The same argument applies to $(\phi \tensor \id)_!$ and similar variants.
  
  By \cref{lemma:f_*_adjunction_model}, the counit
  \[ \Delta_! X^*  \xlongfrom{\epsilon}  (f \times \id_X)^* (f \times \id_X)_* \Delta_! X^* \]
  is modeled by the zig-zag
  \begin{equation} \label{eq:model_diag_counit}
    \mu_S^* (\iota_S)_! (\blank) \xlongto{\eta}  (\phi \tensor \id_S)_\antishriek (\phi \tensor \id_S)^\antishriek \mu_S^* (\iota_S)_! (\blank)  \xlongfrom{\eq}  (\phi \tensor \id_S)_! (\phi \tensor \id_S)^\antishriek \mu_S^* (\iota_S)_! (\blank)
  \end{equation}
  where $\eta$ is the unit and the quasi-isomorphism is the one of \cref{lemma:scalar_ext_right_adjoint}.
  
  The following left-hand equivalences of \eqref{eq:transfer_diag_1} are modeled by the right-hand (quasi-)\-iso\-mor\-phisms
  \begin{equation} \label{eq:model_diag_1}
  \begin{aligned}
    &\mathrel{\phantom{\eq}} (f \times \id_X)_* \Delta_! X^* (\blank) & &\mathrel{\phantom{=}} (\phi \tensor \id_S)^\antishriek \mu_S^* (\iota_S)_! (\blank) \\
    &\eq (f \times \id_X)_! \bigl( \Delta_! X^* (\blank) \tensor \DS{f \times \id} \bigr) & &= (\phi \tensor \id_S)^* \bigl( \mu_S^* (\iota_S)_! (\blank)  \tensor_{S \tensor S}  q ( \dual{(S \tensor S)}_{R \tensor S} ) \bigr) \\
    &\eq (f \times \id_X)_! \bigl( \Delta_! X^* (\blank) \tensor \pr_1^* \DS{f} \bigr) & &\xfrom{\eq} (\phi \tensor \id_S)^* \bigl( \mu_S^* (\iota_S)_! (\blank) \tensor_{S \tensor S} ( q (\dual S_R) \tensor S) \bigr) \\
    &\eq (f \times \id_X)_! \Delta_! \bigl( X^* (\blank) \tensor \Delta^* \pr_1^*(\DS f) \bigr) & &\iso (\phi \tensor \id_S)^* \mu_S^* \bigl( (\iota_S)_! (\blank) \tensor_S (\mu_S)_! ( q (\dual S_R) \tensor S) \bigr) \\
    &\eq (f, \id_X)_! \bigl( X^* (\blank) \tensor \DS f \bigr) & &\iso \mu_{R,S}^* \bigl( (\iota_S)_! (\blank) \tensor_S q (\dual S_R) \bigr) \\
    &\eq (f, \id_X)_! \bigl( f^* Y^* (\blank) \tensor (f, \id_X)^* \pr_2^*(\DS f) \bigr) & &\iso \mu_{R,S}^* \bigl( \phi_! (\iota_R)_! (\blank) \tensor_S (\mu_{R, S})_!(R \tensor q (\dual S_R)) \bigr) \\
    &\eq (f, \id_X)_! f^* Y^* (\blank) \tensor \pr_2^*(\DS f) & &\iso \mu_{R,S}^* \phi_! (\iota_R)_! (\blank) \tensor_{R \tensor S} \bigl( R \tensor q (\dual S_R) \bigr) \\
    &\eq ({\id_Y} \times f)^* \Delta_! Y^*(\blank) \tensor \pr_2^*(\DS f) & &\iso ({\id_R} \tensor \phi)_! \mu_R^* (\iota_R)_! (\blank) \tensor_{R \tensor S} \bigl( R \tensor q (\dual S_R) \bigr)
  \end{aligned}
  \end{equation}
  where $\mu_{R,S} \colon R \tensor S \to S$ is the multiplication map, and we use, in order, \cref{lemma:model_f_*_DS}, \cref{lemma:model_DS_product}, \cref{lemma:projection_model}, \cref{lemma:model_composition} and its adjoint, \cref{lemma:model_composition} again, \cref{lemma:projection_model} again, and finally \cref{lemma:pull-push_model}.
  
  Composing \eqref{eq:model_diag_counit} with the first (quasi-)isomorphisms of \eqref{eq:model_diag_1} yields the upper and right-hand part of the commutative diagram
  \[
  \begin{tikzcd}[column sep = 17]
    \mu_S^* (\iota_S)_! (\blank) \rar{\eta} & (\phi \tensor \id_S)_\antishriek (\phi \tensor \id_S)^\antishriek \mu_S^* (\iota_S)_! (\blank) & \lar[swap]{\eq} (\phi \tensor \id_S)_! (\phi \tensor \id_S)^\antishriek \mu_S^* (\iota_S)_! (\blank) \\
    & (\phi \tensor \id_S)_\antishriek \mu_{R,S}^* \bigl( (\iota_S)_! (\blank) \tensor_S q (\dual S_R) \bigr) \uar{\eq} & \lar[swap]{\eq} (\phi \tensor \id_S)_! \mu_{R,S}^* \bigl( (\iota_S)_! (\blank) \tensor_S q (\dual S_R) \bigr) \uar[swap]{\eq}
  \end{tikzcd}
  \]
  whose left-hand and bottom part, when evaluated on $\k \in \Modeq{G}{\k}$, yields the top and right-hand sides of the following commutative diagram of $G$-equivariant $(S \tensor S)$-modules
  \[
  \begin{tikzcd}
    S^e \ar{ddd}[swap]{\alpha} \ar{rr}{\eta} &[-70] &[-130] \dlar[start anchor = -172][swap]{\eq} \Hom_{R \tensor S} \bigl( q (\dual {(S \tensor S)}_{R \tensor S}), S^e  \tensor_{S \tensor S}  q (\dual{(S \tensor S)}_{R \tensor S}) \bigr) \\
    & \Hom_{R \tensor S} \bigl( q (\dual S_R) \tensor S, S^e  \tensor_{S \tensor S}  q (\dual{(S \tensor S)}_{R \tensor S}) \bigr) & \\
    & \Hom_{R \tensor S} \bigl( q (\dual S_R) \tensor S, \mu_{R,S}^* q (\dual S_R) \bigr) \uar{\eq} & \\
    \Hom_R \bigl( q (\dual S_R), q (\dual S_R) \bigr) \ar[start anchor = north east]{ur}[swap]{\iso} & & \ar[start anchor = 172]{ul}{\eq} \Hom_{R \tensor S} \bigl( q (\dual {(S \tensor S)}_{R \tensor S}), \mu_{R,S}^* q (\dual S_R) \bigr) \ar{uuu}[swap]{\eq} \\
    S \tensor_R q(\dual S_R) \uar{\eq} \ar{rr}{\iso} & & \mu_{R,S}^* q (\dual S_R) \tensor_{R \tensor S} (S \tensor S) \uar[swap]{\eq}
  \end{tikzcd}
  \]
  where $\alpha$ is the map determined by $1 \mapsto \id$, and the two bottom vertical quasi-isomorphisms are obtained from \cref{lemma:scalar_ext_right_adjoint}.
  Hence the left-hand half of the pasting \eqref{eq:model_transfer_diag}, evaluated on $\SS_B$, is modeled by the left-hand side of the commutative diagram
  \begin{equation} \label{eq:model_diag_left}
  \begin{tikzcd}
    & \dlar[bend right = 10][swap]{\alpha} S^e \drar[bend left = 10]{\beta} & \\
    \Hom_R \bigl( q (\dual S_R), q (\dual S_R) \bigr) \rar{\eq} & \Hom_R \bigl( q (\dual S_R), \dual S_R \bigr) \iso \Hom_R \bigl( S, \dual {(q (\dual S_R))}_R \bigr) & \lar[swap]{\eq} \Hom_R(S, S) \\
    S \tensor_R q (\dual S_R) \uar{\eq} \ar{rr}{\eq} & & S \tensor_R \dual S_R \uar
  \end{tikzcd}
  \end{equation}
  and thus it is also modeled by the right-hand side, where $\beta$ is again determined by $1 \mapsto \id$.
  
  By \cref{lemma:model_composition,lemma:pullback_monoidal_model}, the following upper equivalence making the middle square of \eqref{eq:model_transfer_diag} commute
  \begin{equation} \label{eq:model_diag_middle}
  \begin{aligned}
    (f \times \id_X)^* \bigl( (\id_Y \times f)^* (\blank) \tensor \pr_2^* (\DS{f}) \bigr)  &\eq  (\id_X \times f)^* (f \times \id_Y)^* (\blank) \tensor \pr_2^* (\DS{f}) \\
    (\phi \tensor \id_S)_! \bigl( ({\id_R} \tensor \phi)_! (\blank) \tensor_{R \tensor S} (R \tensor q (\dual S_R)) \bigr)  &\iso  ({\id_S} \tensor \phi)_! (\phi \tensor \id_R)_! (\blank) \tensor_{S \tensor S} \bigl( S \tensor q (\dual S_R) \bigr)
  \end{aligned}
  \end{equation}
  is modeled by the canonical lower isomorphism of functors $\Modeq{G}{R \tensor R} \to \Modeq{G}{S \tensor S}$.
  
  On a cofibrant object of $\Modeq{G}{S \tensor R}$, the left-hand equivalences of \eqref{eq:transfer_diag_2} are modeled by the right-hand (quasi-)isomorphisms
  \begin{equation} \label{eq:model_diag_2}
  \begin{aligned}
    &\mathrel{\phantom{\eq}} X_! \Delta^* \bigl( ({\id_X} \times f)^* (\blank) \tensor \pr_2^*(\DS f) \bigr) & &\mathrel{\phantom{\iso}} \iota_S^* (\mu_S)_! \bigl( ({\id_S} \tensor \phi)_! (\blank) \tensor_{S \tensor S} (S \tensor q (\dual S_R)) \bigr) \\
    &\eq X_! \bigl( \Delta^* ({\id_X} \times f)^* (\blank) \tensor \Delta^* \pr_2^*(\DS f) \bigr) & &\iso \iota_S^* \bigl( (\mu_S)_! ({\id_S} \tensor \phi)_! (\blank) \tensor_{S} (\mu_S)_! (S \tensor q (\dual S_R)) \bigr) \\
    &\eq Y_! f_! \bigl( (\id_X, f)^* (\blank) \tensor (\id_X, f)^* \pr_1^*(\DS f) \bigr) & &\iso \iota_R^* \phi^* \bigl( (\mu_{S,R})_! (\blank) \tensor_{S} (\mu_{S,R})_! (q (\dual S_R) \tensor R) \bigr) \\
    &\eq Y_! f_! (\id_X, f)^* (\blank \tensor \pr_1^*(\DS f)) & &\iso \iota_R^* \phi^* (\mu_{S,R})_! \bigl( \blank \tensor_{S \tensor R} (q (\dual S_R) \tensor R) \bigr) \\
    &\eq Y_! \Delta^* (f \times \id_Y)_! (\blank \tensor \pr_1^*(\DS f)) & &\iso \iota_R^* (\mu_R)_! (\phi \tensor \id_R)^* \bigl( \blank \tensor_{S \tensor R} (q (\dual S_R) \tensor R) \bigr) \\
    &\eq Y_! \Delta^* (f \times \id_Y)_! (\blank \tensor \DS {f \times \id}) & &\xto{\eq} \iota_R^* (\mu_R)_! (\phi \tensor \id_R)^* \bigl( \blank \tensor_{S \tensor R} q (\dual {(S \tensor R)}_{R \tensor R}) \bigr) \\
    &\eq Y_! \Delta^* (f \times \id_Y)_* (\blank) & &= \iota_R^* (\mu_R)_! (\phi \tensor \id_R)^\antishriek (\blank)
  \end{aligned}
  \end{equation}
  where $\mu_{S,R} \colon S \tensor R \to S$ is the multiplication map, and we use, in order, \cref{lemma:pullback_monoidal_model}, \cref{lemma:model_composition} and its adjoint, \cref{lemma:pullback_monoidal_model} again, \cref{lemma:pull-push_model}, \cref{lemma:model_DS_product}, and finally \cref{lemma:model_f_*_DS}.
  
  By \cref{lemma:f_*_adjunction_model}, the unit
  \begin{align*}
    (f \times \id_Y)_* (f \times \id_Y)^*  \xlongfrom{\eta}  \id
    \shortintertext{is modeled by the map}
    (\phi \tensor \id_R)^\antishriek \Lder (\phi \tensor \id_R)_!  \xlongto{\epsilon}  q
  \end{align*}
  given by the left-hand composite in the commutative diagram
  \begin{equation} \label{eq:model_diag_a}
    \begin{tikzcd}
      q(\blank) \tensor_{R \tensor R} (S \tensor R) \tensor_{S \tensor R} q \bigl( \dual{(S \tensor R)}_{R \tensor R} \bigr) \dar & \lar[swap]{\eq} q(\blank) \tensor_{R \tensor R} (S \tensor R) \tensor_{S \tensor R} \bigl( q (\dual{S}_R) \tensor R \bigr) \dar \\
      q(\blank) \tensor_{R \tensor R} (S \tensor R) \tensor_{S \tensor R} \dual{(S \tensor R)}_{R \tensor R} \dar[swap]{\ev} & \lar[swap]{\iso} q(\blank) \tensor_{R \tensor R} (S \tensor R) \tensor_{S \tensor R} ( \dual{S}_R \tensor R ) \dar{\ev} \\
      q(\blank) & \lar[equal] q(\blank)
    \end{tikzcd}
  \end{equation}
  of endofunctors of $\Modeq{G}{R \tensor R}$.
  Hence applying the final equivalence of \eqref{eq:model_diag_2} to $\Lder (\phi \tensor \id_R)_! (\blank)$ and composing with $\epsilon$ yields $\iota_R^* (\mu_R)_!$ applied to the right-hand composite of \eqref{eq:model_diag_a}.
  
  Composing the last isomorphisms of \eqref{eq:model_diag_1} with the isomorphism of \eqref{eq:model_diag_middle}, and evaluating on $\k \in \Modeq{G}{\k}$ yields the following isomorphism of $G$-equivariant $(S \tensor S)$-modules
  \[ S \tensor_R q (\dual S_R)  \iso  S \tensor_R R^e \tensor_R q (\dual S_R) \]
  where we consider $R^e$ as an $R$-$R$-bimodule.
  Writing $\B_\k(A)$ for the two-sided bar construction $\B_\k(A, A, A)$, we can replace $R^e$ by $\B_\k(R)$ by \cref{lemma:bar_complex}.
  Evaluating the first isomorphisms of \eqref{eq:model_diag_2} on the result, and composing this with the right-hand composite of \eqref{eq:model_diag_a}, we obtain the topmost two maps of the commutative diagram of $\k$-modules
  \begin{equation} \label{eq:HH_rotate}
  \begin{tikzcd}[column sep = 17]
    & \dlar[bend right = 20, start anchor = west][swap]{\iso} \iota_R^* (\mu_R)_! \bigl( q (\dual S_R) \tensor_S S \tensor_R \B_\k(R) \bigr) \rar{\ev} & \iota_R^* (\mu_R)_! \bigl( \B_\k(R) \bigr) \\
    \iota_S^* (\mu_S)_! \bigl( S \tensor_R \B_\k(R) \tensor_R q (\dual S_R) \bigr) & \HH_\k \bigl(R, q(\dual S_R) \tensor_S S \bigr) \uar[swap]{\iso} \rar{\ev} & \HH_\k (R, R) \uar[swap]{\iso} \\
    \HH_\k \bigl( S, S \tensor_R \B_\k(R) \tensor_R q (\dual S_R) \bigr) \uar{\eq} \dar[swap]{\eq} & \lar[swap]{\iso} \HH_\k \bigl(R, q(\dual S_R) \tensor_S \B_\k(S) \tensor_S S \bigr) \uar[swap]{\eq} \dar{\eq} & \HH_\k \bigl( R, \dual S_R \tensor_S S \bigr) \uar[swap]{\ev} \\
    \HH_\k \bigl( S, S \tensor_R \B_\k(R) \tensor_R \dual S_R \bigr) \dar[swap]{\eq} & \lar[swap]{\iso} \HH_\k \bigl( R, \dual S_R \tensor_S \B_\k(S) \tensor_S S \bigr) \urar[bend right = 20, start anchor = east][swap]{\eq} & \\
    \HH_\k \bigl( S, S \tensor_R \dual S_R \bigr)
  \end{tikzcd}
  \end{equation}
  where we use \cref{obs:HH_derived} to identify $\iota_S^* (\mu_S)_! (\blank)$ with $\HH_\k(S, \blank)$ for the cofibrant $(S \tensor S)$-module $S \tensor_R \B_\k(R) \tensor_R q (\dual S_R)$.
  Combining this diagram with \eqref{eq:model_diag_left} yields the desired result.
\end{proof}

The following proposition provides an alternative description of the Hochschild homology transfer, which is sometimes useful.

\begin{proposition} \label{prop:HH_transfer_alt}
  Let $G$ be a group, and let $\k \to R$ and $\phi \colon R \to S$ be cofibrations of $G$-equivariant cdgas such that $S$ is homotopically finite as an $R$-module.
  Then the Hochschild homology transfer $\transfer{S}$ of \cref{con:HH_transfer} is homotopic to the composite
  \[ \HH_\k(S)  \xlongto{\gamma_*}  \HH_\k \bigl( S, \Hom_R(S, S) \bigr)  \xlongfrom{\eq}  \HH_\k \bigl( R, \dual S_R \bigr)  \xlongto{(\ev_1)_*}  \HH_\k(R) \]
  in $\Underlying(\Modeq{G}{\k})$.
  The quasi-isomorphism is induced by $\phi$ and the map $\phi_* \colon \Hom_R(S, R) \to \Hom_R(S, S)$.
  The map $\ev_1 \colon \Hom_R(S, R) \to R$ is given by evaluation at $1 \in S$.
\end{proposition}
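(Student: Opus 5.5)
The plan is to compare the two zig-zags by mapping both into a common ``cyclic'' complex and reducing everything to the uniqueness of maps between cofibrant resolutions. Throughout, write $\B_\k(A) \defeq \B_\k(A, A, A)$. I would first dispose of the harmless parts.

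First, the map $\nu \colon S \to \Hom_R(S, S)$ with $\nu(1) = \id$ is exactly the map $\gamma$ of the statement. So the first arrows of both descriptions agree.

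Second, there is a unique way to pass from the terminal maps of \cref{con:HH_transfer} to $(\ev_1)_*$. Let $\bar\epsilon_S$ denote the map of coefficients $\dual S_R \tensor_S \B_\k(S) \tensor_S S \to \dual S_R \tensor_S S \iso \dual S_R$ given by the augmentation $\epsilon$ of \cref{lemma:bar_complex} on the middle factor. Then the map $\ev \colon \dual S_R \tensor_S S \to R$ agrees with $\ev_1 \colon \dual S_R \to R$ after composing with $\bar\epsilon_S$. Hence the bottom row of \cref{con:HH_transfer} may be rewritten as
\[ \HH_\k\bigl(R, \dual S_R \tensor_S \B_\k(S) \tensor_S S\bigr) \xlongto{\eq} \HH_\k(R, \dual S_R) \xlongto{(\ev_1)_*} \HH_\k(R). \]
Here the first map is a quasi-isomorphism by \cref{lemma:bar_complex,lemma:cofibrant_flat}; $S$ is cofibrant over $R$ since $\phi$ is a cofibration.

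It thus remains to show that two maps out of
\[ C \defeq \HH_\k\bigl(R, \dual S_R \tensor_S \B_\k(S) \tensor_S S\bigr) \]
into $\HH_\k(S, \Hom_R(S, S))$ are homotopic.
\begin{itemize}
  \item The first map rotates cyclically to $\HH_\k(S, S \tensor_R \B_\k(R) \tensor_R \dual S_R)$, then augments $\B_\k(R)$ to $R$, and then applies \cref{lemma:Hom_qiso}.
  \item The second map augments $\B_\k(S)$ and then applies the quasi-isomorphism $\Phi \colon \HH_\k(R, \dual S_R) \to \HH_\k(S, \Hom_R(S, S))$ induced by $\phi$ and $\phi_*$.
\end{itemize}
I would realize both as chain maps out of the single complex obtained from $\B_\k(S) \tensor S \tensor_R \B_\k(R) \tensor_R \dual S_R$ by tensoring cyclically. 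Both $C$ and the rotated complex are this complex, via the cyclic isomorphism. Next I would use the map $\B_\k(\phi) \colon \B_\k(R) \to \B_\k(S)$ of bar constructions, which is compatible with augmentations. It lets me rewrite $\Phi \after \bar\epsilon_S$ as a composite that first collapses the $\B_\k(R)$-factor into the $\B_\k(S)$-factor, multiplying inside $\B_\k(S)$, and only then augments.

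The main obstacle is exactly this last comparison. The two maps augment different bar factors, so they are not equal on the nose. On an element with positive $R$-bar degree, augmenting $\B_\k(R)$ kills it, whereas $\Phi$ retains the $r_i$ as Hochschild tensor factors of $S$. My first attempt would be a resolution-uniqueness argument. Consider the $(S \tensor_\k S)$-modules
\[ P \defeq S \tensor_R \B_\k(R) \tensor_R \dual S_R \qquad \text{and} \qquad Q \defeq \B_\k(S) \tensor_S S \tensor_R \dual S_R. \]
The two maps arise by applying $\B_\k(S) \tensor_{S \tensor S} (\blank)$ to two maps $P \to Q$, or to their composites to $S \tensor_R \dual S_R$, which lie over the same map to $S \tensor_R \dual S_R$ up to the augmentations. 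Since $P$ is cofibrant as an $(S \tensor_\k S)$-module and the target map $\B_\k(S) \to S$ is a trivial fibration on underlying modules, the two lifts are homotopic. Applying $\HH_\k(S, \blank)$ then gives the desired homotopy, which is $G$-equivariant because every map involved is. If this bookkeeping fails, my fallback is to write down an explicit contracting homotopy built from the maps $s_n$ of \cref{lemma:bar_complex}.
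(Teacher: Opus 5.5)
Your preliminary steps are correct: $\nu = \gamma$, the last map of \cref{con:HH_transfer} factors as $(\ev_1)_* \after \bar\epsilon_S$, and it suffices to show that two maps $C \to \HH_\k(S, \Hom_R(S,S))$ agree in the homotopy category. Call them $\theta'$ (rotate, augment $\B_\k(R)$, apply \cref{lemma:Hom_qiso}) and $\Phi \after \bar\epsilon_S$. That comparison is the entire content of the proposition, and your argument for it does not work.

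Under the rotation $C \iso \B_\k(S) \tensor_{S \tensor_\k S} P$, the map $\Phi \after \bar\epsilon_S$ is not $\B_\k(S) \tensor_{S \tensor_\k S} (\blank)$ applied to any module map out of $P$, whether through $Q$ or not. Such maps preserve the bar degree of the outer factor $\B_\k(S)$. By contrast, $\Phi \after \bar\epsilon_S$ kills everything of positive outer bar degree and turns the $\B_\k(R)$-degree into Hochschild degree. This is exactly your own diagnosis that the two maps augment different bar factors, and no comparison of two lifts $P \to Q$ can capture it. In fact, if $h \colon P \to Q$ is the map induced by $\B_\k(\phi)$, both maps factor through the same map $\HH_\k(S,h)$. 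They differ only in which of the two bar factors of $\HH_\k(S,Q)$ is augmented afterwards. Separately, lifting against $\B_\k(S) \to S$ as a trivial fibration of underlying modules does not produce $G$-equivariant homotopies, so your closing equivariance claim is unsupported.

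The missing step is to compare these two augmentations directly. Set $N \defeq S \tensor_R \dual S_R$ and identify $\HH_\k(S,Q)$ with the cyclic tensor product $(\B_\k(S) \tensor_S \B_\k(S)) \tensor_{S \tensor_\k S} N$. Then $\theta'$ is $\HH_\k(S,h)$ followed by $(\id \tensor \epsilon) \tensor N$ and \cref{lemma:Hom_qiso}. Likewise $\Phi \after \bar\epsilon_S$ is $\HH_\k(S,h)$ followed by $(\epsilon \tensor \id) \tensor N$ and \cref{lemma:Hom_qiso}. The maps $\id \tensor \epsilon$ and $\epsilon \tensor \id \colon \B_\k(S) \tensor_S \B_\k(S) \to \B_\k(S)$ are maps between cofibrant objects of $\Modeq{G}{S \tensor_\k S}$. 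They become equal after composing with the quasi-isomorphism $\epsilon$, so they agree in the homotopy category. Finally, $(\blank) \tensor_{S \tensor_\k S} N$ preserves this equality by \cref{lemma:cofibrant_flat}, so no lifting is needed.

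The paper avoids double bar constructions altogether. Via \cref{obs:HH_derived} it replaces $\HH_\k(S,\blank)$ and $\HH_\k(R,\blank)$ by $\iota_S^*(\mu_S)_!$ and $\iota_R^*(\mu_R)_!$ on cofibrant bimodules. The only resolution carried along is then $\B_\k(R)$ inside the coefficients. The cyclic rotation becomes the strict isomorphism $(\mu_R)_!(M) \iso (\mu_S)_!(S \tensor_R M)$ induced by $x \mapsto 1 \tensor x$. Combined with \eqref{eq:HH_rotate}, every square then commutes on the nose.
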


\begin{proof}
  By \eqref{eq:HH_rotate}, the map $\HH_\k ( S, S \tensor_R \dual S_R ) \to \HH_\k ( R )$ in $\Underlying(\Modeq{G}{\k})$ is homotopic to the upper composite in the commutative diagram
  \[
  \begin{tikzcd}
    & \dlar[bend right = 20, start anchor = west][swap]{\iso} \iota_R^* (\mu_R)_! \bigl( q (\dual S_R) \tensor_R \B_\k(R) \bigr) \rar{\ev_1} \dar{\iso} & \iota_R^* (\mu_R)_! \bigl( \B_\k(R) \bigr) \dar[equal] \\
    \iota_S^* (\mu_S)_! \bigl( S \tensor_R \B_\k(R) \tensor_R q (\dual S_R) \bigr) & \lar[swap]{\iso} \iota_R^* (\mu_R)_! \bigl( \B_\k(R) \tensor_R q (\dual S_R) \bigr) \rar{\ev_1} & \iota_R^* (\mu_R)_! \bigl( \B_\k(R) \bigr) \\
    \HH_\k \bigl( S, S \tensor_R \B_\k(R) \tensor_R q (\dual S_R) \bigr) \dar[swap]{\eq} \uar{\eq} & \lar \HH_\k \bigl(R, \B_\k(R) \tensor_R q (\dual S_R) \bigr) \uar[swap]{\eq} \dar{\eq} \rar{\ev_1} & \HH_\k \bigl( R, \B_\k(R) \bigr) \dar{\eq} \uar[swap]{\eq} \\
    \HH_\k \bigl( S, S \tensor_R \dual S_R \bigr) & \lar \HH_\k \bigl(R, \dual S_R \bigr) \rar{\ev_1} & \HH_\k ( R )
  \end{tikzcd}
  \]
  where we use \cref{obs:HH_derived} to identify $\iota_S^* (\mu_S)_! (M)$ with $\HH_\k(S, M)$ for a cofibrant $(S \tensor S)$-module $M$, and similarly for $R$.
  The left-hand horizontal maps are all induced by maps of $(R \tensor R)$-modules of the form $x \mapsto 1 \tensor x$.
\end{proof}

To conclude this section, we include an example showing in a simple case how to use our rational model for the THH transfer for a hands-on calculation.
It recovers an example of Lind--Malkiewich \cite[Corollary~9.4]{LM} rationally.

\begin{example}[Lind--Malkiewich] \label{ex:cdga}
  We consider the map $f \colon \B \Sphere 1 \to \B \Sphere 3$ induced by the canonical map of topological groups $\Sphere 1 \to \Sphere 3$.
  The homotopy fiber of $f$ is given by the homotopy orbits of $\Sphere 1$ acting on $\Sphere 3$; the action is free, so the homotopy orbits are given by the strict orbits, which is $\Sphere 2$.
  Rationally both $\B \Sphere 1$ and $\B \Sphere 3$ are Eilenberg--MacLane spaces, and using the Serre spectral sequence for $f$, we see that a generator of $\Coho * (\B \Sphere 3; \QQ)$ pulls back to the square of a generator of $\Coho * (\B \Sphere 1; \QQ)$.
  Hence $f$ is modeled by the map of cdgas $\QQ[x] \to \QQ[y]$ given by $x \mapsto y^2$, where $\deg x = 4$, $\deg y = 2$, and we equip both algebras with the trivial differential.
  Writing $R \defeq \QQ[x]$ and $S \defeq \QQ[y]$, we have $S = \quot{R[y]}{(y^2 - x)}$ as an $R$-algebra; as an $R$-module it is freely generated by $1$ and $y$.
  
  By \cref{thm:transfer_model_cdga,prop:HH_transfer_alt}, the $\THH$ transfer of $f$ is modeled by the zig-zag
  \begin{equation} \label{eq:ex_transfer}
    \HH_\QQ(S, S)  \longto  \HH_\QQ \bigl( S, \Hom_R(S, S) \bigr)  \xlongfrom{\eq}  \HH_\QQ \bigl( R, \dual S_R \bigr)  \xlongto{(\ev_1)_*}  \HH_\QQ(R, R)
  \end{equation}
  of maps of cochain complexes.
  For a graded vector space $V$, there is a quasi-isomorphism
  \begin{align*}
    \SA V \tensor \SA \shift V  &\xlongto{\eq}  \HH_\QQ \bigl( \SA V, \SA V \bigr) \\
    a \tensor (\shift v_1 \wedge \dots \wedge \shift v_n)  &\longmapsto  \sum_{\sigma \in \Sigma_n} \sgn(\sigma) a \tensor \shift v_{\sigma^{-1}(1)} \tensor \dots \tensor \shift v_{\sigma^{-1}(n)}
  \end{align*}
  where the domain is equipped with the trivial differential (see e.g.\ \cite[Theorem~3.2.2]{Lod}).
  Hence a basis for the homology of $\HH_\QQ(S, S)$ is given by the elements of the form $y^i$ and $y^i \tensor \shift y$ for $i \ge 0$, and similarly for $\HH_\QQ(R, R)$.
  
  Note that $\dual S_R$ is freely generated as an $R$-module by the basis $(\dual 1, \dual y)$ dual to the basis $(1, y)$ of $S$.
  In this basis the (right) $S$-module structure is given by $\dual 1 \cdot y = x \dual y$ and $\dual y \cdot y = \dual 1$.
  Identifying $\Hom_R(S, S) \iso S \tensor_R \dual S_R$, the first map of \eqref{eq:ex_transfer} is given by
  \[ y^i  \longmapsto  y^i \tensor \dual 1 + y^{i+1} \tensor \dual y  \qquad \text{and} \qquad  y^i \tensor \shift y  \longmapsto  (y^i \tensor \dual 1 + y^{i+1} \tensor \dual y) \tensor \shift y \]
  and we want to exhibit these images as being homologous to elements in the image of the second map.
  To this end we observe
  \begin{align*}
    d \bigl( (y^i \tensor \dual y) \tensor y \bigr) &= y^i \tensor \dual 1 - y^{i+1} \tensor \dual y \\
    d \bigl( (y^i \otimes \dual y) \otimes \shift y \otimes \shift y \bigr) &= (y^i \otimes \dual 1) \otimes \shift y - (y^i \otimes \dual y) \otimes \shift y^2 + (y^{i+1} \otimes \dual y) \otimes \shift y
  \end{align*}
  to see that the image of $y^i$ is homologous to $2 y^i \tensor \dual 1$ and $2 y^{i+1} \tensor \dual y$, and that the image of $y^i \tensor \shift y$ is homologous to $(y^i \otimes \dual y) \otimes \shift y^2$.
  If $i$ is odd, we instead use that
  \begin{align*}
    d \bigl( (y^{i - 1} \otimes \dual 1) \otimes \shift y \otimes \shift y \bigr)  &=  (y^{i - 1} \tensor x \dual y) \tensor \shift y - (y^{i-1} \tensor \dual 1) \tensor \shift y^2 + (y^i \otimes \dual 1) \otimes \shift y \\
    &=  (y^{i + 1} \tensor \dual y) \tensor \shift y - (y^{i-1} \tensor \dual 1) \tensor \shift y^2 + (y^i \otimes \dual 1) \otimes \shift y
  \end{align*}
  to see that the image of $y^i \tensor \shift y$ is homologous to $(y^{i - 1} \otimes \dual 1) \otimes \shift y^2$.
  
  Taken together, we see that on homology the composite \eqref{eq:ex_transfer} induces the map determined by
  \[ [y^i]  \longmapsto  \begin{cases*} [2 x^{\frac i 2}], & if $i$ even \\ 0, & if $i$ odd \end{cases*}  \qquad \text{and} \qquad  [y^i \tensor \shift y]  \longmapsto  \begin{cases*} 0, & if $i$ even \\ [x^{\frac {i - 1} 2} \tensor \shift x], & if $i$ odd \end{cases*} \]
  which recovers \cite[Corollary~9.4]{LM} rationally.
\end{example}

\begin{appendices}

\crefalias{section}{appsec}

\section{Groupoidal limits of model categories} \label{sec:sections}

In this appendix, we show that a groupoidal limit of model categories is a model for the limit of the underlying $\infty$-categories.
We deduce this from work of Harpaz \cite{Har}, who showed this for simplicial model categories, and Dugger \cite{Dug}, who showed that (under some conditions) any model category is Quillen equivalent to a simplicial model category.

\subsection{Limits of \texorpdfstring{$\infty$}{infinity}-categories}

We begin by recalling that the limit of a diagram $F \colon \cat C \to \Catinf$ can be computed as the cocartesian sections of the associated cocartesian fibration over $\cat C$ (see \cite[Corollary~3.3.3.2]{LurHTT}).
We will require that this equivalence is natural in $F$.
As we could not find a reference for this fact in the literature, we provide a proof.

\begin{notation}
  Let $\cat C$ be a simplicial set.
  We denote by $\Fibmark{\cat C}$ the category of marked simplicial sets over $\cat C$ equipped with the cocartesian model structure (see \cite[Remark~3.1.3.9]{LurHTT}) and write $\Cocart{\cat C} \defeq \Underlying(\Fibmark{\cat C})$.
  We omit the subscripts when $\cat C = *$.
  For $\cat E \in \Fibmark{\cat C}$, we furthermore write $\Mapcc{\cat C}(\cat C, \cat E) \subseteq \Map_{\cat C}(\cat C, \cat E)$ for the simplicial subset spanned by those maps $\cat C \to \cat E$ that land in the marked edges of $\cat E$.
  By \cite[Remark~3.1.4.5]{LurHTT}, the functor $\Mapcc{\cat C}(\cat C, \blank) \colon \Fibmark{\cat C} \to \sSet$ is right Quillen when we equip $\sSet$ with the Joyal model structure.
  We write $\Funcc{\cat C}(\cat C, \blank) \colon \Cocart{\cat C} \to \Catinf$ for its right derived functor.
\end{notation}

Recall that $\Underlying(\sSetmark) \eq \Underlying(\sSet) = \Catinf$ by \cite[Proposition~3.1.5.3]{LurHTT}.

\begin{notation}
  For an $\infty$-category $\cat C$, we denote by $\Unstr \colon \Fun(\cat C, \Catinf) \to \Cocart{\cat C}$ the functor induced by (the opposite of) the unstraightening functor of \cite[Theorem~3.2.0.1]{LurHTT}.
\end{notation}

Recall that $\Unstr$ is natural in $\cat C$ by \cite[Corollary~A.32]{GHN}.
We now prove that the limit of a functor valued in $\infty$-categories can be naturally computed as the cocartesian sections of its unstraightening.

\begin{lemma} \label{lemma:limit_sections}
  Let $\cat C$ be an $\infty$-category and $F \colon \cat C \to \Catinf$ a functor.
  Then there is a natural equivalence
  \[ \Funcc{\cat C} \bigl( \cat C, \Unstr(F) \bigr)  \eq  \lim_{\cat C} F \]
  of functors $\Fun(\cat C, \Catinf) \to \Catinf$.
  Furthermore, for a functor $g \colon \cat C' \to \cat C$, the diagram
  \[
  \begin{tikzcd}
    \Funcc{\cat C} \bigl( \cat C, \Unstr(F) \bigr) \rar{g^*} \dar[swap]{\eq} & \Funcc{\cat C'} \bigl( \cat C', g^* \Unstr(F) \bigr) \rar{\eq} & \Funcc{\cat C'} \bigl( \cat C', \Unstr(g^* F) \bigr) \dar{\eq} \\
    \lim_{\cat C} F \ar{rr} & & \lim_{\cat C'} g^* F
  \end{tikzcd}
  \]
  commutes in the $\infty$-category of functors $\Fun(\cat C, \Catinf) \to \Catinf$.
\end{lemma}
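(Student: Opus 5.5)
The plan is to reduce both sides to functors of $\cat C$ represented in the same way, and then compare them via the Yoneda lemma.

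\textbf{Step 1: represent the limit.} Recall that $\lim_{\cat C} F$ is characterized, naturally in $F$, by
\[ \Map_{\Catinf}\bigl(\cat K, \lim_{\cat C} F\bigr) \eq \Map_{\Fun(\cat C, \Catinf)}(\const_{\cat K}, F) \]
for every $\cat K \in \Catinf$. Via $\Unstr$, the right-hand side is $\Map_{\Cocart{\cat C}}(\Unstr(\const_{\cat K}), \Unstr(F))$, and $\Unstr(\const_{\cat K})$ is the projection $\cat K \times \cat C \to \cat C$, with marked edges those whose $\cat K$-component is an equivalence. This step uses that $\Unstr$ is an equivalence of $\infty$-categories, natural in $\cat C$ by \cite[Corollary~A.32]{GHN}.

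\textbf{Step 2: represent the sections.} The functor $\Funcc{\cat C}(\cat C, \blank)$ is right adjoint to $\cat K \mapsto \cat K^\flat \times \cat C^\sharp$. At the model-categorical level this is the Quillen adjunction of \cite[Remark~3.1.4.5]{LurHTT}, with the left adjoint sending a simplicial set $K$ to $K^\flat \times \cat C^\sharp$. By \cite[Proposition~1.5.1]{Hin}, it induces an adjunction of underlying $\infty$-categories. Its left adjoint is $\Unstr \after \const$ up to natural equivalence. This can be checked on the point and extended by colimit preservation, using that both functors $\Catinf \to \Cocart{\cat C}$ preserve colimits and send $*$ to $\cat C^\sharp$.

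\textbf{Step 3: compare.} Steps~1 and~2 show that $\Funcc{\cat C}(\cat C, \Unstr(\blank))$ and $\lim_{\cat C}$ are both right adjoint to $\const \colon \Catinf \to \Fun(\cat C, \Catinf)$, once $\Unstr$ is used to identify $\Fun(\cat C, \Catinf)$ with $\Cocart{\cat C}$. Right adjoints are unique up to a canonical natural equivalence, which gives the first claim.

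\textbf{Step 4: compatibility with $g$.} The same argument applies for $g \colon \cat C' \to \cat C$, since both horizontal composites are obtained by passing to right adjoints. On the limit side, the map $\lim_{\cat C} F \to \lim_{\cat C'} g^*F$ is the mate of $g^* \after \const_{\cat C} = \const_{\cat C'}$. On the section side, $g^*$ on sections is the mate of the analogous identification $g^*(\cat K^\flat \times \cat C^\sharp) \iso \cat K^\flat \times \cat C'^\sharp$, composed with the naturality equivalence $g^*\Unstr \eq \Unstr g^*$. It then remains to check that the left-adjoint identifications from Step~2 are compatible with $g^*$. By colimit preservation, \cref{obs:assembly}-style arguments reduce this to $\cat K = *$, where it is the compatibility of $\Unstr$ with pullback applied to constant functors.

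I expect this final coherence to be the main obstacle: the naturality of $\Unstr$ must be matched with the strict base change of marked simplicial sets at the level of the homotopy-coherent equivalence, not merely pointwise. Here I would rely on \cite[Corollary~A.32]{GHN} providing $\Unstr$ as a natural transformation of functors $\opcat{\Catinf} \to \Catinf$, from which the required commuting squares of left adjoints follow by passing to mates.
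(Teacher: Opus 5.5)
Your strategy is different from the paper's and can be made to work: you exhibit both $\lim_{\cat C}$ and $\Funcc{\cat C}(\cat C, \Unstr(\blank))$ as right adjoints of $\const$ and then invoke uniqueness of adjoints. However, the step that carries all the content rests on a false principle. In Step~2 you identify the left adjoint $\Lder\bigl((\blank)^\flat \times \cat C^\sharp\bigr)$ with $\Unstr \after \const$ by saying that both preserve colimits and send $*$ to $\cat C^\sharp$. In Step~4 you again reduce to $\cat K = *$ ``by \cref{obs:assembly}-style arguments''.

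That reduction is valid for functors out of $\An$, which is freely generated under colimits by the point. It fails for functors out of $\Catinf$. For example, $\cat K \mapsto \cat K \times \cat C$ and $\cat K \mapsto |\cat K| \times \cat C$, where $|\blank|$ is groupoid completion, are both colimit-preserving functors $\Catinf \to \Cocart{\cat C}$ sending $*$ to $\cat C^\sharp$, yet they are not equivalent. Colimit-preserving functors out of $\Catinf$ are controlled by their restriction to $\Delta$, not to $[0]$. So as written, Step~2 does not show that the left adjoint of $\Funcc{\cat C}(\cat C, \blank)$ is $\Unstr \after \const$, and Step~4 inherits the same gap.

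The identification itself is true, and the right tool is one you mention only at the very end. Apply the naturality of $\Unstr$ from \cite[Corollary~A.32]{GHN} to $p \colon \cat C \to *$. This gives $\Unstr \after p^* \eq p^* \after \Unstr_*$, where $p^* \colon \Cocart{*} \to \Cocart{\cat C}$ is modeled by $X \mapsto X \times \cat C^\sharp$ and $\Unstr_*$ is identified with the identity of $\Catinf$. Because that naturality is coherent, applying it to $\cat C' \to \cat C \to *$ gives the compatibility Step~4 needs. You would still have to check one more thing: that the model-level restriction of sections, once derived, is the mate of $g^*(K^\flat \times \cat C^\sharp) \iso K^\flat \times \cat C'^\sharp$. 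This comparison mixes a left-derived and a right-derived functor.

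The paper avoids identifying adjoints altogether. It works over the cone $\lcone{\cat C}$ and uses two facts: restricting cocartesian sections to the cone point is an equivalence, and restricting to $\cat C$ is an equivalence exactly for limit diagrams \cite[Proposition~3.3.3.1]{LurHTT}. It then produces limit diagrams naturally via the right Kan extension $\iota_*$. Naturality in $\cat C$ comes directly from naturality of $\Unstr$ and of restriction, and compatibility with $g$ from \cref{lemma:mates}. Once repaired, your route is more conceptual, since the lemma becomes uniqueness of right adjoints; the paper's route makes the coherence in $g$ essentially formal.
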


\begin{proof}
  Let $\iota \colon \cat C \to \lcone{\cat C}$ be the inclusion and $j \colon \set{\lconept} \to \lcone{\cat C}$ the inclusion of the cone point.
  Then the diagram
  \[
  \begin{tikzcd}
    \Fun(\cat C, \Catinf) \dar{\eq}[swap]{\Unstr} & \lar[swap]{\iota^*} \Fun(\lcone{\cat C}, \Catinf) \dar{\eq}[swap]{\Unstr} \rar{j^*} & \Catinf \dar[equal] \\
    \Cocart{\cat C} & \lar[swap]{\iota^*} \Cocart{\lcone{\cat C}} \rar{j^*} & \Catinf
  \end{tikzcd}
  \]
  commutes and is natural in $\cat C$ by naturality of unstraightening.
  For a functor $F' \colon \lcone{\cat C} \to \Catinf$, we thus have maps
  \[ F'(\lconept)  \eq  \restrict {\Unstr(F')} \lconept  \xlongfrom{j^*}  \Funcc{\lcone{\cat C}} \bigl( \lcone{\cat C}, \Unstr(F') \bigr)  \xlongto{\iota^*}  \Funcc{\cat C} \bigl( \cat C, \restrict {\Unstr(F')} {\cat C} \bigr)  \eq  \Funcc{\cat C} \bigl( \cat C, \Unstr(\restrict {F'} {\cat C}) \bigr) \]
  that are natural in $F'$ and $\cat C$.
  The map $j^*$ is an equivalence by \kerodon{02TC}, so we obtain a natural map
  \[ F'(\lconept)  \longto  \Funcc{\cat C} \bigl( \cat C, \Unstr(\restrict {F'} {\cat C}) \bigr) \]
  which is an equivalence if $F'$ is a limit diagram, since then $\iota^*$ is an equivalence by \cite[Proposition~3.3.3.1]{LurHTT}.
  Note that in that case $\lim_{\cat C} \restrict {F'} {\cat C} = F'(\lconept)$ by definition.
  
  Now consider the right Kan extension functor $\iota_* \colon \Fun(\cat C, \Catinf) \to \Fun(\lcone{\cat C}, \Catinf)$.
  It takes values in limit diagrams by definition.
  Since $\iota$ is fully faithful, the counit $\epsilon$ of the adjunction $\iota^* \dashv \iota_*$ yields an equivalence $\restrict {\iota_*(F)} {\cat C} \eq F$ that is natural in $F$.
  We thus have equivalences
  \[ \lim_{\cat C} F  \eq  \lim_{\cat C} \restrict {\iota_*(F)} {\cat C}  \eq  \Funcc{\cat C} \bigl( \cat C, \Unstr(\restrict {\iota_*(F)} {\cat C}) \bigr)  \eq  \Funcc{\cat C} \bigl( \cat C, \Unstr(F) \bigr) \]
  that are natural in $F$.
  
  Given a functor $g \colon \cat C' \to \cat C$, the mate of the equivalence $(\iota')^* (\lcone{g})^* \eq g^* \iota^*$ is a natural transformation $\alpha \colon (\lcone{g})^* \iota_* \to \iota'_* g^*$ that is given by the induced map $\lim_{\cat C} F \to \lim_{\cat C'} g^* F$ at the cone point.
  We thus obtain the following commutative diagram
  \[
  \begin{tikzcd}
    \lim_{\cat C} F \dar[equal] \ar{rr} & & \lim_{\cat C'} g^* F \dar[equal] \\
    \bigl( \iota_*(F) \bigr) (\lconept) \rar[equal] \dar[swap]{\eq} & \bigl( (\lcone{g})^* \iota_* (F) \bigr) (\lconept) \dar \rar{\alpha} & \bigl( \iota'_* g^* (F) \bigr) (\lconept) \dar{\eq} \\
    \Funcc{\cat C} \bigl( \cat C, \Unstr(F) \bigr) \rar & \Funcc{\cat C'} \bigl( \cat C', \Unstr(g^* F) \bigr) \rar[equal] & \Funcc{\cat C'} \bigl( \cat C', \Unstr(g^* F) \bigr)
  \end{tikzcd}
  \]
  where, for the lower right-hand square, we use that the diagram
  \[
  \begin{tikzcd}
    (\iota')^* (\lcone{g})^* \iota_* \rar{\alpha} \dar{\eq} & (\iota')^* \iota'_* g^* \dar{\epsilon}[swap]{\eq} \\
    g^* \iota^* \iota_* \rar{\eq}[swap]{\epsilon} & g^*
  \end{tikzcd}
  \]
  commutes by \cref{lemma:mates}.
\end{proof}

The following lemma yields an explicit description of $\Unstr(F)$ when $F \colon \cat C \to \Catinf$ is represented by a functor to marked simplicial sets.

\begin{lemma} \label{lemma:limit_sections_nerve}
  Let $\cat C$ be a category and $F \colon \cat C \to \sSetmark$ a functor taking values in fibrant objects.
  Denoting by $\Nervemark[F](\cat C) \in \Fibmark{\cat C}$ the marked relative nerve of \cite[Definition~3.2.5.12]{LurHTT}, there is a natural equivalence of functors $\Fun(\cat C, \sSetmark) \to \Cocart{\cat C}$
  \[ \Nervemark[F](\cat C)  \eq  \Unstr(\upsilon \after F) \]
  where $\upsilon \colon \sSetmark \to \Underlying(\sSetmark) \eq \Catinf$ is the canonical functor.
  Furthermore, for a functor $g \colon \cat C' \to \cat C$, the diagram
  \[
  \begin{tikzcd}
    g^* \Nervemark[F](\cat C) \rar{\eq} \dar[swap]{\iso} & g^* \Unstr(\upsilon \after F) \dar{\eq} \\
    \Nervemark[g^* F](\cat C') \rar{\eq} & \Unstr(\upsilon \after g^* F)
  \end{tikzcd}
  \]
  commutes in the $\infty$-category of functors $\Fun(\cat C, \sSetmark) \to \Cocart{\cat C'}$.
\end{lemma}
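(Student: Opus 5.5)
The plan is to compare both sides through Lurie's straightening equivalence, which is already known to identify the relative nerve with unstraightening at the level of objects. Then I would upgrade this pointwise identification to a natural one using the universal property of the localization $\Fun(\cat C, \sSetmark) \to \Fun(\cat C, \Catinf)$. Concretely, \cite[Proposition~3.2.5.18]{LurHTT} provides a natural (in $F$) weak equivalence relating $\Nervemark[F](\cat C)$ to the (marked) unstraightening of $F$. More precisely, the marked relative nerve $\Nervemark[-](\cat C)$ is a right Quillen functor from the projective model structure on $\Fun(\cat C, \sSetmark)$ to $\Fibmark{\cat C}$. It is naturally weakly equivalent on fibrant objects to the straightening right adjoint $\mathrm{Un}^+_{\phi}$ composed with the comparison $\mathfrak{C}[\Nerve \cat C]^{\mathrm{op}} \to \cat C$, and it is a Quillen equivalence.

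First, I will restrict to projectively fibrant diagrams, which are exactly those taking values in fibrant objects, as in the hypothesis. On this subcategory the relative nerve preserves weak equivalences. It therefore descends to a functor $\fUnderlying(\Fun(\cat C, \sSetmark)) \to \Cocart{\cat C}$. By \cite[Proposition~1.3.4.25]{LurHA}, the source is identified with $\Fun(\cat C, \Catinf)$, compatibly with $\upsilon_*$. This gives a functor $N \colon \Fun(\cat C, \Catinf) \to \Cocart{\cat C}$ together with a natural equivalence $\Nervemark[F](\cat C) \eq N(\upsilon \after F)$.

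Second, I will identify $N$ with $\Unstr$. By Lurie's comparison \cite[Proposition~3.2.5.18]{LurHTT}, $N$ is the right derived functor of a right Quillen equivalence naturally weakly equivalent to the unstraightening functor used to define $\Unstr$. Via the Quillen equivalence $\mathfrak{C}[\Nerve\cat C] \to \cat C$ for ordinary categories, this yields a natural equivalence $N \eq \Unstr$.

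Third, I will treat naturality in $g \colon \cat C' \to \cat C$. The square for $\Nervemark$ commutes strictly, since $g^* \Nervemark[F](\cat C) \iso \Nervemark[g^*F](\cat C')$ by the definition of the relative nerve as a pullback-type construction. Compatibility of $\Unstr$ with $g^*$ is \cite[Corollary~A.32]{GHN}. What remains is that the comparison equivalence from the second step is compatible with these two base changes. The cleanest route is to note that the straightening-side comparison maps of Lurie are themselves natural in $\cat C$. If that is not directly available, I would fall back on the fact that both composites are natural equivalences between right adjoint functors $\Fun(\cat C,\Catinf) \to \Cocart{\cat C'}$, and compare their left adjoints.

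The main obstacle is this last coherence point: ensuring the equivalence in the second step is chosen naturally in $\cat C$ so that the square commutes in the $\infty$-category of functors rather than just objectwise. I expect to handle it by tracking the explicit zig-zag of Quillen equivalences in \cite[§3.2.5]{LurHTT} and checking that each is strictly compatible with pullback along functors of ordinary categories.
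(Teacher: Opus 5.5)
Your proposal is correct and is essentially the paper's argument. The paper cites Lurie's comparison of the marked relative nerve with unstraightening (\cite[Corollary~3.2.5.20]{LurHTT}, which rests on the Proposition~3.2.5.18 you invoke) for the equivalence natural in $F$, and asserts compatibility with restriction along $g$ ``by construction''; your write-up unpacks this, making explicit the passage to $\infty$-categories via \cite[Proposition~1.3.4.25]{LurHA} and the point-set check of compatibility with pullback that the paper leaves implicit.
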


\begin{proof}
  By \cite[Corollary~3.2.5.20]{LurHTT}, there is an equivalence $\Nervemark[F](\cat C) \eq \Unstr(\upsilon \after F)$ in $\Cocart{\cat C}$ that is natural in $F$.
  It is furthermore compatible with restriction along $g$ by construction.
\end{proof}

\subsection{The model category of sections}

We are now ready to prove the main result of this appendix.
We begin by defining the category of sections of a functor $F$ taking values in categories.
It is a model for the lax limit of $F$.

\begin{definition} \label{def:Sect}
  For a category $\cat C$ and a pseudofunctor $F \colon \cat C \to \twoCat$, we write $\Sect{\cat C}(F)$ for the category such that
  \begin{itemize}
    \item
    an object $S$ of $\Sect{\cat C}(F)$ consists of an object $S(C) \in F(C)$ for every $C \in \cat C$ and a morphism $S(f) \colon F(f)(S(C)) \to S(C')$ of $F(C')$ for every morphism $f \colon C \to C'$ of $\cat C$,
    \item
    a morphism $\phi \colon S \to S'$ of $\Sect{\cat C}(F)$ consists of a morphism $\phi_C \colon S(C) \to S'(C)$ of $F(C)$ such that $\phi_{C'} \circ S(f) = S'(f) \circ F(f)(\phi_C)$ for every morphism $f \colon C \to C'$ of $\cat C$,
  \end{itemize}
  and identities and composition are defined in the obvious way.
\end{definition}

Note that $\Sect{\cat C}(F)$ is isomorphic to the category of sections of the projection functor $\gc F \to \cat C$ of the Grothendieck construction of $F$.

\begin{notation}
For a strict $2$-category $\cat C$, we denote by $\truncat{1} \cat C$ its underlying category, obtained by forgetting the $2$-morphisms.
This construction is canonically functorial with respect to strict pseudofunctors.
\end{notation}

\begin{proposition} \label{prop:sections}
  Let $\cat J$ be a groupoid and $M \colon \cat J \to \truncat{1} \ModCatL$ a functor taking values in left proper, combinatorial model categories.
  Then there is an equivalence of $\infty$-categories
  \[ \sigma  \colon  \cUnderlying \bigl( \Sect{\cat J}(M) \bigr)  \xlongto{\eq}  \lim_{\cat J} ({\cUnderlying} \after M) \]
  where we equip $\Sect{\cat J}(M)$ with the injective model structure, i.e.\ weak equivalences and cofibrations are defined pointwise.
  The map $\sigma$ is natural in the full subcategory of $\Fun(\cat J, \truncat{1} \ModCatC)$ spanned by the functors that land in $\ModCatL$.
  Furthermore, given a functor $f \colon \cat I \to \cat J$ of groupoids, the diagram
  \[
  \begin{tikzcd}
    \cUnderlying \bigl( \Sect{\cat J}(M) \bigr) \rar{\sigma} \dar & \lim_{\cat J} ({\cUnderlying} \after M) \dar \\
    \cUnderlying \bigl( \Sect{\cat I}(M \after f) \bigr) \rar{\sigma} & \lim_{\cat I} ({\cUnderlying} \after M \after f)
  \end{tikzcd}
  \]
  commutes.
\end{proposition}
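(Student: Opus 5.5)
The plan is to reduce to the simplicial case treated by Harpaz, using Dugger's simplicial replacement together with the natural comparison between limits of $\infty$-categories and cocartesian sections.

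First, apply Dugger's theorem functorially over $\cat J$. Each $M(j)$ is combinatorial, so Dugger gives a Quillen equivalence $M(j) \to \cat N(j)$, where $\cat N(j)$ is a simplicial, left proper, combinatorial model category. Concretely, I would take $\cat N(j)$ to be a left Bousfield localization of simplicial presheaves on a small presentation of $M(j)$. I must check that this construction is functorial in left Quillen functors. Concretely, I would choose small generating subcategories compatibly along the functor $M$, using that $\cat J$ is a small groupoid, and extend by left Kan extension. This produces a functor $\cat N \colon \cat J \to \truncat{1} \ModCatL$ of simplicial model categories, together with a natural Quillen equivalence $M \to \cat N$. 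I expect this step to be the main obstacle, because Dugger's construction is only natural up to choices. My first attempt would be to exploit that $\cat J$ is a groupoid. It then suffices to work with one object per component, where $\cat J$ is equivalent to $\B G$. On such a component I would make the presentation $G$-equivariant by closing a chosen small set of generators under the $G$-action.

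Next, I compare the section categories. Because weak equivalences and cofibrations in the injective structure are defined pointwise, a natural Quillen equivalence $M \to \cat N$ induces a Quillen equivalence $\Sect{\cat J}(M) \to \Sect{\cat J}(\cat N)$. Both injective structures exist by \cite[Theorem~2.30]{Bar}, using left properness and combinatoriality. Consequently both $\cUnderlying(\Sect{\cat J}(M)) \to \cUnderlying(\Sect{\cat J}(\cat N))$ and $\lim_{\cat J}\cUnderlying M \to \lim_{\cat J}\cUnderlying\cat N$ are equivalences. It therefore suffices to treat the simplicial case.

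For simplicial $\cat N$, I would invoke Harpaz \cite{Har}. His result identifies $\cUnderlying(\Sect{\cat J}(\cat N))$ with the cocartesian sections of the marked relative nerve of $\cat J \ni j \mapsto \cat N(j)^{\circ}$, where fibrancy of the values as marked simplicial sets is secured via coherent nerves of bifibrant objects. Since $\cat J$ is a groupoid, every section is cocartesian, so lax and strict limits agree. \Cref{lemma:limit_sections_nerve} identifies this marked relative nerve with $\Unstr(\cUnderlying\after\cat N)$. \Cref{lemma:limit_sections} then identifies its cocartesian sections with $\lim_{\cat J}(\cUnderlying\after\cat N)$.

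Naturality in $M$ follows because each step is natural: the Dugger replacement, restriction of sections, and the equivalences in \cref{lemma:limit_sections_nerve,lemma:limit_sections}. Compatibility with restriction along $f \colon \cat I \to \cat J$ follows from the stated restriction compatibilities of those two lemmas, combined with the evident isomorphism $f^*\Sect{\cat J}(M) \to \Sect{\cat I}(M\after f)$ on the model-categorical side.
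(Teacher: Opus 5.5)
There is a genuine gap, and it sits where you yourself locate the obstacle. In your argument $\sigma$ exists only as a transport of Harpaz's equivalence through a chosen replacement $M \to \cat N$. Its naturality in $M$ and its compatibility with restriction along $f \colon \cat I \to \cat J$ are therefore only as good as the functoriality of that replacement, and the presentation-based replacement (localized simplicial presheaves on a small generating subcategory) has none.

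Your equivariant fix does not close this. Making the set of generators $G$-stable does not by itself make the remaining choices strictly equivariant when objects have nontrivial stabilizers; these are the cosimplicial resolutions, the realization functor, and the localizing set of maps. Nothing makes the choices for $M$ and $M'$, or for $M$ and $M \after f$, compatible.

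More fundamentally, the statement asks for naturality with respect to transformations whose components merely lie in $\ModCatC$. This is exactly what \cref{cor:sections_monoidal} uses, since the tensor product functors $\cat M \times \cat M \to \cat M$ are not left Quillen. Simplicial replacements of Dugger type are functorial only in left Quillen functors, so transporting through them does not deliver that naturality. (Also, a presentation provides its left Quillen functor in the direction $\cat N(j) \to M(j)$, not $M(j) \to \cat N(j)$.)

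The missing idea is to construct $\sigma$ before, and independently of, knowing that it is an equivalence. For arbitrary $M$ the paper uses the Grothendieck construction $\gc \cofibobj{M}$ with the weak equivalences marked. Since $\cat J$ is a groupoid, the structure maps of a section are isomorphisms, so a cofibrant section lands in marked edges. This yields a map $\nu \colon \Nervemark\bigl(\cofibobj{\Sect{\cat J}(M)}\bigr) \to \Mapcc{\cat J}\bigl(\cat J, \Nervemark[r \after {\Nervemark} \after \cofibobj{M}](\cat J)\bigr)$. Here $r$ is a fibrant replacement in $\sSetmark$, and one uses $\Nervemark(\gc \cofibobj{M}) \iso \Nervemark[{\Nervemark} \after \cofibobj{M}](\cat J)$. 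By \cref{lemma:limit_sections,lemma:limit_sections_nerve} the target represents $\lim_{\cat J}(\cUnderlying \after M)$. The map $\nu$ is visibly natural in $\ModCatC$-transformations and compatible with restriction.

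Being an equivalence is then a property. It can be checked after applying the natural left Quillen map $M \to \simp M$ into $\simp \cat N = \Fun(\opcat{\Delta}, \cat N)$ with Dugger's Reedy ``hocolim'' structure from \cite{Dug}. That replacement is strictly functorial in left Quillen functors, so no equivariant choices are needed.

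Your simplicial step also needs repair. Left Quillen functors do not preserve fibrant objects, so $j \mapsto \bifibobj{\cat N(j)}$ is not a functor, and it has no marked relative nerve to feed into \cref{lemma:limit_sections_nerve}. Harpaz's result concerns the bifibrant part $\gcbifib S$ of the simplicial Grothendieck construction. To reach the target of $\nu$ you must:
\begin{enumerate}
  \item include $\gcbifib S$ into $\gc \cofibobj{S}$;
  \item identify $\Nervesimpmark(\gc \cofibobj{S})$ with a marked relative nerve via Beardsley--Wong;
  \item compare $\Nervemark$ with $\Nervesimpmark$ on cofibrant objects.
\end{enumerate}
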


\begin{proof}
  First note that, since $M$ is pointwise combinatorial, the injective model structure on $\Sect{\cat J}(M)$ exists by a result of Barwick \cite[Theorem~2.30]{Bar} and is left proper by \cite[Proposition~2.31]{Bar} since $M$ is pointwise left proper.
  Note that it is clear that a map $M \to M'$ in $\Fun(\cat J, \truncat{1} \ModCatC)$ induces a map $\Sect{\cat J}(M) \to \Sect{\cat J}(M')$ that is again in $\ModCatC$.
  
  We write $\gc M$ for the Grothendieck construction of $M$, and say that a morphism $(\alpha, a)$ in $\gc M$ is a \emph{weak equivalence} if $a \colon M(\alpha)(X) \to Y$ is a weak equivalence.
  Furthermore writing $\Nervemark$ for the nerve with the weak equivalences marked, we obtain a canonical map of marked simplicial sets
  \[ \Nervemark \bigl( \cofibobj{\Sect{\cat J}(M)} \bigr)  \longto  \Mapcc{\cat J} \bigl( \cat J, \Nervemark(\gc \cofibobj{M}) \bigr) \]
  where on the right-hand side the pointwise marked edges are marked (and we use that $\cat J$ is a groupoid, so that the structure maps of any object of $\Sect{\cat J}(M)$ consist of isomorphisms).
  We furthermore have $\Nervemark(\gc \cofibobj{M}) \iso \Nervemark[{\Nervemark} \after \cofibobj{M}](\cat J)$, where the latter denotes the marked relative nerve of \cite[Definition~3.2.5.12]{LurHTT}.
  Writing $r$ for a fibrant replacement functor of $\sSetmark$, we thus obtain a map
  \[ \nu  \colon  \Nervemark \bigl( \cofibobj{\Sect{\cat J}(M)} \bigr)  \longto  \Mapcc{\cat J} \bigl( \cat J, \Nervemark[r \after {\Nervemark} \after \cofibobj{M}](\cat J) \bigr) \]
  of marked simplicial sets.
  We note that the marked relative nerve is a right Quillen functor by \cite[Proposition~3.2.5.18]{LurHTT}, so that the target of $\nu$ is fibrant (using that $\Fibmark{\cat J}$ is a $\sSetmark$-enriched model category by \cite[Corollary~3.1.4.3]{LurHTT}).
  In particular $\nu$ factors through a fibrant replacement of the domain, whose underlying simplicial set is a quasi-category representing $\cUnderlying(\Sect{\cat J}(M))$ by definition.
  Similarly, by \cref{lemma:limit_sections,lemma:limit_sections_nerve}, the underlying simplicial set of the target of $\nu$ represents $\lim_{\cat J} (\cUnderlying \after M)$.
  Hence we obtain a natural map
  \[ \sigma  \colon  \cUnderlying \bigl( \Sect{\cat J}(M) \bigr)  \longto  \lim_{\cat J} ({\cUnderlying} \after M) \]
  as desired, which is an equivalence if $\nu$ is a weak equivalence of marked simplicial sets.
  Note that $\nu$ is compatible with restriction along functors $f \colon \cat I \to \cat J$, and hence $\sigma$ is as well (by choosing the factorization through a fibrant replacement of the domain of $\nu$ in some appropriate functor category).
  
  We will now prove that $\nu$ is a weak equivalence, beginning with the case that $M$ takes values in \emph{simplicial} model categories.
  To this end, we write $\Nervesimp(\cat C) \in \sSet$ for the homotopy coherent nerve of a simplicial category $\cat C$, and $\ModCatL[\sSet]$ for the strict $(2,1)$-category of simplicial model categories, simplicial left Quillen functors, and natural isomorphisms.
  Harpaz \cite[Corollary~4.4]{Har} has shown that, given a pseudofunctor $S \colon \cat J \to \ModCatL[\sSet]$ taking values in combinatorial model categories, the following canonical map is an equivalence of quasi-categories
  \[ \Nervesimp \bigl( \bifibobj{\Sect{\cat J}(S)} \bigr)  \xlongto{\eq}  \Mapcc{\cat J} \bigl( \cat J, \markcart{\Nervesimp(\gcbifib S)} \bigr) \]
  where $\gcbifib S$ denotes the full simplicial subcategory of the simplicial Grothendieck construction $\gc S$ spanned by the objects that are fibrant and cofibrant, and $\markcart{X}$ denotes $X$ with the cocartesian edges marked.
  Here we again implicitly use that $\cat J$ is a groupoid, so that the structure maps of any element of $\Sect{\cat J}(S)$ consist of isomorphisms.
  
  We write $\Nervesimpmark$ for the homotopy coherent nerve with the weak equivalences marked.
  By (the dual of) \cite[Lemma~2.1]{Har} we have $\Nervesimpmark(\gcbifib S) = \markcart{\Nervesimp(\gcbifib S)}$, so that there is an induced map $\iota \colon \markcart{\Nervesimp(\gcbifib S)} \to \Nervesimpmark(\gc \cofibobj{S})$.
  From now on assuming that $S$ is a strict functor, there is, by a result of Beardsley--Wong \cite[Theorem~2.13]{BW}, a natural isomorphism $\Nervesimpmark(\gc \cofibobj{S}) \iso \Nervemark[\Nervesimpmark \after \cofibobj{S}](\cat J)$ over $\Nerve(\cat J)$.
  Using \cite[Propositions~3.1.3.5 and 3.1.4.1]{LurHTT}, we see that the composite
  \[ \markcart{\Nervesimp(\gcbifib S)}  \xlongto{\iota}  \Nervesimpmark(\gc \cofibobj{S})  \iso  \Nervemark[{\Nervesimpmark} \after \cofibobj{S}](\cat J)  \longto  \Nervemark[r \after {\Nervesimpmark} \after \cofibobj{S}](\cat J) \]
  is a cocartesian equivalence: over an object $j \in \cat J$ it is given by
  \[ \Nervesimpmark \bigl( \bifibobj{S(j)} \bigr)  \longto  \Nervesimpmark \bigl( \cofibobj{S(j)} \bigr)  \xlongto{\eq}  r \Nervesimpmark \bigl( \cofibobj{S(j)} \bigr) \]
  where the first map is a weak equivalence of marked simplicial sets since it has a simplicial homotopy inverse (with respect to the simplicial enrichment given by $\Mapmarkueq$, see \cite[Corollary~3.1.4.4]{LurHTT}) given by a simplicial fibrant replacement functor.
  
  There is a canonical natural transformation ${\Nervemark} \after \cofibobj{S} \to {\Nervesimpmark} \after \cofibobj{S}$ that is pointwise a weak equivalence of marked simplicial sets: for any simplicial model category $\cat S$, the maps $\Nervemark(\bifibobj{\cat S}) \to \Nervemark(\cofibobj{\cat S})$ and $\Nervesimpmark(\bifibobj{\cat S}) \to \Nervesimpmark(\cofibobj{\cat S})$ are simplicial homotopy equivalences (again with respect to $\Mapmarkueq$), and the canonical map $\Nervemark(\bifibobj{\cat S}) \to \Nervesimpmark(\bifibobj{\cat S})$ is a weak equivalence by \cite[Proposition~1.3.4.7]{LurHA}.
  Factoring the composite map ${\Nervemark} \after \cofibobj{S} \to {\Nervesimpmark} \after \cofibobj{S} \to r \after {\Nervesimpmark} \after \cofibobj{S}$ as a trivial cofibration followed by a trivial fibration, we obtain the upper row in the diagram
  \[
  \begin{tikzcd}
    {\Nervemark} \after \cofibobj{S} \rar[tail]{\eq} \dar[swap]{\eq} & \dlar[dashed]{\eq} F \rar[two heads]{\eq} & r \after {\Nervesimpmark} \after \cofibobj{S} \\
    r \after {\Nervemark} \after \cofibobj{S}
  \end{tikzcd}
  \]
  where a dashed weak equivalence as indicated exists since $r \after {\Nervemark} \after \cofibobj{S}$ is pointwise fibrant.
  Applying the marked relative nerve followed by $\Mapcc{\cat J} ( \cat J, \blank )$, we obtain a commutative diagram of marked simplicial sets
  \[
  \begin{tikzcd}
    \Nervesimpmark \bigl( \bifibobj{\Sect{\cat J}(S)} \bigr) \rar{\eq} \dar & \Mapcc{\cat J} \bigl( \cat J, \markcart{\Nervesimp(\gcbifib S)} \bigr) \dar \drar[bend left = 20, start anchor = east]{\eq} \\
    \Nervesimpmark \bigl( \cofibobj{\Sect{\cat J}(S)} \bigr) \rar & \Mapcc{\cat J} \bigl( \cat J, \Nervesimpmark(\gc \cofibobj{S}) \bigr) \rar & \Mapcc{\cat J} \bigl( \cat J, \Nervemark[r \after {\Nervesimpmark} \after \cofibobj{S}](\cat J) \bigr) \\
    \Nervemark \bigl( \cofibobj{\Sect{\cat J}(S)} \bigr) \rar \uar \drar[bend right = 20, end anchor = west][swap]{\nu} & \Mapcc{\cat J} \bigl( \cat J, \Nervemark(\gc \cofibobj{S}) \bigr) \rar \uar \dar & \dlar[bend left = 20, end anchor = east]{\eq} \Mapcc{\cat J} \bigl( \cat J, \Nervemark[F](\cat J) \bigr) \uar[swap]{\eq} \\
     & \Mapcc{\cat J} \bigl( \cat J, \Nervemark[r \after {\Nervemark} \after \cofibobj{S}](\cat J) \bigr)
  \end{tikzcd}
  \]
  such that the indicated maps are weak equivalences.
  By the same argument as above, the two left-hand vertical maps are weak equivalences.
  Hence $\nu$ is a weak equivalence as claimed.
  
  We now complete the proof by deducing that $\sigma$ is a weak equivalence in the non-simplicial case as well.
  To this end we use that Dugger \cite[Theorem~1.2]{Dug} has constructed a strict functor $\simp$ from the full subcategory of $\ModCatL$ spanned by the left proper, combinatorial model categories to $\ModCatL[\sSet]$ such that $\simp \cat N$ is naturally Quillen equivalent to $\cat N$.
  It sends a model category $\cat N$ to its category of simplicial objects $\simp \cat N \defeq \Fun(\opcat{\Delta}, \cat N)$, equipped with the ``Reedy hocolim model structure'' whose cofibrations are the Reedy cofibrations and whose weak equivalences are those natural transformations that induce a weak equivalence on homotopy colimits.
  (That this construction is indeed functorial follows from \cite[Proposition~15.4.1 and Theorem~19.4.5]{Hir}.)
  Moreover it follows from \cite[Theorem~15.6.27 and Theorem~15.3.4~(2)]{Hir}, \cite[Corollary~1.54]{AR}, and \cite[Theorem~4.7]{Bar} that $\simp \cat N$ is again left proper and combinatorial.
  The natural Quillen equivalence between $\simp \cat N$ and $\cat N$ is given by the left Quillen functor $\cat N \to \simp \cat N$ induced by pulling back along $\opcat{\Delta} \to *$.
  Thus we obtain a commutative diagram
  \[
  \begin{tikzcd}
    \cUnderlying \bigl( \Sect{\cat J}(M) \bigr) \rar{\sigma} \dar[swap]{\eq} & \lim_{\cat J} ({\cUnderlying} \after M) \dar{\eq} \\
    \cUnderlying \bigl( \Sect{\cat J}(\simp M) \bigr) \rar{\sigma} & \lim_{\cat J} ({\cUnderlying} \after \simp M)
  \end{tikzcd}
  \]
  where the lower horizontal map is an equivalence by the discussion above.
  The two vertical maps are also equivalences, using that $\Sect{\cat J}(\blank)$ preserves Quillen equivalences.
  This completes the proof.
\end{proof}

The following provides a monoidal version of \cref{prop:sections}.

\begin{corollary} \label{cor:sections_monoidal}
  Let $\cat J$ be a groupoid and $M \colon \cat J \to \truncat{1} \ModCatmon$ a functor taking values in left proper, combinatorial model categories.
  Then the equivalence of \cref{prop:sections} can be lifted to a natural equivalence of functors $\Fun(\cat J, \truncat{1} \ModCatmon) \to \Catinfmon$
  \[ \monUnderlying \bigl( \Sect{\cat J}(M) \bigr)  \eq  \lim_{\cat J} ({\monUnderlying} \after M) \]
  where we equip $\Sect{\cat J}(M)$ with the pointwise symmetric monoidal structure.
\end{corollary}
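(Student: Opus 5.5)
The plan is to deduce the monoidal statement from \cref{prop:sections} by applying that proposition levelwise over $\Finpt$, in the same way that $\monUnderlying$ was defined from $\cUnderlying$. First, I would regard $M$ as a functor $\cat J \times \Finpt \to \truncat{1} \ModCatC$ given by $(j, n) \mapsto M(j)^{\times n}$, after strictifying as in the proof of \cref{prop:BMeq}. Here the maps in $\Finpt$ act via the tensor product and projections. For each fixed $n$, the restriction $\cat J \to \truncat{1} \ModCatL$, $j \mapsto M(j)^{\times n}$, still takes values in left proper combinatorial model categories, since finite products preserve these properties. There is also a canonical isomorphism $\Sect{\cat J}(M^{\times n}) \iso \Sect{\cat J}(M)^{\times n}$, compatible with the injective model structures because cofibrations and weak equivalences are pointwise. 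Hence \cref{prop:sections} gives equivalences $\sigma_n \colon \cUnderlying(\Sect{\cat J}(M)^{\times n}) \eq \lim_{\cat J} \cUnderlying(M^{\times n})$.

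The key step is to make the $\sigma_n$ natural in $n \in \Finpt$. For inert maps and permutations this is the naturality of $\sigma$ for maps in $\Fun(\cat J, \truncat{1} \ModCatC)$ landing in $\ModCatL$, which \cref{prop:sections} asserts. For active maps, that is tensor products, the tensor functors $M(j)^{\times n} \to M(j)$ are left Quillen bifunctors but not left Quillen functors. They do, however, preserve cofibrant objects and weak equivalences between cofibrant objects. So they are morphisms of $\Fun(\cat J, \truncat{1} \ModCatC)$ between functors landing in $\ModCatL$, and these are exactly the morphisms for which the proposition claims naturality. This is why naturality was stated there for $\ModCatC$-morphisms.

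Concretely, I would assemble the maps $\nu$ from the proof of \cref{prop:sections} into a natural transformation of functors $\Finpt \to \sSetmark$. This is possible because $\nu$ is built from strictly functorial constructions: nerves of Grothendieck constructions, the marked relative nerve, and $\Mapcc{\cat J}(\cat J, \blank)$ applied after a functorial fibrant replacement. Composing with the localization then gives a natural equivalence $\monUnderlying(\Sect{\cat J}(M)) \eq \lim_{\cat J}(\monUnderlying \after M)$ of functors $\Finpt \to \Catinf$. It remains to note that limits in $\Catinfmon$ are computed in $\Fun(\Finpt, \Catinf)$, so the right-hand side is the limit in $\Catinfmon$. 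The source is symmetric monoidal because $\Sect{\cat J}(M)$ with the pointwise structure is a symmetric monoidal model category with cofibrant unit; the pushout-product axiom can be checked pointwise. Naturality in $M \in \Fun(\cat J, \truncat{1} \ModCatmon)$ follows from the same functoriality of $\nu$.

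I expect the main obstacle to be the coherence of $\sigma$ with respect to the tensor maps. The proof of \cref{prop:sections} passes through Dugger's simplicial replacement $\simp$, and it is not obvious that $\simp$ is compatible with monoidal structures. To avoid this, I would construct the comparison map $\nu$ functorially in $\Finpt$ directly, without going through $\simp$. I would use Dugger and Harpaz only to check that each $\nu_n$ is an equivalence, which is a levelwise property and so needs no coherence.
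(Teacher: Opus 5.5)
Your proposal is correct and takes essentially the same route as the paper. Both strictify $(\cat M, n) \mapsto \cat M^{\times n}$, use that $\Sect{\cat J}$ commutes with finite products, and apply the naturality of $\sigma$ from \cref{prop:sections} along the $\ModCatC$-morphisms given by the tensor functors. Your worry about Dugger's $\simp$ is already settled inside \cref{prop:sections} itself: there $\sigma$ is constructed directly from $\nu$, and $\simp$ is used only to check that $\sigma$ is an equivalence, so the stated naturality can be used as is without revisiting the construction.
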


\begin{proof}
  First note that it is clear that $\Sect{\cat J}(M)$ is a symmetric monoidal model category and that as such $\Sect{\cat J}$ is functorial in $M$.
  Now choose a strictification of the pseudofunctor $\ModCatmon \times \Finpt \to \ModCatC$ given by $(\cat M, n) \mapsto \cat M^{\times n}$.
  This induces a strict functor
  \[ \Fun(\cat J, \truncat{1} \ModCatmon)  \longto  \Fun \bigl( \Finpt, \Fun(\cat J, \truncat{1} \ModCatC) \bigr) \]
  and we will write $M^\tensor \colon \Finpt \to \Fun(\cat J, \truncat{1} \ModCatC)$ for the image of $M$.
  Then there is, by \cref{prop:sections} and definition of $\monUnderlying$, an equivalence of functors $\Finpt \to \Catinf$
  \[ \monUnderlying \bigl( \Sect{\cat J}(M) \bigr)  \eq  {\cUnderlying} \after {\Sect{\cat J}} \after M^\tensor  \eq  {\lim_{\cat J}} \after (\cUnderlying)_* \after M^\tensor  \eq  \lim_{\cat J} \monUnderlying(M) \]
  that is natural in $M$ (for the first equivalence we use that $\Sect{\cat J}$ preserves products).
\end{proof}

\end{appendices}

\section*{References}

\printbibliography[keyword=this,heading=subbibliography,title={This series}]
\printbibliography[notkeyword=this,heading=subbibliography,title={Other}]

@online{I,
  shorthand = {Part~I},
  author = {Naef, Florian and Stoll, Robin},
  title = {A rational model for the fiberwise THH transfer I: Sullivan algebras},
  note = {Preprint},
  eprinttype = {arxiv},
  eprint = {2604.02516v2}
}

@online{II,
  shorthand = {Part~II},
  author = {Naef, Florian and Stoll, Robin},
  title = {A rational model for the fiberwise THH transfer II: $A_\infty$-algebras},
  note = {Preprint},
  eprinttype = {arxiv},
  eprint = {2604.24709v1}
}

@article{AGH,
  author = {Abellán, Fernando and Gagna, Andrea and Haugseng, Rune},
  title = {Straightening for lax transformations and adjunctions of $(\infty, 2)$-categories},
  volume = {31},
  DOI = {10.1007/s00029-025-01084-z},
  eid = {85},
  journal = {Selecta Mathematica},
  year = {2025}
}

@book{AR,
  author = {Adámek, Jiří and Rosicky, Jiří},
  title = {Locally Presentable and Accessible Categories},
  series = {London Mathematical Society Lecture Note Series},
  number = {189},
  DOI = {10.1017/cbo9780511600579},
  publisher = {Cambridge University Press},
  year = {1994}
}

@article{ABG,
  title = {Parametrized spectra, multiplicative Thom spectra and the twisted Umkehr map},
	author = {Ando, Matthew and Blumberg, Andrew J. and Gepner, David},
	doi = {10.2140/gt.2018.22.3761},
	journal = {Geometry \& Topology},
	number = {7},
	pages = {3761--3825},
	volume = {22},
	year = {2018},
}

@article {BMR,
  AUTHOR = {Barthel, Tobias and May, J. P. and Riehl, Emily},
  TITLE = {Six model structures for DG-modules over DGAs: model category theory in homological action},
  JOURNAL = {New York Journal of Mathematics},
  VOLUME = {20},
  YEAR = {2014},
  PAGES = {1077--1159},
  URL = {https://nyjm.albany.edu/j/2014/20_1077.html}
}

@article{Bar,
  author = {Barwick, Clark},
  title = {On left and right model categories and left and right Bousfield localizations},
  volume = {12},
  DOI = {10.4310/hha.2010.v12.n2.a9},
  number = {2},
  journal = {Homology, Homotopy and Applications},
  year = {2010},
  pages = {245--320}
}

@article {BW,
  AUTHOR = {Beardsley, Jonathan and Wong, Liang Ze},
  TITLE = {The operadic nerve, relative nerve and the Grothendieck construction},
  JOURNAL = {Theory and Applications of Categories},
  VOLUME = {34},
  YEAR = {2019},
  EID = {13},
  PAGES = {349--374},
  URL = {http://www.tac.mta.ca/tac/volumes/34/13/34-13abs.html}
}

@article{Ben,
  author = {Ben-Moshe, Shay},
  title = {Categorical ambidexterity},
  year = {2026},
  journal = {New York Journal of Mathematics},
  volume = {32},
  pages = {371--390},
  url = {https://nyjm.albany.edu/j/2026/32-17.html}
}

@unpublished{Ber,
  author = {Berglund, Alexander},
  title = {Poincaré duality fibrations and graph complexes},
  note = {In progress}
}

@Article{BM,
  Author = {Berglund, Alexander and Madsen, Ib},
  Title = {Rational homotopy theory of automorphisms of manifolds},
  Journal = {Acta Mathematica},
  Volume = {224},
  Number = {1},
  Pages = {67--185},
  Year = {2020},
  doi = {10.4310/ACTA.2020.v224.n1.a2}
}

@online{BS24,
  author = {Berglund, Alexander and Stoll, Robin},
  title = {Equivariant algebraic models for relative self-equivalences and block diffeomorphisms},
  year = {2025},
  note = {Preprint},
  eprinttype = {arxiv},
  eprint = {2501.01865v1}
}

@article{BZ,
  author = {Berglund, Alexander and Zeman, Tomáš},
  title = {Algebraic models for classifying spaces of fibrations},
  volume = {29},
  DOI = {10.2140/gt.2025.29.3567},
  number = {7},
  journal = {Geometry \& Topology},
  year = {2025},
  pages = {3567--3634}
}

@article{BG,
  author = {Bousfield, A. K. and Gugenheim, V. K. A. M.},
  title = {On PL De Rham theory and rational homotopy type},
  volume = {8},
  number = {179},
  journal = {Memoirs of the American Mathematical Society},
  year = {1976}
}

@book{BK,
  author = {Bousfield, A. K. and Kan, D. M.},
  title = {Homotopy Limits, Completions and Localizations},
  series = {Lecture Notes in Mathematics},
  number = {304},
  publisher = {Springer},
  year = {1972},
  DOI = {10.1007/978-3-540-38117-4}
}

@article{Bra21,
  author = {Braunack-Mayer, Vincent},
  title = {Combinatorial parametrised spectra},
  volume = {21},
  DOI = {10.2140/agt.2021.21.801},
  number = {2},
  journal = {Algebraic \& Geometric Topology},
  year = {2021},
  pages = {801--891}
}

@online{Bra,
  author = {Braunack-Mayer, Vincent},
  title = {Strict algebraic models for rational parametrised spectra~II},
  year = {2020},
  note = {Preprint},
  eprinttype = {arxiv},
  eprint = {2011.06307v1}
}

@article{BS,
  author = {Brown, Jr., Edgar H. and Szczarba, Robert H.},
  title = {Rational and real homotopy theory with arbitrary fundamental groups},
  volume = {71},
  DOI = {10.1215/s0012-7094-93-07111-6},
  number = {1},
  journal = {Duke Mathematical Journal},
  year = {1993},
  pages = {299--316}
}

@article{CCRY,
  author = {Carmeli, Shachar and Cnossen, Bastiaan and Ramzi, Maxime and Yanovski, Lior},
  title = {Characters and transfer maps via categorified traces},
  journal = {Forum of Mathematics Sigma},
  volume = {13},
  eid = {e93},
  DOI = {10.1017/fms.2025.23},
  year = {2025}
}

@book{Cis,
  author = {Cisinski, Denis-Charles},
  title = {Higher Categories and Homotopical Algebra},
  DOI = {10.1017/9781108588737},
  publisher = {Cambridge University Press},
  year = {2019},
  series = {Cambridge Studies in Advanced Mathematics},
  number = {180}
}

@online{Cno,
  author = {Bastiaan Cnossen},
  title = {Twisted ambidexterity in equivariant homotopy theory},
  year = {2023},
  note = {Preprint},
  eprinttype = {arxiv},
  eprint = {2303.00736v2}
}

@online{DM,
  author = {Di Liberti, Ivan and Meadows, Nicholas},
  title = {Classifying Infinity Topoi via Weighted Limits},
  year = {2025},
  note = {Preprint},
  eprinttype = {arxiv},
  eprint = {2512.15613v2}
}

@article{Dug,
  author = {Dugger, Daniel},
  title = {Replacing model categories with simplicial ones},
  volume = {353},
  DOI = {10.1090/s0002-9947-01-02661-7},
  number = {12},
  journal = {Transactions of the American Mathematical Society},
  year = {2001},
  pages = {5003--5027}
}

@article{DWW,
  author = {Dwyer, W. and Weiss, M. and Williams, B.},
  title = {A parametrized index theorem for the algebraic $K$-theory Euler class},
  volume = {190},
  DOI = {10.1007/bf02393236},
  number = {1},
  journal = {Acta Mathematica},
  year = {2003},
  pages = {1--104}
}

@article{FMT,
  author = {Félix, Yves and Murillo, Aniceto and Tanré, Daniel},
  title = {Fibrewise stable rational homotopy},
  volume = {3},
  DOI = {10.1112/jtopol/jtq023},
  number = {4},
  journal = {Journal of Topology},
  year = {2010},
  pages = {743--758}
}

@book{GR,
  author = {Gaitsgory, Dennis and Rozenblyum, Nick},
  title = {A Study in Derived Algebraic Geometry},
  subtitle = {Volume~I: Correspondences and Duality},
  DOI = {10.1090/surv/221.1},
  series = {Mathematical Surveys and Monographs},
  number = {221},
  publisher = {American Mathematical Society},
  year = {2017},
}

@incollection{Gep,
  author = {Gepner, David},
  title = {An introduction to higher categorical algebra},
  booktitle = {Handbook of Homotopy Theory},
  editor = {Miller, Haynes},
  chapter = {13},
  pages = {487--548},
  DOI = {10.1201/9781351251624},
  publisher = {CRC Press},
  year = {2020}
}

@article{GHK,
  author = {Gepner, David and Haugseng, Rune and Kock, Joachim},
  title = {$\infty$-Operads as Analytic Monads},
  volume = {2022},
  DOI = {10.1093/imrn/rnaa332},
  number = {16},
  journal = {International Mathematics Research Notices},
  year = {2021},
  pages = {12516--12624}
}

@article{GHN,
  author = {Gepner, David and Haugseng, Rune and Nikolaus, Thomas},
  title = {Lax Colimits and Free Fibrations in $\infty$-Categories},
  volume = {22},
  DOI = {10.4171/dm/593},
  journal = {Documenta Mathematica},
  year = {2017},
  pages = {1225--1266}
}

@article{Goo,
  author = {Goodwillie, Thomas G.},
  title = {Relative algebraic $K$-theory and cyclic homology},
  volume = {124},
  DOI = {10.2307/1971283},
  number = {2},
  journal = {Annals of Mathematics},
  year = {1986},
  pages = {347--402}
}

@article{GHT,
  author = {Gómez-Tato, Antonio and Halperin, Stephen and Tanré, Daniel},
  title = {Rational homotopy theory for non-simply connected spaces},
  volume = {352},
  DOI = {10.1090/s0002-9947-99-02463-0},
  number = {4},
  journal = {Transactions of the American Mathematical Society},
  year = {2000},
  pages = {1493--1525}
}

@article {Har,
  AUTHOR = {Harpaz, Yonatan},
  TITLE = {Lax limits of model categories},
  JOURNAL = {Theory and Applications of Categories},
  VOLUME = {35},
  YEAR = {2020},
  NUMBER = {25},
  PAGES = {959--978},
  URL = {http://www.tac.mta.ca/tac/volumes/35/25/35-25abs.html}
}

@online{Har1,
  author = {Harpaz, Yonatan},
  title = {The Cobordism Hypothesis in Dimension 1},
  year = {2012},
  note = {Preprint},
  eprinttype = {arxiv},
  eprint = {1210.0229v1}
}

@online{HNS,
  author = {Harpaz, Yonatan and Nikolaus, Thomas and Saunier, Victor},
  title = {Trace methods for stable categories I: The linear approximation of algebraic K-theory},
  year = {2024},
  note = {Preprint},
  eprinttype = {arxiv},
  eprint = {2411.04743v2}
}

@article{Hau,
  author = {Haugseng, Rune},
  title = {On lax transformations, adjunctions, and monads in $(\infty, 2)$-categories},
  volume = {5},
  DOI = {10.21136/hs.2021.07},
  number = {1},
  journal = {Higher Structures},
  year = {2021},
  pages = {244--281}
}

@inproceedings{Hes,
  author = {Hess, Kathryn},
  title = {Rational homotopy theory: a brief introduction},
  booktitle = {Interactions between Homotopy Theory and Algebra},
  editor = {Avramov, Luchezar L. and Christensen, J. Daniel and Dwyer, William G. and Mandell, Michael A. and Shipley, Brooke E.},
  series = {Contemporary Mathematics},
  number = {436},
  publisher = {American Mathematical Society},
  year = {2007},
  pages = {175--202},
  DOI = {10.1090/conm/436/08409}
}

@article{Hil,
  author = {Hilton, Peter},
  title = {On G-spaces},
  volume = {7},
  number = {1},
  journal = {Boletim da Sociedade Brasileira de Matemática},
  year = {1976},
  pages = {65--73},
  DOI = {10.1007/bf02584848}
}

@inproceedings{Hil75,
  author = {Hilton, Peter},
  title = {Nilpotent actions on nilpotent groups},
  booktitle = {Algebra and Logic},
  series = {Lecture Notes in Mathematics},
  number = {450},
  editor = {Crossley, John N.},
  publisher = {Springer},
  year = {1975},
  pages = {174--196},
  DOI = {10.1007/bfb0062856}
}

@book{HMR,
  title = {Localization of Nilpotent Groups and Spaces},
  author = {Hilton, Peter and Mislin, Guido and Roitberg, Joe},
  series = {North-Holland Mathematics Studies},
  number = {15},
  year = {1975},
  publisher = {Elsevier}
}

@article{HRS,
  author = {Hilton, Peter and Roitberg, Joseph and Singer, David},
  title = {On $G$-spaces, Serre classes and $G$-nilpotency},
  journal = {Mathematical Proceedings of the Cambridge Philosophical Society},
  volume = {84},
  number = {3},
  year = {1978},
  pages = {443--454},
  DOI = {10.1017/s0305004100055274}
}

@article{Hin,
  author = {Hinich, Vladimir},
  title = {Dwyer--Kan localization revisited},
  volume = {18},
  DOI = {10.4310/hha.2016.v18.n1.a3},
  number = {1},
  journal = {Homology, Homotopy and Applications},
  year = {2016},
  pages = {27--48}
}

@book{Hir,
  title = {Model Categories and Their Localizations},
  author = {Hirschhorn, Philip S.},
  series = {Mathematical Surveys and Monographs},
  number = {99},
  year = {2003},
  publisher = {American Mathematical Society},
  doi = {10.1090/surv/099}
}

@online{HL,
  author = {Hopkins, Michael and Lurie, Jacob},
  title = {Ambidexterity in $K(n)$-Local Stable Homotopy Theory},
  date = {2013-12-19},
  note = {Unpublished manuscript},
  url = {https://people.math.harvard.edu/~lurie/papers/Ambidexterity.pdf}
}

@book{Hov,
  author = {Hovey, Mark},
  title = {Model Categories},
  year = {2007},
  series = {Mathematical Surveys and Monographs},
  number = {63},
  publisher = {American Mathematical Society},
  doi = {10.1090/surv/063}
}

@article{HSS,
  author = {Hoyois, Marc and Scherotzke, Sarah and Sibilla, Nicolò},
  title = {Higher traces, noncommutative motives, and the categorified Chern character},
  volume = {309},
  DOI = {10.1016/j.aim.2017.01.008},
  journal = {Advances in Mathematics},
  year = {2017},
  pages = {97--154}
}

@article{Kel21,
  author = {Keller, Bernhard},
  title = {Hochschild (Co)homology and Derived Categories},
  volume = {47},
  DOI = {10.1007/s41980-021-00556-0},
  number = {1 supplement},
  journal = {Bulletin of the Iranian Mathematical Society},
  year = {2021},
  pages = {57--83}
}

@article{Kel98,
  author = {Keller, Bernhard},
  title = {Invariance and localization for cyclic homology of DG algebras},
  volume = {123},
  DOI = {10.1016/s0022-4049(96)00085-0},
  journal = {Journal of Pure and Applied Algebra},
  year = {1998},
  pages = {223--273}
}

@inproceedings{KS,
  author = {Kelly, G. M.  and  Street, Ross},
  title = {Review of the elements of 2-categories},
  DOI = {10.1007/bfb0063101},
  booktitle = {Category Seminar},
  booktitleaddon = {Sydney, NSW, Australia, 1972/73},
  editor = {Kelly, Gregory M.},
  series = {Lecture Notes in Mathematics},
  number = {420},
  publisher = {Springer},
  year = {1974},
  pages = {75--103}
}

@article{Kle,
  author = {Klein, John R.},
  title = {The dualizing spectrum of a topological group},
  volume = {319},
  DOI = {10.1007/pl00004441},
  number = {3},
  journal = {Mathematische Annalen},
  year = {2001},
  pages = {421--456}
}

@article{KW,
  author = {Klein, John R and Williams, E Bruce},
  title = {Homotopical intersection theory I},
  volume = {11},
  DOI = {10.2140/gt.2007.11.939},
  number = {2},
  journal = {Geometry \& Topology},
  year = {2007},
  pages = {939--977}
}

@article{LP,
  author = {Lack, Stephen and Paoli, Simona},
  title = {2-nerves for bicategories},
  volume = {38},
  DOI = {10.1007/s10977-007-9013-2},
  number = {2},
  journal = {K-Theory},
  year = {2008},
  pages = {153--175}
}

@book{Lan,
  author = {Land, Markus},
  title = {Introduction to Infinity-Categories},
  DOI = {10.1007/978-3-030-61524-6},
  series = {Compact Textbooks in Mathematics},
  publisher = {Birkhäuser},
  year = {2021}
}

@article{LM,
  author = {Lind, John A. and Malkiewich, Cary},
  title = {The transfer map of free loop spaces},
  volume = {371},
  DOI = {10.1090/tran/7497},
  number = {4},
  journal = {Transactions of the American Mathematical Society},
  year = {2018},
  pages = {2503--2552}
}

@book{Lod,
  author = {Loday, Jean-Louis},
  title = {Cyclic Homology},
  edition = {2},
  DOI = {10.1007/978-3-662-11389-9},
  series = {Grundlehren der mathematischen Wissenschaften},
  number = {301},
  publisher = {Springer},
  year = {1998}
}

@online{LR25,
  author       = {Loubaton, Félix and Ruit, Jaco},
  title        = {On the squares functor and the Gaits\-gory–Rozen\-blyum conjectures},
  year         = {2025},
  note         = {Preprint},
  eprinttype   = {arxiv},
  eprint       = {2507.07807v2}
}

@online {LurHA,
  author = "Lurie, Jacob",
  title = "Higher Algebra",
  url = "http://www.math.harvard.edu/~lurie/papers/HA.pdf",
  note = "Unpublished manuscript",
  date = "2017-09-18"
}

@book {LurHTT,
  author = "Lurie, Jacob",
  title = "Higher Topos Theory",
  publisher = "Princeton University Press",
  series = "Annals of Mathematics Studies",
  number = "170",
  year = "2009",
  eprinttype = "arxiv",
  eprint = "math/0608040"
}

@article{Macp,
  author = {Macpherson, Andrew W},
  title = {A bivariant Yoneda lemma and $(\infty, 2)$-categories of correspondences},
  volume = {22},
  DOI = {10.2140/agt.2022.22.2689},
  number = {6},
  journal = {Algebraic \& Geometric Topology},
  year = {2022},
  pages = {2689--2774}
}

@book{MP,
  author = {May, J. P. and Ponto, K.},
  title = {More Concise Algebraic Topology},
  subtitle = {Localization, Completion, and Model Categories},
  publisher = {University of Chicago Press},
  series = {Chicago Lectures in Mathematics},
  year = {2011},
  DOI = {10.7208/chicago/9780226511795.001.0001}
}

@online{NS,
  author = {Naef, Florian and Safronov, Pavel},
  title = {Simple homotopy invariance of the loop coproduct},
  note = {Preprint},
  eprinttype = {arxiv},
  eprint = {2406.19326v1},
  year = {2024}
}

@article{Pon,
  author = {Ponto, Kate},
  title = {Fixed Point Theory and Trace for Bicategories},
  DOI = {10.24033/ast.815},
  journal = {Astérisque},
  year = {2018},
  volume = {333}
}

@article{Pow,
  author = {Power, A. J.},
  title = {A general coherence result},
  volume = {57},
  DOI = {10.1016/0022-4049(89)90113-8},
  number = {2},
  journal = {Journal of Pure and Applied Algebra},
  year = {1989},
  pages = {165--173}
}

@article{SS,
  author = {Schwede, Stefan and Shipley, Brooke E.},
  title = {Algebras and Modules in Monoidal Model Categories},
  volume = {80},
  DOI = {10.1112/s002461150001220x},
  number = {2},
  journal = {Proceedings of the London Mathematical Society},
  year = {2000},
  pages = {491--511}
}

@thesis {Sci,
  author = {Sciarappa, Luke},
  title = {Model categories in equivariant rational homotopy theory},
  type = {SPUR Final Paper},
  institution = {Massachusetts Institute of Technology},
  year = {2017},
  url = {https://math.mit.edu/research/undergraduate/spur/documents/2017Sciarappa.pdf}
}

@article{Shi,
  author = {Shipley, Brooke E.},
  title = {Convergence of the homology spectral sequence of a cosimplicial space},
  volume = {118},
  DOI = {10.1353/ajm.1996.0004},
  number = {1},
  journal = {American Journal of Mathematics},
  year = {1996},
  pages = {179--207}
}

@article {Shu,
  author = {Shulman, Michael},
  title = {Comparing composites of left and right derived functors},
  journal = {New York Journal of Mathematics},
  volume = {17},
  year = {2011},
  pages = {75--125},
  url = {http://nyjm.albany.edu/j/2011/17_75.html}
}

@article{Sul,
  author = {Dennis Sullivan},
  title = {Infinitesimal computations in topology},
  journal = {Publications ma\-thé\-ma\-tiques de l'IHÉS},
  year = {1977},
  volume = {47},
  number = {1},
  pages = {269--331},
  doi = {10.1007/bf02684341}
}

\end{document}